\theoremstyle{plain}
\newtheorem{maintheorem}{Theorem}
\newtheorem{maincorollary}[maintheorem]{Corollary}
\newtheorem{theorem}{Theorem}[section]
\newtheorem{proposition}[theorem]{Proposition}
\newtheorem{corollary}[theorem]{Corollary}
\newtheorem{lemma}[theorem]{Lemma}
\newtheorem{claim}[theorem]{Claim}
\newtheorem{remark}[theorem]{Remark}
\theoremstyle{definition}
\newtheorem{definition}{Definition}
\newtheorem{conjecture}{Conjecture}
\newtheorem{example}{Example}
\newcommand{\RR}{{\mathbb R}}
\newcommand{\EE}{{\mathbb E}}
\newcommand{\DD}{{\mathbb D}}
\newcommand{\ZZ}{{\mathbb Z}}
\newcommand{\sS}{{\mathbb S}}
\def \TT {{\mathbb T}}
\newcommand{\fX}{{\mathfrak{X}}}
\newcommand{\cA}{{\mathcal A}}
\newcommand{\cC}{{\mathcal C}}
\newcommand{\cF}{{\mathcal F}}
\newcommand{\cO}{{\mathcal O}}
\newcommand{\cW}{{\mathcal W}}
\newcommand{\F}{\EuScript{F}}
\newcommand{\D}{\EuScript{D}}
\newcommand{\G}{\EuScript{G}}
\newcommand{\K}{\EuScript{K}}
\newcommand{\vfi}{{\varphi}}
\renewcommand{\epsilon}{\varepsilon}
\newcommand{\qand}{\quad\text{and}\quad}
\newcommand{\wh}{\widehat}
\newcommand{\wt}{\widetilde}
\DeclareMathOperator{\diag}{diag}
\DeclareMathOperator{\dom}{dom}
\DeclareMathOperator{\spec}{sp}
\DeclareMathOperator{\dist}{dist}
\DeclareMathOperator{\supp}{supp}
\DeclareMathOperator{\sing}{Sing}       
\DeclareMathOperator{\lip}{Lip}
\DeclareMathOperator{\m}{Leb}
\DeclareMathOperator{\close}{Closure}
\DeclareMathOperator{\tr}{Tr}
\def \Leb {\operatorname{Leb}}
\def \leb {\operatorname{Leb}}
  \def \vfi {\varphi}
\title[Physical measures for flows]{Physical
  measures for mostly sectionally expanding flows}
\date{\today}
\thanks{V.A. was partially supported by CNPq-Brazil (grant
  304047/2023-6). L.S. was partially supported by FAPERJ-Funda\c c\~ao
  Carlos Chagas Filho de Amparo \`a Pesquisa do Estado do Rio de
  Janeiro Projects APQ1-E-26/211.690/2021 SEI-260003/015270/2021 and
  JCNE-E-26/200.271/2023 SEI-260003/000640/2023, by Coordena\c c\~ao
  de Aperfei\c coamento de Pessoal de N\'ivel Superior CAPES — Finance
  Code 001 and PROEXT-PG project Dynamic Women - Din\^amicas,
  CNPq-Brazil (grant Projeto Universal 404943/2023-3).
  S.S. was partially supported by CNPq-Brazil Doctoral schoolarship.}
\author{Vitor Araujo, Luciana Salgado and S\'ergio Sousa}
\address[V.A.]{Instituto de Matem\'atica e Estat\'{\i}stica,
  Universidade Federal da Bahia, Av. Ademar de Barros s/n,
  40170-110 Salvador, Brazil.}
\email{vitor.araujo.im.ufba@gmail.com, vitor.d.araujo@ufba.br}
\urladdr{https:///sites.google.com/view/vitor-araujo-ime-ufba}
\address[L.S.]{Universidade Federal do Rio de Janeiro, Instituto de
   Matem\'atica\\
   Avenida Athos da Silveira Ramos 149 Cidade Universit\'aria, P.O. Box 68530, 
   21941-909 Rio de Janeiro-RJ-Brazil }
 \email{lsalgado@im.ufjr.br, lucianasalgado@ufrj.br}
 \urladdr{http://www.im.ufrj.br/~lsalgado}
 \address[S.S.]{Universidade Federal do Rio de Janeiro, Instituto de
   Matem\'atica\\
   Avenida Athos da Silveira Ramos 149 Cidade Universit\'aria, P.O. Box 68530, 
   21941-909 Rio de Janeiro-RJ-Brazil }
 \email{sergio.sousa@ufrj.br}
\keywords{physical/SRB measure, positive Lyapunov exponent,
  non-uniform sectional expanding flow, hyperbolic times, $cu$-Gibbs
  state}
\subjclass[2010]{Primary: 37D45. Secondary: 37D30, 37D25, 37D35.}
\begin{document}

\begin{abstract}
  We prove that a partially hyperbolic attracting set for a $C^2$
  vector field, having slow recurrence to equilibria, supports an
  ergodic physical/SRB measure if, and only if, the trapping region
  admits non-uniform sectional expansion on a positive Lebesgue
  measure subset. Moreover, in this case, the attracting set supports
  at most finitely many ergodic physical/SRB measures, which are also
  Gibbs states along the central-unstable direction.

  This extends to continuous time systems a similar well-known result
  obtained for diffeomorphisms, encompassing the presence of
  equilibria accumulated by regular orbits within the attracting set.
  In codimension two the same result holds, assuming only the
  trajectories on the trapping region admit a sequence of times with
  asymptotical sectional expansion, on a positive volume subset.
  
%  sectional expansion for the time-$1$ map of the flow.

  We present several examples of application, including the existence
  of physical measures for asymptotically sectional hyperbolic
  attracting sets, and obtain physical measures in an alternative
  unified way for many known examples: Lorenz-like and Rovella
  attractors, and sectional-hyperbolic attracting sets (including the
  multidimensional Lorenz attractor).
  
  % We prove that a partially hyperbolic attracting set
  % of codimension two, for a $C^2$ vector field supports an ergodic
  % physical/SRB measure if, and only if, the trajetories on the
  % trapping region admits a sequence of times with asymptotical
  % sectional expansion, on a positive volume subset. That is, every
  % transversal direction to the vector field along the center
  % subbundle
  % has a positive Lyapunov exponent in a weak sense. Moreover, in
  % this
  % case, the attracting set supports at most finitely many ergodic
  % physical/SRB measures, which are also Gibbs states along the
  % central-unstable direction, if it additionally satisfies slow
  % recurrence to equilibria of the vector field.

\end{abstract}
 
\maketitle

\tableofcontents

\section{Introduction and statement of results}
\label{sec:introduction}

Much of the recent progress in Dynamics is a consequence of a
probabilistic approach to the understanding of complicated dynamical
systems, where one focuses on the statistical properties of ``typical
orbits'', in the sense of large volume in the ambient space. We deal
here with flows $\phi_t:M\to M$ on compact manifolds. The most basic
statistical data are the time averages
$T^{-1} \int_0^T \delta_{\phi_t (z)} \,dt$, where $\delta_w$
represents the Dirac measure at a point $w$.  Birkhoff's Ergodic
Theorem asserts that time averages admit asymptotic limits in the
weak$^*$ topology at almost every point $z$ with respect to any
invariant probability $\mu$. That is, for every continuous observable
$\psi:M\to\RR$ there exists a subset $E\subset M$ of full measure
$\mu(E)=1$ so that
$\lim_{T\nearrow\infty} \frac1T \int_0^T \psi(\phi_t (z)) \,dt =
\wt{\psi}(z)$ is well-defined for each $z\in E$.  Moreover, if the
measure is ergodic, then the time average coincides with the space
average, that is, $\wt{\psi}(z)=\int\psi\,d\mu$ for $z\in E$. However,
many invariant measures are singular with respect to volume in
general, and so the Ergodic Theorem is not enough to understand the
behavior of positive volume (Lebesgue measure) sets of orbits.

A \emph{physical measure} is an invariant probability measure for
which time averages exist and coincide with the space average, for a
set of initial conditions with positive Lebesgue measure, i.e. in the
weak$^*$ topology of convergence of probability measures we have
$$
B(\mu):=\left\{z\in M: \lim_{T\nearrow\infty} \frac1T \int_0^T
  \delta_{\phi_t (z)} \,dt = \mu \right\}
\;\text{with  } \Leb(B(\mu))>0.
$$
This set is the \emph{basin} of the measure. Sinai, Ruelle and Bowen
introduced this notion about fifty years ago, and proved that, for
uniformly hyperbolic (Axiom~A) diffeomorphisms and flows, time averages
exist for Lebesgue almost every point and coincide with one of
finitely many physical measures; see~\cite{BR75,Si72}.

The problem of existence (and finiteness) of physical measures, beyond
the Axiom A setting, remains a main goal of Dynamics. The construction
of the so called Gibbs $u$-states, by Pesin and Sinai in \cite{PS82},
was the beginning of the extension of the Sinai, Ruelle and Bowen
ideas to partially hyperbolic systems, a fruitful generalization of
the notion of uniform hyperbolicity, which more recently was shown to
encompass Lorenz-like or singular-hyperbolic flows \cite{MPP99}
and to be a consequence of robust transitivity: see the surveys
\cite{AraPac2010s,BDV2004} for much of the progress obtained so far and
the recent extensions to higher-dimensional flows
\cite{araujo_2021,LeplYa17}.

The papers of Alves, Bonatti and Viana~\cite{ABV00,BoV00}, and
Dolgopyat~\cite{Do2000} are of special interest to us here since they
prove existence and finiteness of physical measures for partially
hyperbolic diffeomorphisms, which are also $u$-Gibbs states, under the
assumption that the central direction is either ``mostly contracting''
\cite{BoV00, Do2000} or ``non-uniformly expanding'' \cite{ABV00}. For
local diffeomorphisms with a non-flat critical or singular set, the
authors in~\cite{ABV00} show that a slow recurrence condition is
sufficient to obtain an absolutely continuous invariant probability
measure when the system is non-uniformly expanding.

Here, we extend the results of~\cite{ABV00} to attracting sets for
smooth vector fields with a dominated splitting and a sequence of
times with asymptotical sectional expansion on a positive volume
subset.  That is, every transversal direction to the vector field
along the center subbundle has a positive Lyapunov exponent in a weak
sense. In this setting partial hyperbolicity is natural. We
additionally obtain finitely many physical/SRB measures for the flow,
which are also $cu$-Gibbs states, whose ergodic basins cover the set
of non-uniform sectional expanding orbits (except for a subset of
volume zero), if we also have slow recurrence to the equilibria of the
vector field.

The properties of continuous time dynamics enable us to show that weak
asymptotical sectional expansion on a positive volume subset is a
necessary and sufficient condition for existence of ergodic
physical/SRB measures for partially hyperbolic attracting sets with
central direction of codimension two.
% ; and that \emph{slow recurrence to hyperbolic equilibria of
% saddle-type always holds for Lebesgue almost every trajectory not
% converging to a sink.}
We present several examples of application, including the existence of
physical measures for weak asymptotically sectional hyperbolic
attracting sets.

Known examples satisfying these conditions are, besides hyperbolic
(Axiom A) flows~\cite{BR75}, all singular-hyperbolic attracting sets
for $C^2$ smooth flows~\cite{AraPac2010s} (including the Lorenz
attractor), the contracting Lorenz attractor (also known as the
Rovella attractor~\cite{Ro93}) and all sectional-hyperbolic attracting
sets~\cite{araujo_2021} (including the multidimensional Lorenz
attractor~\cite{BPV97}). Our results provide an alternative unified
way to obtain physical/SRB measures for these known systems and for
many others.

%%%%%%%%%%%%%%%%%%%%%%%%%%%%%%%%%%%%%%%%%%%%%%%%%%%%%%%%%%

\subsection{Statements of the results}
\label{sec:statements-results}

Let $M$ be a compact connected manifold with dimension $\dim M=m$,
endowed with a Riemannian metric, induced distance $d$ and volume form
$\m$. Let $\fX^r(M)$, $r\ge1$, be the set of $C^r$ vector fields on $M$
endowed with the $C^r$ topology and denote by $\phi_t$ the flow
generated by $G\in\fX^r(M)$.

\subsubsection{Preliminary definitions}
\label{sec:prelim-definit}

We say that $\sigma\in M$ with $G(\sigma)=0$ is an {\em equilibrium}
or \emph{singularity}. In what follows we denote by $\sing(G)$ the
family of all such points. We say that an equilibrium
$\sigma\in\sing(G)$ is \emph{hyperbolic} if all the eigenvalues of
$DG(\sigma)$ have non-zero real part.

An \emph{invariant set} $\Lambda$ for the flow $\phi_t$, generated by
the vector field $G$, is a subset of $M$ which satisfies
$\phi_t(\Lambda)=\Lambda$ for all $t\in\RR$. A point $p\in M$ is
\emph{periodic} for the flow $\phi_t$ generated by $G$ if $G(p)\neq 0$
and there exists $\tau>0$ so that $\phi_\tau(p)=p$; its orbit
$\cO_G(p)=\phi_{\RR}(p)=\phi_{[0,\tau]}(p)=\{\phi_tp: t\in[0,\tau]\}$
is a \emph{periodic orbit}, an invariant simple closed curve for the
flow.  An invariant set is \emph{nontrivial} if it is not a finite
collection of periodic orbits and equilibria.

Given a compact invariant set $\Lambda$ for $G\in \fX^r(M)$, we say
that $\Lambda$ is \emph{isolated} if there exists an open set
$U\supset \Lambda$ such that
$ \Lambda =\bigcap_{t\in\RR}\close{\phi_t(U)}$.  If $U$ can be chosen
so that $\close{\phi_t(U)}\subset U$ for all $t>0$, then we say that
$\Lambda$ is an \emph{attracting set} and $U$ a \emph{trapping region}
(or \emph{isolating neighborhood}) for
$\Lambda=\Lambda_G(U)=\cap_{t>0}\close{\phi_t(U)}$.

% We note that every attracting set admits a natural continuation, since
% there exists a neighborhood $\V$ of $G$ in $\fX^r(M)$ so that
% $\close{\phi^Y_t}(U)\subset U$ for all $t>0$ and each $Y\in\V$, where
% $(\phi_t^Y)_{t\in\RR}$ is the flow generated by $Y$, and so we may
% consider the attracting set $\Lambda_Y(U)$.

An \emph{attractor} is a transitive attracting set, that is,
an attracting set $\Lambda$ with a point $z\in\Lambda$ so that its
$\omega$-limit
$
  \omega(z):=\left\{y\in M: \exists t_n\nearrow\infty\text{  s.t.
  } \phi_{t_n}z\xrightarrow[n\to\infty]{}y \right\}
$
coincides with $\Lambda$.

\subsubsection{Partial hyperbolic attracting sets for vector fields}
\label{sec:part-hyperb-diff}

Let $\Lambda$ be a compact invariant set for $G \in \fX^r(M)$.  We say
that $\Lambda$ is {\em partially hyperbolic} if the tangent bundle
over $\Lambda$ can be written as a continuous $D\phi_t$-invariant
Whitney sum $ T_\Lambda M=E^s\oplus E^{cu}, $ where
$d_s=\dim(E^s_x)\ge1$ and $d_{cu}=\dim (E^{cu}_x)\ge2$ for $x\in\Lambda$,
and there exists a constant $\lambda >0$ such that for all
$x \in \Lambda$, $t\ge0$, we have\footnote{For some choice of the
  Riemannian metric on the manifold, see e.g. \cite{Goum07}. Changing
  the metric does not change the rate $\lambda$ but might introduce
  the multiplication by a constant.}
\begin{itemize}
  \item domination of the splitting:
$\|D\phi_t | E^s_x\| \cdot \|D\phi_{-t} | E^{cu}_{\phi_tx}\| \le e^{-\lambda t}$;
\item uniform contraction along $E^s$:
  $\|D\phi_t | E^s_x\| \le e^{-\lambda t}$.
\end{itemize}
We refer to $E^s$ as the stable bundle and to $E^{cu}$ as the
center-unstable bundle.  
\begin{lemma}{\cite[Lemma 3.2]{arsal2012a}}
  \label{le:flow-center}
  Let $\Lambda$ be a compact invariant set for $G$.
  \begin{enumerate}
  \item Given a $D\phi_t$-invariant and continuous splitting
    $T_\Lambda M = E\oplus F$ such that $E$ is uniformly contracted,
    then $G(x)\in F_x$ for all $x\in \Lambda$.
  \item Assuming that $\Lambda$ is non-trivial and has a continuous
    $D\phi_t$-invariant \emph{and dominated} splitting
    $T_\Lambda M = E\oplus F$ such that $G(x)\in F_x$ for all
    $x\in\Lambda$, then $E$ is a uniformly contracted subbundle.
  \end{enumerate}
\end{lemma}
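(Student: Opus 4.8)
The plan is to establish the two items separately; item (1) is elementary and item (2) is the heart of the matter.

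For item (1), let $\pi^E_x\colon T_xM\to E_x$ be the projection with kernel $F_x$. Continuity of the splitting and compactness of $\Lambda$ give $c:=\sup_{x\in\Lambda}\|\pi^E_x(G(x))\|<\infty$. Since $D\phi_t$ preserves the splitting and $D\phi_t(G(x))=G(\phi_t x)$, we have $D\phi_t\bigl(\pi^E_x(G(x))\bigr)=\pi^E_{\phi_t x}(G(\phi_t x))$ for all $x\in\Lambda$ and $t\in\RR$. Writing $x=\phi_t(\phi_{-t}x)$ with $\phi_{-t}x\in\Lambda$ and using uniform contraction $\|D\phi_t|E_y\|\le C\lambda^t$, we get
\[
\|\pi^E_x(G(x))\|=\bigl\|D\phi_t\bigl(\pi^E_{\phi_{-t}x}(G(\phi_{-t}x))\bigr)\bigr\|\le C\lambda^t\,c\xrightarrow[t\to+\infty]{}0,
\]
hence $\pi^E_x(G(x))=0$, that is, $G(x)\in F_x$.

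For item (2) I would rely on the following standard criterion: a continuous $D\phi_t$-invariant subbundle $E$ over a compact invariant set is uniformly exponentially contracted if and only if $\lambda^+_E(\mu):=\inf_{t>0}\tfrac1t\int\log\|D\phi_t|E_x\|\,d\mu(x)<0$ for every $\phi_t$-invariant Borel probability $\mu$ on $\Lambda$. Indeed, $t\mapsto\log\|D\phi_t|E_x\|$ is subadditive along the flow, so by the subadditive variational principle $\lim_{t\to+\infty}\tfrac1t\sup_{x}\log\|D\phi_t|E_x\|=\sup_\mu\lambda^+_E(\mu)$, and since $\mu\mapsto\lambda^+_E(\mu)$ is an infimum of continuous functions it is upper semicontinuous on the weak-$*$ compact set of invariant measures, whence the supremum is attained and is negative as soon as each $\lambda^+_E(\mu)$ is. Thus it suffices to prove $\lambda^+_E(\mu)<0$ for every ergodic $\mu$. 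If $\mu(\sing(G))=0$, then $G(x)\neq0$ for $\mu$-a.e.\ $x$; since $\langle G(x)\rangle\subset F_x$, the conorm obeys $m(D\phi_t|F_x)=\|D\phi_{-t}|F_{\phi_t x}\|^{-1}\le\|G(\phi_t x)\|/\|G(x)\|$, and domination yields
\[
\|D\phi_t|E_x\|\le C\lambda^t\,m(D\phi_t|F_x)\le C\lambda^t\,\frac{\max_\Lambda\|G\|}{\|G(x)\|}\,,
\]
so $\lambda^+_E(\mu)\le\log\lambda<0$.

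The remaining ergodic measures are the $\mu=\delta_\sigma$ with $\sigma\in\Lambda\cap\sing(G)$, and for these one must show $\spec\bigl(DG(\sigma)|E_\sigma\bigr)$ lies in the open left half-plane; this is precisely where non-triviality of $\Lambda$ is used, since it forces $\sigma$ to be accumulated inside $\Lambda$ by regular orbits. The idea is to pick regular $x_n\in\Lambda$ with $x_n\to\sigma$ whose forward orbit asymptotes $\sigma$, so that $G(\phi_t x_n)/\|G(\phi_t x_n)\|$ converges, along the part of the orbit near $\sigma$, to a unit vector $w\in F_\sigma$ lying in the stable subspace of $DG(\sigma)$; passing the displayed estimate above to the limit, using continuity of the splitting and $D\phi_s|E_{\phi_t x_n}\to e^{sDG(\sigma)}|E_\sigma$, one obtains $\|e^{sDG(\sigma)}|E_\sigma\|\le C\lambda^s\,\|D\phi_s w\|$, and since $w$ is a contracting direction of $DG(\sigma)$ this forces $\|e^{sDG(\sigma)}|E_\sigma\|$ to decay exponentially, i.e.\ $E_\sigma$ is contracted. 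I expect this last step to be the main obstacle: one has to control the orbits of the approximating regular points while they shadow the equilibrium, rule out degenerate limiting directions coming from complex eigenvalues or Jordan blocks, and verify that the extracted vector of $F_\sigma$ is genuinely slow; Lemma~\ref{le:flow-center}(1) and the infinitesimal-Lyapunov-function techniques of the cited reference are the natural tools. Once $\lambda^+_E(\mu)<0$ is known for all ergodic—hence all invariant—$\mu$, the criterion above upgrades it to uniform exponential contraction of $E$ on the whole of $\Lambda$.
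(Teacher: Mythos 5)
The paper does not supply a proof of this lemma: it is stated as a citation of \cite[Lemma 3.2]{arsal2012a}, so there is no in-paper argument to compare against, and your task was to reconstruct the argument from scratch.

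Your proof of item (1) is correct and complete. The projection $\pi^E_x$ intertwines $D\phi_t$ because the splitting is invariant, $G$ is flow-invariant, so $\pi^E_xG(x)=D\phi_t\pi^E_{\phi_{-t}x}G(\phi_{-t}x)$, and uniform contraction of $E$ plus boundedness of $\|\pi^E_\cdot G(\cdot)\|$ on the compact set $\Lambda$ forces $\pi^E_xG(x)=0$.

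Your proof of item (2) has a genuine gap, which you yourself flag. Reducing to the inequality $\lambda^+_E(\mu)<0$ for every ergodic invariant $\mu$ via the subadditive variational principle is a legitimate (and standard) reduction, and the case $\mu(\sing(G))=0$ is handled correctly: with $\langle G(x)\rangle\subset F_x$ one has $m(D\phi_t|F_x)\le\|G(\phi_t x)\|/\|G(x)\|$, domination gives $\|D\phi_t|E_x\|\le C\lambda^t\|G(\phi_t x)\|/\|G(x)\|$, and since $\|G\|$ is bounded above on $\Lambda$ this yields $\lambda^+_E(\mu)\le\log\lambda<0$. But the Dirac masses at equilibria are the crux of the lemma, and there you offer only a sketch. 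To close it one must first argue that nontriviality of $\Lambda$ forces each $\sigma\in\Lambda\cap\sing(G)$ to be accumulated inside $\Lambda$ by regular orbits and in fact that $W^s(\sigma)\cap\Lambda$ (or a suitable strong-stable set when $\sigma$ is not hyperbolic) is nontrivial; then one must show that the normalized flow direction along such an orbit accumulates, as $t\to+\infty$, on a vector $w\in F_\sigma$ for which $\|e^{tDG(\sigma)}w\|$ does not grow, so that domination yields $\|e^{tDG(\sigma)}|E_\sigma\|\le C\lambda^t\|e^{tDG(\sigma)}w\|$ and hence decay. None of these steps is actually carried out; you list the difficulties (shadowing near $\sigma$, complex eigenvalues, Jordan blocks, extracting a genuinely slow direction) but do not resolve them, and the lemma does not even assume hyperbolicity of $\sigma$, which makes the local structure near $\sigma$ less immediate than your sketch suggests. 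As written, item (2) is an outline with the hardest part left open; the cited reference handles it with a different technique (infinitesimal Lyapunov functions, i.e. families of quadratic forms), precisely to avoid this orbit-by-orbit analysis at singularities.
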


A {\em partially hyperbolic attracting set} is a partially hyperbolic
set that is also an attracting set.

\begin{remark}\label{rmk:domsplit2parthyp}
  % In~\cite{ABV00} it was shown that a $C^{1+}$ diffeomorphism with a
  % positive invariant set endowed with a dominated splitting whose
  % central-unstable subbundle in non-uniformly expanding, admits
  % Gibbs-$cu$ states, which are not necessarily physical measures.
  In the flow setting, a dominated splitting becomes partially
  hyperbolic whenever the flow direction is contained in the
  central-unstable bundle $G\in E^{cu}$, from
  Lemma~\ref{le:flow-center}. Thus, this inclusion is equivalent to
  partial hyperbolicity. Since the flow direction is invariant, then
  partial hyperbolicity is the natural setting to study invariant sets
  (which are not composed only of equilibria) for flows with a
  dominated splitting.
\end{remark}

% \begin{remark}
%   \label{rmk:G1}
%   We assume without loss of generality that $\|G\|\le1$ in what
%   follows, which is equivalent to a time reparametrization of the
%   corresponding flow on a compact manifold.
% \end{remark}

\subsubsection{Singular/sectional-hyperbolicity}
\label{sec:singul-hyperb-asympt}

The center-unstable bundle $E^{cu}$ is \emph{volume expanding} if
there exists $K,\theta>0$ such that
$|\det(D\phi_t| E^{cu}_x)|\geq K e^{\theta t}$ for all $x\in \Lambda$,
$t\geq 0$.

We say that a compact nontrivial invariant set $\Lambda$ is a
\emph{singular hyperbolic set} if all equilibria in $\Lambda$ are
hyperbolic, and $\Lambda$ is partially hyperbolic with volume
expanding center-unstable bundle.  A singular hyperbolic set which is
also an attracting set is called a {\em singular hyperbolic attracting
  set}.

We say that $E^{cu}$ is \emph{($2$-)sectionally expanding} if there are
positive constants $K , \theta$ such that for every $x \in \Lambda$
and every $2$-dimensional linear subspace $L_x \subset F_x$ one has
$|\det(D\phi_t| L_x)|\geq K e^{\theta t}$ for all $t\geq 0$.  A
\emph{sectional-hyperbolic (attracting) set} is a partially hyperbolic
(attracting) set whose central subbundle is sectionally expanding.

\subsubsection{Asymptotical sectional-hyperbolicity}
\label{sec:asympt-singul-hyperb}

A compact invariant partially hyperbolic set $\Lambda$ of a vector
field $G$ whose equilibria are hyperbolic, is \emph{asymptotically
  sectional-hyperbolic} (ASH) if the center-unstable subbundle is
eventually asymptotically sectional expanding outside the stable
manifold of the equilibria. That is, there exists $c_*>0$ so that
  \begin{align}\label{eq:assecexp}
    \limsup_{T\nearrow\infty}\frac1T
    \log|\det(D\phi_T\mid_{F_x})|\ge c_*
  \end{align}
  for every
  $x\in\Lambda\setminus \cup\{W^s_\sigma:\sigma\in\sing_\Lambda(G)\}$
  and each $2$-dimensional linear subspace $F_x$ of $E^{cu}_x$, where
  we write $\sing_\Lambda(G)=\sing(G)\cap\Lambda$ and
  $W^s_\sigma=\{x\in M: \lim_{t\to+\infty}\phi_tx = \sigma\}$ is the
  \emph{stable manifold} of the hyperbolic equilibrium $\sigma$. It is
  well-known that $W^s_\sigma$ is a immersed submanifold of $M$; see
  e.g.~\cite{PM82}. This implies that all transverse directions to the
  vector field along the center-unstable subbundle have positive
  Lyapunov exponent; that is, if $v\in E^{cu}_x\setminus(\RR\cdot G)$,
  then
  $\chi(x,v):=\limsup_{t\to+\infty}\log\|D\phi_t(x)v\|^{1/t}\ge
  c_*>0$; see e.g. Theorem~\ref{thm:NUSEvar};
  Remark~\ref{rmk:wMASE-wPSL} and Conjecture~\ref{conj:PSL}.

  \begin{lemma}[Hyperbolic Lemma]\label{le:hyplemma}
    Every compact invariant subset $\Gamma$ without equilibria
    contained in a asymptotically sectional-hyperbolic set is
    uniformly hyperbolic.
  \end{lemma}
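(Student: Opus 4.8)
The plan is to reduce the statement to a fact about the restricted linear cocycle $D\phi_t|_{E^{cu}}$ over $\Gamma$, using that the flow direction is a neutral invariant line field sitting inside the central subbundle. First, since $\Gamma$ is compact, invariant and has no equilibria, the speed $\|G\|$ is bounded between two positive constants $c\le C$ on $\Gamma$, so $c/C\le\|D\phi_t| \RR G(x)\|\le C/c$ for all $x\in\Gamma$, $t\in\RR$; in particular $\RR G|_\Gamma$ has zero exponential growth. Also $E^s$ is uniformly contracted over the compact invariant set $\Gamma$, so Lemma~\ref{le:flow-center}(1) gives $G(x)\in E^{cu}_x$ for all $x\in\Gamma$, and thus $\RR G|_\Gamma$ is a continuous $D\phi_t$-invariant line subbundle of $E^{cu}|_\Gamma$. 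Finally, no point of $\Gamma$ can lie in $W^s(\sigma)$ for $\sigma\in\sing_\Lambda(G)$: otherwise $\sigma=\lim_{t\to+\infty}\phi_t x\in\Gamma$ by closedness and invariance, a contradiction; hence the asymptotic sectional-expansion estimate \eqref{eq:assecexp} is available at \emph{every} point of $\Gamma$ (and in the sectional-hyperbolic case the uniform estimate $|\det(D\phi_t| L_x)|\ge Ke^{\theta t}$ holds there directly).

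The next step is to turn the (a priori only pointwise and asymptotic) expansion of $2$-planes into a uniform one over $\Gamma$. In the sectional-hyperbolic case there is nothing to do. In the asymptotically sectional-hyperbolic case I would run the standard ergodic-optimization argument on the compact Grassmann bundle $\cG$ of $2$-planes of $E^{cu}|_\Gamma$: the induced flow $\hat\phi_t(x,L)=(\phi_t x,D\phi_t L)$ is continuous, and $(x,L)\mapsto\log|\det(D\phi_t| L)|$ is a continuous \emph{additive} cocycle over $\hat\phi_t$ with continuous infinitesimal generator $\psi\colon\cG\to\RR$. By Birkhoff's ergodic theorem and the ergodic decomposition, \eqref{eq:assecexp} forces $\int\psi\,d\mu\ge c_*$ for every $\hat\phi_t$-invariant probability $\mu$; a compactness argument with the empirical measures $\frac1T\int_0^T\delta_{\hat\phi_s\xi}\,ds$, whose weak-$\ast$ accumulation points are $\hat\phi_t$-invariant, then yields $\frac1T\int_0^T\psi(\hat\phi_s\xi)\,ds\ge c_*/2$ for all $\xi\in\cG$ and all $T$ large, i.e.\ $|\det(D\phi_t| L_x)|\ge Ke^{\theta t}$ holds uniformly on $\Gamma$ with $\theta=c_*/2$.

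With uniform sectional expansion in hand, I would use $D\phi_tG(x)=G(\phi_t x)$ to factor, for any $2$-plane $L_x=\RR G(x)\oplus\RR v\subset E^{cu}_x$,
\begin{align*}
  |\det(D\phi_t| L_x)|=\frac{\|G(\phi_t x)\|}{\|G(x)\|}\cdot\frac{\dist\!\big(D\phi_t v,\RR G(\phi_t x)\big)}{\dist\!\big(v,\RR G(x)\big)},
\end{align*}
where the second factor is exactly the norm-expansion of the quotient cocycle $\ov{D\phi_t}$ that $D\phi_t$ induces on $\cN:=E^{cu}|_\Gamma/\RR G|_\Gamma$ (the quotient norm being the distance to $\RR G$). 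Hence $\ov{D\phi_t}$ expands every nonzero vector of $\cN$ by at least $(c/C)Ke^{\theta t}$, so it is uniformly expanding on $\cN$; combined with $\|D\phi_t| \RR G(x)\|\le C/c$ this gives the domination bound $\|D\phi_t| \RR G(x)\|\cdot\|(\ov{D\phi_t}|_{\cN_x})^{-1}\|\to0$ uniformly, i.e.\ inside $E^{cu}|_\Gamma$ the invariant line bundle $\RR G$ is dominated by the quotient. The classical graph-transform (invariant-cone) construction then produces a unique continuous $D\phi_t$-invariant complement $E^u\subset E^{cu}|_\Gamma$ with $E^{cu}|_\Gamma=\RR G|_\Gamma\oplus E^u$, isomorphic to $\cN$ via the quotient projection with uniformly bounded angles, hence uniformly expanded. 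Together with the uniform contraction of $E^s$, the splitting $T_\Gamma M=E^s\oplus\RR G|_\Gamma\oplus E^u$ exhibits $\Gamma$ as a uniformly hyperbolic set for the flow.

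The main obstacle is the second step: upgrading the pointwise asymptotic estimate \eqref{eq:assecexp} to a genuinely uniform sectional-expansion rate valid on all of $\Gamma$. This is where compactness of $\Gamma$ (equivalently of $\cG$), the additive-cocycle structure of $\log|\det(D\phi_t| L)|$ along orbits, and the fact that $\Gamma$ — having no equilibria — stays at positive distance from the stable manifolds of all singularities, so that a single $c_*>0$ works at every point, are all essential. In the purely sectional-hyperbolic case this difficulty disappears and the argument reduces to the elementary factorisation of the third step together with the standard reconstruction of the invariant expanding bundle.
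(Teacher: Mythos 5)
The paper does not actually prove this lemma: it simply cites \cite[Proposition 1.8]{MPP04} for the sectional-hyperbolic case and \cite[Theorem 2.2]{SmartinVivas20} for the asymptotic one, so there is no ``paper's proof'' to compare against line by line. Your argument is, however, a correct and self-contained derivation, and it runs along the same lines as those references. Two remarks on how it lines up with the tools the paper already has. First, the ``quotient cocycle'' $\ov{D\phi_t}$ on $\cN=E^{cu}|_\Gamma/\RR G|_\Gamma$, with the quotient norm $\dist(\cdot,\RR G)$, is canonically identified with the Linear Poincar\'e Flow $P^t$ restricted to $N^{cu}=E^{cu}\cap G^\perp$ that the paper introduces in Subsection~\ref{sec:linear-poincare-flow}; your factorisation $|\det(D\phi_t\mid L_x)|=\frac{\|G(\phi_tx)\|}{\|G(x)\|}\cdot\frac{\|\ov{D\phi_t}\bar v\|}{\|\bar v\|}$ is exactly the identity the author uses later in the proof of Corollary~\ref{mcor:fabv} (see~\eqref{eq:logP}), so the ``elementary factorisation'' step is precisely the mechanism used there. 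Second, the Cao-type compactness argument on the Grassmann bundle that you use to upgrade the pointwise $\limsup$ estimate~\eqref{eq:assecexp} to a uniform rate over $\Gamma$ is exactly the argument the author sketches in the footnote to Subsection~\ref{sec:comments-conject-org} (citing~\cite{cao2003}). Two small points worth making explicit: (i) the passage from $\int\psi\,d\mu\ge c_*$ for all ergodic $\mu$ to a uniform lower Birkhoff bound also needs the trivial compactness remark that $\frac1T\int_0^T\psi$ is uniformly bounded below for $T$ in any compact range, to produce the multiplicative constant $K$ in $|\det(D\phi_t\mid L_x)|\ge Ke^{\theta t}$; (ii) the graph-transform step needs to be run for a fixed large $t=t_0$ with $\|D\phi_{t_0}\mid\RR G\|\cdot\|(\ov{D\phi_{t_0}})^{-1}\|<1$, and one should note that the resulting fixed-point bundle $E^u$ is $\phi_t$-invariant for all $t$ (not only for multiples of $t_0$), which follows from uniqueness of the fixed point and the commutation $\phi_s\circ\phi_{t_0}=\phi_{t_0}\circ\phi_s$. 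With those minor clarifications the argument is complete and gives a clean stand-alone proof of what the paper delegates to the literature.
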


  We say that an invariant compact subset $\Gamma$ is
  \emph{(uniformly) hyperbolic} if $\Gamma$ is partially hyperbolic
  and the central-unstable bundle admits a continuous splitting
  $E^{cu}=(\RR\cdot G)\oplus E^u$, with $\RR\cdot G$ the
  one-dimensional invariant flow direction and $E^u$ a uniformly
  expanding subbundle. That is, we get the following dominated
  splitting $T_\Gamma M= E^s\oplus(\RR\cdot G)\oplus E^u$ into
  three-subbundles; see e.g.~\cite{fisherHasselblatt12}.
  
  \begin{proof}[Proof of Lemma~\ref{le:hyplemma}]
    See e.g.~\cite[Proposition 1.8]{MPP04} for sectional-hyperbolic
    sets; and~\cite[Theorem 2.2]{SmartinVivas20} for the
    asymptotically sectional-hyperbolic case.
  \end{proof}

%%%%%%%%%%%%%%%%%%%%%%%%%%%%%%%%%%%%%%%%%%%%%%%%%%%%%%%%%%%

\subsection{Non-uniform sectional expansion}
\label{sec:non-uniformly-expand-1}

Let us fix $G\in\fX^2(M)$ endowed with a partially hyperbolic
attracting set $\Lambda=\Lambda_G(U)$ with a trapping region $U$.
Then we can take a continuous extension
$T_UM=\wt{E^s}\oplus \wt{E^{cu}}$ of $T_\Lambda M=E^s\oplus E^{cu}$
and for small $a>0$ find center unstable and stable cones
\begin{align}
  \label{eq:cucone}
  C^{cu}_a(x)
  &=
    \{ v=v^s+v^c : v^s\in \wt{E^s}_x, v^c\in\wt{E^{cu}}_x, x\in
    U, \|v^s\|\le a\|v^c\|\}, \qand
  \\
  C^s_a(x)
  &=
    \{ v=v^s+v^c : v^s\in \wt{E^s}_x, v^c\in\wt{E^{cu}}_x, x\in
    U, \|v^c\|\le a\|v^s\|\}, \nonumber
\end{align}
which are invariant in the following sense
\begin{align}\label{eq:coneinv}
  D\phi_t(x)\cdot C^{cu}_a(x)\subset C^{cu}_a(\phi_t(x))
  \qand
  D\phi_{-t}\cdot C^{s}_a(x)\supset C^{s}_a(\phi_{-t}(x)),
\end{align}
for all $x\in\Lambda$ and $t>0$ so that $\phi_{-s}(x)\in U$ for all
$0<s\le t$; see Subsection~\ref{sec:ext-stable-bundle}. We can assume
that $\wt{E^{cu}_x}\subset C^{cu}_a(x)$ still contains the flow
direction $G(x)$ for each $x\in U$. Otherwise, since
$G(x)\in E^{cu}_x$ for $x\in\Lambda$ by Lemma~\ref{le:flow-center}(1),
then $G(x)\in C^{cu}_a(x)$ and we can set
$N^{cu}_x:=\wt{E^{cu}}\cap G(x)^\perp$ and
$\wh{E^{cu}}:=G(x)\oplus N^{cu}_x$ for all $x\in U$. Then
$\wh{E^{cu}}$ contains the flow direction by construction, is
continuous and $\wh{E^{cu}_x}\subset C^{cu}_a(x), x\in U$.  We can
also assume, without loss of generality according to~\cite{ArMel17},
that the continuous extension of the stable direction $E^s$ of the
splitting is still $D\phi_t$-invariant and
$\wt{E^s_x}\subset C^s_a(x), x\in U$.
In what follows, we keep the notation $T_U M=E^s\oplus E^{cu}$ and
write $N_x^{cu}=E^{cu}_x\cap G(x)^{\perp}, x\in U$.

% A partially hyperbolic attracting set $\Lambda=\Lambda_G(U)$ for a
% vector field $G$ is \emph{mostly asymptotically sectional expanding}
% if the flow is \emph{asymptotically sectional expanding} on a positive
% Lebesgue measure subset, i.e., there exists $\Omega\subset U$ with
% $\m(\Omega)>0$ and $c_0>0$ such that
% \begin{align}
%   \label{eq:sNUE}
%   \limsup_{T\nearrow\infty}
%   \frac1T\log\|\wedge^2 (D\phi_T\mid_{E^{cu}_x})^{-1}\| \le - c_0,
%   \quad x\in\Omega.
% \end{align}

% \begin{remark}\label{rmk:infinitelymany}
%   It follows from the proof that, if the non-uniform sectional
%   expansion condition~\eqref{eq:sNUE} is not bounded away from zero,
%   then the conclusion remains with an \emph{at most denumerable family} of
%   ergodic physical/SRB measures.
% \end{remark}

In what follows we write $f:=\phi_1$ for the time-$1$ diffeomorphism
induced by the flow. We say that the attracting set $\Lambda$ is
\emph{non-uniform $2$-sectionally expanding} (NU2SE) if if there
exists $c_0>0$ so that
\begin{align}\label{eq:NU2SE}
  \Omega=\left\{
  x\in U: \limsup_{n\nearrow\infty}\frac1n\sum\nolimits_{i=0}^{n-1}
  \log
  \big\|\wedge^2 (Df\mid_{E^{cu}_{f^i x}})^{-1}\big\|
  \le - c_0
  \right\}
  \qand \m(\Omega)>0.
\end{align}
Non-uniform sectional expansion ensures the existence of 
at most finitely many physical/SRB measures whose
ergodic basins cover the (NU2SE) subset $\leb\bmod0$.

\begin{maintheorem}\label{mthm:discretefabv0}
  Let $G\in\fX^2(M)$ be a vector field with a partially hyperbolic
  attracting set $\Lambda=\Lambda_G(U)$ with no equilibria
  $\sing_\Lambda(G)=\emptyset$.  Then we have (NU2SE) on
  $\Omega\subset U$ if, and only if, there are finitely many ergodic
  hyperbolic physical/SRB measures whose basins cover $\m$-a.e. point
  of $\Omega$:
  $\m\Big(\Omega\setminus \big(B(\mu_1)\cup\dots\cup
  B(\mu_p)\big)\Big)=0$.
\end{maintheorem}

Here, hyperbolicity of an invariant probability measure means
\emph{non-uniform hyperbolicity}. More precisely, we set the forward
Lyapunov exponent of $v\in T_x M\setminus\{\vec0\}$ as
\begin{align*}
  \chi^+(x,v):=\limsup\nolimits_{t\nearrow\infty}\log\|D\phi_t(x)v\|^{1/t}.
\end{align*}
The function $\chi(x,\cdot)$ admits finitely many values only
$\chi^+_1(x)>...>\chi^+_{p(x)}(x)$ on $T_x M\setminus\{0\}$ (the
\emph{Lyapunov Spectrum} of $\mu$) and generates a filtration
$ 0\subsetneq V_{p(x)}(x) \subsetneq \cdots \subsetneq V_{1}(x)=T_xM$
with $V_i(x)=\{ v\in T_xM, \ \chi^+(x,v)\leq \chi^+_i(x)\}$. The
functions $p(x), \chi^+_i(x)$ are $\phi_t$-invariant; the vector
subspaces $V_i(x)$, $i=1,...,p(x)$ are $D\phi_t$-invariant; and each of
them depend Borel measurably on $x$.

An invariant ergodic probability measure $\mu$ supported in $\Lambda$ is
\emph{hyperbolic}, if the tangent bundle over $\Lambda$ splits into a
sum $T_z M = E^s_z\oplus (\RR\cdot G(z)) \oplus F_z$ of invariant
subspaces defined for $\mu$-a.e.  $z\in \Lambda$ and depending
measurably on the base point $z$;
% , where $\mu$ is any physical/SRB
% measure in the statement of Theorem~\ref{mthm:discretefabv0};
$\RR\cdot G(z)$ is the flow direction (with zero Lyapunov exponent);
$\RR\cdot G(z)\oplus F_z=E^{cu}_z$ and $F_z$ is the direction with
positive Lyapunov exponents, that is, it satisfies
$\lim_{t\to+\infty}\log\big\|(D\phi_t\mid_{F_z})^{-1}\big\|^{-1/t}>0$.
Naturally, all directions along $E^s_z$ have negative Lyapunov
exponents
$\lim_{t\to+\infty}\log\big\|D\phi_t\mid_{E^s_z})\big\|^{1/t}<0$. In
terms of the Lyapunov Spectrum this means that
$\chi^+_{d_{cu}+1}(z)<0 = \chi^+_{d_{cu}}(z)<\chi^+_{d_{cu}-1}(z)$.

\subsubsection{Physical/SRB measures and $cu$-Gibbs states}
\label{sec:physicalsrb-measures}

In the uniformly hyperbolic setting, it is well known that physical
measures $\mu$, for hyperbolic attractors of $C^2$ diffeomorphisms $g$,
admit a disintegration into conditional measures along the unstable
manifolds of almost every point which are absolutely continuous with
respect to the induced Lebesgue measure on these submanifolds, see
\cite{Bo75,BR75,PS82}. By Leddrappier-Young characterization of
measures satisfying (Pesin's) Entropy Formula~\cite{LY85}, this is
equivalent to
\begin{align}
  \label{eq:Pesin}
  h_\mu(g)=\int\Sigma^+\,d\mu=\int \log|\det Dg\mid_{E^u}|\,d\mu > 0,
\end{align}
where $E^u$ is the unstable invariant subbundle over the hyperbolic
attractor, and $\Sigma^+(z)=\sum_{i\le\dim(E^u)} \chi_i^+(z)$ is the
sum of positive Lyapunov exponent (with multiplicities). In the
hyperbolic setting for diffeomorphisms, condition~\eqref{eq:Pesin}
means that $\mu$ is a $u$-Gibbs state. These measures are known as
\emph{Sinai-Ruelle-Bowen (SRB)} measures.

In our setting, existence of unstable manifolds is guaranteed by the
hyperbolicity of physical measures: the \emph{strong-unstable
  manifolds} $W^{uu}_z$ are the ``integral manifolds'' tangent to
$F_z$ defined by
$
  W^{uu}_z
  =
  \big\{ y\in M: \lim\nolimits_{ t\to -\infty}
  d\big(\phi_t(y),\phi_t(z)\big)=0
  \big\}
$
and exist for $\mu$-a.e. $z$ with respect to the measures obtained in
Theorem~\ref{mthm:discretefabv0}.

The weak-unstable manifolds $W^{cu}_z$ are the saturation
$\phi_{(-1,1)}(W^{uu}_z)$ of $W^{uu}_z$ by the flow, and are tangent
to $E^{cu}_z$, $\mu$-a.e. $z$.  The sets $W^{cu}_z$ are embedded
sub-manifolds in a neighborhood of $z$ which, in general, depend only
measurably (including its size) on the base point $z\in\Lambda$.  We
note that, since $\Lambda$ is an attracting set, then
$W^{cu}_z\subset\Lambda$ where defined\footnote{For if
  $y\in W^{uu}_z\cap U$ and $z\in\Lambda$, then
  $d(\phi_{-t} y,\phi_{-t} z)\to0$ for $t\nearrow\infty$. Thus
  $\phi_{-t} y\subset U$ for all $t\ge0$, that is,
  $y\in \cap_{t\ge0} \phi_t(U)=\Lambda$.}.

The arguments of our proofs, adapted from~\cite{ABV00}, enable us to
obtain not only ergodic hyperbolic physical invariant probability
measures, but also the condition corresponding to~\eqref{eq:Pesin} in
the flow setting
\begin{align}
  \label{eq:fPesin}
  h_\mu(\phi_1)
  =
  \int \Sigma^+\,d\mu
  =
  \int\log|\det Df\mid_{E^{cu}}|\,d\mu>0,
\end{align}
that is, the physical measures are \emph{$cu$-Gibbs states}. The
results from Leddrappier-Young~\cite{LY85} are still valid in our
setting, that is, ergodic physical measures
satisfying~\eqref{eq:fPesin}, supported on partially hyperbolic
attractors in our setting, admit a disintegration into conditional
measures along the unstable manifolds of almost every point, which are
absolutely continuous with respect to the induced Lebesgue measure on
these submanifolds -- see e.g.~\cite{AraPac2010s} for the
three-dimensional case, and~\cite{araujo_2021} (and references therein)
for the general case.

\begin{example}[Hyperbolic examples]\label{ex:Anosov}
  Anosov flows and hyperbolic attractors (attracting basic pieces of
  the spectral decomposition of Smale) for smooth vector fields admit
  a physical/SRB probability measure whose basin covers the trapping
  region except a zero volume subset \cite{KH95,fisherHasselblatt12},
  and are asymptotically sectional hyperbolic, with no equilibria.
  
  In fact, the central-unstable bundle splits
  $E^{cu}_\Lambda=\RR\cdot G\oplus E^u$ into a pair of continuous
  subbundles: the direction of the flow and an unstable bundle $E^u$
  (uniformly contracting in negative time).% , which projects
  % $\cO\cdot E^u=N^{cu}$ into an uniformly sectional expanding subbundle
  % for the Linear Poincar\'e Flow; see e.g.~\cite{AraPac2010s}. 

  Using an adapted metric, we can assume without loss of generality that
  $T_\Lambda M=E^s\oplus \RR\cdot G\oplus E^u$ is an \emph{orthogonal}
  invariant splitting% and so $P^t\mid N^{cu}=D\phi_t\mid E^u$
  ; see e.g.~\cite{Goum07}. Then, for all $x\in\Lambda$ and also in
  $U$, the backward contraction on $E^u$ ensures that there exists
  $\lambda >0$ so that, for any bivector $u\wedge v$ with
  $u,v\in E^u_x$, we may assume without loss of generality that
  $\langle u,v \rangle=0$ and obtain
  $\|\wedge^2D\phi_{-t}\cdot(u\wedge v)\| \le
  \|D\phi_{-t}u\|\cdot\|D\phi_{-t}v\| \le e^{-2\lambda t}\|u\|\cdot\|v\|
  =e^{-2\lambda t}\|u\wedge v\|$.  If we instead consider a bivector
  $G(x)\wedge v$ for $v\in E^u_x$ we obtain
  \begin{align*}
    \|\wedge^2D\phi_{-t}\cdot(G(x)\wedge v)\|
    \le
    e^{-\lambda t}\|v\|\cdot \|G(\phi_{-t}x)\|/\|G(x)\| 
    \le
    K e^{-\lambda t}\|G(x)\wedge v\|
  \end{align*}
  since $\|G(x)\|$ is bounded above and also bounded away from zero on
  $M$. We thus obtain (ASH) for all $x\in M$.
  Moreover, we also obtain (NU2SE) for $f=\phi_n$, where 
  $n\ge1$ is such that $K e^{-\lambda n}<1$, that is, we
  get~\eqref{eq:NU2SE} with $\Omega=M$ and $f=\phi_n$.

%   $P^1\mid_{N^{cu}_x} = O_{fx}\circ Df\mid_{N^{cu}_x} = O_{fx}\circ
%   Df\mid_{E^u_x}$ because $u\in N^{cu}_x$ is a linear combination of
%   $v+\xi\cdot G(x)$ with $v\in E^u_x, \xi\in\RR$, both $E^u$ and $G$
%   are invariant
%   \begin{align*}
%     \big(O_{fx}\circ Df\mid_{N^{cu}_x}\big)\cdot u
%     =
%     O_{fx}\big( Df\cdot v + \xi\cdot G(fx)\big)
%     =
%     \big(O_{fx}\circ Df\mid_{E^u_x}\big)\cdot v;
%   \end{align*}
%   and the angle between $N^{cu}_x$ and $E^u_x$ is bounded above by
%   continuity of the splitting $G\oplus E^u$, so
% $
%     \|O_{f^nx}\circ Df^n\mid_{E^u_x}\|\ge C\cdot\|Df^n\mid_{E^u_x}\| >
%     C e^{n\lambda} > 1
%  $
%   for a positive constant $C$ and for $n\ge n_0\ge1$. Hence, resetting
%   $f=\phi_{n_0}$ we get (NUSE) as in~\eqref{eq:NUE}.
  
  % $\tau,\omega>0$ so that $\|\wedge^2D\phi_T\mid
  % E^{cu}_x)^{-1}\| <e^{-\omega}$ for all $x\in M$ and so we
  % get~\eqref{eq:NUE} replacing $P^1$ by $P^T$ and $f=\phi_1$ by
  % $f=\phi_T$. That is, we have non-uniform sectional expansion modulo
  % a time reparametrization.  
\end{example}

\subsubsection{Recurrence control to equilibria}
\label{sec:recurr-control-equil}

To construct the physical probability measure in the presence of
equilibria for a partially hyperbolic attracting set, we need to
control the recurrence near the equilibria.

\begin{definition}[Slow recurrence, continuous and discrete]
  \label{def:SR}
  Let $U\subset M$ be a forward invariant set of $M$ for the flow of a
  $C^1$ vector field $G$, where all equilibria are hyperbolic. We say
  that $G$ has
  \begin{description}
  \item[(CSR)] \emph{continuous slow recurrence to equilibria}
if, on the positive Lebesgue measure
subset $\Omega\subset U$, for every $\epsilon>0$, we can find
$\delta>0$ so that
\begin{align}
  \label{eq:SSR}
  \limsup\nolimits_{T\nearrow\infty}\frac1T\int_0^T-\log d_\delta
  \big(\phi_t(x),\sing_\Lambda(G)\big) \, dt <\epsilon, \quad x\in\Omega,
\end{align}
where $d_\delta(x,S)$ $\delta$-{\em truncated distance} from $x\in M$
to a subset $S$, that is
% \begin{align*}
%   d_\delta(x,S)=
%   \begin{cases}
%     1 & \mbox{if }d(x,S)\geq \delta\\
%   d(x,S) & \mbox{otherwise}
% \end{cases}.
% \end{align*}
\begin{align*}%\label{eq:distdelta}
  d_{\delta}(x,S)=\left\{
  \begin{array}{lll}
d(x,S) & \textrm{if $0<d(x,S)\leq\delta$;}
\\
\left(\frac{1-\delta}{\delta}\right)d(x,S)+2\delta-1 
& \textrm{if $\delta<d(x,S)<2\delta$;}
\\
1 & \textrm{if $d(x,S)\geq2\delta$.}
\end{array} \right.
\end{align*}
\item[(SR)]\emph{slow recurrence to equilibria} if, on the positive
  Lebesgue measure subset $\Omega\subset U$, for every $\epsilon>0$,
  we can find $\delta>0$ so that
\begin{align}
  \label{eq:SR}
  \limsup_{n\nearrow\infty}\frac1n\sum\nolimits_{i=0}^{n-1}-\log d_\delta
  \big(f^i(x),\sing_\Lambda(G)\big) <\epsilon, \quad x\in\Omega,
\end{align}
  \end{description}
\end{definition}

\begin{remark}\label{rmk:SRcond}
  If the function $|\log d_\delta(x,\sing_\Lambda(G))|$ is
  $\mu$-integrable, whenever $\mu$ is a physical measure, the slow
  recurrence to equilibria is a consequence of Birkhoff's Ergodic
  Theorem; see e.g. the comments
  in~\cite{alves-luzzatto-pinheiro2005}. However, either this
  integrability property, or conditions (SR) or (CSR), are hard to
  obtain; see e.g. \cite{ArSzTr} for a setting where this
  was deduced from global properties of the transformation.

  Clearly, the slow recurrence condition is vacuous (and automatically
  valid) if $\Lambda$ contains no equilibria.
\end{remark}

In the presence of equilibria, we obtain the same conclusion of
Theorem~\ref{mthm:discretefabv0} assuming non-uniform sectional
expansion together with slow recurrence.

\begin{maintheorem}\label{mthm:discretefabv}
  Let $G\in\fX^2(M)$ be a vector field with a partially hyperbolic
  attracting set $\Lambda=\Lambda_G(U)$ satisfying (SR) on
  $\Omega\subset U$, with $\m(\Omega)>0$.  Then we have (NU2SE) on
  $\Omega$ if, and only if, there are finitely many ergodic
  physical/SRB measures, which are $cu$-Gibbs states, and whose basins
  cover $\m$-a.e. point of $\Omega$:
  $\m\Big(\Omega\setminus \big(B(\mu_1)\cup\dots\cup
  B(\mu_p)\big)\Big)=0$.
\end{maintheorem}

The proof of Theorem~\ref{mthm:discretefabv} relies on carefully
constructing hyperbolic times and pre-disks\footnote{See
  Sections~\ref{sec:hyperb-times-center}
  and~\ref{sec:lebesgue-measure-at} together with
  e.g. \cite[Chap. 7]{Alves2020b} and compare \cite{ABV00} for many
  more details.} needed to apply the main technical results from
\cite{ABV00}.

We adapt the main ideas from~\cite{ABV00} to allow for the neutral
(non-expanding nor contracting) flow direction along the
center-unstable subbundle -- we stress that this invariant direction
$\RR\cdot G$ (away from singularities) inside $E^{cu}$ is \emph{not
  part of a continuous splitting of $E^{cu}$ in general}, as in the
uniformly hyperbolic case; see e.g.~\cite{fisherHasselblatt12}. The
goal is to build, for each hyperbolic time of a given trajectory, a
center-unstable disk of uniform inner radius which is uniformly
backward contracted along the sectional center-unstable directions,
and deduce a distortion bound to control the density of push-forwards
of Lebesgue measure along these disks.

\begin{example}[Singular-hyperbolic attracting
  sets]\label{ex:Lorenzlike}
  We recall that all singular-hyperbolic attracting sets, as the
  (geometric) Lorenz attractor~\cite{AraPac2010s}, for $C^2$ smooth
  flows with codimension two partially hyperbolic attracting sets,
  admit finitely many (one only if transitive) physical/SRB
  probability measures, which are $cu$-Gibbs states and whose basins
  cover $\m$-a.e. point of the trapping region of these attracting
  sets; see e.g.~\cite{araujo_2021}. These are  partially
  hyperbolic and sectional expanding attracting
  sets. But they also exhibit slow recurrence to equilibria.
  % ; see Definition~\ref{def:SR} in what follows.
  Indeed, it is even possible to obtain \emph{exponentially} slow
  recurrence; see e.g.~\cite{ArSzTr}.

\end{example}

\subsection{Continuous time versus discrete time}
\label{sec:contin-time-versus}

A property similar to ASH on a positive volume subset of the trapping
neighborhood $U$ of a partially hyperbolic attracting set $\Lambda$ is
the following.  We say that $\Lambda$ is \emph{mostly asymptotically
  sectional expanding} (MASE) if there exists $c_0>0$ such that
\begin{align}\label{eq:MASE}
  \Omega=\left\{ x\in U:\limsup_{T\nearrow\infty}
  \frac1T\log\|\wedge^2 (D\phi_T\mid_{E^{cu}_x})^{-1}\| \le - c_0\right\}
  \qand \m(\Omega)>0.
\end{align}
We can interrelate the previous notions (NU2SE), (MASE) and (SR),(CSR)
as follows.

\begin{maintheorem}[Equivalence between discrete and continuous time
  versions]\label{mthm:equivalence}
  Let $G\in\fX^2(M)$ be given admitting a partially hyperbolic
  attracting set $\Lambda=\Lambda_G(U)$. Then
  \begin{enumerate}
  \item the slow recurrence $(SR)$ condition~\eqref{eq:SR} holds for
    $x\in U$ if, and only if, continuous slow recurrence $(CSR)$
    condition~\eqref{eq:SSR} holds for $x$.
  \item if the subset
    $\Omega=\{x\in U:$~\eqref{eq:SR} holds for $x\}$ has positive
    volume, then the following pair of conditions are equivalent:
    \begin{enumerate}[(A)]
    \item there exists a hyperbolic physical/SRB invariant probability
      measure for the flow, which is a $cu$-Gibbs state with
      $\supp\mu\subset\Lambda$;
    \item there exists $T>0$ and $c_0>0$ and a positive volume subset
      $E\subset\Omega$ where we have (NU2SE) for $\phi_T$, that is
      \begin{align}\label{eq:NUSE2}
        \limsup_{n\nearrow\infty}\frac1n\sum\nolimits_{i=0}^{n-1}
        \log\big\|\wedge^2\big(D\phi_T\mid_{E^{cu}_{\phi_{iT}(x)}}\big)^{-1}
        \big\|^{1/T}
        < - c_0 <0, \quad
        x\in E.
      \end{align}
      % \begin{align}\label{eq:NUSE2}
      %   \limsup_{n\nearrow\infty}\frac1n\sum\nolimits_{i=0}^{n-1}
      %   \log\big\|\big(P^T\mid_{N^{cu}_{\phi_{iT}
      %   (x)}}\big)^{-1}\big\|^{1/T}
      %   < - c_0 <0, \quad
      %   x\in E.
      % \end{align}
    \end{enumerate}
  \item if either condition of item (2) is met, then
    \begin{enumerate}
    \item the (MASE) condition~\eqref{eq:MASE} holds $\m$-a.e. in $E$;
      and
    \item if, additionally, $\Lambda$ is transitive, then there exists
      one ergodic physical/SRB measure such that
      $\m(B(\mu)\setminus\Omega)=0$.
    \end{enumerate}
  \end{enumerate}
\end{maintheorem}
\begin{remark}[(NU2SE) implies (MASE)]
  This means that, under the assumption of slow recurrence, (NU2SE)
  implies (MASE) whenever we have hyperbolic physical/SRB measures, by
  either reparametrizing the flow $(\phi_t)_{t\in\RR}$ to
  $(\phi_{tT})_{t\in\RR}$ or, equivalently, replacing $G$ by a
  multiple $T\cdot G$.
\end{remark}

% \begin{remark}
%   \label{rmk:pedreira} Contrasting to the general slow recurrence
%   result of Theorem~\ref{mthm:equivalence}(1), Pedreira and
%   Pinheiro~\cite{pedrpin2022} show that for the Lorenz interval map
%   there are invariant probability measures with fast recurrence,
%   positive entropy and infinite positive Lyapunov exponent, which
%   correspond to invariant measures for the flow with infinite mass.
% \end{remark}

\subsection{Non-uniformly sectional hyperbolic variants}
\label{sec:nush}

We explore several possible definitions of non-uniform hyperbolicity
for partially hyperbolic attracting set and relations between them, in
order to exhibit the equivalence of some properties, and present some
results where slow recurrence is not needed to prove the existence of
physical measures.

% \subsubsection*{Linear  Flow}
% \label{sec:linear-poincare-flow}

\begin{definition}[Linear Poincar\'e Flow]
  If $x$ is a regular point of the vector field $G$ (i.e.
  $G(x)\neq \vec0$), denote by
  $ N_x=\{v\in T_xM: \langle v , G(x)\rangle=0\} $ the orthogonal
  complement of $G(x)$ in $T_xM$.  Denote by $O_x:T_xM\to N_x$ the
  orthogonal projection of $T_x M$ onto $N_x$.  For every $t\in\RR$
  define, see Figure~\ref{fig:LPF}
  \begin{align*}
    P_x^t:N_x\to N_{\phi_t x}
    \quad\text{by}\quad
    P_x^t=O_{\phi_t x}\circ D\phi_t(x).
  \end{align*}

\begin{figure}[h]
  \centering \includegraphics[width=10cm]{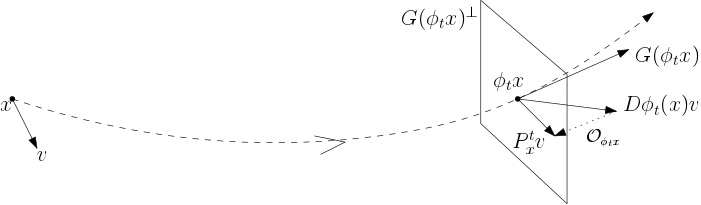}
  \caption{\label{fig:LPF}Sketch of the Linear Poincar\'e flow $P^t_x$
    of a vector $v\in T_xM$ with $x\in M\setminus\sing(G)$ and the
    orthogonal projection $O_{\phi_t x}:T_{\phi_tx}M\to N_{\phi_tx}$,
    with $N_{\phi_tx}=G(\phi_t x)^\perp$.}
\end{figure}
It is easy to see that $P=\{P_x^t:t\in\RR, G(x)\neq 0\}$ satisfies the
cocycle relation $ P^{s+t}_x=P^t_{\phi_s x}\circ P^s_x $ for every
$t,s\in\RR$.  The family $P=P_G$ is called the {\em Linear Poincar\'e
  Flow} of $G$.
\end{definition}

\begin{theorem}[Pointwise non-uniform sectional expanding variants]
  \label{thm:NUSEvar}
  Let $G$ be a $C^1$ vector field generating the flow $\phi_t:M\to M$
  on the compact connected finite dimensional manifold $M$ admitting a
  compact invariant attracting subset $\Lambda=\Lambda_G(U)$, with a
  dominated splitting $T_\Lambda M = E^{cs}\oplus E^{cu}$ on the
  trapping region $U$, continuously extended to $U$ with
  $G(x)\in E^{cu}_x, x\in U$.
  Let $c_0>0$ be given together with $x\in U$ so that its future
  trajectory $\big(\phi_t(x)\big)_{t\ge0}$ does not converge to any
  equilibrium point. Then the following condition (A) implies
  condition (B):
  \begin{enumerate}[(A)]
  \item \emph{weak non-uniformly $2$-sectional expanding
      \textbf{(wNU2SE)}}:
 \begin{align}\label{eq:sNUE}
   \liminf_{n\nearrow\infty}
   \frac1n\sum\nolimits_{i=0}^{n-1}
   \log\|\wedge^2 (Df\mid_{E^{cu}_{f^ix}})^{-1}\| \le - c_0;
 \end{align}
\item \emph{weak non-uniformly sectional expanding \textbf{(wNUSE)}}:
\begin{align}\label{eq:wNUE}
  \liminf_{n\nearrow\infty}\frac1n\sum\nolimits_{i=0}^{n-1}
  \log\|(P^1\mid_{N^{cu}_{f^i x}})^{-1}\| \le -c_0.
\end{align}
\end{enumerate}
  Moreover, the former condition (B) on the time-$1$ map $f$
  implies the following conditions
  \begin{enumerate}[(a)]
  \item \emph{weak mostly asymptotically sectional expanding
      \textbf{(wMASE)}}:
    \begin{align}\label{eq:wMASE1}
      \liminf_{T\nearrow\infty} \frac1T
      \log\|\wedge^2(D\phi_T\mid_{E^{cu}_x})^{-1}\| \le - c_0;
    \end{align}
      \item \emph{weak asymptotic sectional expansion for the
      Linear Poincar\'e Flow \textbf{(wASEP)}}:
    \begin{align}\label{eq:wASELPF}
      \liminf_{T\nearrow\infty}\frac1T
      \log\|(P^T\mid_{N^{cu}_x})^{-1}\| \le - c_0;
    \end{align}
  \end{enumerate}
  and $(a)$ implies $(b)$.
  Again, the previous conditions imply the following 
  conditions
    \begin{enumerate}[(i)]  
\item \emph{weak asymptotically sectional expanding \textbf{(wASE)}}:
      for each $2$-dimensional linear subspace $F_x$ of $E^{cu}_x$ we have
    \begin{align}\label{eq:wASE}
      \limsup_{T\nearrow\infty}\frac1T
      \log|\det(D\phi_T\mid_{F_x})|\ge c_0.
    \end{align}
      \item \emph{weak positive sectional Lyapunov exponents
      \textbf{(wPSL)}}:
    \begin{align}\label{eq:wPSL}
      \limsup_{T\nearrow\infty} \frac1T
      \log\|D\phi_T(x)v\| \ge  c_0, \forall v\in
      E^{cu}_x\setminus\RR\cdot G_x;
    \end{align}
  \end{enumerate}
  and $(i)$ implies $(ii)$.
  
  Reciprocally, if $d_{cu}=\dim E^{cu}=2$, then the latter
  condition (i) implies (A).
%  with $f$ replaced by $\phi_T$ for some fixed value of $T>0$.

  In addition, the statements above are also true after exchanging
  $\liminf$ and $\limsup$. In particular, the following is implied by
  (NU2SE):
  \begin{itemize}
  \item\emph{non-uniformly sectional expanding \textbf{(NUSE)}}:
    \begin{align}\label{eq:NUE}
      \limsup_{n\nearrow\infty}\frac1n\sum\nolimits_{i=0}^{n-1}
      \log\big\|\big(P^1\mid_{N^{cu}_{f^i x}}\big)^{-1}\big\| \le - c_0;
    \end{align}
  \end{itemize}
\end{theorem}

\begin{remark}\label{rmk:wMASE-wPSL}
In shorthand, we obtain the following relations
  \begin{align*}
    [(NU2SE)\implies(NUSE)]
    \quad\&\quad
    (NU2SE)\implies(MASE)\implies(ASEP)
    \implies(ASE)
  \end{align*}
  together with the weak forms
  \begin{align*}
    [\eqref{eq:sNUE}\implies\eqref{eq:wNUE}]
    \quad\&\quad
    \eqref{eq:sNUE}\implies
    \eqref{eq:wMASE1}\implies\eqref{eq:wASELPF}
    \implies\eqref{eq:wASE}
  \end{align*}
  for trajectories not converging to any equilibrium in the future.
  Theorem~\ref{thm:NUSEvar} shows that the (wMASE) condition implies
  positive central-unstable Lyapunov exponents along directions
  transversal to the flow direction: \emph{positive sectional Lyapunov
    exponents}, i.e. the condition (wPSL). Moreover, this condition is
  a consequence of asymptotical sectional expansion (wASE) as claimed
  when presenting the (ASH) condition~\eqref{eq:assecexp}.

  It is easy to see that  all asymptotic properties in the
  statement of Theorem~\ref{thm:NUE-ASEA} do not depend on the
  particular continuous extension of $E^{cu}_\Lambda$ to $U$ chosen
  before, due to the domination of the splitting; see
  Proposition~\ref{prop:Ccu}.
\end{remark}

The assumption~\eqref{eq:NU2SE} of (NU2SE) is close to the notion of
non-uniform sectional hyperbolicity on critical elements (equilibria
and periodic orbits) defined by Arbieto-S. \cite[Definition 2.5 \&
Remark 2.6]{ArbSal2011} to obtain sectional hyperbolicity for the
non-wandering set on a $C^1$ residual subset of vector fields, among
those with non-uniform sectional hyperbolic critical elements.

More precisely, let us assume that the attracting set $\Lambda$ admits
a \emph{continuous invariant splitting} $E^{cs}\oplus E^{cu}$ for the
flow of $G$.  We say that the positive trajectory
$(\phi_t(x))_{t\ge0}$ of $x$ is \emph{weakly non-uniformly sectional
  hyperbolic} (wNUSH) if there exists $\omega>0$ so that
\begin{align}
  \liminf_{T\nearrow\infty}
  &\frac1T\int_0^T \log\| Df\mid_{E^{cs}_{\phi_tx}}\| \, dt <
    -\omega,
    \qand \label{eq:NUCsal}
  \\
  \liminf_{T\nearrow\infty}
  &\frac1T\int_0^T
    \log\| \wedge^2(Df\mid_{E^{cu}_{\phi_tx}})^{-1}\| \, dt
    < -\omega.\label{eq:NUEsal}
\end{align}
These conditions on a \emph{total probability set}\footnote{That is,
  every $x\in E$ satisfy both~\eqref{eq:NUCsal} and~\eqref{eq:NUEsal}
  with the same $c_0$ and $\mu(E)=1$ for each invariant probability
  measure $\mu$ supported in $\Lambda$.} ensure
sectional-hyperbolicity.  Moreover, a strong form of the
property~\eqref{eq:NUEsal} is enough to get (NU2SE), as follows.

\begin{theorem}\label{thm:NUE-ASEA}
  Let $G$ be a $C^1$ vector field generating the flow $\phi_t:M\to M$
  on the compact connected finite dimensional manifold $M$ admitting a
  compact invariant attracting subset $\Lambda=\Lambda_G(U)$, with a
  dominated splitting $T_\Lambda M = E^{cs}\oplus E^{cu}$ on the
  trapping region $U$, continuously extended to $U$ with
  $G(x)\in E^{cu}_x, x\in U$.
  If there exists a total probability subset $E$ of $U$ satisfying
  (wNUSH), then $\Lambda$ is sectional-hyperbolic.

  Moreover, if $\Lambda=\Lambda_G(U)$ is a partially hyperbolic
  attracting set which admits a positive volume subset of points in
  $U$ satisfying~\eqref{eq:NUEsal}, then we obtain a positive volume
  subset of points of $U$ satisfying (wNU2SE) for some $c_0>0$.
  % and $x\in U$ is such that is future trajectory
  % $\big(\phi_t(x)\big)_{t\ge0}$ does not converge to any equilibrium
  % point, then
  % on $x$ implies 
  In addition, if we replace~\eqref{eq:NUEsal} by
    \begin{align}\label{eq:NUEsal+}
      \limsup_{T\nearrow\infty}
      \frac1T\int_0^T \log\|\wedge^2(Df\mid_{E^{cu}_{\phi_tx}})^{-1}\|\,dt
      < -\omega,
    \end{align}
    then we obtain (NU2SE), that is, condition~\eqref{eq:NU2SE} for
    some $c_0>0$.
\end{theorem}

This, together with Theorem~\ref{mthm:equivalence}, enables us to
restate Theorem~\ref{mthm:discretefabv} with the
assumption~\eqref{eq:NUEsal+} in the place of (NU2SE).

% and replace condition (wNUSE) in Corollary~\ref{mcor:wABVndcu}
% with~\eqref{eq:NUEsal}.

\subsection{Weak non-uniform sectional expansion and physical measure}
\label{sec:weak-non-uniform}

Combining the proof of the following Theorem~\ref{mthm:fABV} and of
Theorem~\ref{mthm:discretefabv}, enable us to obtain the following
result on existence of physical/SRB measures with weak non-uniform
sectional expansion.

\begin{maincorollary}\label{mcor:wABVndcu}
  Let $G\in\fX^2(M)$ be a vector field with a partially hyperbolic
  attracting set $\Lambda=\Lambda_G(U)$ admitting no
 negative exponents along $E^{cu}_\Lambda$: for all
    $x\in\Lambda$ we have
    \begin{align}\label{eq:NNE}
      \liminf_{T\nearrow\infty} \frac1T
      \log\|D\phi_T(x)v\| \ge 0, \forall v\in
      E^{cu}_x\setminus\RR\cdot G_x;
    \end{align}
  If the (wNU2SE) condition holds, then there exists an ergodic
  physical/SRB probability measure $\mu$ supported on $\Lambda$.

  Moreover, if  condition (SR) holds on $\Omega\subset U$, with
  $\m(\Omega)>0$, then: we have (wNU2SE) on $\Omega$ if, and only if,
  there are finitely many ergodic physical/SRB measures, which are
  $cu$-Gibbs states, and whose basins cover $\m$-a.e. point of
  $\Omega$.
\end{maincorollary}

\begin{remark}\label{rmk:anydcu}
  In
  Theorems~\ref{mthm:discretefabv0},~\ref{mthm:discretefabv},~\ref{mthm:equivalence}
  and Corollary~\ref{mcor:wABVndcu}, there are no restrictions on the
  dimension of the central-unstable subbundle.
\end{remark}

\begin{example}[Sectional-hyperbolic attracting
  sets]\label{ex:multidimLorenz}
  The multidimensional Lorenz attractor~\cite{BPV97} is a
  sectional-hyperbolic attractor with a generalized Lorenz-like
  equilibrium.
  % which admits slow recurrence to this equilibrium by the
  % same argument of Example~\ref{ex:Lorenzlike}
  Sectional-expansion on an attracting set naturally implies sectional
  expansion along every orbit in a trapping neighborhood $U$ which, in
  turn, clearly ensures (MASE). In particular, it ensures the
  non-existence of negative Lyapunov exponents along the
  central-unstable direction: we get~\eqref{eq:NNE} with positive
  $\liminf$. Then Corollary~\ref{mcor:wABVndcu} provides an
  alternative proof of existence of a unique ergodic physical/SRB
  measure for this family of attractors, complementing the arguments
  given in~\cite{BPV97} and the proof presented in~\cite{LeplYa17} for
  the general smooth sectional-hyperbolic attractor (which was
  extended in~\cite{araujo_2021} for smooth sectional-hyperbolic
  attracting sets).
\end{example}

\subsubsection{Codimension two partially hyperbolic attractors}
\label{sec:codimens-two-partial}

In low codimension, we show that the (wMASE) condition on a positive
volume subset is enough to ensure the existence of a physical/SRB
measure.  

\begin{maintheorem}\label{mthm:fABV}
  Let a partially hyperbolic attracting set $\Lambda=\Lambda_G(U)$ for
  a vector field $G\in\fX^2(M)$ be given, with $d_{cu}=\dim
  E^{cu}=2$. Then $\Lambda$ satisfies (wASE) on a positive volume subset
  \begin{align*}
    \leb\left( \left\{ x\in U: \liminf_{T\nearrow\infty} \frac1T
    \log|\det (D\phi_T\mid_{E^{cu}_x})| >0 \right\} \right) >0
  \end{align*}
  \emph{if, and only if}, there exists a
  physical/SRB ergodic hyperbolic measure $\mu$.
  If $\Lambda$ is transitive, then $\mu$ is unique and
  $\leb(\Omega\setminus B(\mu))=0$.

  Reciprocally, without restriction on $d_{cu}$, the existence of an
  invariant ergodic hyperbolic physical/SRB measure implies that
  (wMASE) holds on a positive volume subset of $U$.
\end{maintheorem}
Hence, to obtain a physical measure, it is enough to obtain a sequence
of times with asymptotic sectional expansion, along the trajectories
on a positive volume subset.

\subsubsection{Codimension two partially hyperbolic attractors with
  slow recurrence}
\label{sec:codimens-two-hyperb-1}

Restricting to codimension two with slow recurrence, we gain
finiteness of physical measures.

\begin{maincorollary}\label{mcor:wABV}
  Let a partially hyperbolic attracting set $\Lambda=\Lambda_G(U)$ for
  a vector field $G\in\fX^2(M)$ be given, with $d_{cu}=2$ and assume
  that there exists $c_0>0$ so that (wASE) holds on a positive volume
  subset
$
  \Omega:=\left\{ x\in U :
  \liminf_{T\nearrow\infty} \frac1T
    \log|\det (D\phi_T\mid_{E^{cu}_x})| >0
    % \liminf_{T\nearrow\infty}
    % \frac1T\log\|\wedge^2 (D\phi_T\mid_{E^{cu}_x})^{-1}\| \le -c_0
  \right\}
$
and points in $\Omega$ satisfy (SR). Then, there are finitely many
ergodic physical/SRB measures $\mu_1,\ldots,\mu_p$, which are
$cu$-Gibbs states, and whose basins cover $\m$-a.e. point of $\Omega$.
% that is,
%   $\m\Big(\Omega\setminus \big(B(\mu_1)\cup\dots\cup
%   B(\mu_p)\big)\Big)=0$.
\end{maincorollary}

Using the proof of the previous main results, we can deduce the
following extension of the main statement from~\cite{smviv23} on
existence of physical measure for (ASH) attractors.

\begin{maincorollary}
  \label{mcor:SRB-ASH}
  Let a $C^2$ vector field $G$ on $M$ and a trapping region $U$ be
  given containing a attracting set $\Lambda=\Lambda_G(U)$ with
  $d_{cu}=2$ so that every $x\in\Lambda$ not converging to any
  equilibrium satisfies (wASE).

  If $\Lambda$ contains only saddle-type hyperbolic equilibria, then
  there exists a physical/SRB probability measure supported on
  $\Lambda$.  If $\Lambda$ is transitive, then $\Lambda$ supports a
  unique physical/SRB probability measure whose basin covers a
  neighborhood of $\Lambda$.
  % In higher codimension $d_{cu}\ge3$, if all equilibria are hyperbolic
  % of saddle-type and $3$-sectional-expanding, that is,
  % $\|\wedge^3(Df^k\mid_{E^{cu}_\sigma})^{-1}\|<1$ for some $k\ge1$,
  % then there exists a physical/SRB probability measure supported on
  % $\Lambda$, which becomes unique if $\Lambda$ is transitive.
\end{maincorollary}

This corollary shows that it is enough to obtain just a sequence of
times with asymptotic sectional expansion, along the trajectories not
converging to a singularity, to conclude the existence of a physical
measure.

% temos aqui como generalizar a d_cu>2 se substituirmos ASH por NU2SE
% no compacto menos as variedades estaveis das singularidades -- pode
% ser usado em MNUSH

\begin{example}[Rovella attractors]\label{ex:Rovellalike}
  Another class of partially hyperbolic and mostly asymptotically
  sectional expanding attracting sets are the Rovella
  attractors~\cite{Ro93} (also known as contracting Lorenz attractors)
  which are (weak) asymptotically singular-hyperbolic;
  see~\cite{SmartinVivas20}. These attractors admit a physical/SRB
  probability measure whose basin covers the trapping region except a
  zero volume subset and, moreover, exhibit slow recurrence to the
  equilibrium at the origin\footnote{The known proof of asymptotical
    sectional expansion for this family of attractors intertwines with
    the control of recurrence to the equilibrium point.};
  see~\cite{mtz001}.
\end{example}

\subsection{Comments}
\label{sec:comments-conjectures}

Recently, Crovisier et al.~\cite{CWYZ23} obtained physical measures for
$C^1$-generic $C^\infty$ multisingular vector fields. However this
class of results fails to take into account non-transitive attracting
singular sets as well as Rovella-like attractors, which are
encompassed by our main statements.

\subsubsection{On the proof of Theorems~\ref{mthm:discretefabv0}
  and~\ref{mthm:discretefabv}}
\label{sec:proof-theorems-mthmabv}

The reliance on discrete dynamics, through the reduction to asymptotic
properties of iterations of the time-$1$ map $f=\phi_1$, suggests the
classical alternative of reducing the flow dynamics to a global
Poincar\'e return map to a well-chosen finite collection of
cross-sections, as done in \cite{APPV,ArSzTr,ArTri21,mtz001} to obtain
physical/SRB measures for contracting Lorenz attractors and
sectional-hyperbolic attractors. This strategy however is technically
challenging:
\begin{enumerate}
\item non-uniform sectional expanding partial hyperbolic attractors
  \emph{may admit different hyperbolic non-Lorenz-like
    equilibria}, especially with $d_{cu}>2$, while
  sectional-hyperbolic or contracting Lorenz attractors admit only a
  well-controlled family of (generalized) Lorenz-like equilibria;
\item continuous slow recurrence or slow recurrence \emph{for the time-$1$
    map} do not play a role in the construction of physical measures
  for sectional-hyperbolic or contracting Lorenz attractors because,
  in this special partial hyperbolic setting, the dynamics of the
  global Poincar\'e map can be further reduced to a one-dimensional
  quotient map over the stable foliation, which demands that
  \begin{itemize}
  \item $d_{cu}=2$ in order to obtain the one-dimensional quotient;
  \item holonomies along the stable foliation exhibit some smoothness
    on the trapping region, to ensure that the quotient transformation
    is at least piecewise H\"older-$C^1$.
  \end{itemize}
\end{enumerate}
Circumventing the problems posed by the previous items in the (NUSE)
setting, especially with $d_{cu}>2$, with similar strategies to e.g.
\cite{ArTri21,mtz001} may be impossible, since higher dimensional
invariant foliations are in general only
H\"older-continuous~\cite{PSW97}. However, we can adapt the
construction from \cite{ABV00} to the vector field setting, as
presented in this text, and also show that slow recurrence is
automatic for trajectories not converging to equilibria.

\subsubsection{Idea of the proof of Theorem~\ref{mthm:fABV}}
\label{sec:idea-proof-theorem}

The proof relies on reducing mostly
asymptotical sectional expansion to a discrete version applied to the
time-$1$ map of the flow, or some other fixed time after a time
reparametrization, taking care to allow for the neutral (non-expanding
nor contracting) flow direction along the center-unstable subbundle;
and applying the following useful extension of Pesin's
Formula~\cite{Pe77} obtained by
Catsigeras-Cerminara-Enrich~\cite{CatCerEnr15}.
% ; see
% e.g.~\cite{SunTian12,catsigeras2016weak,CaYa16} and more
% recently~\cite{CYZ20}.

\begin{theorem}[Generalized Pesin's Inequality~\cite{CatCerEnr15}]
  \label{thm:GenPesin}
  For any $C^1$ diffeomorphism $f$, if $\Lambda$ is an invariant
  compact set with a dominated splitting $T_\Lambda M= E\oplus F$,
  then for Lebesgue almost every point $x$ satisfying
  $\omega(x)\subset \Lambda$, the entropy of any weak$^*$ limit
  measure $\mu$ of the sequence
  $\big(\frac1n \sum_{i=0}^{n-1} \delta_{f^i(x)}\big)_{n\ge1}$ is
  bounded from below:
  \begin{align}\label{eq:gpesin}
    h_\mu(f)\geq \int \log|\det Df\mid_{F}|\,d\mu.
  \end{align}
\end{theorem}

We use this for $x$ satisfying the (wMASE) condition~\eqref{eq:wMASE1}
to find an ergodic hyperbolic physical/SRB measure as an ergodic
component of a limit measure $\mu$ as above. It is well known from the
work of Ledrappier-Young~\cite{LY85} that for $C^2$ systems such
measures are physical measures.

\subsubsection{Conjectures and extensions} 
\label{sec:conjectures}

On the one hand, using the same techniques from~\cite{CMY2017}, we may
replace the domination condition by H\"older continuity of the
splitting over $\Lambda$, keeping the conclusions of the main theorems
and extending the thermodynamical methods from~\cite{CatCerEnr15} to
our vector field setting.

On the other hand, from Remark~\ref{rmk:domsplit2parthyp}, we can
replace the partial hyperbolic assumption on $\Lambda$ by the
assumption that $\Lambda$ be a non-trivial attracting set with a
dominated splitting $T_\Lambda M=E^s\oplus E^{cu}$, such that the
vector field $G$ is contained in $E^{cu}$, and keep the same
conclusions of the main results.  More precisely, we obtain the
following statements with small adaptations of our arguments.

\begin{theorem}\label{thm:scholium1}
  Let $\Lambda=\Lambda_G(U)$ be an attracting set for a vector field
  $G\in\fX^2(M)$ admitting an invariant splitting
  $T_\Lambda M=E^s\oplus E^{cu}$ such that
  \begin{itemize}
  \item either the splitting is H\"older-continuous and $E^s$ is
    uniformly contracted;
  \item or the splitting is dominated and the flow direction is
    contained in $E^{cu}$.
  \end{itemize}
  If either (SR) holds on $\Omega_0\subset U$, or $d_{cu}=2$, then
  $\Lambda$ satisfies (NU2SE) on a positive volume subset
  $\Omega\subset\Omega_0$ \emph{if, and only if}, there exists an
  ergodic hyperbolic physical/SRB measure $\mu$, which is a $cu$-Gibbs
  state with $\m(B(\mu)\cap\Omega)>0$.
\end{theorem}

\begin{example}[Non-dominated continuous splitting]\label{ex:nondomcont}
  We consider the suspension flow $\phi_t$, with constant roof of
  height $1$, of the diffeomorphisms $f:\TT^4\to\TT^4$ of the
  $4$-torus studied by Tahzibi~\cite{tah04}. The diffeomorphism
  admits a $Df$-invariant dominated splitting
  $TM= E^{cs}\oplus E^{cu}$ together with a hyperbolic periodic point
  with expanding direction contained in $E^{cs}$, and another
  hyperbolic periodic point with contracting direction contained in
  $E^{cu}$. Hence, the naturally inherited $D\phi_t$-invariant
  splitting of the suspension
  $T\wt{M}=E^{cs}\oplus (E^{cu}\oplus(\RR\cdot G))$ is
  \emph{continuous but cannot be dominated}, where $\RR\cdot G$ is the
  flow direction on the special manifold $\wt{M}:= M/\sim$ defined by
  the natural dynamical identification.

  Moreover, since the time-$1$ map $f=\phi_1$ satisfies $P^1=Df$, then
  $E^{cu}=N^{cu}$ and, by the constructions presented in~\cite{tah04},
  we obtain condition (NUSE). It also satisfies average asymptotic
  contraction along $E^{cs}$. In addition, since $f$ is transitive and
  admits a unique ergodic physical/SRB invariant measure, then the
  suspension of this measure is also physical/SRB with respect to the
  suspension flow $\phi_t$; see
  e.g. Subsection~\ref{sec:finitely-mamy-physic}.
\end{example}

Recently, Cao-Mi-Zou~\cite{caomizou} propose a method to construct
Pesin unstable manifolds for hyperbolic measures admitting a continuous
splitting of the tangent bundle. Example~\ref{ex:nondomcont} indicates
that the following stronger result should be attainable.

\begin{conjecture}
  \label{conj:nondomcont}
  The same conclusion of Theorem~\ref{thm:scholium1} holds assuming a
  continuous splitting $T_\Lambda M=E^{cs}\oplus E^{cu}$ with $E^{cs}$
  having only negative Lyapunov exponents Lebesgue almost everywhere.
\end{conjecture}

Naturally we expect the dimensional restriction $\dim E^{cu}=2$ on the
statement of Theorem~\ref{thm:NUSEvar} to be just a technical
shortcoming of our proof.

\begin{conjecture}
  \label{conj:fABV}
  We have (wASE)$\implies$(wNU2SE) for any value
  of $\dim E^{cu}>2$.
\end{conjecture}

Example~\ref{ex:nuenophysical}, namely
inequalities~\eqref{eq:secontr}, show that (wPSL)$\implies$(wASE) is
subtle when we are dealing with trajectories which might not be
Oseledets regular. On forward Oseledets regular points, conditions
(A)-(B); (a)-(b) and (i)-(ii) are
equivalent. From~\cite{JiangQiPing03} our main results ensure that the
ergodic basins of the physical measures are positive volume subsets of
forward Oseledets regular points. It is thus natural to conjecture
that this might be a priori true in more generality.

\begin{conjecture}
  \label{conj:PSL}
  The implications (wNUSE)$\implies$(wNU2SE); (wPSL)$\implies$(wASE)
  and (wNU2SE)$\implies$~\eqref{eq:NUEsal} hold on a full volume
  subset of points of the trapping region of a generic smooth
  partially hyperbolic attractor.
\end{conjecture}

% It is natural to consider the weaker non-uniform sectional expansion
% condition~\eqref{eq:wNUE} obtained by using $\liminf$
% in~\eqref{eq:NUE}; or the \emph{weak asymptotic sectional expansion}
% condition
% \begin{align}
%   \label{eq:wMASE}
%   \liminf\nolimits_{T\nearrow\infty}
%   \log|(\wedge^2D\phi_T\mid_{E^{cu}_x})^{-1} |^{1/T}<0.
% \end{align}
Analogously to Theorem~\ref{thm:NUE-ASEA} if, in the setting of
Theorem~\ref{mthm:fABV}, we have (wMASE) \emph{on total
  probability}, that is, the relation~\eqref{eq:wMASE1} for all $x$ on
a total probability subset $\Omega\subset \Lambda$, then $\Lambda$
becomes a sectional-hyperbolic attracting set and the conclusion of
Theorem~\ref{mthm:fABV} still holds true by the result
from~\cite{araujo_2021} or Corollary~\ref{mcor:wABVndcu} without any
dimensional restriction.

The analogous condition for (local) diffeomorphisms was shown by
Alves, Dias, Luzzatto and Pinheiro~\cite{ADLP,Pinheiro05} to be enough
to obtain the same conclusions of the main results from~\cite{ABV00}.
Recently, a different approach to the proof of this result was
presented by Burguet-Yang~\cite{BurguetYang25} and
Theorem~\ref{mthm:fABV} can be seen as an extension of this result to
the continuous time setting. Since  (SR) was not assumed, it
is natural to present the following.

\begin{conjecture}\label{conj:wNUE}
  In the same setting of Theorem~\ref{mthm:equivalence}, either the
  (wNUSE) condition~\eqref{eq:wNUE}, or the (wMASE)
  condition~\eqref{eq:wMASE1} is enough to obtain the same conclusion
  on existence and finiteness of physical/SRB measures, without
  assuming the slow recurrence $(SR)$ condition.
\end{conjecture}

% \textbf{In codimension two ($d_{cu}=2$)}, Corollary~\ref{mcor:fabv} shows that
% asymptotic sectional expansion is analogous to positive Lyapunov
% exponents in one-dimensional transformations versus non-uniform
% expansion; see e.g.~\cite{alves-luzzatto-pinheiro2004}. That is,
% \begin{align*}
%   \liminf\nolimits_{n\nearrow\infty}\log|(f^n)'(x)|^{1/n}
%   =
%   \liminf\nolimits_{n\nearrow\infty}\sum\nolimits_{i=0}^{n-1}\log|f'(f^ix)|^{1/n}.
% \end{align*}

% \hrulefill

As for the number of distinct ergodic physical/SRB measures supported
in the attracting set, motivated by the recent result~\cite{araujo23}
we propose the following.

\begin{conjecture}\label{conj:numbersrb}
  In the setting of the main results, the number $s$ of
  ergodic physical/SRB measures supported in the attracting set
  satisfies $s\le 2\cdot s_L$, where $s_L$ is the number of
  generalized Lorenz-like equilibria contained in $\Lambda$.
\end{conjecture}

The natural path after obtaining a physical/SRB measure is to study
its statistical properties. Motivated by what has already been achieved
for singular-hyperbolic attracting sets
\cite{ArMel16,ArMel18,AMV15,ArTri21}; for sectional-hyperbolic
attracting sets~\cite{araujo_2021}; for contracting Lorenz
attractors \cite{mtz00}; and also in the discrete time case
\cite{Alves2020b,alves-luzzatto-pinheiro2005,AlPi10,Pinho2011}, we
propose the following.

\begin{conjecture}\label{conj:statprop}
  Given a non-uniformly sectional expanding partial hyperbolic
  attracting set $\Lambda_G(U)$ with hyperbolic singularities for a
  $C^2$ vector field $G$, then
  \begin{itemize}
  \item modulo an arbitrary small perturbation of the norm of $G$, the
    field is topologically equivalent to a $C^2$ nearby vector field
    so that each ergodic physical/SRB is exponential mixing with
    respect to smooth observables; and
  \item both the flow $\phi_t$ of $G$ and its time-$1$ map $f=\phi_1$
    satisfy the Central Limit Theorem, the Law of the Iterated
    Logarithm and the Almost Sure Invariance Principle (for a
    comprehensive list, see e.g~\cite{PhilippStout75}) with respect
    to $\mu$.
  \end{itemize}
  Moreover, motivated by~\cite{araJSP21,mtz00}, the
  physical/SRB measures supported on $\Lambda$ should be statistically
  stable and also stochastically stable.
\end{conjecture}

\begin{remark}
  \label{rmk:neutral}
  Recently, Bruin-Farias~\cite{bruin2023mixing} (see
  Example~\ref{ex:nonhypeq} in Section~\ref{sec:new-examples}), 
  polynomial speed of mixing was proved for a neutral geometrical
  Lorenz-like attractor, so the assumption of hyperbolic equilibria
  should be necessary in Conjecture~\ref{conj:statprop}.
\end{remark}

\subsection{Organization of the text}
\label{sec:organization-text}

In Section~\ref{sec:preliminary-results} we present preliminary
results that are needed as tools for the overall construction,
including the proof of Theorems~\ref{thm:NUSEvar}
and~\ref{thm:NUE-ASEA}.  In Section~\ref{sec:hyperb-times-center} we
start the proof of Theorem~\ref{mthm:discretefabv}, using the
domination of the splitting to obtain bounded distortion on $cu$-disks
at hyperbolic times. Using this tool, in
Section~\ref{sec:lebesgue-measure-at} we study the push-forward of
Lebesgue measure at hyperbolic times along $cu$-disks, obtaining the
main tool to construct physical/SRB measures which are $cu$-Gibbs
states. We then provide an overview of the construction of
physical/SRB measures, citing the relevant results from Alves, Bonatti
and Viana~\cite{ABV00}, and complete the proof of
Theorem~\ref{mthm:discretefabv}.

In Section~\ref{sec:recipr-condit} we prove
Theorem~\ref{mthm:equivalence} first.  Then, using the Generalized
Pesin's Inequality Theorem~\ref{thm:GenPesin}, we obtain
Theorem~\ref{mthm:fABV} and deduce
Corollaries~\ref{mcor:wABVndcu},~\ref{mcor:wABV}
and~\ref{mcor:SRB-ASH} in this section.

Finally, in Section~\ref{sec:new-examples}, we present mostly
asymptotically sectional expanding examples which are either
non-sectional hyperbolic or non-singular hyperbolic, with or without
hyperbolic equilibria, as well as counter-examples failing some of our
assumptions and having no physical measure.

\subsection*{Acknowledgments}

V.A. thanks the Mathematics and Statistics Institute of the Federal
University of Bahia (Brazil) for its support of basic research and
CNPq (Brazil) for partial financial support. L.S. and S.S. thank the
Mathematics Institute of Universidade Federal do Rio de Janeiro
(Brazil) for its encouraging of mathematical research and S.S.  thanks
CNPq for the Doctoral scholarship. L.S. thanks the Mathematics and
Statistics Institute of the Federal University of Bahia (Brazil) for
its hospitality together with CAPES-Finance code 001, CNPq, FAPERJ -
Aux{\'\i}lio B\'asico \`a Pesquisa (APQ1) Project E-26/211.690/2021;
and FAPERJ - Jovem Cientista do Nosso Estado (JCNE) grant
E-26/200.271/2023 for partial financial support; and also PROEXT-PG
project Dynamic Women - Din\^amicas, CNPq-Brazil (grant Projeto
Universal 404943/2023-3).  We also thank the anonymous referees for
the careful reading of this manuscript and the many valuable
suggestions given.

%%%%%%%%%%%%%%%%%%%%%%%%%%%%%%%%%%%%%%%%%%%%%%%%%%%%%%%%%%%%%%%%%%%%%%%%

%%%%%%%%%%%%%%%%%%%%%%%%%%%%%%%%%%%%%%%%%%%%%%%%%%%%%%%%%%%%%%%%%%%%%%%%

\section{Auxiliary results}
\label{sec:preliminary-results}

The following results will be used in our arguments.

\subsection{Partial hyperbolic attracting sets}
\label{sec:section-hyperb-attra-1}

The following properties of partial hyperbolic attracting sets will be
used as tools in our arguments. 

\subsubsection{Extension of the stable bundle and
  center-unstable cone fields}
\label{sec:ext-stable-bundle}

Let $\D^k$ denote the $k$-dimensional open unit disk and let
$\mathrm{Emb}^r(\D^k,M)$ denote the set of $C^r$ embeddings
$\psi:\D^k\to M$ endowed with the $C^r$ distance. We say
that \emph{the image of any such embedding is a $C^r$
  $k$-dimensional disk}.

\begin{proposition}{\cite[Proposition~3.2, Theorem~4.2 and
    Lemma~4.8]{ArMel17}}
  \label{prop:Ws}
  Let $\Lambda$ be a partially hyperbolic attracting set.
  \begin{enumerate}
  \item The stable bundle $E^s$ over $\Lambda$ extends to a
    continuous uniformly contracting $D\phi_t$-invariant
    bundle $E^s$ on an open positively invariant
    neighborhood $U $ of $\Lambda$.
  \item There exists a constant $\lambda\in(0,1)$, such that
    \begin{enumerate}
    \item for every point $x \in U $ there is a $C^r$
      embedded $d_s$-dimensional disk $W^s_x\subset M$, with
      $x\in W^s_x$, such that $T_xW^s_x=E^s_x$;
      $\phi_t(W^s_x)\subset W^s_{\phi_tx}$ and
      $d(\phi_tx,\phi_ty)\le \lambda^t d(x,y)$ for all
      $y\in W^s_x$, $t\ge0$ and $n\ge1$.

    \item the disks $W^s_x$ depend continuously on $x$ in
      the $C^0$ topology: there is a continuous map
      $\gamma:U \to {\rm Emb}^0(\D^{d_s},M)$ such that
      $\gamma(x)(0)=x$ and $\gamma(x)(\D^{d_s})=W^s_x$.
      Moreover, there exists $L>0$ such that
      $\lip\gamma(x)\le L$ for all $x\in U $.

    \item the family of disks $\cF^s=\{W^s_x:x\in U \}$ defines a
      topological foliation $\cW^s$ of $U $: every $x_0\in
      U $ admits a neighborhood $V\subset U $ and a
      homeomorphism $\psi:V\to\RR^{d_s}\times\RR^{d_{cu}}$
      so that $\psi(W^s_x)=\pi_s^{-1}\{\pi_s(\psi(x))\}$ where
      $\pi^s: \RR^{d_s}\times\RR^{d_{cu}} \to\RR^{d_s}$ is
      the canonical projection.
    \end{enumerate}
  \end{enumerate}
\end{proposition}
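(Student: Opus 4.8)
The plan is to treat Proposition~\ref{prop:Ws} as the strong-stable manifold theorem for the attracting set $\Lambda$; the only point at which being an attracting set matters is that $U$ is merely positively invariant rather than a compact invariant set, and the rest is the standard Hirsch--Pugh--Shub graph-transform machinery. Since partial hyperbolicity is expressed with respect to an adapted metric, the contraction and domination estimates hold in particular for the time-one map $f=\phi_1$, with some constant $\lambda_0\in(0,1)$. Using a continuous (not yet invariant) extension $\wt{E^s}\oplus\wt{E^{cu}}$ of the splitting and the stable cone field $C^s_a$ from~\eqref{eq:cucone}, and shrinking $U$ if necessary --- which is harmless, since $\Lambda=\Lambda_G(U)$ is an attracting set, so every neighbourhood of $\Lambda$ contains a smaller trapping region --- I would arrange, using domination together with continuity and compactness of $\Lambda$, that for every $x\in U$ (hence $fx\in U$) the pulled-back cone $Df(x)^{-1}C^s_a(fx)$ lies in the interior of $C^s_a(x)$ with strictly smaller aperture, and $\|Df(x)v\|\le\lambda\|v\|$ for $v\in C^s_a(x)$, for some fixed $\lambda\in(\lambda_0,1)$.

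For item~(1) I would \emph{define} the extension of the stable bundle by the cone-intersection formula
\[
  E^s_x:=\bigcap_{t\ge0}D\phi_{-t}(\phi_tx)\cdot C^s_a(\phi_tx)\subset T_xM,\qquad x\in U.
\]
Since $\phi_tx\in U$ for every $t\ge0$, the cones on the right-hand side are defined, and by the estimates above they are nested decreasing in $t$ with aperture shrinking to $0$; hence the intersection is a single $d_s$-dimensional subspace, it reduces to the original $E^s_x$ when $x\in\Lambda$, and it varies continuously with $x$ because the nesting is uniform over $U$. Propagating the contraction estimate along orbits gives uniform contraction of $E^s$ on $U$, and reindexing the intersection --- the new terms are supersets of those already present, since forward iteration enlarges stable cones --- yields $D\phi_t(x)E^s_x=E^s_{\phi_tx}$ for all $t\ge0$. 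This is precisely the step where mere positive invariance of $U$, as opposed to invariance, is enough.

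For item~(2) I would run the contracting graph transform along the forward orbit $x,fx,f^2x,\dots$, all lying in $U$. In charts around the points $f^nx$ in which $\wt{E^s}_{f^nx}$ and $\wt{E^{cu}}_{f^nx}$ are coordinate subspaces, a candidate local stable set through $x$ is encoded by a sequence of maps $g_n$ from a ball in $\wt{E^s}_{f^nx}$ to $\wt{E^{cu}}_{f^nx}$ whose graphs $D_n$ satisfy $f(D_n)\supset D_{n+1}$, and the pull-back graph transform acts by $\graph\wt g_n=f^{-1}(\graph g_{n+1})\cap(\text{chart at }f^nx)$; by the cone contraction together with the domination estimate this is a uniform contraction on the space of such sequences in the $C^1$ topology (uniformly in $n$), with a unique fixed point which a standard regularity argument (the section theorem) shows to consist of $C^r$ maps. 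Its initial component defines $W^s_x:=\graph g_0^\ast$, with $x\in W^s_x$, $T_xW^s_x=E^s_x$, $f(W^s_x)\subset W^s_{fx}$ and $d(f^ny,f^nx)\le\lambda^n d(x,y)$ for $y\in W^s_x$; saturating by the flow (or repeating the argument with $\phi_t$ in place of $f$) promotes these to $\phi_t(W^s_x)\subset W^s_{\phi_tx}$ and $d(\phi_tx,\phi_ty)\le\lambda^t d(x,y)$ for all $t\ge0$. The Lipschitz dependence on the parameter of the fixed point of a uniformly contracting parametrised family yields a continuous map $\gamma:U\to\mathrm{Emb}^0(\D^{d_s},M)$ with $\gamma(x)(0)=x$, $\gamma(x)(\D^{d_s})=W^s_x$ and a uniform bound $\lip\gamma(x)\le L$. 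Finally, plaque coherence --- $y\in W^s_x$ implies that $W^s_y$ and $W^s_x$ coincide near $y$, which follows from the local dynamical characterisation $W^s_x=\{y:\sup_n\lambda^{-n}d(f^ny,f^nx)<\infty\}$ and the uniqueness of the invariant graph through a point --- shows that $\cF^s=\{W^s_x:x\in U\}$ is a partition of $U$; composing $\gamma$ with a continuous field of $d_{cu}$-dimensional transversals through a chosen $x_0\in U$ then straightens $\cW^s$ to the foliation chart $\psi$ of~(2)(c).

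The step I expect to be the main obstacle is the one just highlighted: one cannot quote the stable manifold theorem for compact hyperbolic sets off the shelf, because $U$ is only positively invariant. The cone-intersection description of $E^s$, and the observation that the graph transform uses only forward orbits, remove this difficulty; after that, the $C^r$ regularity of the leaves, their uniform $C^0$ and Lipschitz dependence on the base point, and plaque coherence are the standard --- if somewhat lengthy --- technical steps, carried out in detail in~\cite{ArMel17}.
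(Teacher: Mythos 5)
Your sketch is correct and follows the same route as the cited reference: the paper itself does not prove Proposition~\ref{prop:Ws} but quotes it directly from Ara\'ujo--Melbourne \cite{ArMel17}, and that reference establishes exactly items (1)--(2) via the stable-cone-field/graph-transform argument you describe, with the cone-intersection definition of the extended bundle over the positively invariant trapping region being the key device for circumventing the non-invariance of $U$. The one place to be a bit more careful is the local dynamical characterisation you invoke for plaque coherence (it should be stated with a fixed small radius, $W^s_x=\{y: d(f^ny,f^nx)\le\rho\lambda^n\text{ for all }n\ge0\}$, so as to exclude nearby points on the flow line of $x$), but this is the standard formulation and does not affect the argument.
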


\begin{remark}
  \label{rmk:contholo}
  For any two close enough $d_{cu}$-disks $D_1,D_2$ contained in $U$
  and transverse to $\cF^s$ there exists an open subset $\hat D_1$ of
  $D_1$ so that $W^s_x\cap D_2$ is a singleton for each
  $x\in\hat D_1$. This defines the \emph{holonomy map}
  $h:\hat D_1\to D_2, \hat D_1\ni x\mapsto W^s_x\cap D_2$ and
  Proposition~\ref{prop:Ws} ensures that $h$ is continuous.
\end{remark}

The splitting $T_\Lambda M=E^s\oplus E^{cu}$ extends continuously to a
splitting $T_{U } M=E^s\oplus E^{cu}$ where $E^s$ is the invariant
uniformly contracting bundle in Proposition~\ref{prop:Ws} -- however
$E^{cu}$ is not invariant in general, but the center-unstable cone
field satisfies the following.

\begin{proposition}
  \label{prop:Ccu}
  Let $\Lambda$ be an attracting set with a dominated splitting so
  that the flow direction is contained in the center-unstable bundle
  $G\in E^{cu}$. Then, for any $a>0$, after possibly shrinking $U$, we
  can find $\kappa>0$ so that
  $ D\phi_t\cdot \cC^{cu}_a(x)\subset \cC^{cu}_{\kappa
    e^{-\lambda^t}a}(\phi_tx)$ for all $t>0$, $x\in U$.
  % \item If in addition $\Lambda$ is a sectional-hyperbolic attracting
  %   set then, after possibly shrinking $U $, there exist constants
  %   $K_1,\theta_1>0$ such that
  %   $|\det(D\phi_t| F_x)|\geq K_1 \, e^{\theta_1 t}$ for each
  %   $2$-dimensional subspace $F_x\subset E^{cu}_x$ and all $x\in U $,
  %   $t\geq 0$.
  % \end{enumerate}
\end{proposition}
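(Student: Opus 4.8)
The plan is to prove the inclusion first on $\Lambda$, where it is immediate from the invariance of the splitting together with domination, and then transport it to the trapping region $U$. By Lemma~\ref{le:flow-center} we may assume $E^s$ is uniformly contracted, so $\Lambda$ is partially hyperbolic and Proposition~\ref{prop:Ws} provides an extension of $E^s$ to $U$ that is still $D\phi_t$-invariant and $\lambda^t$-contracting; only $E^{cu}$ fails to be invariant, and this will be the whole difficulty. For $x\in\Lambda$, write $v=v^s+v^c\in E^s_x\oplus E^{cu}_x$ with $\|v^s\|\le a\|v^c\|$. Invariance gives $D\phi_t v=D\phi_t v^s+D\phi_t v^c$ with $D\phi_t v^s\in E^s_{\phi_tx}$ and $D\phi_t v^c\in E^{cu}_{\phi_tx}$, so these are the stable and center-unstable components of $D\phi_t v$; using $\|D\phi_t v^s\|\le\|D\phi_t\mid E^s_x\|\,\|v^s\|$ and $\|D\phi_t v^c\|\ge\|D\phi_{-t}\mid E^{cu}_{\phi_tx}\|^{-1}\|v^c\|$, domination yields
\[
  \frac{\|D\phi_t v^s\|}{\|D\phi_t v^c\|}\le\|D\phi_t\mid E^s_x\|\cdot\|D\phi_{-t}\mid E^{cu}_{\phi_tx}\|\cdot\frac{\|v^s\|}{\|v^c\|}\le\lambda^t a,
\]
so the claim holds on $\Lambda$ with $\kappa=1$.

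To reach $U$, fix a small reference time $\tau>0$ and, for $p\in U$, define the \emph{leakage} $\ell(p)$ of $E^{cu}$ as the supremum over $0\ne u\in E^{cu}_p$ of $\|\pi^s_{\phi_\tau p}(D\phi_\tau u)\|/\|\pi^c_{\phi_\tau p}(D\phi_\tau u)\|$, where $\pi^s,\pi^c$ are the projections of $T_UM=E^s\oplus E^{cu}$. On $\Lambda$ one has $\ell\equiv0$, so by (Hölder) continuity of the splitting and of $D\phi_\tau$, after shrinking $U$ we may assume $\ell(p)\le L_0\,d(p,\Lambda)^{\alpha}$ on $U$ for some $\alpha>0$. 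Since $E^s$ stays invariant on $U$, for $v=v^s+v^c\in\cC^{cu}_a(x)$ the center-unstable component of $D\phi_\tau v$ at $\phi_\tau x$ is exactly $\pi^c_{\phi_\tau x}(D\phi_\tau v^c)$, and repeating the estimate above (with the ratio $\|D\phi_\tau\mid E^s\|/m(D\phi_\tau\mid E^{cu})\le\lambda^\tau$ on $\Lambda$ controlled by continuity off $\Lambda$) gives, writing $p_k=\phi_{k\tau}(x)$ and letting $b_k$ be the least aperture with $D\phi_{k\tau}(x)\cdot\cC^{cu}_a(x)\subset\cC^{cu}_{b_k}(p_k)$,
\[
  b_{k+1}\le\lambda^\tau(1+\eta_k)\,b_k+\ell(p_k),\qquad b_0\le a,
\]
with $\eta_k\le L_1\,d(p_k,\Lambda)^{\alpha}$.

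It remains to sum this recursion, and this is where the attracting-set hypothesis is essential: because $\Lambda=\Lambda_G(U)$ is attracting and $U$ is foliated by the uniformly bounded, $\lambda^t$-contracting stable disks of Proposition~\ref{prop:Ws}, every stable leaf of $U$ meets $\Lambda$, so (after possibly shrinking $U$ once more) $d(\phi_t(x),\Lambda)\le C_1\lambda^t$ for all $x\in U$, $t\ge0$. Then $\eta_k$ and $\lambda^{-k\tau}\ell(p_k)$ decay geometrically, hence $\sum_k\eta_k<\infty$, $\prod_{k<n}\lambda^\tau(1+\eta_k)\le\kappa_0\lambda^{n\tau}$, and iterating the recursion gives $b_n\le\kappa_0\lambda^{n\tau}a+\kappa_0L_0C_1^{\alpha}\,n\,\lambda^{(n-1)\tau}$; after replacing $\lambda$ by a slightly larger constant in $(0,1)$ (still a domination constant, which absorbs the polynomial factor) this reads $b_n\le\kappa'\lambda^{n\tau}a$. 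For general $t>0$ write $t=n\tau+s$ with $0\le s<\tau$ and use the crude bounded-time bound $D\phi_s\cdot\cC^{cu}_b(q)\subset\cC^{cu}_{\kappa_2 b}(\phi_s q)$ together with $\lambda^{n\tau}\le\lambda^{-\tau}\lambda^t$, obtaining the asserted inclusion with a suitable $\kappa$ (enlarged once more so as to cover the trivial range $t\le\tau$). The main obstacle is exactly the control of the accumulated leakage of $E^{cu}$ along the orbit: its summability rests on the exponential approach of forward orbits to $\Lambda$, which is where both the freedom to shrink $U$ and the attracting-set structure are used; once that is granted, the rest is the elementary computation of the first step plus a geometric summation.
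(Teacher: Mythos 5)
Your argument takes a genuinely different route from the paper's. The paper's proof works in a single step for each fixed $t$: it applies the domination estimate directly to $v=v^s+v^c\in C^{cu}_a(x)$ to get $\|D\phi_t v^s\|/\|D\phi_t v^c\|\le\lambda^t a$, and then corrects for the non-invariance of the extended $E^{cu}$ on $U$ by projecting $D\phi_t v^c$ onto $E^{cu}_{\phi_t x}$ parallel to $E^s_{\phi_t x}$; the projection constant $\kappa=\kappa(a)$ is uniform in $t$ because both $D\phi_t\cdot E^{cu}_x$ and $E^{cu}_{\phi_t x}$ lie in a cone of fixed aperture $a$. That argument never needs to quantify how fast $\phi_t x$ approaches $\Lambda$. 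Your ``leakage'' recursion is a reasonable alternative mechanism, but as written it has two real gaps.

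First, the geometric summation is carried out as though the H\"older exponent $\alpha$ of the splitting were $1$. With $\ell(p_k)\le L_0C_1^\alpha\lambda^{\alpha k\tau}$ and the cocycle weights of order $\lambda^{(n-1-k)\tau}$, the iterated recursion gives
\begin{align*}
\sum_{k<n}\ell(p_k)\lambda^{(n-1-k)\tau}
\;\lesssim\;\lambda^{n\tau}\sum_{k<n}\lambda^{(\alpha-1)k\tau}
\;\lesssim\;\lambda^{\alpha n\tau},
\end{align*}
not $n\,\lambda^{(n-1)\tau}$, whenever $\alpha<1$. Since $\lambda^{\alpha n\tau}/\lambda^{n\tau}\to\infty$, there is no $\kappa'$ (independent of $n$) with $b_n\le\kappa'\lambda^{n\tau}a$, and ``absorbing a polynomial factor into a slightly larger $\lambda$'' does not rescue the claimed rate. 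What you do obtain is $b_n\lesssim\lambda^{\alpha n\tau}a$, i.e.\ the proposition with $\lambda$ replaced by $\lambda^\alpha$ --- a strictly weaker rate, which you should either state or repair (e.g.\ by picking $\tau$ large so that the one-step cone contraction already beats $\lambda^{\alpha\tau}$).

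Second, the step $d(\phi_t x,\Lambda)\le C_1\lambda^t$ for all $x\in U$ is asserted but not really established. You derive it from ``every stable leaf of $U$ meets $\Lambda$'', which in turn requires $\Lambda$ to be saturated by local $cu$-disks through its points (so that the $\cW^s$-holonomy onto such a disk lands in $\Lambda$); that is true for partially hyperbolic attracting sets, but it is a separate nontrivial fact, not part of Proposition~\ref{prop:Ws}, and it is neither cited nor proved here. The paper's projection argument avoids this input entirely, which is part of why it is shorter: it only needs the attracting-set hypothesis to invoke Proposition~\ref{prop:Ws} for the invariant extension of $E^s$, not for any quantitative approach rate to $\Lambda$.
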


\begin{proof} See~\cite[Proposition~3.1]{ArMel17} considering the
  choice of adapted Riemannian metric as defined in
  Subsection~\ref{sec:part-hyperb-diff}: we estimate for
  $v\in C^{cu}_a(x)$ (using only the domination of the splitting)
  \begin{align*}
    \frac{\|D\phi_t(x)\cdot v^s\|}{\|D\phi_t(x)\cdot v^c\|}
    \le
    \frac{\|D\phi_t\mid_{E^s_x}\|\cdot\|v^s\|}
    {\|(D\phi_t\mid_{E^{cu}_x})^{-1}\|^{-1}\cdot\|v^c\|}
    \le
    e^{-\lambda^t} a.
  \end{align*}
  However, since $E^{cu}$ extended to $U$ is not necessarily
  $D\phi_t$-invariant, we need to project $D\phi_t(x)\cdot v^c$ to
  $E^{cu}_{\phi_tx}$ parallel to $E^s_{\phi_t x}$ to decompose
  $D\phi_t(x)\cdot v$ into stable/center-unstable components. Because
  both $E^{cu}_{\phi_tx}$ and $D\phi_t\cdot E^{cu}_x$ are contained in
  $C^{cu}_a(\phi_tx)$, then we can find $\kappa=\kappa(a)>0$ so that
  $\kappa\|\pi^{cu}\cdot D\phi_t(x)\cdot v^c\|\ge\|D\phi_t(x)\cdot v^c\|$,
  and so
$
    \frac{\|D\phi_t(x)\cdot v^s\|}{\|\pi^{cu}\cdot D\phi_t(x)\cdot v^c\|}
    \le
    \kappa e^{-\lambda^t}a,
 $
  which completes the proof of the statement.
\end{proof}

\subsubsection{Partial hyperbolicity of Poincar\'e maps}
\label{sec:hyperb-poincare-maps}

Let $\Sigma,\Sigma'$ be a small cross-sections to $G$ contained in $U$
and let $R:\dom(R)\to\Sigma'$ be a Poincar\'e map\footnote{Note that
  $R$ needs not correspond to the first time the orbits of $\Sigma$
  encounter $\Sigma'$ nor it is defined everywhere in $\Sigma$.}
$R(y)=\phi_{t(y)}(y)$ from an open subset $\dom(R)$ of $\Sigma$ to
$\Sigma'$ (possibly $\Sigma=\Sigma'$).  The splitting
$E^s\oplus E^{cu}$ over $U$ induces a continuous splitting
$E_\Sigma^s\oplus E_\Sigma^{cu}$ of the tangent bundle $T\Sigma$ to
$\Sigma$ (analogously for $\Sigma'$) as
\begin{align}\label{eq:splitting}
E_\Sigma^s(y)=E^{s}_y\cap T_y{\Sigma}
\quad\mbox{and}\quad
E_\Sigma^{cu}(y)=E^{cu}_y\cap T_y{\Sigma}.
\end{align}
The splitting \eqref{eq:splitting} is partially hyperbolic for $R$, as
follows.

\begin{proposition}{\cite[Proposition 4.1]{ArMel18}\& \cite[Lemma
  8.25]{AraPac2010s}}
  \label{pr:secaohiperbolica}
  Let $R:\Sigma\to\Sigma'$ be a Poincar\'e map with Poincar\'e time
  $t(\cdot)$.  Then $DR\cdot E_\Sigma^s(x) = E_\Sigma^s(R(x))$ at every
  $x\in\Sigma$ and $DR\cdot E_\Sigma^{cu}(x) = E_\Sigma^{cu}(R(x))$ at
  every $x\in\Lambda\cap\Sigma$.  Moreover, for $x\in\Sigma$ we have
$
\|DR \mid_{E^s_\Sigma(x)}\| < \lambda^{t(x)}$
and
$\|DR \mid_{E^s_\Sigma(x)}\|\cdot\|\big(DR \mid_{E^{cu}_\Sigma(x)}\big)^{-1}\|
  < \lambda^{t(x)}.
$
\end{proposition}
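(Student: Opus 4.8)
The plan is to read everything off a single formula relating the Poincar\'e map to the flow. Differentiating $R(y)=\phi_{t(y)}(y)$ along a curve in $\Sigma$ through $x$ (the Poincar\'e time $t(\cdot)$ is smooth by the implicit function theorem) gives, for $v\in T_x\Sigma$,
\[
  DR(x)v \;=\; D\phi_{t(x)}(x)v + \big(Dt(x)v\big)\,G(R(x)),
\]
and since $DR(x)v\in T_{R(x)}\Sigma'$ while $T_{R(x)}M = T_{R(x)}\Sigma'\oplus\RR\,G(R(x))$ (because $\Sigma'$ is a cross-section), this says exactly that $DR(x) = O^{\Sigma'}_{R(x)}\circ\big(D\phi_{t(x)}(x)\mid T_x\Sigma\big)$, where $O^{\Sigma'}_{y}\colon T_yM\to T_y\Sigma'$ is the projection parallel to $G(y)$; write $O^{\Sigma}_y$ for the analogous projection onto $T_y\Sigma$. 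I would also recall at this point that the cross-sections are taken subordinate to the stable foliation $\cW^s$ of Proposition~\ref{prop:Ws}, so that $E^s_x\subset T_x\Sigma$; hence $E^s_\Sigma(x)=E^s_x$, while $E^{cu}_\Sigma(x)=E^{cu}_x\cap T_x\Sigma$ is a complement of $\RR\,G(x)$ inside $E^{cu}_x$ (as $G(x)\in E^{cu}_x$, $G(x)\notin T_x\Sigma$), and $T_x\Sigma = E^s_\Sigma(x)\oplus E^{cu}_\Sigma(x)$.

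For invariance of the subbundles: if $v\in E^s_\Sigma(x)=E^s_x$, then $D\phi_{t(x)}(x)v\in E^s_{R(x)}\subset T_{R(x)}\Sigma'$ by the $D\phi_t$-invariance of the extended stable bundle over the positively invariant set $U$ (Proposition~\ref{prop:Ws}), so $O^{\Sigma'}_{R(x)}$ acts trivially and $DR(x)v = D\phi_{t(x)}(x)v\in E^s_\Sigma(R(x))$. Also $DR(x)$ is injective on $T_x\Sigma$, since $DR(x)v=0$ forces $D\phi_{t(x)}(x)v\in\RR\,G(R(x))$, hence $v\in\RR\,G(x)\cap T_x\Sigma=\{0\}$; a dimension count then yields $DR(x)\cdot E^s_\Sigma(x)=E^s_\Sigma(R(x))$ for every $x\in\Sigma$. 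The same argument works for $E^{cu}_\Sigma$ on $\Lambda$: for $x\in\Lambda\cap\Sigma$ and $u\in E^{cu}_\Sigma(x)\subset E^{cu}_x$ one has $D\phi_{t(x)}(x)u\in E^{cu}_{R(x)}$ (the splitting is $D\phi_t$-invariant on $\Lambda$ and $R(x)\in\Lambda$), and subtracting a multiple of $G(R(x))\in E^{cu}_{R(x)}$ keeps the vector in $E^{cu}_{R(x)}$, so $DR(x)u\in E^{cu}_{R(x)}\cap T_{R(x)}\Sigma'=E^{cu}_\Sigma(R(x))$; injectivity and a dimension count give equality.

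For the norm estimates: for a unit $v\in E^s_\Sigma(x)=E^s_x$ the projection is again trivial, so $\|DR(x)v\|=\|D\phi_{t(x)}(x)v\|\le\|D\phi_{t(x)}\mid E^s_x\|\le\lambda^{t(x)}$, which is strict since $t(x)>0$; this is the first bound. For the domination bound I would invert the formula on the center-unstable side: one checks directly that on $\Lambda$
\[
  \big(DR(x)\mid E^{cu}_\Sigma(x)\big)^{-1} \;=\; O^{\Sigma}_x\circ\big(D\phi_{-t(x)}(R(x))\mid E^{cu}_\Sigma(R(x))\big),
\]
and then, using $\|DR(x)\mid E^s_\Sigma(x)\|\le\|D\phi_{t(x)}\mid E^s_x\|$, that $E^{cu}_\Sigma(R(x))\subset E^{cu}_{R(x)}$, and the domination of the splitting $E^s\oplus E^{cu}$ for the flow,
\begin{align*}
  \big\|DR(x)\mid E^s_\Sigma(x)\big\|\cdot\big\|\big(DR(x)\mid E^{cu}_\Sigma(x)\big)^{-1}\big\|
  &\le \big\|O^{\Sigma}_x\mid E^{cu}_x\big\|\cdot\big\|D\phi_{t(x)}\mid E^s_x\big\|\cdot\big\|D\phi_{-t(x)}\mid E^{cu}_{R(x)}\big\|
  \\
  &\le \big\|O^{\Sigma}_x\mid E^{cu}_x\big\|\cdot\lambda^{t(x)}.
\end{align*}
The factor $\|O^{\Sigma}_x\mid E^{cu}_x\|$ is controlled by the reciprocal of the angle between $E^{cu}_\Sigma(x)$ and $\RR\,G(x)$ inside $E^{cu}_x$, which by the uniform transversality of $\Sigma$ to $G$ and compactness is bounded by a constant $K\ge1$ depending only on the cross-sections. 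Choosing the cross-sections small enough to be nearly orthogonal to $G$ — so $K$ is as close to $1$ as we wish — and recalling that return times are bounded below, this factor is absorbed, at worst into a slightly larger choice of $\lambda\in(0,1)$, giving the stated strict inequality.

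The step I expect to be the real obstacle is this last one. The invariance statements and the contraction of $E^s_\Sigma$ are just unwinding of definitions and of the flow's partial hyperbolicity, but the passage from the flow's domination (which is clean) to the Poincar\'e map's domination forces one to keep a uniform two-sided control of the non-orthogonal projections $O^{\Sigma},O^{\Sigma'}$ along $E^{cu}$. This is the only place where the geometry of the chosen cross-sections — their uniform transversality to $G$, their smallness, and a positive lower bound on return times — genuinely enters, and where one must be willing to fold a bounded constant into the hyperbolicity rate.
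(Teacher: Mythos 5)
The paper gives no direct proof of this proposition — it simply cites \cite[Proposition 4.1]{ArMel18} and \cite[Lemma 8.25]{AraPac2010} — so there is no in-paper argument to compare against. Your self-contained derivation is the standard one that those references use: differentiate $R=\phi_{t(\cdot)}$ to identify $DR(x)$ with the oblique projection $O^{\Sigma'}_{R(x)}$ (parallel to $G(R(x))$) composed with $D\phi_{t(x)}(x)\mid T_x\Sigma$, then feed the invariance of $E^s$ and $E^{cu}$ and the domination $\|D\phi_t\mid E^s\|\cdot\|D\phi_{-t}\mid E^{cu}\|\le\lambda^t$ through that formula. Your formula for $\bigl(DR\mid E^{cu}_\Sigma\bigr)^{-1}$ is correct (the $c\,G(x)$ term is killed by $O^\Sigma_x$), and you are right that the only unlabelled assumption one must make explicit is that the cross-sections are subordinate to the stable foliation, so that $E^s_x\subset T_x\Sigma$; without that, $E^s_\Sigma(x)=E^s_x\cap T_x\Sigma$ would drop a dimension and the first claim would be false.

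The one place you are under-selling your own worry is the final constant. On the $E^s$ side the oblique projection drops out entirely and you get the clean identity $\|DR\mid E^s_\Sigma(x)\|=\|D\phi_{t(x)}\mid E^s_x\|\le\lambda^{t(x)}$ (note: this is not strict just because $t(x)>0$; the strict inequality in the statement is presumably a harmless abuse). On the $E^{cu}$ side, however, the extra factor $\|O^\Sigma_x\mid E^{cu}_x\|$ genuinely survives, and the two geometric requirements you invoke to tame it — that $\Sigma$ be stable-subordinate (to make $E^s_x\subset T_x\Sigma$) and that $\Sigma$ be nearly orthogonal to $G$ (to make the projection nearly isometric) — pull in different directions, since $E^s$ is generically not orthogonal to $G$. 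The standard way out, consistent with the paper's footnote referencing \cite{Goum07}, is to work with an adapted Riemannian metric in which $E^s_x\perp E^{cu}_x$ and $G_x\perp E^{cu}_\Sigma(x)$ inside $E^{cu}_x$; then $O^\Sigma_x\mid E^{cu}_x$ is an orthogonal projection onto a complement of $G_x$ inside $E^{cu}_x$ and has norm $\le1$, so no absorption of constants into $\lambda$ is needed at all. Either make that metric choice explicit, or accept that the estimate produced is $\le C\lambda^{t(x)}$ (with a uniform $C$ from compactness and transversality), which can then be folded into a slightly worse $\lambda$ only by invoking a uniform lower bound on return times — an extra hypothesis that should then be stated. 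As it stands, the "choose cross-sections nearly orthogonal to $G$" step conflicts with the subordination you already used and is the one genuine gap.

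Everything else — the formula for $DR$, the invariance of both subbundles, the injectivity/dimension count, and the inversion formula for $DR\mid E^{cu}_\Sigma$ — is correct and is what the cited references do.
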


Given a cross-section $\Sigma$, $b>0$ and $x\in \Sigma$, the unstable
cone of width $b$ at $x$ is
\begin{align}
\label{eq:cusectioncone}
C^u_b(\Sigma,x)=\{v=v^s+v^u : v^s\in E^s_\Sigma(x),\, v^u\in E^{cu}_\Sigma(x)
\mbox{ and } \|v^s\| \le b \|v^u\| \}.
\end{align}

\begin{corollary}
\label{cor:cusectioncone}
There exists $b>0$ small enough so that, for each $R:\Sigma\to\Sigma'$
as in Proposition~\ref{pr:secaohiperbolica}, we have
$DR(x)\cdot C^u_b(\Sigma,x)) \subset
C^u_{b\lambda^{t(x)}}(\Sigma',R(x))$ for all $x \in\Sigma$.
\end{corollary}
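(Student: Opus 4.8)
The plan is to reduce the claim to a linear-algebra estimate for the derivative $DR(x)\colon T_x\Sigma\to T_{R(x)}\Sigma'$ and then run the classical invariant-cone argument, the contraction rate $\lambda^{t(x)}$ being supplied directly by the domination inequalities of Proposition~\ref{pr:secaohiperbolica} and the flow-direction issue being handled as in the proof of Proposition~\ref{prop:Ccu}. First I would fix $v=v^s+v^u\in C^u_b(\Sigma,x)$, with $v^s\in E^s_\Sigma(x)$, $v^u\in E^{cu}_\Sigma(x)$, $\|v^s\|\le b\|v^u\|$, and observe that since $E^s_\Sigma(x)\subset E^s_x$ and $E^{cu}_\Sigma(x)\subset E^{cu}_x$ this is also the $E^s\oplus E^{cu}$ splitting of $v$ in $T_xM$, so $C^u_b(\Sigma,x)\subset C^{cu}_b(x)$. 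By Proposition~\ref{pr:secaohiperbolica} the cross-sectional stable bundle is \emph{exactly} $DR$-invariant at every point of $\Sigma$, so $DR(x)v^s\in E^s_\Sigma(R(x))$, and combining the two inequalities stated there,
\begin{align*}
  \|DR(x)v^s\|\le\|DR\mid E^s_\Sigma(x)\|\cdot\|v^s\|
  \le b\,\lambda^{t(x)}\big\|\big(DR\mid E^{cu}_\Sigma(x)\big)^{-1}\big\|^{-1}\|v^u\|
  \le b\,\lambda^{t(x)}\|DR(x)v^u\|.
\end{align*}

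Next I would show that $DR(x)v^u$ stays inside a thin cone around $E^{cu}_\Sigma(R(x))$, the subtle point being that the extended bundle $E^{cu}_\Sigma$ is $DR$-invariant only along $\Lambda\cap\Sigma$. Here the argument mirrors the proof of Proposition~\ref{prop:Ccu}: since $v\in C^{cu}_b(x)$, one has $D\phi_{t(x)}(x)v\in C^{cu}_{\kappa\lambda^{t(x)}b}(R(x))$; passing from $D\phi_{t(x)}(x)v$ to $DR(x)v$ amounts to subtracting a multiple of the flow direction $G(R(x))$, which lies in $E^{cu}_{R(x)}$, so it leaves the $E^s_{R(x)}$-component unchanged and yields a vector of $T_{R(x)}\Sigma'$, in which — by \eqref{eq:splitting} — the ambient splitting restricts to $E^s_\Sigma\oplus E^{cu}_\Sigma$; and the uniform transversality of the cross-sections to $G$ keeps the central component from being collapsed by this flow-projection. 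Decomposing $DR(x)v^u=\xi^s+\xi^u$ relative to $E^s_\Sigma(R(x))\oplus E^{cu}_\Sigma(R(x))$, this gives $\|\xi^s\|\le\eta\,\|DR(x)v^u\|$ and $\|\xi^u\|\ge(1-\eta)\|DR(x)v^u\|$, where the ``leakage'' $\eta$ can be made as small as one wishes after taking $b$ small and shrinking the trapping region $U$ towards $\Lambda$, using that every orbit arc realizing a Poincaré map between cross-sections in $U$ stays in $U$.

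Finally I would assemble the pieces: writing $DR(x)v=DR(x)v^s+\xi^s+\xi^u$ one gets $\|\pi^s(DR(x)v)\|\le\|DR(x)v^s\|+\|\xi^s\|\le(b\lambda^{t(x)}+\eta)\|DR(x)v^u\|$ and $\|\pi^{cu}(DR(x)v)\|=\|\xi^u\|\ge(1-\eta)\|DR(x)v^u\|$, so that with $b$ small and $U$ squeezed onto $\Lambda$ so that $\eta$ is negligible against $b\lambda^{t(x)}$ we obtain $\|\pi^s(DR(x)v)\|\le b\lambda^{t(x)}\|\pi^{cu}(DR(x)v)\|$, i.e. $DR(x)v\in C^u_{b\lambda^{t(x)}}(\Sigma',R(x))$. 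The step I expect to be the main obstacle is precisely the control of the leakage $\eta$ — the non-invariance of the extended central bundle away from $\Lambda$ — which forces one to keep the cross-sections inside a trapping region close to $\Lambda$ (where the bundle is genuinely invariant); once that is granted, everything else is the standard cone computation, carried out in detail for flow-adapted cross-sections in \cite[Lemma~8.25]{AraPac2010} and \cite[Proposition~4.1]{ArMel18}.
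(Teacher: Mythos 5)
The paper's own proof of this corollary is merely a pointer to \cite[Proposition~3.1]{ArMel17}, so you are supplying an argument the author does not actually write out; your set-up (the domination estimates of Proposition~\ref{pr:secaohiperbolica}, the exact $DR$-invariance of $E^s_\Sigma$, and the ``leakage'' coming from $E^{cu}_\Sigma$ being $DR$-invariant only over $\Lambda\cap\Sigma$) is the right one. However, your final step has a genuine gap. You treat the leakage as a quantity $\eta$ that is independent of the return time, arrive at $\|\pi^s(DR(x)v)\| \le (b\lambda^{t(x)}+\eta)\|DR(x)v^u\|$, and then try to make $\eta$ ``negligible against $b\lambda^{t(x)}$'' by shrinking $b$ and $U$. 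This cannot close: shrinking $b$ shrinks the target cone width $b\lambda^{t(x)}$, making a fixed $\eta$ \emph{less} negligible, not more; and the Poincar\'e time $t(x)$ is not bounded above (orbit arcs can pass arbitrarily close to equilibria), so $\lambda^{t(x)}$ can fall below any fixed $\eta>0$ obtained by squeezing $U$ onto $\Lambda$.

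What is actually needed is that the leakage itself scales like $\lambda^{t(x)}$. Since $v^u\in E^{cu}_\Sigma(x)$ lies in the ambient cone $C^{cu}_a(x)$, Proposition~\ref{prop:Ccu} — which you quote and then drop — places $D\phi_{t(x)}(x)v^u$ in a cone of width $\kappa\lambda^{t(x)}a$ around $E^{cu}_{R(x)}$; intersecting that cone with $T_{R(x)}\Sigma'$ (the uniform transversality of cross-sections to the flow keeps this intersection well-conditioned) gives $\|\xi^s\|\le c(a)\,\lambda^{t(x)}\|\xi^u\|$ with a constant depending only on the ambient cone width $a$. Carrying this through yields a cone of width of order $(b+c(a))\lambda^{t(x)}$, and to collapse this back to exactly $b\lambda^{t(x)}$ one still needs to exploit the strict inequalities in Proposition~\ref{pr:secaohiperbolica} (or the freedom to increase $\lambda$ slightly, as in Subsection~\ref{sec:holder-control-tange}) together with a small ambient cone parameter $a$ — note that Proposition~\ref{prop:Ccu} itself is forced to carry a $\kappa$ factor in its conclusion for precisely this reason. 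Without the $\lambda^{t(x)}$ scaling in the leakage, the stated conclusion does not follow from your estimates.
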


\begin{proof}
  Cf. proof of \cite[Proposition 3.1]{ArMel17} which is similar to
  the proof of Proposition~\ref{prop:Ws}.
\end{proof}

%%%%%%%%%%%%%%%%%%%%%%%%%%%%%%%%%%%%%%%%%%%%%%%%%%%%%%%%%%%%%%%%%

\subsection{H\"older control of the tangent bundle in the
  center-unstable direction}
\label{sec:holder-control-tange}

We recall that we have continuous extensions of the two subbundles
$E^{s}$ and $E^{cu}$ defined on an isolating neighborhood $U$ of
$\Lambda$, and the respective cone fields $C_a^s(x), C_a^u(x), x\in U$
for a small $0<a<1$ which are invariant in the sense of
\eqref{eq:coneinv}.

We may assume without loss of generality that, up
to increasing the value of $\lambda > 0$ by a small amount and reducing
the neighborhood $U$ of $\Lambda$, a ``bunched domination condition''
holds true for vectors in these cone fields: there exists
$\zeta\in(0,1)$ so that 
\begin{align*}
  \|D\phi_t\cdot u \| \cdot \|D\phi_{-t} \cdot v\|^{1+\zeta}
  \le
  e^{-\lambda^t}\cdot \|u\|\cdot \|v\|,
  \quad\text{for  } t>0, x\in U, u\in C^s_a(x) \;\&\;
  v\in C^{cu}_a(\phi_tx).
\end{align*}
A $C^1$ disk $D$ on $M$, that is, the image of a $C^1$ embedding
$\psi:B(0,1)\subset\RR^{d_{cu}}\to M$ defined on the unit ball of an
Euclidean space, is a \emph{$cu$-disk} if
$T_yD\subset C^{cu}_a(y), y\in D$.

We fix $\rho_0>0$ so that the inverse of the exponential map $\exp_x$
is defined on the $\rho_0$ neighborhood of each point $x\in U$, which
we identify with the corresponding neighborhood $V_x$ of the origin
$0$ in $T_xM$, and $x$ with $0$.

We may assume without loss of generality that $E^s_x\subset C^s_a(y)$ for all
$y\in V_x$ so that, in particular, $E^s_x\cap C^{cu}_a(x)=\{0\}$.  If
$x\in D$ then $T_yD$ is given by the graph of the linear map
$A_x(y):T_xD\to E^s_x$ for each $y\in V_x\cap D$.

We say that \emph{the tangent bundle $TD$ is $(C,\zeta)$-H\"older} if
there are constants $C, \zeta >0$ such that $\|A_x(y)\|\le C d_x(y)^\zeta$ for
$y\in D\cap V_x$, where $d_x(y)$ is the intrinsic distance from $x$ to
$y$ within $D\cap V_x$\footnote{The length of the shortest curve
  connecting $x$ to $y$ inside $D\cap V_x$}.  Given a $cu$-disk $D$ we
write $\kappa(D)$ for the least $C>0$ so that the tangent bundle of
$D$ is $(C,\zeta)$-H\"older.

We recall the notation $f=\phi_1$ for the time-$1$ map of the flow of
$G$. Then we can prove the following, since $\Lambda$ is also a
partially hyperbolic attracting set for $f$.

\begin{proposition}{\cite[Proposition 2.2 \& Corollary 2.4]{ABV00}}
  \label{pr:curvature} There exists $C_1>0$ so that
  each $cu$-disk $D\subset U$ satisfies
  % $ \kappa(f(D)) \le \lambda_0 \cdot \kappa(D) + C_0$.
  \begin{enumerate}[(a)]
  \item there exists $n_0\ge 1$ such that $\kappa(f^n(D)) \le C_1$ for
    every $n\ge n_0$ such that $f^k(D) \subset U$ for all
    $0\le k \le n$; and, if $\kappa(D) \le C_1$, then $n_0= 1$;
\item
  $ J_k: f^k(D)\ni x \mapsto \log |\det \big(Df \mid_{T_x f^k(D)}\big)|$
  are $(L_1,\zeta)$-H\"older continuous for $0\le k \le n$ whenever
  $D$ and $n$ are as above, where $L_1=L_1(f,C_1)>0$ depends only on
  $C_1$ and $G$.
\end{enumerate}
\end{proposition}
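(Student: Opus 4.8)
Since $\Lambda$ is a partially hyperbolic attracting set for the diffeomorphism $f=\phi_1$, the statement is the bounded--distortion/tangent--bundle lemma of~\cite[\S2]{ABV00}, adapted to our adapted metric and to the bunched domination condition of Subsection~\ref{sec:holder-control-tange}. The core of the proof of~(a) is a graph--transform estimate for the H\"older norm of $TD$; part~(b) then follows by composing this H\"older control with the smoothness of $Df$.

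\textbf{Part (a): the recursion for $\kappa$.} Work in the exponential charts of size $\rho_0$. For a $u$-disk $D$ and $x\in D$, write $T_yD=\graph A_x(y)$ with $A_x(y)\colon T_xD\to E^s_x$, so $\|A_x(y)\|\le a$ by the cone condition and $\|A_x(y)\|\le\kappa(D)\,d_x(y)^\zeta$ by definition of $\kappa(D)$. By Proposition~\ref{prop:Ccu} (applied with $t=1$), $f(D)$ is again a $u$-disk, after re-decomposing $Df\cdot T_yD$ into its $E^s_{f(x)}\oplus E^{cu}_{f(x)}$ components (the extended $E^{cu}$ is not $Df$-invariant, exactly as in the proof of Proposition~\ref{prop:Ccu}). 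Writing the graph transform of $A_x$ under $Df$ and using that $f\in C^2$ — so that $Df$ restricted to the ($\kappa$-controlled curvature) disk $D$ is Lipschitz with constant depending only on $\sup_U\|D^2\phi_1\|$ and $\kappa(D)$ — one obtains, for a preimage $y'=f^{-1}(y)\cap D$,
\[
  \|A_{f(x)}(f(y))\|
  \le
  \|Df\mid E^s_x\|\cdot\big\|\big(Df\mid E^{cu}_x\big)^{-1}\big\|\cdot\|A_x(y')\|
  + C_0\, d_{f(x)}(f(y))^\zeta ,
\]
where $C_0$ absorbs the $C^2$-bound on $f$. Since $d_x(y')\le\big\|(Df\mid E^{cu}_x)^{-1}\big\|\cdot d_{f(x)}(f(y))$ and $\|A_x(y')\|\le\kappa(D)\,d_x(y')^\zeta$, the contraction factor for the slope term becomes $\|Df\mid E^s_x\|\cdot\big\|(Df\mid E^{cu}_x)^{-1}\big\|^{1+\zeta}$, which is $\le\lambda<1$ precisely by the bunched domination condition. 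Hence $\kappa(f(D))\le\lambda\,\kappa(D)+C_0'$, and iterating over the range where $f^k(D)\subset U$ gives $\kappa(f^n(D))\le\lambda^n\kappa(D)+C_0'/(1-\lambda)$. Taking $C_1$ slightly larger than $C_0'/(1-\lambda)$ (so that $\lambda C_1+C_0'\le C_1$) yields an $n_0=n_0(\kappa(D))$ with $\kappa(f^n(D))\le C_1$ for all $n\ge n_0$, and $n_0=1$ whenever $\kappa(D)\le C_1$.

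\textbf{Part (b): H\"older control of $J_k$.} On the compact set consisting of $U$ together with the Grassmannian of $d_{cu}$-planes lying in the cones $C^{cu}_a$, the function $(x,E)\mapsto\det\big(Df(x)\mid E\big)$ is $C^1$ and bounded away from $0$ (by Proposition~\ref{prop:Ccu} and $\sup_U\|D\phi_1^{\pm1}\|<\infty$), so $(x,E)\mapsto\log|\det(Df(x)\mid E)|$ is Lipschitz there. For $0\le k\le n$ with $\kappa(f^k(D))\le C_1$ (guaranteed by part~(a)), the base point map $x\mapsto f^k(x)$ on $f^k(D)$ is Lipschitz while $x\mapsto T_xf^k(D)$ is $(C_1,\zeta)$-H\"older; composing, $J_k$ is $\zeta$-H\"older with a constant $L_1$ depending only on $C_1$, $\sup_U\|D\phi_1^{\pm1}\|$ and $\sup_U\|D^2\phi_1\|$, i.e.\ only on $C_1$ and $G$.

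\textbf{Main obstacle.} The delicate point is the first step of~(a): extracting the contraction rate $\|Df\mid E^s\|\cdot\|(Df\mid E^{cu})^{-1}\|^{1+\zeta}\le\lambda$ from the bunched condition while keeping $C_0$ uniform over all $u$-disks. The latter requires a bootstrap, since $C_0$ a priori depends on $\kappa(D)$: one first checks that the range $\kappa\le C_1$ (for suitable $C_1$) is forward invariant under one step of $f$, which legitimizes a uniform $C_0$, and only then runs the contraction estimate. This is entirely parallel to~\cite[\S2]{ABV00} and transcribes without change once the adapted metric and bunched domination of Subsection~\ref{sec:holder-control-tange} are in force.
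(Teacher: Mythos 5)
Your proof is correct and reconstructs the graph--transform argument of \cite[Proposition~2.2 and Corollary~2.4]{ABV00}, which is precisely what the paper cites as proof; the recursion $\kappa(f(D))\le\lambda\kappa(D)+C_0'$ from the bunched domination condition and the composition argument for the H\"older continuity of $J_k$ are exactly the steps used there. One minor inaccuracy in your closing remark: the constant $C_0$ in the recursion is uniform over $u$-disks directly from the cone condition $\|A_x(y)\|\le a$ (a $C^0$-bound on slopes), the $C^2$-bound on $f$, and the H\"older modulus of the extended splitting on $U$, so no bootstrap through forward invariance of $\{\kappa\le C_1\}$ is actually required; this does not affect the validity of the main estimates.
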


\begin{remark} \label{rmk:flowdisk} For any small $\epsilon>0$, the
  family
  $\{\phi_{(-\epsilon,\epsilon)}\big[D\cap\exp_z\big(N_z^u\cap
  B(0,\rho_0)\big)\big]: z\in U\}$ of $cu$-disks is flow
  invariant. Then all of them have curvature bounded above by $C_1$.
\end{remark}

\subsection{Equivalences and implications between non-uniform
  sectional expansion versions}
\label{sec:non-uniform-section-2}

Here we prove Theorem~\ref{thm:NUSEvar} first and then
Theorem~\ref{thm:NUE-ASEA}.

\begin{proof}[Proof of Theorem~\ref{thm:NUSEvar}]
  % To obtain the (MASE) condition~\eqref{eq:MASE}
  We recall that the extension of $E^{cu}_\Lambda$ to $U$ is not
  necessarily $D\phi_t$-invariant and note that, given any
  $2$-subspace $F_x$ of $E^{cu}_x$, the map
  $t\mapsto |\det (D\phi_t\mid_{F_x})|$ is a multiplicative function
$
    |\det (D\phi_{t+s}\mid_{F_x})|
    =
    |\det (D\phi_s\mid_{D\phi_t\cdot F_x})| \cdot |\det (D\phi_t\mid_{F_x})|$,
    for all $t,s\ge0, x\in U$.

  In addition, by Proposition~\ref{prop:Ccu}, we get
  $D\phi_t\cdot E^{cu}_x\subset C^{cu}_{\kappa \lambda^t a}(\phi_tx)$
  for constants $\kappa,a>0$. Then the stable direction
  $E^s_{\phi_tx}$ is complementary to both the $D\phi_t\cdot E^{cu}_x$
  and $E^{cu}_{\phi_tx}$ directions at $\phi_tx$. Therefore there
  exists a natural isomorphism
  $\pi^s:(D\phi_t\cdot E^{cu}_x)\to E^{cu}_{\phi_tx}$ given by the
  projection parallel to $E^s_{\phi_tx}$. Hence,
  $\pi^s(D\phi_t\cdot F_x) = {\wh F}_{\phi_tx}\subset
  E^{cu}_{\phi_tx}$ and since the width $\kappa\lambda^t a$ of the
  center-unstable cone around $E^{cu}_{\phi_tx}$ is small for large
  $t>0$, then we obtain $\xi_t\to 1$ when $t\nearrow\infty$ such that
  for any fixed $s\ge0$
  \begin{align}\label{eq:approxmutl}
    \xi_t^{-1}
    |\det (D\phi_s\mid_{{\wh F}_{\phi_t x}})|
    &\le
      \frac{|\det (D\phi_{t+s}\mid_{F_x})|}{|\det (D\phi_t\mid_{F_x})|}
      \le
      \xi_t
      |\det (D\phi_s\mid_{{\wh F}_{\phi_t x}})|.
  \end{align}
  
  We first show that \textbf{(wASE)}, given by~\eqref{eq:wASE},
  implies the \textbf{(wNU2SE)} condition~\eqref{eq:sNUE}, i.e.,
  (i)$\implies$(A), when
  $d_{cu}=2$. By assumption, given any $2$-subspace
  $F_x\subset E^{cu}_x$ and sufficiently small $\epsilon>0$, there
  exists a strictly increasing sequence $t_k$ such that
  $t_k\nearrow\infty$ and $ \log|\det(D\phi_{t_k}\mid_{F_x})|
%\log\|\wedge^2(D\phi_{t_k}\mid_{E^{cu}_x})^{-1}\|
  \ge (c_0-\epsilon)t_k$. Since
  $|\det (D\phi_{[t_k]}\mid_{F_x})|=\prod_{i=0}^{[t_k]-1} \frac{|\det
    (D\phi_{i+1}\mid_{F_x})|}{|\det (D\phi_i\mid_{F_x})|}$
  we get\footnote{We write
    $[t]=\sup\{\ell\le t: \ell\in\ZZ^+\}$ for the integer part of
    $t\in\RR$.}
  from~\eqref{eq:approxmutl}
  \begin{align*}
    \left|\log|\det (D\phi_{t_k}\mid_{F_x})^{-1}|
    +\sum\nolimits_{i=0}^{[t_k]-1}\log|\det(Df\mid_{{\wh F}_{f^ix}})|\right|
    \le
    \sum\nolimits_{i=0}^{[t_k]}|\log\xi_i|.
  \end{align*}
  Hence, for all $k$ and each $2$-subspace $F_x\subset E^{cu}_x$,
  we can write
  \begin{align*}
    \sum\nolimits_{i=0}^{[t_k]-1}\log|\det(Df\mid_{F_{f^ix}})^{-1}|
    \le
    (-c_0+\epsilon) t_k + \sum\nolimits_{i=0}^{[t_k]-1}|\log\xi_i|.
  \end{align*}
  Since $\epsilon>0$ was arbitrary, we obtain
  $ \liminf\frac1n\sum\nolimits_{i=0}^{n-1}
  \log\|\wedge^2(Df\mid_{E^{cu}_{f^ix}})^{-1}\| \le -c_0, $ that is,
  (wNU2SE) if $\dim E^{cu}=2$. Analogously, we deduce that
  \textbf{(ASE)$\implies$(NU2SE)}, i.e., (i)$\implies$(A) after exchanging
  $\liminf$ with $\limsup$, and note that no assumption on
  non-convergence to equilibria was needed here. 

%%%%%%%%%%%%%%% P2

  We now easily show both \textbf{(NU2SE) $\implies$ (NUSE)} and
  \textbf{(wNU2SE) $\implies$ (wNUSE)}, i.e., (A)$\implies$(B), without restrictions on
  $d_{cu}$.
  % \footnote{Since $\sing_\Lambda(G)$ is formed by
  %   hyperbolic equilibria, it is a finite and zero volume subset, so
  %   such $x$ is $\leb$-generic.}
For a regular point $x\in U$, any $2$-subspace $F_x\subset E^{cu}_x$
containing $G(x)$ admits an orthonormal basis
$\{G(x)/\|G(x)\|,n(x)\}$\footnote{Recall that $E^{cu}_x\ni G(x)$ for
  $x\in U$ by construction; see
  Subsection~\ref{sec:non-uniformly-expand-1}.} and 
$\wh F_{f^ix}=\pi^s(Df^i_x\cdot F_x)$ admits an orthonormal basis
$\{G(f^ix)/\|G(f^ix)\|,n(f^ix)\}$ for $i\ge1$, where $n(f^ix)$ is a
unit vector on the line $\wh F_{f^ix}\cap G(f^ix)^\perp$\footnote{In
  general, we do not know if $E^{cu}$ is orientable, however the
  choice $\pm n(f^ix)$ is immaterial since we consider the modulus of
  the Jacobian in what follows}.  These choices ensure that
  \begin{align}\label{eq:exterior}
    |\det(D\phi_1\mid_{{\wh F}_{f^ix}})|
    =
    \det
    \begin{pmatrix}
      \|G(f^{i+1}x)\|/\|G(f^ix)\| & \star
      \\
      0 & \|P^1\cdot n(f^ix)\|
    \end{pmatrix}.
  \end{align}
  Let $\G_2(f^ix)$ be the compact family of all $2$-subspaces of
  $E^{cu}_{f^ix}$ which contain $G(f^ix)$.  From~\eqref{eq:exterior}
  we get
  \begin{align*}
    \sup\nolimits_{F\in\G_2(f^ix)}|\det(Df\mid_{F})^{-1}|
    =
    \big(\|G(f^{i}x)\|/\|G(f^{i+1}x)\|\big)\cdot\sup\nolimits_{n\in
    N^{cu1}_{f^ix}} \|P^1\cdot n\|^{-1},
  \end{align*}
  where $N^{cu1}_{f^ix}$ is the unit sphere on the normal bundle
  $N^{cu}_{f^ix}$ and $i\ge1$. Since this is a subset of all
  $2$-subspaces of $E^{cu}_x$, we get
  \begin{align}\label{eq:secLPF0}
    \frac{\|G(f^{i}x)\|}{\|G(f^{i+1}x)\|}
    \cdot \|(P^1\mid_{N^{cu}_{f^ix}})^{-1}\|
    \le
    \|\wedge^2(Df\mid_{E^{cu}_{f^ix}})^{-1}\|.
  \end{align}
  Let $\Gamma(x):= \log\|\wedge^2(Df\mid_{E^{cu}_{x}})^{-1}\|$ and
  $\psi(x):= \log \|(P^1\mid_{N^{cu}_{x}})^{-1}\|$ in what follows.
  Therefore
  \begin{align*}
    \sum\nolimits_{i=0}^{n-1} \Gamma(f^ix)
     +
    \sum\nolimits_{i=0}^{n-1}\log\frac{\|G(f^{i+1}x)\|}{\|G(f^ix)\|}
    % &\ge
    % \sum_{i=0}^{n-1}
    % \log\left(\frac{\|G(f^{i+1}x)\|}{\|G(f^ix)\|}
    % \|(P^1\mid_{N^{cu}_{f^i x}})^{-1}\|\right)
    % \\
      &\ge
        \sum\nolimits_{i=0}^{n-1}      \psi(f^ix),
        \quad n\ge1.
  \end{align*}
Since we assume that the future trajectory of $x$
does not converge to any equilibrium, we can use the following.

\begin{lemma}
  \label{le:medG1}
  We have
  $\limsup\frac1n\sum_{i=0}^{n-1}
  \log\frac{\|G(f^{i+1}x)\|}{\|G(f^{i}x)\|} = 0 = \limsup\frac1n
  \log\|G(f^nx)\|$ when $n\nearrow\infty$; and also
  $\limsup\frac1t \log\|G(\phi_tx)\|=0$ when $t\nearrow\infty$, for
  every $x\in U$ whose future trajectory does not converge to any
  equilibrium. In particular, it holds for $\leb$-a.e $x\in U$.
\end{lemma}

Hence, dividing by $n$ and taking $\limsup$ when $n\nearrow\infty$,
Lemma~\ref{le:medG1} together with~\eqref{eq:secLPF0} ensures that
\textbf{(NU2SE)$\implies$(NUSE)}. Analogously, using $\liminf$ provides
\textbf{(wNU2SE)$\implies$(wNUSE)} since, for all $\epsilon>0$ there exists
$N>1$ so that
$\frac1n\sum_{i=0}^{n-1}
\log\frac{\|G(g^{i+1}x)\|}{\|G(g^{i}x)\|}<\epsilon$ for all $n\ge N$,
by Lemma~\ref{le:medG1}. Thus, for any $\epsilon>0$ we get
  \begin{align*}
    \liminf_{n\nearrow\infty}
    \frac1n \sum\nolimits_{i=0}^{n-1} \psi(f^i x)
    \le
      \epsilon+\liminf_{n\nearrow\infty}
    \frac1n\sum\nolimits_{i=0}^{n-1} \Gamma(f^ix)
    \le\epsilon -c_0.
  \end{align*}
  Both \textbf{(wMASE) $\implies$ (wASEP)} and \textbf{(MASE)
    $\implies$ (ASEP)}, i.e., (a)$\implies$(b), follow from the
  analogous to~\eqref{eq:exterior} and~\eqref{eq:secLPF0}: we get for
  the family of $2$-subspaces $F$ of $E^{cu}_x$ with orthonormal basis
  $\{G(x),n(x)\}$ for some $n(x)\in N^{cu1}_x$
  \begin{align}\label{eq:ext-t}
    |\det(D\phi_t\mid_F)|
    =
    \det
    \begin{pmatrix}
      \|G(\phi_tx)\|/\|G(x)\| & \star
      \\
      0 & \|P^t\cdot n(x)\|
    \end{pmatrix}
  \end{align}
  and it follows, as before, that
$
    \frac{\|G(x)\|}{\|G(\phi_tx)\|}
    \cdot
    \|(P^t\mid_{N^{cu}_x})^{-1}\|
    \le
    \|\wedge^2(D\phi_t\mid_{E^{cu}_x})^{-1}\|.
 $
Then, for each $t>0$, we get
$
   \frac1t\left(\log\|\wedge^2(D\phi_t\mid_{E^{cu}_x})^{-1}\|
   +
   \log\frac{\|G(\phi_tx)\|}{\|G(x)\|}\right)
   \ge
   \frac1t\log\|(P^t\mid_{N^{cu}_x})^{-1}\|,
$
and the claimed relations follow just like above using
Lemma~\ref{le:medG1}.

Analogously, we obtain (b)$\implies$(i), that is, \textbf{(wASEP)
  $\implies$ (wASE) } and \textbf{(ASEP) $\implies$ (ASE)}. For any
$2$-subspace $F$ of $E^{cu}_x$ containing $G(x)$ and each fixed $t>0$,
we have~\eqref{eq:ext-t}. For $F$ not containing $G(x)$, we choose a
basis $\{v_i+\omega_i G(x), i=1,2\}$ of $F$, with
$\omega_1\omega_2\neq0$; and $\{v_i^t+\omega_i^tG(\phi_tx), i=1,2\}$
of $F_t:=D\phi_t(x)F$, such that $\{ v_1^t , v_2^t\}$ is
orthonormal. Thus,
$P^{-t}\big(v_i^t+\omega_i^tG(\phi_tx )\big)=v_i, i=1,2$ and
$ |\det(D\phi_t\mid_F)^{-1}|$ is given by
\begin{align*}
  \frac{\| \big(v_1+\omega_1G(x)\big) \wedge
  \big(v_2+\omega_2G(x)\big)\|}{\|\big(v_1^t+\omega_1^tG(\phi_tx)\big)\wedge\big(v_2^t+\omega_2^tG(\phi_tx)\big)\|}
=
  \|(P^{-t}v_1^t) \wedge (P^{-t}v_2^t)\|
  \le
  \|(P^t\mid_{N^{cu}_x})^{-1}\|^2.
\end{align*}
Then
$ |\det(D\phi_t\mid_F)^{-1}| \le \max\left\{
  \frac{\|G(x)\|}{\|G(\phi_tx)\|}\|(P^t\mid_{N^{cu}_x})^{-1}\| ,
  \|(P^t\mid_{N^{cu}_x})^{-1}\|^2 \right\}=\max\{a_t,b_t\}$.
From (wASEP) and Lemma~\ref{le:medG1}, for any $\epsilon>0$ we have
$T>0$ so that  $\|G(\phi_tx)\|>e^{-\epsilon t}$ for all $t>T$, and
then
$\liminf\frac1t\log a_t$ satisfies
\begin{align*}
\liminf_{t\nearrow\infty}\frac1t\log\Big(\frac{\|G(x)\|}{\|G(\phi_tx)\|}
\|(P^t\mid_{N^{cu}_x})^{-1}\|\Big)
  \le
  \epsilon+
\liminf_{t\nearrow\infty}\frac1t\log\|(P^t\mid_{N^{cu}_x})^{-1}\|
\le -c_0+\epsilon.
\end{align*}
Since $\epsilon>0$ is arbitrary, we see that for any sequence
$t_n\nearrow\infty$ there exists $N>1$ so that for all $n>N$ we have
$|\det(D\phi_{t_n}\mid_F)^{-1}|\le a_{t_n}$ and so
\begin{align*}
\limsup_{n\nearrow\infty}\frac1n\log |\det(D\phi_t\mid_F)|
  &=
    \liminf_{n\nearrow\infty}\frac1n\log |\det(D\phi_t\mid_F)^{-1}|
    \le -c_0.
  % \\
  % &=
  %   \min\Big\{\liminf_{n\nearrow\infty}\frac1n\log a_n ;
  %   \liminf_{n\nearrow\infty}\frac1n\log b_n \Big\};
\end{align*}
Since $F$ is an arbitrary $2$-subspace of $E^{cu}_x$, we have obtained
(wASE), as claimed. If we assume (ASEP), then from
Lemma~\ref{le:medG1} we have
\begin{align*}
  \limsup_{t\nearrow\infty}\frac1t\log a_t
  &\le
 \liminf_{t\nearrow\infty}\frac1t\log\frac{\|G(\phi_tx)\|}{\|G(x)\|}
  +
  \limsup_{t\nearrow\infty}\frac1t\log\|(P^t\mid_{N^{cu}_x})^{-1}\|\le -c_0.
  % \\
  % &\le
  %   \limsup_{t\nearrow\infty}\frac1t\log\|(P^t\mid_{N^{cu}_x})^{-1}\|
  %   \le
  %   -c_0.
\end{align*}
Hence, by the same reasoning as above, we conclude (ASE)
\begin{align*}
  \liminf_{n\nearrow\infty}\frac1n\log |\det(D\phi_t\mid_F)|
  &=
    \limsup_{n\nearrow\infty}\frac1n\log |\det(D\phi_t\mid_F)^{-1}|
    \le -c_0.
\end{align*}
We can now show that (i)$\implies$(ii), that is,
\textbf{(wASE)$\implies$(wPSL)} and
\textbf{(ASE)$\implies$(PSL)}. Indeed, given any
$v\in E^{cu}_x\setminus \RR\cdot G(x)$, we let $F$ be the $2$-subspace
of $E^{cu}_x$ generated by $\{G(x),n(x)\}$, where $n(x)\in N^{cu}_x$
is the orthogonal projection of $v$. Then, assuming (wASE), given
$\epsilon>0$, there exists $t_n\nearrow\infty$ so that
\begin{align*}
  e^{\epsilon t_n}\|P^{t_n}n(x)\|
  \ge
  \frac{\|G(\phi_{t_n}x)\|}{\|G(x)\|}\|P^{t_n}n(x)\|
  =
  |\det(D\phi_{t_n}\mid_F)|
  \ge
  e^{(c_0-\epsilon)t_n}
\end{align*}
from~\eqref{eq:ext-t} and Lemma~\ref{le:medG1}. By definition of the
Linear Poincar\'e Flow, since  $v=n(x)+\omega G(x)$ with 
$\omega\in\RR$, then there exits $\eta>0$ so that
\begin{align*}
  \|D\phi_{t_n}(x)v\|^2
  =
  \|P^{t_n}n(x)+\eta G(\phi_{t_n}x)\|^2
  =
  \|P^{t_n}n(x)\|^2+\|\eta G(\phi_{t_n}x)\|^2
  \ge
  \|P^{t_n}n(x)\|^2.
\end{align*}
Altogether we obtain $\|D\phi_{t_n}(x)v\|\ge
e^{(c_0-2\epsilon)t_n}$. Since $\epsilon>0$ is arbitrary, we get
\textbf{(wPSL)}. Assuming now \textbf{(ASE)}, that is
$\liminf\frac1T\log|\det(D\phi_T\mid F)|\ge c_0$, then we obtain the
same estimates above for all times $t$ larger than some $T>0$ in the
place of $t_n$. We thus conclude that
$\|D\phi_{t}(x)v\|\ge e^{(c_0-2\epsilon)t}$ for all $t>T$. Hence, we
get $\liminf\frac1t\log\|D\phi_{t}(x)v\|\ge c_0$, i.e, (PSL).

% For \textbf{(wPSL)$\implies$(wASE)}, we note that for any $2$-subspace
% $F$ of $E^{cu}_x$ which contains $G(x)$, we can find
% $n(x)\in N^P{cu}_x$ so that $\{G(x),n(x)\}$ is a basis for $F$. Then
% for any sequence $t_n\nearrow\infty$ we have the middle equality
% of~\eqref{eq:middle}.
 
Finally, for (A)$\implies$(a), that is, \textbf{(wNU2SE) $\implies$
  (wMASE)} and \textbf{(NU2SE) $\implies$ (MASE)}: these follow from
the subadditivity of the continuous map
$(t,x)\in\RR^+\times U
\mapsto\psi_t(x)=\log\|\wedge^2(D\phi_t\mid_{E^{cu}_x})^{-1}\|$
together with the relative compactness of $U$.  Indeed, for any real
$T>0$ we have
  \begin{align*}
    % \lim_{T\nearrow\infty}\log\|D\phi_T\cdot v\|^{1/T}
    % =
    \frac{\log\|\wedge^2(D\phi_T\mid_{E^{cu}_x})^{-1}\|}T
    &\le
     \frac{\log\|\wedge^2(D\phi_{T-[T]}\mid_{E^{cu}_{f^{[T]}x}})^{-1}\|}T
      +
      \sum_{i=0}^{[T]-1}\frac{ \log\|\wedge^2(Df\mid_{E^{cu}_{f^ix}})^{-1}\|}T
    \\
    &\le
      \frac{[T]}T\cdot\frac1{[T]}\sum\nolimits_{i=0}^{[T]-1}
      \Gamma(f^ix) + \frac2T\log L,
  \end{align*}
  where
  $L:=\sup\{ \log\|\wedge^2(D\phi_t\mid_{E^{cu}_x})^{-1}\| : x\in U, 0\le
  t\le1\}$. So letting $T\nearrow\infty$ shows that the discrete time
  average of $\Gamma$ is larger than the asymptotic average, as
  claimed.

  This completes the proof of Theorem~\ref{thm:NUSEvar}, except for
  the proof Lemma~\ref{le:medG1}, which we present next.
\end{proof}

  \begin{proof}[Proof of Lemma~\ref{le:medG1}]
    The sum in the statement is telescopic, so it becomes
    \begin{align}\label{eq:telescop}
        \limsup\nolimits_{n\nearrow\infty}
        (\log\|G(f^nx)\|-\log\|G(x)\|)/n
        =
        \limsup\nolimits_{n\nearrow\infty}
        \log\|G(f^nx)\|^{1/n}.
    \end{align}
    % Since $\RR\cdot G$ defines an invariant subbundle over
    % $U\setminus\sing_\Lambda(G)$ and $D\phi_t\cdot G(x)=G(\phi_tx)$ by
    % definition of flow of $G$, we have
    % \begin{align*}
    %   \sum_{i=0}^{m-1}\log\frac{\|G(g^{i+1}x)\|}{\|G(g^{i}x)\|}
    %   &=
    %     \sum_{i=10}^{m-1}\log\left\|
    %     Dg\cdot \frac{G(g^{i}x)}{\|G(g^{i}x)\|}
    %     \right\|
    %     =    
    %     \sum_{i=0}^{m-1}\log\frac{\|Dg^{i+1}\cdot G(x)\|}{\|Dg^{i}\cdot
    %     G(x)\|}
    %   \\
    %   &=
    %     \log\|Dg^m\cdot G(x)\| - \log\|G(x)\|.
    % \end{align*}
    % Therefore, we arrive at
    % \begin{align*}
    %         %   \limsup_{m\nearrow\infty}\frac1m
    %         %   \log\frac{\|G(g^ix)\|}{\|G(g^{i-1}x)\|}
    %         %   &=
    %     \limsup_{m\nearrow\infty}
    %     \log\|Dg^m\cdot G(x)\|^{1/m}
    %     =
    %     \limsup_{m\nearrow\infty}
    %     \log\|G(g^mx)\|^{1/m}.
    % \end{align*}
    Since $\|G\|$ is bounded above, we get~\eqref{eq:telescop}$\le\limsup_{t\nearrow\infty}\frac1t\log\|G(\phi_tx)\|\le0$. Hence,
    it is enough to find the same lower bound.

    Let us assume that the future trajectory of $x$ does not converge
    to an equilibrium.  We argue by contradiction: if there exists
    $\epsilon_0>0$ so that
    $ \limsup_{t\nearrow\infty} \log\|G(\phi_tx)\|^{1/t}<-\epsilon_0$,
    then we can find $T=T(x,\epsilon_0)>1$ so that for all $t>T$ we
    get $ \|G(\phi_tx)\| \le e^{-\epsilon_0 t}$. This implies that $\phi_tx$
    is converging to the finite subset $\sing_\Lambda(G)$, and so
    converges to some equilibrium of this set. This contradiction
    shows that $\limsup\frac1t\log\|G(\phi_tx)\|=0$. The same argument
    replacing a real $t\nearrow\infty$ with an integer
    $n\nearrow\infty$ completes the proof of the first statement of
    the lemma.
   
    Let $S(\Omega)\subset\sing_\Lambda(G)$ be the collection of
    equilibria accumulated by the future trajectories of points of
    $\Omega$. Then $\sigma\in S(\Omega)$ cannot be a sink (otherwise
    trajectories converging to $\sigma$ cannot have sectional
    expansion) nor a source.  Since $S(G)$ consists of finitely many
    hyperbolic fixed points of saddle-type, then every approach of the
    trajectory to any element $\sigma\in S(G)$ is followed by a
    departure from $\sigma$, except for the points of the stable
    manifold of $\sigma$. But, in this setting, the union
    $\cup_{\sigma\in S(G)}W^s_\sigma$ of the local stable manifolds
    has zero volume. Indeed, the stable manifold of a hyperbolic
    critical element in an immersed submanifold \cite{PM82}, so it
    has zero volume as a subset of the ambient manifold. Hence,
    $U \setminus\cup \{W^s_\sigma:\sigma\in\sing_\Lambda(G)\}$ has
    full volume in $U$.

    Thus, the previous convergence of $f^nx$ or
    $\phi_tx$ to $\sing_\Lambda(G)\supset S(\Omega)$ is impossible on
    a positive Lebesgue measure subset. This completes the proof of
    the lemma.
  \end{proof}

%%%%%%%%%%%%%%%%%%%%%%%%%%%%%%%%%%%%%%

\begin{proof}[Proof of Theorem~\ref{thm:NUE-ASEA}]
  Let $\Gamma(x):=\log\| \wedge^2(Df\mid_{E^{cu}_x})^{-1}\|$ and set
  $E:=\{x\in U:$~\eqref{eq:NUEsal} holds for $x\}$. If
  $\mu(E)=1$ for some ergodic invariant measure $\mu$ for the flow, we
  obtain
  $ \mu(\Gamma) =
  \lim_{t\nearrow\infty}\frac1t\int\Gamma(\phi_tx)\,d\mu(x) = \int
  \lim_{t\nearrow\infty}\frac1t\Gamma(\phi_tx)\,d\mu(x) <-\omega $
  from Kingman's Subadditive Ergodic Theorem, since the limit exists
  and coincides with the limit inferior for $\mu$-a.e. $x$. But by
  subadditivity
  $\int\log\|\wedge^2(D\phi_t\mid_{E^{cu}_x})^{-1}\|\,d\mu(x)$ is
  bounded above by
  \begin{align*}
    &\int\log\| \wedge^2(D\phi_{t-[t]}\mid_{E^{cu}_{\phi_{[t]}x}})^{-1}\| \,d\mu(x)
   +
      \int\log\|\wedge^2(D\phi_{[t]}\mid_{E^{cu}_x})^{-1}\|\,d\mu(x)
    \\
    &\le
      \sup_{x\in U}\sup_{s\in[0,1]}\log\| \wedge^2(D\phi_s\mid_{E^{cu}_x})^{-1}\|
      +
      \sum\nolimits_{i=0}^{[t]-1}\int\Gamma(f^ix)\,d\mu(x)
      = C + [t]\mu(\Gamma)
  \end{align*}
  where $f:=\phi_1$, $C$ is a constant depending on the flow,
  and we used that $\mu$ is $f$-invariant. Therefore, again by the
  Subadditive Ergodic Theorem 
  \begin{align*}
    \int
    \lim_{t\nearrow\infty}\frac1t
    \log\|\wedge^2(D\phi_t\mid_{E^{cu}_x})^{-1}\|\,d\mu(x)
    &=
    \lim_{t\nearrow\infty}\frac1t
    \int\log\|\wedge^2(D\phi_t\mid_{E^{cu}_x})^{-1}\|\,d\mu(x)
    \\
    &\le
    \limsup_{t\nearrow\infty}\frac{[t]}t\cdot\mu(\Gamma)
    <  -\omega.
  \end{align*}
  Since we assume that the above holds for any invariant ergodic
  probability measure $\mu$ for the flow, we are in the conditions of
  the following result.
  
  \begin{proposition}{\cite[Corollary 4.2]{ArbSal2011}}
    \label{pr:arbsal}
    Let $\{t\mapsto f_t:\Lambda\to \RR\}_{t\in \RR}$ be a continuous
    family of continuous function which is subadditive and suppose
    that, for every invariant probability measure $\mu$ for the flow,
    the limit $ \wt{f}(x):=\lim_{t\nearrow\infty}\frac1t f_t(x)$
    defined for $\mu$-a.e. $x$ satisfies $\int \wt{f}\, d\mu<0.$ Then
    there exist constants $T>0$ and $\lambda>0$ such that for every
    $x\in \Lambda$ and every $t>T$ we have
    $f_t(x)\leq -\lambda \cdot t$.
\end{proposition}

If we set $f_t(x):=\log\|\wedge^2(D\phi_t\mid_{E^{cu}_x})^{-1}\|$ we
obtain $T_u,\lambda_u>0$ such that 
\begin{align*}
  \|\wedge^2(D\phi_t\mid_{E^{cu}_x})^{-1}\|\le e^{-\lambda_u t}
  \quad\text{for
  $t\ge T_u$ and all $x\in \Lambda$}.
\end{align*}
Analogously, if we instead set $f_t(x)=\log\|D\phi_t\mid_{E^{cs}_x}\|$
and reapply the same reasoning, we conclude the existence of
$T_s,\lambda_s>0$ so that
$\|D\phi_t\mid_{E^{cs}_x}\|\le e^{-\lambda_s t}$ for all $x\in\Lambda$
and $t\ge T_s$.

This ensures that the compact set $\Lambda$ with the invariant
continuous splitting $E^{cs}\oplus E^{cu}$ is a sectional-hyperbolic
set, and concludes the proof of the first statement of the theorem.

For the second statement, we write
$\psi_\delta(x):=\int_0^\delta\Gamma(\phi_tx) \, dt$ where
$\delta=1/m>0$ for some $m>1$ so large that for $0<\zeta<\omega/4$ we get
$
\exp\big(\Gamma(\phi_\delta x)-\Gamma(x)\big)
% \frac{\| \wedge^2(D\phi_1\mid_{E^{cu}_{\phi_{\delta}x}})^{-1}\|}
%     {\| \wedge^2(D\phi_1\mid_{E^{cu}_x})^{-1}\|}
\le 1+\zeta$ and so
$|\Gamma(\phi_\delta x)- \Gamma(x)|\le \log(1+\zeta) < \omega/4 $ for
all $x\in U$.  Then $\psi_\delta(x)=\Gamma(x)+\xi(x)$ where
$\xi:U\to(-\zeta,\zeta)$ is a continuous function.

  \begin{lemma}
    \label{le:contdiscrete}
    Let us consider a point $x\in U$ and a sequence
    $T_n\nearrow\infty$ satisfying 
  \begin{align}\label{eq:Tn}
    \int_0^{T_n}\Gamma(\phi_tx) \, dt < -\omega T_n
  \end{align}
  for all large $n>1$. Then there exists $0\le \wh{j}<m$ so that for
  $n$ large enough, with $\ell=\ell_n=[T_n/\delta]/m$ and
  $g:=\phi_\delta$, we have
  $ \sum\nolimits_{i=0}^{\ell-1}\Gamma(f^ig^{\wh{j}}x) < -\ell
  \omega/4m.  $
  \end{lemma}
  
  \begin{proof}
    We rewrite the integral in~\eqref{eq:Tn} as follows
    \begin{align}
      \label{eq:intGamma}
      \int_0^{T_n}\Gamma(\phi_tx)\,dt
      =
         \int_{[T_n]}^{T_n-\delta[T_n/\delta]}\Gamma(\phi_tx) \, dt
      +\sum\nolimits_{i=0}^{[T_n/\delta]-1}\psi_\delta(g_\delta^ix).
    \end{align}
    Since the integrand $\Gamma$ is uniformly bounded by a constant
    $C$ and $T_n-\delta[T_n/\delta]<\delta$,
    can rewrite~\eqref{eq:Tn} as
$
      \sum\nolimits_{i=0}^{[T_n/\delta]-1}\psi_\delta(g^ix)
      <
      -\omega T_n + C\delta
      <
      (-\omega + C\delta/T_n)T_n
      < - [T_n]\omega/2
 $
    for all $n$ large enough. We group iterates that are time-$1$
    apart rewriting the summation as follows: we let
    $[T_n/\delta]=\ell m$ for some $\ell=\ell_n>1$ and 
    \begin{align*}
      \sum\nolimits_{j=0}^{m-1}
      \sum\nolimits_{i=0}^{\ell-1}\psi_\delta(f^ig^jx)
      <
      - [T_n/\delta]\delta \omega/2
      =
      - \ell \omega/2.
    \end{align*}
    So, one of the inner sums is negative.  More precisely, there
    exists $\wh{j}=\wh{j}(x)\in\{0,\dots,m-1\}$ so that
$
      \sum_{i=0}^{\ell-1}\psi_\delta(f^ig^{\wh{j}}x)
      <
      -\ell \omega/2m.
 $
    By the choice of $\delta$ we obtain
    \begin{align*}
      \sum\nolimits_{i=0}^{\ell-1}
      \big(\Gamma(f^ig^{\wh{j}}x)\pm\xi(f^ig^{\wh{j}}x)\big)
     <
      -\ell \omega/2m
      \implies
      \sum\nolimits_{i=0}^{\ell-1}\Gamma(f^ig^{\wh{j}}x)
      <
      -\ell \omega/2m+\zeta
      <
      -\ell \omega/4m
    \end{align*}
    for $n$ large enough, with $\ell=\ell_n=[T_n/\delta]/m$, as in the
    statement of the lemma.
  \end{proof}  
%  We set $F:=\{ x\in U$:~\eqref{eq:NUEsal} holds for $x\}$
  % \begin{align*}
  %   F:=\left\{
  %   x\in U:
  %   \limsup_{T\nearrow\infty}
  %   \frac1T\int_0^T \log\| \wedge^2(D\phi_1\mid
  %     E^{cu}_{\phi_tx})^{-1}\| \, dt < -\omega
  %     \right\}
  % \end{align*}
  We define
$
    E_j
    :=
    \left\{x\in U:
    \liminf\nolimits_{\ell\nearrow\infty}
    \sum\nolimits_{i=0}^{\ell-1}\Gamma(f^ig^jx)^{1/\ell} < -\omega/4m
    \right\}
    $
for each $0\le j < m$.
  If we assume that $\leb(E)>0$, then from Lemma~\ref{le:contdiscrete}
  we have that $E\subset\cup_{j=0}^{m-1}E_j$ and so there exists
  $\wh{j}\in\{0,\dots,m-1\}$ such that $\leb(E_{\wh{j}})>0$. By
  smoothness of $f$, the set $\Omega=g^{-\wh{j}}E_{\wh{j}}$ satisfies
  $\leb(\Omega)>0$ and every $x\in \Omega$ is such that
  \begin{align*}
    \liminf_{n\nearrow\infty} \frac1n\sum\nolimits_{i=0}^{n-1}
    \Gamma(f^ix)%\log\|\wedge^2(D\phi_1\mid_{E^{cu}_{f^ix}})^{-1}\|^{1/n}
    \le
    -\omega/4m.
  \end{align*}
This is (wNU2SE) with $c_0=\omega/4m$, completing the proof of the
first part of the second statement of Theorem~\ref{thm:NUE-ASEA}.

Exchanging $\liminf$ by $\limsup$ in $E$ and $E_j$ enables us to
follow the same reasoning to show that~\eqref{eq:NUEsal+}, on a
positive volume subset of $U$, implies (NU2SE) on a positive volume
subset of $U$ with a slightly smaller rate $c_0=\omega/m$.  This
completes the proof of Theorem~\ref{thm:NUE-ASEA}.
\end{proof}

%   \hrulefill

% \textbf{vale a reciproca no teorema acima ??}
  
%   whose trajectory does not converge to an equilibrium in
%   the future, ensures (NU2SE) given by~\eqref{eq:NU2SE} on the same
%   trajectory.

% , with the exception of the proof of
%   Lemma~\ref{le:medG0}, which we provide next.

%   For the reciprocal statement of Theorem~\ref{thm:NUE-ASEA},
% given
%   $x\in U$ satisfying (wNU2SE), i.e. condition~\eqref{eq:sNUE}, then
%   from~\eqref{eq:intGamma} with $T_n=n$ we obtain
%   \begin{align*}
%     \int_0^n\Gamma(\phi_tx)\,dt
%     <
%     C\delta - \sum_{i=0}^{n-1}\phi_1(f^ix)
%     =
%     n\left( \frac{C\delta}{n} -
%     \frac1n\sum_{i=0}^{n-1}(\Gamma(f^ix)+\xi(f^ix))
%     \right).
%   \end{align*}
%   For a subsequence $n_k\nearrow\infty$ and for each
%   $\epsilon>0$ we can find $K>1$ so that for $k\ge K$
%   \begin{align*}
%     \frac1{n_k} \int_0^{n_k}\Gamma(\phi_tx)\,dt
%     <
%     \epsilon + (-c_0+\epsilon) +\frac{c_0}2
%     =
%     -\frac{c_0}2 +2\epsilon...
%   \end{align*}

% \hrulefill

%%%%%%%%%%%%%%%%%%%%%%%%%%%%%%%%%%%%%%%%%%%%%%%%%%%

\section{Hyperbolic times and center-unstable disks}
\label{sec:hyperb-times-center}

In this section we start the proof of Theorem~\ref{mthm:discretefabv}.
We only use the domination of the splitting and hyperbolic times along
the sectional center-unstable direction. We assume that (NUSE) hold
for a positive Lebesgue measure subset $\Omega$ as in~\eqref{eq:NUE};
see Theorem~\ref{thm:NUSEvar}.

\subsection{Hyperbolic (Pliss) times for the Linear Poincar\'e Flow}
\label{sec:hyperb-pliss-times}

The following is a very useful tool %introduced by Pliss in \cite{Pl72}
which enables us to use \emph{hyperbolic times}.

\begin{lemma}{\cite[Lemma 3.1]{ABV00}}
  \label{le:pliss} Let $A\ge c_2 > c_1$ be real numbers and
  $\zeta={(c_2-c_1)}/{(A-c_1)}$. Given real numbers $a_1,\ldots,a_N$
  satisfying
  \begin{align*}
 \sum\nolimits_{j=1}^N a_j \ge c_2 N
\qand a_j\le A \;\;\mbox{for all}\;\; 1\le j\le N,
  \end{align*}
  there are $\ell>\zeta N$ and $1<n_1<\ldots<n_\ell\le N$ such that
  \begin{align*}
    \sum\nolimits_{j=n+1}^{n_i} a_j \ge c_1\cdot(n_i-n) \;\;\mbox{for
    each}\;\; 0\le n < n_i, \; i=1,\ldots,\ell.
  \end{align*}
\end{lemma}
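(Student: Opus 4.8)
The plan is to translate the statement into a property of the partial sums of the sequence and then to count ``record times''. Set $S_0=0$ and $S_k=\sum_{j=1}^k(a_j-c_1)$ for $1\le k\le N$, so that $\sum_{j=n+1}^{n_i}a_j\ge c_1(n_i-n)$ is equivalent to $S_{n_i}\ge S_n$. Thus the conclusion is exactly that the set of \emph{record times}
\[
  \mathcal{R}=\bigl\{\,k\in\{1,\dots,N\}\ :\ S_k\ge S_m\ \text{for all } 0\le m<k\,\bigr\}
\]
has $\ell:=\#\mathcal{R}>\zeta N$; the $n_i$ are then the elements of $\mathcal{R}$ listed in increasing order. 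The hypotheses supply the two facts I will use: $S_k-S_{k-1}=a_k-c_1\le A-c_1$ for every $k$, and $S_N=\sum_{j=1}^N a_j-c_1N\ge(c_2-c_1)N$.

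\textbf{The increment bound between consecutive records.} Write $n_0:=0$. I claim that, for each $i\in\{1,\dots,\ell\}$,
\[
  S_m\le S_{n_{i-1}}\qquad\text{for all } n_{i-1}\le m<n_i .
\]
For $i=1$ this reads $S_m\le 0$ for $0\le m<n_1$, which follows by induction on $m$: if $m\notin\mathcal{R}$ then $S_m<S_{m'}$ for some $m'<m$, and $S_{m'}\le 0$ by the inductive hypothesis (with $S_0=0$). For $i\ge2$, suppose instead that $S_m>S_{n_{i-1}}$ for some $m$ with $n_{i-1}<m<n_i$, and let $m^\ast$ be the \emph{smallest} index in $[n_{i-1},n_i-1]$ attaining $\max_{n_{i-1}\le m<n_i}S_m$. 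Then $S_{m^\ast}\ge S_{m'}$ for every $m'<m^\ast$ --- using $n_{i-1}\in\mathcal{R}$ when $m'<n_{i-1}$, and maximality together with minimality of $m^\ast$ when $n_{i-1}\le m'<m^\ast$ --- so $m^\ast\in\mathcal{R}$, contradicting that $n_{i-1}$ and $n_i$ are consecutive records. Taking $m=n_i-1$ in the claim, $S_{n_i}-S_{n_{i-1}}\le S_{n_i}-S_{n_i-1}=a_{n_i}-c_1\le A-c_1$.

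\textbf{Telescoping.} Choose $n_\ell$ to be the largest index at which $S$ attains its maximum over $\{0,\dots,N\}$; since $S_N>0$ this index lies in $\{1,\dots,N\}$, belongs to $\mathcal{R}$, and is the largest element of $\mathcal{R}$, so it is consistent with the labelling above. Telescoping the bound just proved,
\[
  S_{n_\ell}=\sum_{i=1}^{\ell}\bigl(S_{n_i}-S_{n_{i-1}}\bigr)\le(A-c_1)\,\ell ,
\]
while $S_{n_\ell}=\max_{0\le k\le N}S_k\ge S_N\ge(c_2-c_1)N$. Hence $(A-c_1)\ell\ge(c_2-c_1)N$, i.e. $\ell\ge\zeta N$; and equality is impossible unless $a_j=A$ for all $j$ (which forces $\mathcal{R}=\{1,\dots,N\}$ and $\zeta=1$, the degenerate case excluded by the statement), so in fact $\ell>\zeta N$.

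\textbf{Main obstacle.} The step I expect to be the real obstacle is the combinatorial claim of the second paragraph: that between two consecutive record times the partial sums never rise above the value at the earlier record, equivalently that any ``local maximum'' of $S$ in such a gap would automatically be a global record. Granting this, the telescoping and the comparison of $S_{n_\ell}$ with $S_N$ are immediate, and the remaining ingredient $S_k-S_{k-1}\le A-c_1$ is just the hypothesis $a_k\le A$.
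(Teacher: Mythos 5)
Your overall approach --- passing to the partial sums $S_k=\sum_{j\le k}(a_j-c_1)$, characterizing the $n_i$ as record times of $(S_k)$, proving that between consecutive records the sums never exceed $S_{n_{i-1}}$ so that $S_{n_i}-S_{n_{i-1}}\le a_{n_i}-c_1\le A-c_1$, and telescoping against $S_{n_\ell}\ge S_N\ge(c_2-c_1)N$ --- is precisely the standard proof of Pliss' lemma (this is how the argument goes in Ma\~n\'e's and Alves--Bonatti--Viana's versions, which the paper cites in lieu of a proof), and the combinatorial claim in your second paragraph is argued correctly.

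The one soft spot is the final strict inequality. You assert that $\ell=\zeta N$ forces $a_j=A$ for all $j$, but the chain of deductions is missing: equality forces every increment $S_{n_i}-S_{n_{i-1}}$ to equal $A-c_1$, hence simultaneously $a_{n_i}=A$ and $S_{n_i-1}=S_{n_{i-1}}$; the latter, combined with your combinatorial claim, makes $n_i-1$ itself a record, which by consecutivity of $n_{i-1},n_i$ forces $n_i=n_{i-1}+1$; hence $\ell=n_\ell=N$, $a_j=A$ for every $j$, and $\zeta=1$. Moreover, $\zeta=1$ --- that is, $A=c_2$ --- is \emph{not} excluded by the hypotheses $A\ge c_2>c_1$, and in that boundary case the stated conclusion $\ell>\zeta N=N$ is genuinely false, since every index is a record and $\ell=N$. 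What your argument honestly yields is $\ell\ge\zeta N$, with strictness whenever $A>c_2$; this small imprecision is already present in the lemma as stated. (Separately, the printed requirement $n_1>1$ conflicts with your record set containing the index $1$ whenever $a_1\ge c_1$; this is presumably a typo for $n_1\ge1$ and is not a defect of your argument.)
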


% \begin{proof}
%   See e.g. \cite{Pl72}, \cite[Section 2]{Mane83} or .
% \end{proof}

Let us fix $x\in\Omega$ satisfying~\eqref{eq:NUE}.  Since
$(P^1\mid_{N^{cu}_x})^{-1}=\cO_x\cdot Df^{-1}\mid_{P^1(N^{cu}_x)}$,
then $\|(P^1\mid_{N^{cu}_x})^{-1}\|\le\|Df^{-1}\|\le e^L$ with
$L=\sup_{x\in U}\|DG(x)\|$, which is finite because $U$ is relatively
compact.  We also have
$\|(P^1\mid_{N^{cu}_x})^{-1}\|\ge\|P^1\mid_{N^{cu}_x}\|^{-1}\ge
e^{-L}$, and thus
$A_0=\sup_{x\in U}\big|\log\|(P^1\mid_{N^{cu}_x})^{-1}\|\big|\le e^L$.
Then we apply Lemma~\ref{le:pliss} to
$a_i=-\log\|(P^1\mid_{N^{cu}_{f^{i-i}x}})^{-1}\|$ for $i=1,\dots, N$
so that $\sum_{i=1}^Na_i\ge c_0 N /2$ -- this inequality holds for all
large enough $N=N(x)>1$.

We obtain $\ell>\zeta_1 N$ with
$\zeta_1=(c_0/2-c_0/4)/(A_0-c_0/4)=c_0/(4A_0-c_0)>0$ and times
$1<n_1<\ldots<n_\ell\le N$ such that
\begin{align}\label{eq:hyptimex}
  \prod\nolimits_{j=n}^{n_i-1}\|(P^1\mid_{N^{cu}_{f^jx}})^{-1}\|
  \le
  e^{-c_0(n_i-n)/4}, \quad 0\le n<n_i, \quad i=1,\ldots,\ell.
\end{align}
We say that $n_i$ is a \emph{hyperbolic time} for $x$ if
$\sing_\Lambda(G)=\emptyset$. In the presence of equilibria, we need
to control the visits of the future orbit of $x$ near these fixed
points where the Linear Poincar\'e flow is not defined. For that, we
reapply Lemma~\ref{le:pliss} to~\eqref{eq:SR}, as follows.

For $\epsilon_0\in(0,\zeta_1c_0/32)$ we take $\delta_0>0$ satisfying 
\eqref{eq:SR} for all $x\in\Omega$, so that for some
$N(x)>1$ we have
\begin{align*}
  n\ge N(x) \implies
  \sum\nolimits_{i=0}^{n-1}\log d_{\delta_0}
  \big(f^i(x),\sing_\Lambda(G)\big) \ge -2\epsilon_0\cdot n.
\end{align*}
Since the summands are non-positive, we can take $A=0$,
$c_2=-2\epsilon_0$ and $c_1=-c_0/16$ to obtain
$\zeta_2=(c_2-c_1)/(A-c_1) = 1-c_2/c_1>1-\zeta_1$. Hence
$\zeta_1+\zeta_2>1$ and for $\zeta=\zeta_1+\zeta_2-1>0$ we
have $\ell\ge\zeta N$ and times $1<n_1<\ldots<n_\ell\le N$
simultaneously satisfying~\eqref{eq:hyptimex} and the conclusion of
Pliss' Lemma for the last summation. We have proved the following.
\begin{proposition}
  \label{pr:hyptimes}
  For each sufficiently small $\epsilon_0>0$, we can find a small
  enough $\delta_0>0$ such that there are $\theta,\epsilon_0$ and for
  $x\in\Omega$ we can find $N=N(x)\in\ZZ^+$ so that for any given
  integer $T\ge N$, there exists $\ell\ge\theta T$ and times
  $1<n_1<\cdots<n_\ell\le T$ satisfying~\eqref{eq:hyptimex} and
  \begin{align}\label{eq:SRtimex}
    d_{\delta_0}\big(f^jx,\sing_\Lambda(G)\big)
    >
    e^{- c_0(n_i-j)/16}, \quad 0\le j<n_i, \quad i=1,\dots,\ell.
  \end{align}
\end{proposition}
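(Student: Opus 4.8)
The plan is to combine the discrete-time Pliss Lemma applied to the non-uniform sectional expansion hypothesis~\eqref{eq:NUE} with the continuous-time Pliss Lemma (Theorem~\ref{thm:plissflows}) applied to the slow-recurrence hypothesis~\eqref{eq:SR}, and then intersect the two resulting sets of ``good'' times, ensuring by a careful choice of constants that the intersection is still a positive-density subset of $[0,T]$. First I would fix $x\in\Omega$ and invoke the bound $A_0=\sup_{x\in U}\big|\log\|(P^1\mid N^{cu}_x)^{-1}\|\big|\le e^L$ established before the statement, so that Lemma~\ref{le:pliss} applies to $a_i=-\log\|(P^1\mid N^{cu}_{f^{i-1}x})^{-1}\|$ with $c_2=c_0/2$, $c_1=c_0/4$, $A=A_0$ for all $N\ge N_0(x)$, yielding $\ell>\zeta N$ hyperbolic times $1<n_1<\dots<n_\ell\le N$ satisfying~\eqref{eq:hyptimex}, where $\zeta=c_0/(4A_0-c_0)$. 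This already gives the conclusion when $\sing_\Lambda(G)=\emptyset$; the work is entirely in the singular case.

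For the singular case, the key step is to run Theorem~\ref{thm:plissflows} on $H(T)=\int_0^T-\log d_{\delta_0}(\phi_t(x),\sing_\Lambda(G))\,dt$: slow recurrence~\eqref{eq:SR} guarantees that for suitably small $\epsilon_0$ there is $\delta_0>0$ and $T_1\ge N$ with $H(T)<\epsilon_0 T$ for all $T\ge T_1$, and $H'\ge0$ since the truncated distance is $\le1$. Applying Theorem~\ref{thm:plissflows} with $c=\epsilon_0$ and an auxiliary negative constant $A$ produces a set $\cH=\cH_\epsilon$ of Lebesgue measure $>\theta T$ with $\theta=\epsilon/(\epsilon_0+\epsilon-A)$, on which the integral of $-\log d_{\delta_0}$ over any forward window grows at most linearly with slope $\epsilon+\epsilon_0$. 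The arithmetic trick is to choose $A$ negative enough — namely $A>\epsilon_0/(2\epsilon)+1-(1-\zeta)^{-1}$ — so that $\zeta+\theta>1$; then, setting $\cT=\bigcup_{i=1}^\ell[n_i,n_i+1)$ (which has measure $>\zeta T$, up to $\pm1$), the intersection $\cH\cap\cT$ has Lebesgue measure $>(\zeta+\theta-1)T$, hence is nonempty and in fact of positive density. Any $\tau\in\cH\cap\cT$ lies in some $[n_i,n_i+1)$, so $n_i$ inherits~\eqref{eq:hyptimex}, and simultaneously the window estimate $\int_k^h-\log d_{\delta_0}(\phi_s(x),\sing_\Lambda(G))\,ds<(h-k)(\epsilon+\epsilon_0)$ holds for $0\le k\le h=[T-\tau]$; by Remark~\ref{rmk:plissFlows}(2b) we may take the Pliss times to be integers.

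The last step converts the integral bound into the pointwise bound~\eqref{eq:SRtimex}. Writing $\psi(s)=\log d_{\delta_0}(\phi_s(x),\sing_\Lambda(G))\le0$, the mean value theorem on $[k,k+1]$ gives some $\omega=\omega_k\in[0,1]$ with $\psi(k+\omega)=\int_k^{k+1}\psi\,ds>\int_k^h\psi\,ds>-(h-k)(\epsilon+\epsilon_0)$; and since $G$ is $L$-Lipschitz and $G(\sigma)=0$ at the nearest equilibrium $\sigma$, the function $\vfi=\exp\psi$ satisfies $\vfi'(t)\le\|G(\phi_tx)\|\le L\vfi(t)$, whence $\vfi(k+\omega)<e^{L\omega}\vfi(k)\le e^{L}\vfi(k)$. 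Combining, $d_{\delta_0}(f^kx,\sing_\Lambda(G))=\vfi(k)>e^{-L}\vfi(k+\omega)=e^{-L}e^{\psi(k+\omega)}>e^{-(h-k)(\epsilon+\epsilon_0)-L}$, and with $\epsilon+\epsilon_0<c_0/16$ and $h-k\le n_i-j$ this yields exactly~\eqref{eq:SRtimex} (relabeling $k$ as $j$). Collecting constants, one sets $\theta$ to be the resulting positive density, and records $N=N(x)$, $\delta_0$, $\epsilon_0$ as in the statement. I expect the \textbf{main obstacle} to be the bookkeeping of the constant chase: one must verify that $\epsilon\in(0,c_0/32)$, $\epsilon_0<\max\{\epsilon((1-\zeta)^{-1}-1),c_0/32\}$, and the choice of $A$ are mutually consistent and simultaneously force both $\zeta+\theta>1$ and $\epsilon+\epsilon_0<c_0/16$ — a delicate but entirely elementary juggling act — together with the minor technical point of legitimately replacing real Pliss times by integers via Remark~\ref{rmk:plissFlows}(2b).
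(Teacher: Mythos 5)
Your proposal is correct and takes essentially the same approach as the paper: apply the discrete Pliss Lemma to the Linear Poincar\'e cocycle to get hyperbolic times with the same constants $\zeta=c_0/(4A_0-c_0)$, apply the flow Pliss Lemma to $H(T)=\int_0^T-\log d_{\delta_0}$ with the same choice of $A$ forcing $\zeta+\theta>1$, intersect the two sets of good times, and use the mean-value-for-integrals plus Lipschitz ($\vfi=\exp\psi$, $\vfi'\le L\vfi$) argument to convert the windowed integral bound into~\eqref{eq:SRtimex}. The only slight imprecision you inherit from the paper's phrasing is the forward/backward bookkeeping around the identification $h=[T-\tau]\approx n_i$, but you land on the same inequalities and correctly flag the integer-Pliss-time adjustment via Remark~\ref{rmk:plissFlows}(2b), so the argument closes as in the text.
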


The times $n_i$ satisfying the conclusion of
Proposition~\ref{pr:hyptimes} will be referred to as \emph{hyperbolic
  times} for $x$ when $\sing_\Lambda(G)\neq\emptyset$.

\begin{remark}[uniform positive asymptotical density]
  \label{rmk:upad}
  This shows that for every $x\in\Omega$ satisfying~\eqref{eq:NUE}
  there exists a subset $HT(x)\subset\ZZ^+$ formed of hyperbolic times
  so that $\limsup_{n\nearrow\infty}\#(HT(x)\cap[1,n])/n \ge\theta>0$.
\end{remark}

\subsection{Estimates for nearby points and roughness of hyperbolic
  times}
\label{sec:hyperb-times-along}

We observe that the map $x\in U\mapsto E^{cu}_x$ is
H\"older-continuous, by the domination of the splitting, see
e.g. \cite[Subsection 4.2]{ArMel17}. In addition, both
$x\mapsto Df(x)$ and $x\in U^*\mapsto \cO_x$ are Lipschitz, where
$U^*=U\setminus\sing(G)$, because $G$ is of class $C^2$ and the unit
vector field $\hat G:=G/\|G\|$ defined in $U^*$ has derivative
$\hat G(x)'= \cO_x\circ DG(x)\cdot\hat G_x$ whose norm is uniformly
bounded from above. Hence
$\Psi:U^*\times U^*\to\RR, (x,y)\mapsto \log
\frac{\|(P^1\mid_{N_y^u})^{-1}\|}{\|(P^1\mid_{N_x^u})^{-1}\|}$ is
H\"older-continuous and $\Psi(x,x)=0$ for all $x\in U^*$. Therefore,
there exists a constant $C_2>0$ and an exponent $\omega\in(0,1)$ so
that $\Psi(x,y)\le C_2\cdot d(x,y)^\omega$.

We recall that $\rho_0>0$ is such that
$\big(\exp_x\mid_{B(0,\rho_0)}\big)^{-1}$ is well-defined at every
$x\in U$.  

For $z\in M$ with $G(z)\neq\vec0$ we
define the cone
\begin{align*}
  C^\perp_a(z)=\{v+\lambda G(z): v\in G(z)^\perp, \; \lambda\in\RR \;\&\; \|v\|\le
  a\|\lambda G(z)\|\}.
\end{align*}
We let $a>0$ be small enough so that for $y\in U^*$
\begin{align*}
  \|\cO_y\cdot Df^{-1}(f(y))v\|
  \le
  e^{c_0/16}
  \big\|\big(P^1\mid_{N^{cu}_y}\big)^{-1}\big\|\cdot\|v\|,
  \quad
  v\in C^{cu}_a(f(y))\cap C^\perp_a(f(y)).
\end{align*}
We choose $0<\rho_1\le\min\{\delta_0,\rho_0,1\}$
such that $C_2\rho_1^\omega<c_0/16$ and, for each
$x,y\in U^*$ with both $d(x,y)<\rho_1$ and
$d(x,y)<d(x,\sing_\Lambda(G))/2$, then 
together with the H\"older condition on $\Psi$ we get
for $v\in C^{cu}_a(f(y))\cap C^\perp_a(f(y))$
\begin{align}
  \label{eq:unifDfaway}
  \|\cO_y\cdot Df^{-1}(f(y))v\|
  \le
  e^{c_0/8}\cdot
  \big\| \big(P^1\mid_{N^{cu}_x}\big)^{-1}\big\|\cdot\|v\|.
\end{align}

We recall that $\delta_0$ is small enough satisfying
Proposition~\ref{pr:hyptimes}.
% on $G$ to obtain the needed estimates, subject to
% finitely many conditions. This can be done without loss of generality
% because of the slow-recurrence condition~\eqref{eq:SR}.

\begin{lemma}\label{le:boundsigma}
  There exists $b_*>0$ so that for each $x\in U^*$, if
  $d(x,\sing_\Lambda(G))<\delta_0$, then 
  \begin{align}\label{eq:lowerbdd}
    b_*L\le \frac{\|G(x)\|}{d(x,\sigma)}\le L
    \qand
    2\cdot d(y,x)<d(x,\sing_\Lambda(G))
    \implies
    \|G(x)\|
    \ge
    b_* \|G(x)\|.
  \end{align}
\end{lemma}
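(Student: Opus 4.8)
The plan is to exploit the hyperbolicity of the equilibria — which forces $DG_\sigma$ to be invertible — together with the $C^2$ regularity of $G$, via a first‑order Taylor expansion at the closest equilibrium. First I would record that $\sing_\Lambda(G)$ is a finite set: it is a closed subset of the compact set $\Lambda$ and hyperbolic equilibria are isolated. Writing $\alpha_0=\min_{\sigma\in\sing_\Lambda(G)}\|DG_\sigma^{-1}\|^{-1}>0$ (each $DG_\sigma$ is invertible, since its eigenvalues have nonzero real part) and working in normal coordinates at each of the finitely many $\sigma$, the $C^2$ regularity of $G$ yields a constant $C_0>0$, \emph{uniform} over $\sing_\Lambda(G)$, with $\|G_x-DG_\sigma(x-\sigma)\|\le C_0\,d(x,\sigma)^2$ for $x$ in a fixed neighbourhood of $\sigma$.

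Next I would fix $\delta_0>0$ small enough that: (i) $2\delta_0$ is smaller than the minimal distance between distinct points of $\sing_\Lambda(G)$, so that whenever $d(x,\sing_\Lambda(G))<\delta_0$ the closest equilibrium $\sigma=\sigma(x)$ is unique and $d(x,\sigma)=d(x,\sing_\Lambda(G))$; and (ii) $C_0\cdot 2\delta_0\le\alpha_0/2$, so the Taylor estimate above is valid and dominated on the $2\delta_0$‑neighbourhood of every $\sigma$. This is consistent with the standing freedom to shrink $\delta_0$ subject to finitely many conditions. For such $x$ one then gets $\|G_x\|=\|G_x-G_\sigma\|\ge\|DG_\sigma(x-\sigma)\|-C_0\,d(x,\sigma)^2\ge\bigl(\alpha_0-C_0\,d(x,\sigma)\bigr)d(x,\sigma)\ge\tfrac{\alpha_0}{2}\,d(x,\sigma)$. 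Since $L=\sup_U\|DG\|\ge\|DG_\sigma\|\ge\|DG_\sigma^{-1}\|^{-1}\ge\alpha_0$, any choice $b_*\le\alpha_0/(4L)$ already gives the first assertion $\|G_x\|\ge b_*L\,d(x,\sigma)$.

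For the second assertion, suppose $2\,d(y,x)<d(x,\sigma)$ with $d(x,\sigma)=d(x,\sing_\Lambda(G))<\delta_0$. Then $\tfrac12 d(x,\sigma)<d(y,\sigma)<\tfrac32 d(x,\sigma)<2\delta_0$, so by (i) the same $\sigma$ is the closest equilibrium to $y$, and the lower bound of the previous paragraph applies to $y$, giving $\|G_y\|\ge\tfrac{\alpha_0}{2}d(y,\sigma)\ge\tfrac{\alpha_0}{4}d(x,\sigma)$. On the other hand the plain mean value inequality gives $\|G_x\|=\|G_x-G_\sigma\|\le L\,d(x,\sigma)$, hence $d(x,\sigma)\ge\|G_x\|/L$ and therefore $\|G_y\|\ge\tfrac{\alpha_0}{4L}\|G_x\|$. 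Setting $b_*=\alpha_0/(4L)$ makes both displayed conclusions of the lemma hold at once.

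The computation is essentially routine; the only points requiring care are the finiteness of $\sing_\Lambda(G)$ (so that $\alpha_0$ and $C_0$ are genuinely positive and finite and so the Taylor bound is uniform), and verifying that the closest equilibrium to $x$ and to $y$ coincide, which is exactly what the separation condition (i) on $\delta_0$ secures. I do not expect any substantive obstacle beyond this bookkeeping.
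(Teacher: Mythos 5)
Your argument is correct and essentially reproduces the paper's proof: both exploit hyperbolicity of the (finitely many) equilibria to get a uniform lower bound $\|DG_\sigma v\|\ge\alpha_0\|v\|$, a $C^2$ Taylor/mean-value remainder estimate at the nearest singularity, a suitable shrinking of $\delta_0$, and the Lipschitz upper bound $\|G_x\|\le L\,d(x,\sigma)$ to close the second inequality. One cosmetic remark: your appeal to condition (i) to conclude that $\sigma$ is also the nearest equilibrium to $y$ is unnecessary --- the Taylor estimate at $\sigma$ applies to any point within $2\delta_0$ of $\sigma$, which is all you actually use --- and as stated (i) would not quite force that conclusion in any case.
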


\begin{proof}
  Since all equilibria in $\sing_\Lambda(G)$ are hyperbolic, there
  are at most finitely many and those accumulated by the orbit of
  $x\in\Omega$ are of saddle type. Thus, there exists $b>0$ so that
  $\|(DG(\sigma))^{-1}\|<1/b$ for all
  $\sigma\in\omega_G(x)\cap\sing_\Lambda(G)$.

  Moreover, we have $\|DG(y)-DG(\sigma)\|\le \kappa_0 d(y,\sigma)^\beta$
  for all $y\in B(\sigma,2\delta_0)$ and some constants $\kappa_0>0$
  and $0<\beta\le1$ since $G$ is of class $C^{1+}$ --- here and in the
  following estimates, we identify $B(\sigma,\rho_0)$ with the
  $\rho_0$-ball on $T_\sigma M$. Hence, if %$d(y,x)<d(x,\sigma)/2$ and
  $2\kappa_0\delta_0^\beta<b$, then by the Mean Value Inequality
  \begin{align*}
    \|G(x)-G(\sigma)-DG(\sigma)\cdot(x-\sigma)\|
    \le
    \kappa d(x,\sigma)^\beta\|x-\sigma\|
    \le
    \kappa_0\delta_0^\beta d(x,\sigma)
  \end{align*}
  and so
  $ \|G(x)\| \ge \|DG(\sigma)\cdot(x-\sigma)\|-\kappa_0\delta_0^\beta d(x,\sigma)
  \ge (b -\kappa_0\delta_0^\beta) d(x,\sigma)$. On the other hand, by the
  smoothness of $G$ we have that
  $\|G(x)\|=\|G(x)-G(\sigma)\|\le L \cdot d(x,\sigma)$.  Finally, if
  $2d(y,x)<d(x,\sigma)$, then
  \begin{align*}
    \|G(y)\| \ge
    (b -\kappa_0\delta_0^\beta) d(y,\sigma)
    \ge
    \frac{b -\kappa_0\delta_0^\beta}2 d(x,\sigma)
    \ge
    \frac{b -\kappa_0\delta_0^\beta}{L} \|G(x)\|
  \end{align*}
  which completes the proof after setting
  $b_*=(b-\kappa_0\delta_0^\beta)/L$.
\end{proof}

Now we show that hyperbolic times are rough along a trajectory, in the
following sense.

\begin{proposition}\label{pr:roughyptime}
  There exists $s_0>0$ small so that, if $n>1$ is a hyperbolic time
  for $x\in U^*$, then $n$ is also a hyperbolic time for $\phi_sx$, for
  all $|s|<s_0$, but with a contracting rate of $e^{-c_0/8}$.
\end{proposition}

\begin{proof}
  First choose $s_0>0$ small enough so that $d(z,\phi_sz)<\rho_1$ for
  all $z\in U$ and $|s|<s_0$, and use~\eqref{eq:unifDfaway} to obtain
  \begin{align*}
    \prod\nolimits_{i=n-k}^{n-1}\|\big(P^1\mid_{N^{cu}_{\phi_sf^ix}}\big)^{-1}\|
    &=
      \prod\nolimits_{i=n-k}^{n-1}\left(\frac{\|\big(P^1\mid_{N^{cu}_{\phi_sf^ix}}\big)^{-1}\|}{\|\big(P^1\mid_{N^{cu}_{f^ix}}
      \big)^{-1}\|} \cdot \|\big(P^1\mid_{N^{cu}_{f^ix}}\big)^{-1}\|\right)
    \\
    &\le
      e^{kc_0/8}\cdot e^{-kc_0/4}=e^{-kc_0/8};
      \quad k=1,\dots,n; |s|<s_0.
  \end{align*}
  Then, if $d(f^ix,\sing_\Lambda(G))<\delta$, we note that
  $d(\phi_sf^ix,\sing_\Lambda(G))$ is bounded from below by
  \begin{align*}
      d(f^ix,\sing_\Lambda(G))-d(\phi_sf^ix,f^ix)
      \ge
      d(f^ix,\sing_\Lambda(G))-|s|\sup\nolimits_{|t|<s}\|G({\phi_tf^ix})\|.
  \end{align*}
  Lemma~\ref{le:boundsigma} provides $\|G({\phi_tf^ix})\|\ge
  b_*\|G(x)\|$ whenever $2
  d(\phi_tf^ix,f^ix)<d(f^ix,\sing_\Lambda(G))$, which holds for
  $|t|<d(f^ix,\sing_\Lambda(G))/(2\|G(x)\|)\le (2b_*L)^{-1}$. In this
  case, we obtain
  \begin{align*}
   d(\phi_sf^ix,\sing_\Lambda(G))
    &\ge
      d(f^ix,\sing_\Lambda(G))-|s|\cdot b_*\|G(x)\|
      \ge
      (1-b_*L\cdot|s|)\cdot d(f^ix,\sing_\Lambda(G)).
  \end{align*}
  Hence, choosing $s_0\in(0,(2b_*L)^{-1})$ small enough so that
  $d(z,\phi_sz)<\rho_1$ for all $z\in U$ we can assume without loss of
  generality that
  \begin{align*}
    d_\delta(\phi_sf^ix,\sing_\Lambda(G))
    \ge
    e^{-L}d_\delta(f^ix,\sing_\Lambda(G))
    \ge
    e^{-L}e^{-(n-i)c_0/16}; \quad i=0,\dots, n; |s|<s_0.
  \end{align*}
  The above conclusions show that, modulo a small change of rates, $n$
  is still a hyperbolic time for $\phi_sx$ for each $|s|<s_0$.
\end{proof}

\subsection{Distortion bounds at hyperbolic times along the sectional
  center-unstable direction}
\label{sec:distort-at-hyperb}

% We start by assuming that the trapping region $U$ of the attracting
% set $\Lambda$ has no equilibria.

We fix a $cu$-disk $D\subset U$ so that $x\in D$ admits $n>1$ as a
hyperbolic time with a choice of $\epsilon_0,\delta_0>0$ satisfying
Proposition~\ref{pr:hyptimes}.

We set $\Sigma_z=\exp_z(B(z,\rho_1)\cap G(z)^\perp)$ as a
cross-section to $G$ through $z\in U^*$;
$\Sigma_z^i=\exp_z(B(z,(\rho_1/2)e^{-ic_0/16-L})\cap G(z)^\perp)$ a
scaled cross-section; $D_n=f^n(D)$ and $D_n^\perp(z)=D_n\cap \Sigma_z$
for each $z\in D_n$, that is, a section of $D_n$ through $z$ in the
direction orthogonal to the vector field.

We then consider the Poincar\'e first hitting maps
$R_i:\dom(R_i)\subset\Sigma_{f^i(x)}^{n-i}
\to\Sigma_{f^{i+1}(x)}^{n-i-1}$ and note that
$DR_i(f^i(x))=P^1_{f^ix}:N_{f^ix}\to N_{f^{i+1}x}$, for
$i=0,\dots,n-1$; see Figure~\ref{fig:Poincare}.

We note that \emph{these maps are well-defined even if $U$ contains
  equilibria}, by the distance bound provided by the
condition~\eqref{eq:SRtimex}.

\begin{lemma}[Local sectional continuity]\label{le:unifDf}
  For each $i=0,\dots,n-1$, let
  $D_i^\perp=f^i(D)\cap\Sigma_{f^{i+1}x}^{n-i-1}$.  Then the
  Poincar\'e maps satisfy
  $ \| DR_i(R_iy)^{-1}\mid_{T_{R_iy}D_i^\perp}\| \le e^{c_0/8}\cdot
  \|(P^1\mid_{N_{f^ix}^u)}^{-1}\|$, for $ y\in\dom(R_i)$.
\end{lemma}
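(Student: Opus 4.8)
The plan is to compare the Poincaré return map $R_i$ restricted to the slightly curved disk section $D_i^\perp$ with the Linear Poincaré flow $P^1$ acting on the normal bundle at the \emph{center} point $f^i x$, and then absorb the discrepancy into the factor $e^{c_0/8}$ using (i) the cone invariance of $C^{cu}_a$, (ii) the auxiliary cone $C^\perp_a$ around the flow direction, and (iii) the Hölder estimate on $\Psi$ together with the choice of $\rho_1$ made just before the lemma. First I would fix $y\in\dom(R_i)$ and set $z=R_i y\in D_{i+1}^\perp$; since $D_i$ is (a forward image of) a $u$-disk, its tangent spaces lie in the center-unstable cone $C^{cu}_a$, and because $D_i^\perp$ is the section of $D_i$ transverse to $G$, its tangent space at any point also lies in $G^\perp$, hence in $C^{cu}_a\cap C^\perp_a$ at that point (after shrinking $a$; this is exactly the cone in which inequality \eqref{eq:unifDfaway} and the displayed bound before Lemma \ref{le:boundsigma} apply). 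The derivative $DR_i(z)^{-1}$ on $T_z D_i^\perp$ is, by definition of the Poincaré map, $\cO_{z'}\circ Df^{-1}(z)$ up to projection onto the appropriate normal bundle, where $z'=f^{-1}(z)=$ the preimage point on $\Sigma_{f^i x}^{n-i}$; so I want to bound $\|\cO_{z'}\cdot Df^{-1}(z)\,v\|$ for unit $v\in T_z D_i^\perp\subset C^{cu}_a(z)\cap C^\perp_a(z)$.

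The key step is the distance control. The hyperbolic-time condition \eqref{eq:SRtimex} together with the scaled cross-sections $\Sigma^{n-i-1}_{f^{i+1}x}$ of radius $(\rho_1/2)e^{-(n-i-1)c_0/8-L}$ forces every point $z$ of $D_i^\perp$ to satisfy both $d(z,f^{i+1}x)<\rho_1$ and $d(z,f^{i+1}x)<\tfrac12 d(f^{i+1}x,\sing_\Lambda(G))$ — the latter because \eqref{eq:SRtimex} gives $d_{\delta_0}(f^{i+1}x,\sing_\Lambda(G))>e^{-c_0(n-i-1)/16-L}$, which comfortably dominates twice the radius of the scaled section. Hence the preimage point $z'$ and the center $f^i x$ are within $\rho_1$ of each other and within half the distance to the singular set, so the hypotheses of \eqref{eq:unifDfaway} are met at the point $y'$ (I would take $y'=f^i x$ as the reference point and $z'$, or rather the point $f^i y$, as the nearby point): we get
\[
  \|\cO_{f^i y}\cdot Df^{-1}(f(f^i y))\,v\|
  \le
  e^{c_0/8}\,\bigl\|\bigl(P^1\mid N^{cu}_{f^i x}\bigr)^{-1}\bigr\|\cdot\|v\|,
  \qquad v\in C^{cu}_a\cap C^\perp_a,
\]
which is precisely the claimed inequality once one identifies $DR_i(R_iy)^{-1}$ restricted to $T_{R_iy}D_i^\perp$ with $\cO_{f^i y}\cdot Df^{-1}$ acting on that cone. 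The Hölder control of $\Psi$ (with $C_2\rho_1^\omega<c_0/16$) is what lets me pass from the norm of $(P^1\mid N^{cu}_{f^i y})^{-1}$ at the moving point to the norm at the fixed center $f^i x$, contributing half of the $e^{c_0/8}$; the other half comes from the cone-opening constant $e^{c_0/16}$ in the definition of $a$ at the start of the subsection.

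The main obstacle I anticipate is bookkeeping rather than conceptual: one must be careful that the vector $v\in T_{R_i y}D_i^\perp$ really does lie in \emph{both} cones simultaneously at the relevant point, and that the orthogonal projections $\cO$ intervening in the definition of $R_i$ (projection onto $N_{f^{i+1}x}$ versus onto $G_z^\perp$) differ only by a quantity controllable by $d(z,f^{i+1}x)<\rho_1$ — this is where the Lipschitz dependence of $x\mapsto\cO_x$ on $M^*$ (noted at the start of Section \ref{sec:hyperb-times-center}) is used, and it is also why one needs $\delta_0$ small and the lower bound $\|G_z\|\ge b_*\|G_{f^{i+1}x}\|$ from Lemma \ref{le:boundsigma} to keep these projections nondegenerate near equilibria. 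Once those two points are checked, the estimate is a direct application of the inequalities set up in the preceding paragraphs.
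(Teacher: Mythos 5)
Your proposal correctly identifies the distance estimates coming from~\eqref{eq:SRtimex} and the cone/H\"older machinery that feeds into~\eqref{eq:unifDfaway}, but it has a genuine gap at the point where you identify $DR_i(R_iy)^{-1}$ restricted to $T_{R_iy}D_i^\perp$ with $\cO_{f^iy}\circ Df^{-1}$ ``up to projection onto the appropriate normal bundle.'' The Poincar\'e map is $R_i(y)=\phi_{t(y)}(y)$ with a $y$-dependent return time, so by the chain rule $DR_i^{-1}$ carries an extra factor $D\phi_{s(y)}^{-1}$ (the paper writes $R_i=\hat R_i\circ f$ and gets $DR_i(R_iy)^{-1}v=\cO_y\cdot Df^{-1}(fy)\cdot D(\phi_s)^{-1}(R_iy)v$). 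To absorb this extra factor you need the Poincar\'e flight time from $f(\Sigma^{n-i}_{f^ix})$ to $\Sigma^{n-i-1}_{f^{i+1}x}$ to be \emph{uniformly bounded} and to send $v$ back into $C^{cu}_a\cap C^\perp_a$ at $fy$. Away from equilibria this is standard and quick (the Poincar\'e time is near $1$ by compactness), which is what the paper disposes of first.

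What your argument does not address is the case $d(f^ix,\sing_\Lambda(G))<\delta_0$: the sections $\Sigma^{n-i}_{f^ix}$ and $\Sigma^{n-i-1}_{f^{i+1}x}$ can both sit at distance roughly $e^{-(n-i)c_0/16}$ from a saddle equilibrium $\sigma$, where the speed $\|G\|$ is of the same small order, so it is not a priori clear that the return time stays bounded — and without that bound, $\|D\phi_s^{-1}\|$ is uncontrolled and~\eqref{eq:unifDfaway} cannot be invoked. This is precisely why the paper's proof spends the bulk of its effort here: it conjugates the flow near $\sigma$ to $\dot X = DG_\sigma\cdot X$ via a bi-H\"older Grobman--Hartman homeomorphism, estimates the linearized flight time $\tau\le \|A_u^{-1}\|\log(2e^{\|A_u\|})$ in block coordinates, and then pulls the flow-box back to the original coordinates using Lemma~\ref{le:boundsigma} to keep $\|G\|$ comparable along the box. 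Your appeal to Lemma~\ref{le:boundsigma} for ``nondegeneracy of the projections'' is a real point, but it is a side concern; the missing content is the bound on the flight time itself, and characterizing this as ``bookkeeping rather than conceptual'' understates it. Without it, the identification of $DR_i^{-1}$ with $\cO\circ Df^{-1}$ on the cone is not established where it matters.
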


\begin{proof}
  We have $y\in\Sigma_{f^i x}^{n-i}$ by  definition of $R_i$
  % , $R_i y\in\Sigma_{f^{i+1}x}^{n-i-1}$
  and so $2\cdot d(y,f^ix) \le \rho_1 e^{-(n-i)c_0/16-L}$.
  % \begin{align*}
  %   \qand 
  % d(R_iy,f^{i+1}x) \le \frac{\rho_1}2 e^{-(n-i-1)c_0/8}.
  % \end{align*}
  Moreover, 
    $d_{\delta_0}(f^ix,\sing_\Lambda(G))>e^{-(n-i)c_0/16}$
  %$d_{\delta_0}(f^{i+1}x,\sing_\Lambda(G))>e^{-(n-i-1)c_0/8}$
    with $\delta_0<\rho_1\le1$. Hence
    $2\cdot d(y,f^ix) \le d_{\delta_0}(f^ix,\sing_\Lambda(G))$ and
    thus
    $2\cdot
    d(y,\sing_\Lambda(G))>d_{\delta_0}(f^ix,\sing_\Lambda(G))$. At
    this point, we divide the argument into two cases.
  \begin{description}
  \item[Away from equilibria] If
    $d\big(f^ix,\sing_\Lambda(G)\big)\ge\delta_0\ge\rho_1$, then 
    $\Sigma_{f^ix}^{n-i}$ is away from $\sing(G)$ and the Poincar\'e
    time from $\dom(\Sigma_{f^ix}^{n-i})$ to
    $\Sigma_{f^{i+1}x}^{n-i-1}$ is between $1-\xi$ and $1+\xi$ for
    some uniform small $\xi>0$ depending on $\rho_1$.
  \end{description}
This ensures that
  $R_iy=(\phi_s\circ f)y$ with $s=s(y)$ such that $|s-1|<\xi$ and so
  for $y\in\dom(R_i)$ and $v\in T_{R_iy}D_{i+1}^\perp$ 
  \begin{align}
    \|DR_i(R_iy)^{-1}v\|
    &= \nonumber
    \|\cO_y\cdot [D(\phi_s\circ f)(R_iy)]^{-1}v\| 
    \\
    &=\label{eq:sections}
    \|\cO_y\cdot Df^{-1}(\phi_{-s}R_iy)\cdot D(\phi_s)^{-1}(R_iy)v\|
    \\
    &=\nonumber
    \|\cO_y\cdot Df^{-1}(fy)\cdot D(\phi_s)^{-1}(R_iy)v\|.
  \end{align}
  The time $s=s(y)$ can be seen as the Poincar\'e first visit time
  from the cross-section $S=f(\Sigma_{f^ix}^{n-i})$ to
  $\Sigma_{f^{i+1}x}^{n-i-1}$, and so $D(\phi_s)^{-1}(R_iy)v\in
  T_{fy}(S\cap f^{i+1}(D))\subset C^{cu}_a(fy)\cap C^\perp_a(fy)$ by
  the proximity between $R_iy, fy$ and $f^{i+1}x$. Then the statement
  of the lemma follows from~\eqref{eq:unifDfaway}.

  \begin{description}
  \item[Close to equilibria] Otherwise,
    $d\big(f^ix,\sing_\Lambda(G)\big)<\delta_0$ and
    $\Sigma_{f^ix}^{n-i}$ is close to an equilibrium
    $\sigma\in\sing_\Lambda(G)$.
  \end{description}
  We show that we can repeat the above argument by obtaining a flow
  box from $\dom(R_i)$ to $\Sigma_{f^{i+1}x}^{n-i-1}$ with flight time
  bounded from above.
  
  Reducing $\delta_0$ if necessary, we may assume, without loss of
  generality, that the flow on $B(\sigma,2\delta_0)$ is topologically
  conjugated to the flow of $\dot X=DG(\sigma)\cdot X$, \emph{because
    $\sigma$ is hyperbolic}. That is, there exists a bi-H\"older
  homeomorphism $h:B(\sigma,2\delta_0)\to \RR^m$ so that
  $(h\circ \phi_t)(z)=e^{t\cdot DG(\sigma)}h(z)$ for $t>0$ such that
  $\phi_{[0,t]}z\subset B(\sigma,2\delta_0)$; see
  e.g.~\cite{BarVal07}. We arrange so that
  $\RR^m=\RR^{u}\times\RR^{s}$ is the decomposition into stable and
  unstable subspaces of $DG(\sigma)$, which decomposes in block form as
  $\diag\{A_u,A_s\}$. We identify $f^ix$ with
  $(v,w)\in\RR^{u}\times\RR^{s}$ and then $f^{i+1}x$ becomes
  $(v_1,w_1)=(e^{A_u}v,e^{A_s}w)$.

\begin{figure}[h]
  \centering \includegraphics[width=14cm]{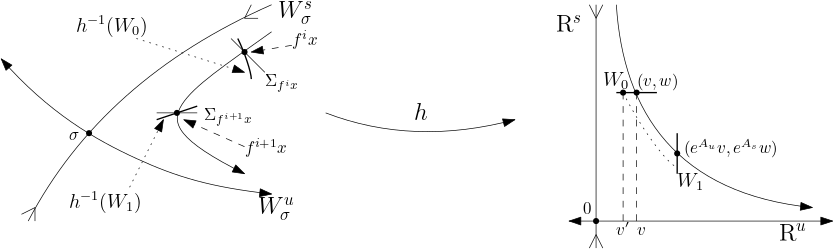}
  \caption{\label{fig:Grobman} The estimation of the flight time in
    the linearized setting.}
\end{figure}
  
For a point in $\Sigma_{f^ix}^{n-i}$ to arrive at
$\Sigma_{f^{i+1}x}^{n-i-1}$, the flow time is close to the time $\tau$
it takes a point $(v',w)\in W_0=\RR^u\times\{w\}$, with $\|v\|-\|v'\|$
small but positive, to arrive at the segment
$W_1=\{e^{A_u}v\}\times\RR^s$; see Figure~\ref{fig:Grobman}.  For each
$y\in\Sigma_{f^ix}^{n-i}$ we have $h(y)=(v',w)$ and for some
$\kappa>0$ and $\beta\in(0,1)$ we get
$ \|h(y)-h(f^ix)\| \le \kappa d(y,f^ix)^\beta \le
\frac{\kappa}{2^\beta} d(f^ix,\sigma)^\beta$ and
$ d(f^ix,\sigma) \le \kappa d(h(f^ix),h(\sigma))^\beta = \kappa
\|h(f^ix)\|^\beta$, then
\begin{align*}
  \|h(y)\|
  &\ge
    \|h(f^ix)\|-\frac{\kappa}{2^\beta} d(f^ix,\sigma)^\beta
    =
    \|h(f^ix)\|
    \left(
    1-\frac{\kappa}{2^\beta}\frac{d(f^ix,\sigma)^\beta}{\|h(f^ix)\|}
    \right)
  \\
  &\ge
    \|h(f^ix)\|
    \left(
    1-\frac{\kappa^{1+\beta}}{2^\beta} d(f^ix,\sigma)^{\beta+\beta^{-1}}
    \right)
  \\
  &\ge
    \|h(f^ix)\|
    \left(
    1-\frac{\kappa^{1+\beta}}{2^\beta} \delta_0^{\beta+\beta^{-1}}
    \right)
    \ge
    \frac12 \|h(f^ix)\|,
\end{align*}
if $\delta_0>0$ is small enough, depending only on $G$.  Using the
previous bound, we estimate
$ \|e^{\tau A_u} v'\| =\|v_1\| \ge e^{\tau\|A_u^{-1}\|^{-1}}\|v'\|$
since $\|v\|\le 2\|v'\|$ and $e^{\tau A}\cdot h(y)=(v_1,w')$ for some
$w'$, and so we obtain
\begin{align*}
    \tau
    \le
    \|A_u^{-1}\|\log\frac{\|v_1\|}{\|v'\|}
    =
    \|A_u^{-1}\|
    \log\left(\frac{\|v_1\|}{\|v\|}\cdot\frac{\|v\|}{\|v'\|}\right)
    \le
    \|A_u^{-1}\|
    \log\left(2\frac{\|v_1\|}{\|v\|}\right)
    =
    \|A_u^{-1}\|
    \log(2e^{\|A_u\|})
\end{align*}
because $v_1=e^{A_u}v'$ satisfies
$\|v_1\|=\|e^{A_u}v'\|\le e^{\|A_u\|}\|v'\|$.  This shows that $\tau$
is bounded depending only on $DG(\sigma)$.

Going back to the original coordinates, the cross-sections
$h^{-1}(W_i), i=0,1$ touch $\Sigma_{f^ix}^{n-1}$ and
$\Sigma_{f^{i+1}x}^{n-i-1}$ at $f^ix, f^{i+1}x$, respectively; see
Figure~\ref{fig:Grobman}.  The vector field in between these sections
has norm uniformly bounded away from zero and close to $G({f^ix})$, by
the estimate~\eqref{eq:lowerbdd}. Hence, the flight time is also
bounded above depending only on $G$ in a neighborhood of $\sigma$.
  
We have recovered a flow box with bounded flight time from $\dom(R_i)$
to $\Sigma_{f^{i+1}x}^{n-i-1}$. We can thus finish repeating the
argument as before, using~\eqref{eq:sections}
and~\eqref{eq:unifDfaway}.
\end{proof}

\begin{figure}[h]
  \centering \includegraphics[width=14cm]{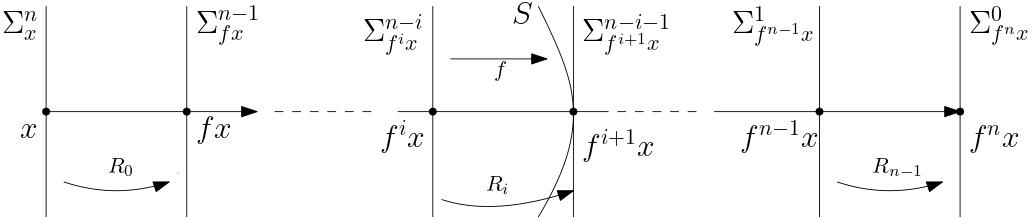}
  \caption{\label{fig:Poincare} The Poincar\'e first hitting time maps
    $R_i$.}
\end{figure}

We write $g_k=R_{n-1}\circ\dots\circ R_{n-k}, 1\le k\le n$ in what
follows, and $\dist_D(x,y)$ for the distance between two
points $x,y$ in the disk $D$, measured along $D$ as the least length
of smooth curves from $x$ to $y$ within $D$.

\begin{lemma}[Local sectional backward contraction]
\label{le:contraction}
Given any $cu$-disk $D\subset U$ tangent to the centre-unstable cone
field, $x\in D$ and $n \ge 1$ a hyperbolic time for $x$
\begin{align*}
  \dist_{D_{n-k}^\perp(x)}  (f^{n-k}(x), g_k(y)  )
  \le
  e^{-k c_0/8}\cdot \dist_{D_n^\perp(x)}(  f^n(x), g_n(y)),
  \quad k=1,\dots,n;
\end{align*}
for each $y\in D_0^\perp(x)$ satisfying
$\dist_{D_n^\perp(x)}(f^nx,g_ny)\le\rho_1$. Moreover, there exists
$\tau_*>0$ such that, defining $t_k$ as the least positive real so
that $g_k(y)=\phi_{t_{n-k}}(y)$, then $|t_{n-k}-(n-k)|\le \tau_*$.
\end{lemma}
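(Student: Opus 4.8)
The plan is to convert the distance estimate into a derivative estimate along minimizing curves: pull back a minimizing curve from the last section $D_n^\perp(x)$ through the Poincar\'e maps $R_i$, bound the norm of each $DR_i^{-1}$ by Lemma~\ref{le:unifDf}, telescope the resulting product against the hyperbolic‐time inequality~\eqref{eq:hyptimex} (which is anchored precisely at the hyperbolic time $n$), and then read the flight‐time control off the contraction together with the lower bound on $\|G\|$ near the equilibria provided by Lemma~\ref{le:boundsigma}.

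\textbf{The contraction estimate.} Fix $y\in D_0^\perp(x)$ with $\dist_{D_n^\perp(x)}(f^nx,g_ny)\le\rho_1$, set $y_0=y$ and $y_{i+1}=R_i(y_i)$, and let $\gamma_n\subset D_n^\perp(x)$ be a minimizing curve joining $f^nx$ to $g_ny$, so its length equals $\dist_{D_n^\perp(x)}(f^nx,g_ny)$. For $0\le j<n$ I would define $\gamma_j=(R_{n-1}\circ\cdots\circ R_j)^{-1}(\gamma_n)$, a curve in $f^j(D)$ joining $f^jx$ to $y_j$. The point to establish, by decreasing induction on $j$, is that each $\gamma_j$ stays inside the scaled cross‐section $\Sigma^{n-j}_{f^jx}$, so that Lemma~\ref{le:unifDf} applies along it and $R_{j-1}^{-1}$ is defined on the next curve; here one uses that once $\mathrm{length}(\gamma_j)$ is exponentially small in $n-j$ the point $y_j$ is much closer to $f^jx$ than $f^jx$ is to $\sing_\Lambda(G)$ thanks to~\eqref{eq:SRtimex}, which is exactly the condition that keeps the Poincar\'e maps well defined when $U$ contains equilibria. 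Granting the induction, the chain rule, Lemma~\ref{le:unifDf} and~\eqref{eq:hyptimex} give $\mathrm{length}(\gamma_j)\le e^{(n-j)c_0/8}\prod_{i=j}^{n-1}\big\|(P^1\mid N^{cu}_{f^ix})^{-1}\big\|\cdot\mathrm{length}(\gamma_n)\le e^{-(n-j)c_0/8}\,\mathrm{length}(\gamma_n)$, since the product of the norms over a segment ending at the hyperbolic time $n$ is at most $e^{-(n-j)c_0/4}$. This both closes the induction (the bound is $\le\rho_1 e^{-(n-j)c_0/8}$, which for $\rho_1$ small keeps $\gamma_j$ in $\Sigma^{n-j}_{f^jx}$) and, at $j=n-k$, yields $\dist_{D_{n-k}^\perp(x)}(f^{n-k}x,g_k(y))\le\mathrm{length}(\gamma_{n-k})\le e^{-kc_0/8}\dist_{D_n^\perp(x)}(f^nx,g_ny)$. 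The restriction $k\ge T_2$ only serves to guarantee that all iterates $f^j(D)$ involved already have curvature $\le C_1$ (Proposition~\ref{pr:curvature}), so that intrinsic and ambient distances on the sections $D_j^\perp$ are uniformly comparable.

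\textbf{The flight times.} Write $g_k(y)=\phi_{t_{n-k}}(y)$ with $t_{n-k}=\sum_{i=0}^{n-k-1}s_i$, where $s_i$ is the Poincar\'e time of $R_i$ at $y_i$; for the base orbit this time is exactly $1$, since $R_i(f^ix)=f^{i+1}x=\phi_1(f^ix)$ and the cross‐sections are orthogonal to $G$. To bound $|s_i-1|$ I would note, by Gr\"onwall over a uniformly bounded time interval, that $d\big(\phi_1(y_i),f^{i+1}x\big)\le e^{L}d(y_i,f^ix)$, and that the orbit of $y_i$ crosses $\Sigma^{n-i-1}_{f^{i+1}x}$ — orthogonal to $G_{f^{i+1}x}$ — transversally with normal speed comparable to $\|G_{f^{i+1}x}\|$, so that $|s_i-1|\lesssim d(y_i,f^ix)/\|G_{f^{i+1}x}\|$ (for $\rho_1$ small the orbit cannot meet this section at a time far from $1$, since a regular orbit cannot nearly close up within a bounded time, even near a saddle where $\|G\|$ is small). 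The contraction just proved gives $d(y_i,f^ix)\le\mathrm{length}(\gamma_i)\le\rho_1 e^{-(n-i)c_0/8}$. If $d(f^{i+1}x,\sing_\Lambda(G))\ge\delta_0$ then $\|G_{f^{i+1}x}\|$ is bounded below uniformly and $|s_i-1|\lesssim e^{-(n-i)c_0/8}$, while if $d(f^{i+1}x,\sing_\Lambda(G))<\delta_0$ then Lemma~\ref{le:boundsigma} and~\eqref{eq:SRtimex} give $\|G_{f^{i+1}x}\|\ge b_*L\,d(f^{i+1}x,\sigma)\ge b_*L\,e^{-c_0(n-i-1)/16-L}$, whence $|s_i-1|\lesssim e^{-(n-i)c_0/8+c_0(n-i-1)/16}\le e^{-c_0(n-i)/16}$. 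Summing the geometric series, $|t_{n-k}-(n-k)|\le\sum_{i=0}^{n-k-1}|s_i-1|\lesssim\sum_{m\ge1}e^{-c_0m/16}=:\tau_*$, uniformly in $x,y,n,k$.

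\textbf{The main difficulty.} The delicate part is the confinement step inside the induction: one must check that the pulled‐back curves $\gamma_j$ never leave the nested family of scaled cross‐sections $\Sigma^{n-j}_{f^jx}$ on which both the uniform derivative bound of Lemma~\ref{le:unifDf} and the well‐definedness of the Poincar\'e maps near the equilibria (resting on~\eqref{eq:SRtimex} and Lemma~\ref{le:boundsigma}) are available, which amounts to reconciling the exponential shrinking rate $e^{-(n-j)c_0/8}$ of the contraction with the radii chosen for these sections. A secondary but genuine nuisance, already flagged, is showing that when $f^ix$ is close to a saddle and $\|G\|$ is correspondingly small along the orbit segment, the Poincar\'e time $s_i$ of $R_i$ at $y_i$ is still within $e^{-c_0(n-i)/16}$ of $1$, rather than merely bounded above.
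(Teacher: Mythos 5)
Your proposal is correct and follows essentially the same path as the paper's proof: both pull a minimizing curve in $D_n^\perp(x)$ back through the inverse Poincar\'e maps, bound each $\|DR_i^{-1}\|$ via Lemma~\ref{le:unifDf}, telescope the product against the hyperbolic-time inequality~\eqref{eq:hyptimex} to close the induction and confine the pulled-back curves in the scaled cross-sections, and then control the flight-time perturbations by dividing the exponentially small distance by $\|G\|$, using Lemma~\ref{le:boundsigma} and~\eqref{eq:SRtimex} near equilibria and summing the resulting geometric series.
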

        
\begin{proof}
  We follow the proof of \cite[Lemma 2.7]{ABV00}.  Let $\gamma_0$ be a
  curve of minimal length in $D_n^\perp(x)$ from $f^nx$ to $g_ny$ and
  let $ \gamma_k=(g_k)^{-1}(\gamma_0), k=1,\dots,n$. Arguing by
  induction, let us assume that for some $k=1,\dots,n$ we have the
  following bound for the length:
  $\ell(\gamma_k)\le e^{-kc_0/8}\ell(\gamma_0)$. The
  choice of $\rho_1$ in~\eqref{eq:unifDfaway} together with the
  definition of hyperbolic times and Lemma~\ref{le:unifDf} ensure
  that 
  \begin{align*}
    \big\|D(R_{n-1}\circ\dots\circ R_{n-k-1})^{-1}\gamma_0'(z)\big\|
    &\le
    e^{(k+1)c_0/8}\|\gamma_0'(z)\|
    \prod_{j=n-k}^n \|(P^1\mid_{N_{f^jx}^u})^{-1}\|
    \\
    &\le
    e^{-(k+1)c_0/8}\|\gamma_0'(z)\|
  \end{align*}
  where $\gamma_0'(z)$ denotes the tangent vector to $\gamma_0$ at
  $z$. Therefore
  \begin{align*}
    \ell(\gamma_{k+1})
    \le
    e^{-(k+1)c_0/8}\ell(\gamma_0)
    =
    e^{-(k+1)c_0/8}\dist_{D_n^\perp(x)}(  f^nx, g_ny)
    \le
    \rho_1e^{-(k+1)c_0/8}
    \le\rho_1,
  \end{align*}
  which shows that the maps are well-defined on their domains and
  completes the induction. Finally, as a standard consequence of
  Gronwall's Inequality, if $y\in D_0^\perp(x)$ is such that
  $\dist_{D_n^\perp(x)}(f^nx,g_ny)\le\rho_1$ and $k=1,\dots,n$, then
  \begin{align*}
    d(f^{n-k+1}x,fg_ky)
    \le
    e^L d(f^{n-k}x,g_ky)
    \le
    e^L \dist_{D^\perp_{n-k}(x)}(f^{n-k}x,g_ky)
    \le
    \rho_1e^Le^{-kc_0/8}.
  \end{align*}
  Hence, the time $\tau_k$ it takes for $f (g_k y)$ to arrive at
  $\phi_{t_{n-k+1}}y = g_{k-1}y \in D_{n-k+1}^\perp(x)$ is bounded from
  above by the above distance divided by the speed of flow. Thus,
  since for $z$ in a $\rho_1e^{-(n-k+1)c_0/8}$-neighborhood of
  $f^{n-k+1}x$, we have either $\|G(z)\|\ge\gamma_0$, or
  $\|G(z)\|\ge b_*\|G({f^{n-k+1}x})\|$, we get from
  Lemma~\ref{le:boundsigma} in the worst case
  \begin{align*}
    |\tau_k|
    \le
    \frac{\rho_1e^{L-kc_0/8}}{b_*\|G({f^{n-k+1}x})\|}
    \le
    \frac{\rho_1e^{L-kc_0/8}}{b_*^2 L d(f^{n-k+1}x,\sigma)}
    \le
    \frac{\rho_1e^{L-kc_0/8}}{b_*^2 L e^{-(k-1)c_0/16}}
    =
    \frac{\rho_1e^{L+c_0/16}}{b_*^2 L}e^{-(k-1)c_0/16}.
  \end{align*}
  This shows that $t_{n-k+1}-t_{n-k}=1+\tau_k$. Since $t_0=0$, then
  \begin{align*}
    |t_{n-k}-(n-k)|
    =
    \big|\sum_{i=k}^{n-1}(t_{n-i+1} -t_{n-i})-(n-k)|
    \le
    \sum_{i=k}^{n-1} |\tau_i|
    \le
    \frac{\rho_1e^{L+c_0/16}}{b_*^2 L}\sum_{i=k}^{n-1}e^{-(k-1)c_0/16}.
  \end{align*}
  which completes the proof after setting
  $\tau_* = \frac{\rho_1e^{L+c_0/16}}{b_*^2 L}(1-e^{-c_0/16})^{-1}$.
\end{proof}

\begin{proposition}[Sectional bounded distortion]
\label{pr:distortion}
There exists $C_2>1$ so that, given a $cu$-disk $D$ tangent to the
centre-unstable cone field with $\kappa(D) \le C_1$, and given
$x\in D$ and $n \ge 1$ a hyperbolic time for $x$, then
\begin{align*}
  \frac{1}{C_2}
  \le
  \frac{|\det Dg_n \mid_{T_y D_0^\perp(x)}|}
  {|\det Df^n \mid_{T_x D_0^\perp(x)}|}
  \le 
  C_2
\end{align*}
in the notation of Lemma~\ref{le:contraction}, for every
$y\in D_0^\perp(x)$ such that $\dist_{D_n^\perp(x)}(g_ny,f^nx)\le \rho_1$.
        \end{proposition}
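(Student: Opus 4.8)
The plan is to deduce this from a bounded-distortion estimate for the composition $g_n=R_{n-1}\circ\cdots\circ R_0$ of first-hitting Poincar\'e maps along the orbit of $x$, by adapting the telescoping argument of \cite{ABV00} to continuous time. For a point $z$ near $x$ in $D_0^\perp(x)$ write $z_k=(R_{k-1}\circ\cdots\circ R_0)(z)\in D_k^\perp(x)$, so that $z_0=z$, $z_n=g_nz$ and $x_k=f^kx$; set $\Phi_k(w)=\log|\det(DR_k(w)\mid T_wD_k^\perp(x))|$. The chain rule gives $|\det Dg_n(z)\mid T_zD_0^\perp(x)|=\prod_{k=0}^{n-1}e^{\Phi_k(z_k)}$, whence
\[
  \log\frac{|\det Dg_n(y)\mid T_yD_0^\perp(x)|}{|\det Dg_n(x)\mid T_xD_0^\perp(x)|}
  =\sum_{k=0}^{n-1}\big(\Phi_k(y_k)-\Phi_k(f^kx)\big),
\]
and the estimate reduces to bounding this sum uniformly.

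The two ingredients are: a uniform H\"older bound $|\Phi_k(w)-\Phi_k(w')|\le L_2\,\dist_{D_k^\perp(x)}(w,w')^{\omega'}$ for $w,w'$ at distance $\le\rho_1$, with $L_2,\omega'>0$ independent of $k$ and $n$; and the geometric decay $\dist_{D_k^\perp(x)}(y_k,f^kx)\le e^{-(n-k)c_0/8}\dist_{D_n^\perp(x)}(g_ny,f^nx)\le e^{-(n-k)c_0/8}\rho_1$, which is exactly the content of Lemma~\ref{le:contraction} (with only $\dist\le\rho_1$ for the last, boundedly-many, indices $k$). Feeding the second into the first and summing the geometric series (plus the boundedly-many exceptional terms) bounds the last display, in absolute value, by a constant depending only on $G$; this controls $|\det Dg_n(y)\mid T_yD_0^\perp(x)|/|\det Dg_n(x)\mid T_xD_0^\perp(x)|$ from above and below. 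Since $g_n(x)=f^nx$ with total flight time $n$ (up to an error $\le\tau_*$, by Lemma~\ref{le:contraction}), the ratio $|\det Dg_n(x)\mid T_xD_0^\perp(x)|/|\det Df^n(x)\mid T_xD_0^\perp(x)|$ is then a bounded, $n$-independent correction coming from the relation between the time-$n$ flow map and the Linear Poincar\'e flow along center-unstable directions, using $T_xD_0^\perp(x)\subset C^{cu}_a(x)$ and that $\|G_{f^nx}\|$ is bounded below at hyperbolic times by \eqref{eq:SRtimex}. Composing the two comparisons yields the constant $C_2>1$.

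For the H\"older bound, away from $\sing_\Lambda(G)$ the maps $R_k$ are realized by the flow for a time in $(1-\xi,1+\xi)$, and the standard formula relating the Jacobian of a Poincar\'e map between cross-sections to that of the generating flow map -- involving only the flight time, the speeds $\|G\|$ at the endpoints and the angles the sections $D_k^\perp(x)$ make with $G$, all uniformly bounded because these sections lie in the cone field $C^{cu}_a$ and $\|G\|\ge\const>0$ off $\sing(G)$ -- reduces the H\"older control of $\Phi_k$ to that of $w\mapsto\log|\det(Df\mid T_wf^k(D))|$, which is Proposition~\ref{pr:curvature}(b). Near an equilibrium $\sigma$ one replaces the flow segment by the flow box with uniformly bounded flight time constructed in the proof of Lemma~\ref{le:unifDf} (from the Grobman--Hartman conjugacy, and the lower bound $\|G_w\|\ge b_*L\,d(w,\sigma)$ of Lemma~\ref{le:boundsigma} keeping $\|G\|$ comparable along the relevant orbit segment), while the summability of the flight-time excesses $|t_{k+1}-t_k-1|$ recorded in Lemma~\ref{le:contraction} keeps the total contribution of these near-equilibrium corrections finite uniformly in $n$.

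The main obstacle is exactly this uniform-in-$(k,n)$ H\"older control of the Poincar\'e log-Jacobians $\Phi_k$ near the equilibria, where a priori $R_k$ has flight time larger than $1$ and $\Phi_k$ is comparatively large: the telescoping sum converges there only because the slow-recurrence hypothesis \eqref{eq:SR} -- entering through the distance estimate \eqref{eq:SRtimex} of Proposition~\ref{pr:hyptimes} and the summability statement of Lemma~\ref{le:contraction} -- forces those excesses to be summable, while the flow box of Lemma~\ref{le:unifDf} keeps the geometry of the sections under control; everything else is the classical $C^2$ bounded-distortion argument along $u$-disks.
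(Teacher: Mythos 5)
Your proposal is correct and follows essentially the same route as the paper: both reduce the distortion bound via the chain rule to a telescoping sum $\sum_{i=0}^{n-1}\bigl(\log J_i(y)-\log J_i(x)\bigr)$ of differences of log-Jacobians of the Poincar\'e maps $R_i$, then feed the exponential backward contraction of Lemma~\ref{le:contraction} into a uniform $(L_1,\zeta)$-H\"older bound on these log-Jacobians obtained from Proposition~\ref{pr:curvature}(b) and the control of the cross-sections near equilibria via the flow box of Lemma~\ref{le:unifDf} and the lower bound of Lemma~\ref{le:boundsigma}. The only substantive difference is presentational: the paper factors $R_i=\hat R_i\circ f$ and treats $\hat R_i$ as projection from a graph with H\"older tangent bundle, while you invoke the standard Poincar\'e-map Jacobian formula directly; and you are more explicit than the paper about the bounded, $n$-independent correction relating $|\det Dg_n(x)|$ to $|\det Df^n(x)|$, a point the paper passes over as a notational identification.
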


\begin{proof}
  Follow~\cite[Proposition 2.8]{ABV00} we write
  $J_i(y)=|\det DR_i\mid_{T_{g_i y} D_i^\perp(x)}|$ and so by the Chain Rule
  \begin{align*}
    \log
    \frac{|\det Dg_n \mid_{T_y D_0^\perp(x)}|}
    {|\det Df^n \mid_{T_x D_0^\perp(x)}|}
    &=
    % \log\left(\frac{|\det Dg_{T_2} \mid T_{g_{T_2}y} D_0^\perp(g_{T_2}y)|}
    % {|\det Df^{n-T_2} \mid T_{f^{T_2}x}
    %   D_{T_2}^\perp(f^{T_2}x)|}\right)
    %   \cdot
    % \\
    % &\cdot
    \sum\nolimits_{i=0}^{n-1}\big( \log J_i(y)-\log J_i(x) \big).
  \end{align*}
  % From Lemma~\ref{le:contraction} we can find $\tau$ so that
  % $g_{T_2}(y)=\phi_{n-T_2+\tau}(y)$ and $|\tau|<\tau_*$. Hence, the
  % first factor above is uniformly bounded by some $L_2>0$ depending
  % only on the flow and $T_2$.
  %For the second factor,
  We recall that $R_i=\hat R_i\circ f$, where
  $\hat R_i: S=f(\Sigma_{f^ix}^{n-i})\to \Sigma_{f^{i+1}x}^{n-i-1}$ is
  the Poincar\'e first visit map; see the proof of
  Lemma~\ref{le:contraction} and Figure~\ref{fig:Poincare}. Since
  $\Sigma_{f^ix}^{n-i}$ is a restriction of a $cu$-disk with curvature
  bounded by $C_1$, and contained in the $cu$-disk
  $W_i=\phi_{(-\epsilon,\epsilon)}\Sigma_{f^ix}^{n-i}$ with
  $\kappa(W_i)\le C_1$ by Remark~\ref{rmk:flowdisk}, then $S$ has
  bounded curvature by Proposition~\ref{pr:curvature}. By
  construction, $S$ is also tangent to $\Sigma_{f^{i+1}x}^{n-i-1}$ at
  $f^{i+1}x$. Hence, we can see $S$ as a graph of a
  $(L_1,\zeta)$-H\"older continuous smooth map
  $h:\Sigma_{f^{i+1}x}^{n-i-1}\to\RR\cdot G({f^{i+1}x})$ and $\hat R_i$
  as the projection from this graph to its domain.  Moreover,
  $f:W_i\to \phi_{(-\epsilon,\epsilon)}S$ is a $C^2$ diffeomorphism
  from a flat submanifold to a manifold whose curvature is bounded by
  $C_1$. Thus
  \begin{align*}
  \log J_i(y) = \log|\det D\hat R_i\mid_{T_{fg_iy}S}| + \log |\det
  Df\mid_{T_{g_iy} D_0^\perp(x)}|
  \end{align*}
  and both summands are restrictions of $(L_1,\zeta)$-H\"older
  continuous maps.

  Therefore, the sum is bounded above by
  $ \sum_{i=0}^{n-1}2 L_1\big( e^{-i c_0/8}\rho_1\big)^\zeta \le
  \frac{2L_1\rho_1^\zeta}{1-e^{-\zeta c_0/8}}.  $ The proof is
  complete after setting
  $C_2=\exp(2L_1\rho_1^\zeta/(1-e^{-\zeta c_0/8}))$.
\end{proof}

%%%%%%%%%%%%%%%%%%%%%%%%%%%%%%%%%%%%%%%%%%%%%%%%%%%%%%%%%%

\section{Lebesgue measure and hyperbolic times}
\label{sec:lebesgue-measure-at}

We extend the construction of backward contraction to a full
neighborhood of points in a $cu$-disk at hyperbolic times in
Subsection~\ref{sec:distort-bounds-centr}.
This provides the tools needed to construct the physical/SRB measure,
outlined in Subsection~\ref{sec:constr-physic-probab}, leading to the
proof of Theorem~\ref{mthm:discretefabv} in
Subsection~\ref{sec:finitely-mamy-physic}.

\subsection{Distortion bounds and central-unstable disks
  at hyperbolic times}
\label{sec:distort-bounds-centr}

In what follows, we set
$\dist_D(x,\partial D)= \inf_{y\in\partial D}\dist_D(x,y)$ for the
distance from a point $x\in D$ to the boundary of $D$. We assume
without loss of generality that $U$ contains a $\rho_1$-neighborhood
of $\Lambda$.

Let $z\in U^*$, $N_z ^u:=E^{cu}_X\cap G(z)^\perp$ and
$W=\exp_z\big(N_z^u\cap B(0,\rho_0)\big)$ be such that the $cu$-disk
$D=\phi_{(-\rho,\rho)}W$ for some $\rho=\rho(z)>0$ satisfies
$\ell(\phi_{(-\rho,\rho)}z)>2\rho_1$ and $\Leb_D(\Omega)>0$, where
$\Leb_D$ is the volume measure induced in the embedded disk $D$ by the
Riemannian metric of $M$. Remark~\ref{rmk:flowdisk} ensures that
$\kappa(D)\le C_1$.

We note that this disk is an union of segments of trajectories of the
flow -- we say that this is a \emph{$cu$-disk}. Moreover, since there
exists $\gamma_*>0$ such that $\|G(x)\|\le\gamma_*$ for all $x\in U$,
we necessarily have that
$2\rho_1<\ell\big(\phi_{(-\rho,\rho)}z\big)\le2\rho\gamma_*$ and so
$\rho>\rho_1\gamma_*^{-1}$.  We set
\begin{align*}
  A=A(D,\rho_1)
  =
  \{ x\in D\cap \Omega : \dist_D(x,\partial D)\ge\rho_1 \}
\end{align*}
so that $\leb_D(A)>0$, reducing $\rho_1$ if necessary. 

Let $\gamma_0>0$ be such that $\|G(x)\|\ge\gamma_0$ for all $x\in U^*$
with $d(x,\sing_\Lambda(G))>\delta_0$.

Next result states robust local sectional backward contraction and
bounded distortion, together with the consequence for the push-forward
of Lebesgue measure along $cu$-disks.

\begin{proposition}\label{pr:preballs}
  Let $x\in A$ and $n>1$ be a hyperbolic time for $x$. Then there
  exists an open neighborhood $V_n(x)$ of $x$ in $D$, a $\delta_0$-ball
  $W_n(x)$ inside $f^n(D)$ centered at $f^nx$ such that
  \begin{enumerate}
  \item $f^n\mid_{V_n(x)} : V_n(x)\to W_n(x)$ is a diffeomorphism, and
  \item there exists $s_0>0$ such that
    $\phi_{(-s_0,s_0)}x\subset V_n(x)$ and $f^n(\phi_{(-s_0,s_0)}x)$
    has length at least $\rho_1$, and for all $-s_0<s<s_0$
    \begin{enumerate}
    \item $n$ is a hyperbolic time for $\phi_sx$,
      $D_0^\perp(\phi_sx)\subset V_n(x)$, and
    \item the translated Poincar\'e maps
      $R_i^s:\dom(R_i^s)\subset\Sigma_{\phi_s f^i(x)}^{n-i}
      \to\Sigma_{\phi_sf^{i+1}(x)}^{n-i-1}, i=0,\dots,n-1$ composed
      to form $g_n^s=R_{n-1}^s\circ\cdots\circ R_0^s$ satisfy:
      \begin{enumerate}
      \item
        $\big(g_n^s\mid_{D_0^\perp(\phi_sx)}\big)^{-1}:
        D_n^\perp(\phi_sf^n) \to D_0^\perp(\phi_sx)$ is a
        $e^{-nc_0/8}$-contraction, and
      \item for every $y\in D_0^\perp(\phi_sx)$ such that
        $\dist_{D_n^\perp(\phi_sx)}(g_n^sy,\phi_sf^nx)\le \rho_1$ we
        get
        \begin{align*}
          C_2^{-1}
          \le
          \frac{|\det Dg_n^s \mid_{T_y D_0^\perp}(\phi_sx)|}
          {|\det Df^n \mid_{T_{\phi_s x} D_0^\perp}(\psi_sx)|}
          \le 
          C_2.
        \end{align*}
      \end{enumerate}
    \end{enumerate}
  \item there exists $C_3>0$ so that
    $f_*^n\big(\leb\mid_{V_n(x)}\big)\le C_3
    \cdot\big(\leb\mid_{W_n(x)}\big)$.
  \end{enumerate}
\end{proposition}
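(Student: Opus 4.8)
The plan is to combine the backward-contraction and distortion estimates already available at a hyperbolic time (Lemmas~\ref{le:unifDf} and~\ref{le:contraction} and Proposition~\ref{pr:distortion}) with the fact that ``being a hyperbolic time'' is stable under small translations of the base point along the flow, and then to build $V_n(x)$ as a flow-box whose cross-sections are the transverse disks $D_0^\perp(\phi_sx)$. \textbf{Step 1 (stability along the flow).} The conditions \eqref{eq:hyptimex} and \eqref{eq:SRtimex} defining ``$n$ is a hyperbolic time for $x$'' form a finite family of inequalities indexed by $0\le j<n$; each is either strict (the slow-recurrence bound \eqref{eq:SRtimex}) or carries a definite slack (the Linear Poincar\'e product in \eqref{eq:hyptimex} decays at rate $c_0/4$ while only $c_0/8$ is used downstream). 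Since $z\mapsto\|(P^1\mid N^{cu}_z)^{-1}\|$ is continuous on $U^*$, $z\mapsto d_{\delta_0}(z,\sing_\Lambda(G))$ is Lipschitz, and $s\mapsto f^j(\phi_sx)=\phi_{j+s}(x)$ is continuous with $d(f^jx,f^j\phi_sx)\le|s|\gamma_*$ uniformly in $j$, there is $s_0=s_0(x,n)>0$ so that $n$ is a hyperbolic time for $\phi_sx$ for all $|s|<s_0$; this is item~(2a).

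\textbf{Step 2 (local inverse branch and flow-box).} Fix $|s|<s_0$. Because $\kappa(D)\le C_1$ (Remark~\ref{rmk:flowdisk}) and $f^k(D)\subset U$ for $0\le k\le n$ ($U$ being a trapping region), Lemmas~\ref{le:unifDf} and~\ref{le:contraction} and Proposition~\ref{pr:distortion} apply with $x$ replaced by $\phi_sx$ and produce the translated Poincar\'e first-hitting maps $R_i^s\colon\dom(R_i^s)\subset\Sigma_{\phi_sf^ix}^{n-i}\to\Sigma_{\phi_sf^{i+1}x}^{n-i-1}$, their composite $g_n^s=R_{n-1}^s\circ\cdots\circ R_0^s\colon D_0^\perp(\phi_sx)\to D_n^\perp(\phi_sf^nx)$ whose inverse is an $e^{-nc_0/8}$-contraction (item~(2b)i), and the distortion bound of item~(2b)ii. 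The disk $D_n=f^n(D)=\phi_{(-\rho,\rho)}(f^nW)$ is again a $u$-disk (cone invariance \eqref{eq:coneinv}) and a union of flow segments; take $W_n(x)$ to be the $\delta_0$-ball about $f^nx$ inside $D_n$, realized as the flow-saturation $\bigcup_{|s|<s_0}D_n^\perp(\phi_sf^nx)$ of its central transverse section, and set
\[
  V_n(x)=\bigcup_{|s|<s_0}(g_n^s)^{-1}\big(D_n^\perp(\phi_sf^nx)\big)=\bigcup_{|s|<s_0}D_0^\perp(\phi_sx).
\]
Each $(g_n^s)^{-1}$ coincides with the corresponding inverse branch of the time-$n$ map of the flow on its transverse section (the $R_i^s$ are Poincar\'e hitting maps whose total flight time is within $\tau_*$ of $n$ by Lemma~\ref{le:contraction}), so these branches glue into a single inverse branch of $f^n$; hence $f^n\mid V_n(x)\colon V_n(x)\to W_n(x)$ is a diffeomorphism (item~(1)), with $\phi_{(-s_0,s_0)}x\subset V_n(x)$ and $D_0^\perp(\phi_sx)\subset V_n(x)$ (item~(2)).

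\textbf{Step 3 (Lebesgue bound).} Disintegrating $\leb\mid V_n(x)$ over the flow parameter $s$ and the sections $D_0^\perp(\phi_sx)$, the Jacobian of $f^n\mid V_n(x)$ at a point of $D_0^\perp(\phi_sx)$ factors as a transverse part and a flow-direction part. The transverse part equals $|\det Dg_n^s\mid T D_0^\perp(\phi_sx)|$ up to the factor $C_2^{\pm1}$ of item~(2b)ii, and $|\det Dg_n^s|\ge1$ since $(g_n^s)^{-1}$ contracts ($D$ being a $u$-disk), so it is $\ge1/C_2$; the flow-direction part is $\|G(\phi_sf^nx)\|/\|G(\phi_sx)\|$, bounded below by a positive constant depending only on $G$ via Lemma~\ref{le:boundsigma} and the fact that $f^nx$ lies at a definite distance from $\sing_\Lambda(G)$. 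Hence $|\det D(f^n\mid V_n(x))|\ge 1/C_3$ for a constant $C_3=C_3(G)$, and since $f^n\mid V_n(x)$ is a diffeomorphism onto $W_n(x)$ this yields $f^n_*(\leb\mid V_n(x))\le C_3\,(\leb\mid W_n(x))$, which is item~(3).

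\textbf{Main obstacle.} The hard part is calibrating the scales $\rho_1,\delta_0,s_0$ so that all requirements hold simultaneously: $s_0$ must be small enough for Step~1 and for the contractions and distortion bounds of Step~2 to remain valid on the full transverse sections, yet large enough that the image segment $f^n(\phi_{(-s_0,s_0)}x)=\phi_{(-s_0,s_0)}(f^nx)$ has length at least $\rho_1$. Both can be achieved because, at a hyperbolic time, $f^nx$ stays a definite distance from the equilibria (a consequence of \eqref{eq:SRtimex} at $j=n-1$ together with the hyperbolic, saddle-type behaviour near $\sing_\Lambda(G)$), which by Lemma~\ref{le:boundsigma} keeps $\|G\|$ bounded below along a definite flow segment through $f^nx$. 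A secondary technical nuisance is that the Poincar\'e hitting maps have flight times differing from integers, so identifying $\bigcup_s(g_n^s)^{-1}$ with an honest inverse branch of $f^n$ and transferring the flow-direction Jacobian requires tracking this correction, exactly as in the proof of Lemma~\ref{le:contraction}.
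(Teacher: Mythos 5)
Your proposal follows the same overall architecture as the paper's proof: translate the base point along the flow to get a one-parameter family of transverse hyperbolic times, assemble the transverse sections $D_0^\perp(\phi_sx)$ and their images $D_n^\perp(\phi_sf^nx)$ into a flow-box $V_n(x)\to W_n(x)$, apply the contraction and distortion estimates from Lemmas~\ref{le:unifDf},~\ref{le:contraction} and Proposition~\ref{pr:distortion} to each translated slice, and then disintegrate Lebesgue measure over the flow parameter to get the push-forward bound. Step~3 is correct in spirit and is essentially the paper's Fubini argument phrased as a Jacobian lower bound; the bookkeeping via $f^n=h\circ g^s_n$ with $h$ bounded is how the paper keeps the ``flight-time versus integer-time'' discrepancy under control, exactly the nuisance you flag.

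The genuine gap is in Step~1 and it is not cosmetic. You derive $s_0=s_0(x,n)$ from continuity of $z\mapsto\|(P^1\mid N^{cu}_z)^{-1}\|$ and the Lipschitz bound $d(f^jx,f^j\phi_sx)\le|s|\gamma_*$, arguing that each of the finitely many inequalities in \eqref{eq:hyptimex}--\eqref{eq:SRtimex} has slack. For \eqref{eq:hyptimex} this does give a uniform $s_0$, because the slack $e^{k c_0/8}$ and the multiplicative perturbation $e^{O(|s|k)}$ are both exponential in $k$. For \eqref{eq:SRtimex}, however, the slack is the fixed multiplicative factor $e^{-L}$ while your additive control $|s|\gamma_*$ is compared against a lower bound $e^{-c_0(n-j)/16-L}$ that is exponentially small in $n-j$; the additive perturbation overwhelms it, so the $s_0$ you get shrinks with $n$. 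This is fatal because the proposition implicitly forces a \emph{uniform} lower bound on $s_0$: since $\|G\|\le\gamma_*$ on $U$, the length requirement $\ell\big(f^n(\phi_{(-s_0,s_0)}x)\big)\ge\rho_1$ yields $s_0\ge\rho_1/(2\gamma_*)$, and the disks $W_n(x)$ must have uniform size $\rho_1$ for the covering/Besicovitch argument in Proposition~\ref{pr:maximal} downstream. The correct mechanism, which the paper uses (via Remark~\ref{rmk:plissFlows}(2b) and Lemma~\ref{le:boundsigma}), is that near a hyperbolic saddle $\sigma$ the speed $\|G\|$ is comparable to $d(\cdot,\sigma)$, so over a flow time $|s|\le s_0$ the distance to $\sigma$ changes \emph{multiplicatively} ($d(\phi_sz,\sigma)\ge e^{-L|s|}d(z,\sigma)$), not additively; this is absorbed by the $e^{-L}$ slack built into \eqref{eq:SRtimex} in Proposition~\ref{pr:hyptimes}, and gives the universal $s_0=\min\{L/4,1/2\}$ that the paper fixes. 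Your crude Lipschitz estimate $d(f^jx,f^j\phi_sx)\le|s|\gamma_*$ throws this structure away exactly where it is needed.
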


\begin{proof}
  Fixing $x\in A$ and $n$ a hyperbolic time for $x$, then $n$ is also
  a hyperbolic time for $\phi_s(x)\in D$ for $-s_0<s<s_0$ with $s_0$
  given by Proposition~\ref{pr:roughyptime}.  Moreover,
  $D_0^\perp(\phi_sx)\subset D$ and we obtain item
  (2a). 

  Hence,
  $d(\phi_sf^nx,\sing_\Lambda(G))>\delta_0$, which implies that
  $\|G({\phi_sf^nx})\|\ge\gamma_0$ for all
  $|s|<s_0$.  Thus $\ell(\phi_{(-s_0,s_0)}f^nx)\ge
  2s_0\gamma_0$.

  In addition, since $\|G(y)-G(x)\|\le L d(x,y)$, if
  $d(x,\sing_\Lambda(G))<\delta_0$, then
  $s_0b_*\|G(x)\| \le \ell(\phi_{(-s_0,s_0)}x) \le s_0 \cdot 2L
  d(x,\sing_\Lambda(G)) < \frac12 d_{\delta_0}(x,\sing_\Lambda(G))$,
  by Lemma~\ref{le:boundsigma}.
      
  To obtain item (2b), we apply Lemma~\ref{le:contraction} together
  with Proposition~\ref{pr:distortion} to each $\phi_sx$ with
  $|s|<s_0$, and we also get that
  $\dist_{D_n}(f^nx,\partial D^n)>\min\{\rho_1,2s_0\gamma_0\}=\rho_1$
  (reducing $\rho_1$ if needed).

  To obtain item (1), we consider the open set
  $W_n(x)=\cup_{|s|<\rho_x}D_n^\perp(\phi_sf^nx)$ together with
  $V_n(x)=f^{-n}W_x(x)$, and note that $W_n(x)$ contains a
  $\rho_1$-ball around $f^nx$ by construction.
\begin{figure}[h]
  \centering \includegraphics[width=9cm]{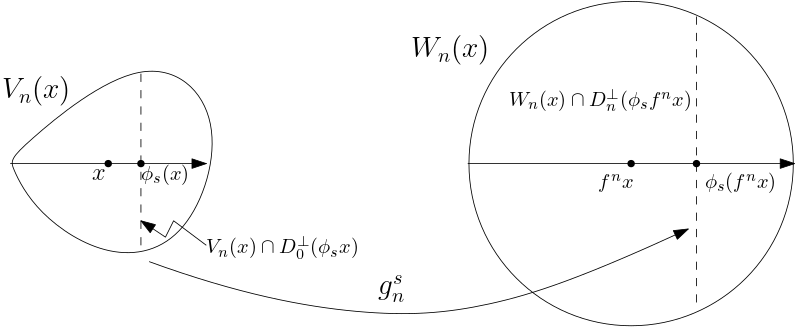}
  \caption{\label{fig:Fubini} Sectioning $V_n(x)$ and $W_n(x)$ through
    normal cross-sections to the trajectory of $x$, to then apply
    Fubini's Theorem.}
\end{figure}
  Finally, since
  $\{D_n^\perp(\phi_sf^nx)=g_n^s\big(D_0^\perp(\phi_sx)\cap
  V_n(x)\big):|s|<s_0\}$ is a measurable partition of $W_n(x)$, we use
  Lemma~\ref{le:contraction} together with
  Proposition~\ref{pr:distortion} as follows; see Figure~\ref{fig:Fubini}.
  We write
    \begin{align*}
      v(\phi_sx)
      =
      \leb^\perp\mid_{V_n(x)\cap D_0^\perp(\phi_sx)}
      \;\; \& \;\;
      w(\phi_sx)
      =
      \leb^\perp\mid_{W_n(x)\cap D_n^\perp(\psi_sx)},
      \;\;
      |s|<s_0;
    \end{align*}
    for the normalized volume measures induced on
    $D_0^\perp(\psi_sx)\cap V_n(x)$ and
    $W_n(x)\cap D_n^\perp(\psi_sx)\big)$, respectively.  Since
    $\big(\leb\mid_{V_x(x)}\big)(E)=\int_{-s_0}^{s_0} v(\phi_sx)(E)\,
    \frac{ds}{\|G({\phi_sx})\|}$ for any measurable subset $E$, then we
    can apply Fubini's Theorem.

    We have that $g_n^s(y)=\phi_{\tau(s,y)}\circ f^n$ for each
    $y\in D_0^\perp(\psi_sx)$ with $|\tau(s,y)|\le\tau_*$, so
    $f^n= h\circ g_n^s$ and $h:V_n(x)\to W_n(x)$ is a diffeomorphism
    with bounded derivatives.  Thus, item (2b)(ii) ensures that
    $(g_n^s)_*v(\phi_sx)\le C_2 w(\phi_sx)$, and there exists a
    constant $K>0$ so that
    \begin{align*}
      f_*^n\big(\leb\mid_{V_n(x)}\big)(E)
      &=
        f_*^n\left(
        \int_{-s_0}^{s_0} v(\phi_sx)(E)\, \frac{ds}{\|G({\phi_sx})\|} 
        \right)
        =
        \int_{-s_0}^{s_0} [(h\circ g_n^s)_*v(\phi_sx)](E)\,
        \frac{ds}{\|G({\phi_sx})\|}
      \\
      &\le
        C_2\int_{-s_0}^{s_0} [(h)_*w(\phi_sx)](E)\,
        \frac{ds}{\|G({\phi_sx})\|}
        \le
        C_2 K\int_{-s_0}^{s_0} w(\phi_sx)(E)\, \frac{ds}{\|G({\phi_sx})\|}
      \\
       &\le
         C_2K
         \int_{\phi_{(-s_0,s_0)}f^nx}
         \big(\leb\mid_{W_n(x)\cap D_n^\perp(z)}\big)(E) \, dz
      \\
      &=
        C_2K \big(\leb\mid_{W_n(x)}\big)(E).
    \end{align*}
    This completes the proof after setting $C_3=K C_2$.
  \end{proof}

%%%%%%%%%%%%%%%%%%%%%%%%%%%%%%%%%%%%%%%%%%%%%%%%%%%%%%%%%%%%%%  

\subsection{Construction of a physical probability measure}
\label{sec:constr-physic-probab}

We now have all the basic tools needed to follow the construction
presented in \cite[Sections 3 \& 4]{ABV00} to obtain a physical/SRB
probability measure for the flow. We present a step by step overview
in what follows.

For each $n>1$ we set
$
  H_n
  =
  \{x\in A(D,\rho_1) : n \text{\ is a hyperbolic time for\  } x\}.
$
From Proposition~\ref{pr:preballs}, if $x\in H_n$, then $f^nx$ is
$\rho_1$-away from the boundary of $f^nD$. For $\delta>0$, we denote
$\Delta_n(x,\delta)$ the $\delta$-neighborhood of $f^nx$ inside
$f^n(D)$. If $\leb_D$ is the probability measure
$\leb_D(E)=\leb(D\cap E)/\leb(D)$ for every Borel subset $E\subset D$,
obtained by normalizing the Riemannian induced volume measure on $D$,
then
$(f^n_* \Leb_D) \mid_{\Delta_n(x,\delta_1)}\le C_3
\leb_{f^n(D)}\mid_{\Delta_n(x,\delta_1)}$, again from
Proposition~\ref{pr:preballs}.

The following is a geometrical consequence of the finite
dimensionality and  bounded curvature of
$cu$-disks.

\begin{proposition}
\label{pr:maximal}
There exists $\tau>0$ so that for  $n>T_2$
there exists a finite subset $\wh{H}_n$ of $H_n$ such that the balls
$\Delta_n(z,\rho_1/4)$
in $f^n(D)$ centered at $z\in f^n(\wh{H}_n)$ are
pairwise disjoint, and their union $\Delta_n$ satisfies
$
(f_*^n \Leb_D)(\Delta_n \cap \Omega)
\ge
(f_*^n \Leb_D)(\Delta_n \cap f^n(H_n))
\ge
\tau \Leb_D(H_n).
$
\end{proposition}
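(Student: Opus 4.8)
Proposition~\ref{pr:maximal} is a Vitali-type covering argument. The plan is to combine a bounded-multiplicity covering lemma for balls of bounded geometry with the distortion control of Proposition~\ref{pr:preballs}(3). Here is how I would organize it.

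\textbf{Step 1: A Vitali-type selection on $f^n(D)$.} For each $x\in H_n$ we have, by Proposition~\ref{pr:preballs}, a $\delta_0$-ball $W_n(x)\subset f^n(D)$ centered at $f^nx$; in particular $f^nx$ is $\rho_1$-away from $\partial f^n(D)$ (shrinking $\rho_1\le\delta_0$ if needed), so the $\rho_1$-ball $\Delta_n(x,\rho_1)$ around $f^nx$ lies in $f^n(D)$. Consider the family $\{\Delta_n(z,\rho_1):z\in f^n(H_n)\}$. Since $f^n(D)$ is a $u$-disk of dimension $d_{cu}$ with curvature bounded by $C_1$ (Proposition~\ref{pr:curvature} and Remark~\ref{rmk:flowdisk}), balls in its intrinsic metric have volume comparable, up to a constant depending only on $d_{cu}$ and $C_1$, to Euclidean balls of the same radius. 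By the classical $5r$-covering lemma applied in the metric space $f^n(D)$, I can extract a countable --- and, because $f^n(H_n)$ has finite diameter, in fact finite --- subset $f^n(\wh H_n)$ such that the balls $\Delta_n(z,\rho_1/5)$ (I would use radius $\rho_1/4$ as in the statement; this only changes constants) with $z\in f^n(\wh H_n)$ are pairwise disjoint, while the dilated balls $\Delta_n(z,\rho_1)$ cover $f^n(H_n)$. Hence $\Delta_n:=\bigcup_{z\in f^n(\wh H_n)}\Delta_n(z,\rho_1/4)$ has $(f^n_*\Leb_D)(\Delta_n\cap f^n(H_n))$ bounded below by a fixed fraction of $(f^n_*\Leb_D)(f^n(H_n))$, using bounded distortion to pass between a ball of radius $\rho_1$ and the concentric ball of radius $\rho_1/4$.

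\textbf{Step 2: Lower bound by $\Leb_D(H_n)$ via distortion.} Cover $H_n$ by the preimages $V_n(x)=(f^n)^{-1}W_n(x)$ for $x$ in the (finite) selected set, or directly estimate: for each selected $z=f^nx$, Proposition~\ref{pr:preballs}(3) gives $f^n_*(\Leb\mid V_n(x))\le C_3(\Leb\mid W_n(x))$, equivalently $\Leb(V_n(x))\le C_3\,\Leb(W_n(x))$, and since $W_n(x)\supset\Delta_n(x,\rho_1/4)$ has volume comparable to $\rho_1^{d_{cu}}$ (bounded geometry), each ball $\Delta_n(z,\rho_1/4)$ carries $f^n_*\Leb_D$-mass bounded below by a constant times $\Leb_D$-mass of a corresponding piece of $H_n$. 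Summing over the disjoint selected balls and using that the $\Delta_n(z,\rho_1)$ cover $f^n(H_n)\supset f^n(H_n\cap\text{selected neighborhoods})$, one gets $(f^n_*\Leb_D)(\Delta_n)\ge\tau\,\Leb_D(H_n)$ for a constant $\tau>0$ depending only on $d_{cu}$, $C_1$, $C_2$, $C_3$ and $\rho_1$ --- none of which depend on $n$. The chain $(f^n_*\Leb_D)(\Delta_n\cap H)\ge(f^n_*\Leb_D)(\Delta_n\cap f^n(H_n))\ge\tau\Leb_D(H_n)$ then follows, the first inequality being trivial since $f^n(H_n)\subset H:=H(D,\rho_1)\subset A(D,\rho_1)$ (or whatever ambient set $H$ denotes) and $\Delta_n\cap H\supset\Delta_n\cap f^n(H_n)$.

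\textbf{Main obstacle.} The only genuinely delicate point is making the geometric comparisons uniform in $n$: one must know that the balls $\Delta_n(z,r)$ inside $f^n(D)$ behave like Euclidean balls with constants independent of $n$. This is exactly what the uniform curvature bound $\kappa(f^n(D))\le C_1$ of Proposition~\ref{pr:curvature}(a) (valid for all large $n$, with the flow-saturation version in Remark~\ref{rmk:flowdisk}) buys us, together with the fact that $d_{cu}=\dim E^{cu}$ is fixed; so the covering lemma constants and the volume-of-ball bounds are all uniform. The rest is the standard Besicovitch/Vitali bookkeeping, and I would simply cite \cite[Lemma~4.1 and its proof]{ABV00} (or the analogous statement there) after noting that our $f^n(D)$ satisfies the same bounded-geometry hypotheses, since this proposition is the flow-version of exactly that lemma.
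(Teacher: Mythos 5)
Your Vitali/Besicovitch-covering-with-bounded-distortion argument is exactly the approach the paper relies on: its proof of this proposition is just the citation \cite[Proposition~3.3 and Lemma~3.4]{ABV00} (your reference to Lemma~4.1 of that paper is slightly off, but the content you recall is right), and your Steps~1--2 reproduce that argument in the present notation. One point worth tightening in Step~2: to see that the selected $V_n(z)$ actually cover $H_n$ downstairs you need to invoke that $f^n=\phi_n$ is a global diffeomorphism, hence injective on $D$ (so $x\in H_n$ with $f^nx$ in a dilated ball around $f^nz$ necessarily lies in $V_n(z)$), and the mass lower bound should run through the two-sided distortion of Proposition~\ref{pr:distortion} (or Proposition~\ref{pr:preballs}(2b)) rather than only the one-sided inequality $f^n_*(\Leb\mid V_n(x))\le C_3\,\Leb\mid W_n(x)$.
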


\begin{proof}
  See \cite[Proposition 3.3 \& Lemma 3.4]{ABV00}.
\end{proof}

Let $\D_n=\{\Delta_n(z,\rho_1/4): z\in f^n(\wh{H}_n)\} $ be the
collection of balls that form $\Delta_n$. We note that all these balls
are $\delta_0$-away from $\sing_\Lambda(G)$, and we define
\begin{align}
  \label{eq:nun}
  \mu_n:=\frac{1}{n} \sum\nolimits_{j=0}^{n-1} f_*^j (\Leb_D);\quad
  \nu_n:=\frac{1}{n} \sum\nolimits_{j=0}^{n-1} (f_*^j \Leb_D) \mid_{\Delta_j}
  \qand
  \eta_n:=\mu_n-\nu_n.
\end{align}

\begin{proposition}
\label{pr:mass}
There is $\alpha>0$ so that both $\nu_n(\Omega)\ge \alpha$ and
$\nu_n\big(\cup_{i=0}^{n-1} f^i(D\cap \Omega)\big)\ge \alpha$ for
all sufficiently large $n>T_2$.
\end{proposition}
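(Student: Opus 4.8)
The plan is to derive both lower bounds from two facts already at our disposal: that Lebesgue‑almost every point of $A=A(D,\rho_1)$ has a positive asymptotic density of hyperbolic times (Proposition~\ref{pr:hyptimes}), and that at each hyperbolic time a fixed proportion of $\Leb_D$‑mass is recovered inside the balls making up $\Delta_j$ (Proposition~\ref{pr:maximal}). Both estimates then follow by summing over $j<n$ and dividing by $n$; the bound for $\bigcup_{i=0}^{n-1}f^i(D\cap H(\sigma))$ will be deduced from the one for $H$ by a soft measure‑theoretic argument.

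First I would estimate the Ces\`aro averages $\frac1n\sum_{j=1}^n\Leb_D(H_j)$ from below. Since $A\subset\Omega$, Proposition~\ref{pr:hyptimes} provides a constant $\theta>0$ and, for each $x\in A$, an integer $N(x)$ such that $x$ has at least $\theta T$ hyperbolic times in $\{1,\dots,T\}$ whenever $T\ge N(x)$; writing $g_n(x)=\#\{1\le j\le n:\ j\text{ is a hyperbolic time for }x\}$, this gives $\liminf_{n\to\infty}g_n(x)/n\ge\theta$ for every $x\in A$. As $H_j=\{x\in A:\ j\text{ is a hyperbolic time for }x\}\subseteq A$, summing over $j$ yields $\sum_{j=1}^n\Leb_D(H_j)=\int_A g_n\,d\Leb_D$, and Fatou's Lemma gives
\begin{align*}
  \liminf_{n\to\infty}\frac1n\sum_{j=1}^n\Leb_D(H_j)
  =\liminf_{n\to\infty}\int_A\frac{g_n}{n}\,d\Leb_D
  \ \ge\ \int_A\liminf_{n\to\infty}\frac{g_n}{n}\,d\Leb_D
  \ \ge\ \theta\,\Leb_D(A)\ =:\ \beta_0,
\end{align*}
and $\beta_0>0$ because $\Leb_D(A)>0$ by construction of $A$. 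In particular there is $n_0$ with $\frac1n\sum_{j=1}^n\Leb_D(H_j)\ge\beta_0/2$ for all $n\ge n_0$.

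Next I would transfer this to $\nu_n=\frac1n\sum_{j=0}^{n-1}(f_*^j\Leb_D)\mid\Delta_j$. For each $j>T_2$, Proposition~\ref{pr:maximal} gives $\tau>0$ with $(f_*^j\Leb_D)(\Delta_j\cap H)\ge\tau\,\Leb_D(H_j)$, so, discarding the finitely many terms with $j\le T_2$ and using $0\le\Leb_D(H_j)\le1$,
\begin{align*}
  \nu_n(H)\ \ge\ \frac{\tau}{n}\sum_{T_2<j<n}\Leb_D(H_j)
  \ \ge\ \tau\Big(\frac{\beta_0}{2}-\frac{T_2+1}{n}\Big)
  \ \ge\ \frac{\tau\beta_0}{4}
\end{align*}
for all $n$ large enough, by the previous step; this is the first assertion. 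For the second, I would use that $f=\phi_1$ is a diffeomorphism of $M$ and that $\Leb_D\big(D\setminus H(\sigma)\big)=0$ — as is the case here, $H(\sigma)$ being the complement in the disk of a finite union of stable manifolds of hyperbolic equilibria, which are immersed submanifolds transverse to the $u$‑disk $D$ and so meet $D$ in a set of dimension $<d_{cu}$. Then, for each $j<n$, since $\Delta_j\subseteq f^j(D)=f^j(D\cap H(\sigma))\sqcup f^j(D\setminus H(\sigma))$ (a disjoint union, as $f^j$ is injective on $D$) and $(f_*^j\Leb_D)\big(f^j(D\setminus H(\sigma))\big)=\Leb_D\big(D\setminus H(\sigma)\big)=0$, we get $(f_*^j\Leb_D\mid\Delta_j)\big(f^j(D\cap H(\sigma))\big)=(f_*^j\Leb_D)(\Delta_j)$, whence, taking $i=j$ in the union,
\begin{align*}
  \nu_n\Big(\bigcup_{i=0}^{n-1}f^i\big(D\cap H(\sigma)\big)\Big)
  \ \ge\ \frac1n\sum_{j=0}^{n-1}(f_*^j\Leb_D)(\Delta_j)
  \ =\ \nu_n(M)\ \ge\ \nu_n(H)\ \ge\ \frac{\tau\beta_0}{4}.
\end{align*}
Taking $\alpha=\tau\beta_0/4$ finishes the plan.

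The step I expect to require the most care is the conversion of the point‑dependent thresholds $N(x)$ into a uniform lower bound for the averages $\frac1n\sum_j\Leb_D(H_j)$ — this is exactly what Fatou's Lemma supplies once $\Leb_D(A)>0$ is known — and, for the second estimate, pinning down that $H(\sigma)$ exhausts $D$ up to a $\Leb_D$‑null set; everything else is the mass‑recovery bookkeeping of Proposition~\ref{pr:maximal}, which is \cite{ABV00}. Should one only know $\Leb_D\big(D\cap H(\sigma)\big)>0$ rather than full measure, the second estimate can instead be obtained by re‑running the covering argument of Proposition~\ref{pr:maximal} with the maximal family chosen inside $f^j\big(H_j\cap H(\sigma)\big)$ and invoking the bounded distortion of Proposition~\ref{pr:preballs}.
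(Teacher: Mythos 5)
Your proof of the first bound $\nu_n(H)\ge\alpha$ is correct and matches the spirit of the paper's reference to \cite[Proposition~3.5]{ABV00}: Fatou's Lemma applied to the indicator counts $g_n/n$, combined with the mass-recovery estimate from Proposition~\ref{pr:maximal}, is exactly the right bookkeeping, and the tail truncation $j\le T_2$ is handled correctly.

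The second bound has a genuine gap, and it stems from a misidentification of $H(\sigma)$. In \cite{ABV00} (whose Proposition~3.5 this proof follows), $\sigma$ is not a singularity of $G$ but a fixed constant in $(0,1)$ — the contraction rate appearing in the definition of ($\sigma$-)hyperbolic times — and $H(\sigma)$ is the set of points in $D$ admitting infinitely many $\sigma$-hyperbolic times. This set has positive $\Leb_D$-measure (it essentially contains $D\cap\Omega$) but is \emph{not} of full measure in $D$, so the claim $\Leb_D\big(D\setminus H(\sigma)\big)=0$ is false, and your ``soft'' reduction of the second bound to $\nu_n(M)\ge\nu_n(H)$ collapses. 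The stable-manifold picture you describe is not what the notation means here; and even if it were, the transversality of $W^s(\sigma)$ to a $u$-disk is not automatic in the partially hyperbolic setting, since $W^s(\sigma)$ contains weakly contracting directions inside $E^{cu}$.

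The good news is that the correct argument is actually \emph{simpler} than your fallback, and you already have all the ingredients on the table: since $H_j\subset A\subset D\cap\Omega\subset D\cap H(\sigma)$ (every point of $\Omega$ has infinitely many hyperbolic times by Proposition~\ref{pr:hyptimes}), the \emph{middle} inequality in Proposition~\ref{pr:maximal},
\begin{align*}
  (f_*^j\Leb_D)\big(\Delta_j\cap f^j(H_j)\big)\ \ge\ \tau\,\Leb_D(H_j),
\end{align*}
already gives, term by term,
\begin{align*}
  (f_*^j\Leb_D)\Big(\Delta_j\cap\bigcup_{i=0}^{n-1}f^i\big(D\cap H(\sigma)\big)\Big)
  \ \ge\
  (f_*^j\Leb_D)\big(\Delta_j\cap f^j(H_j)\big)
  \ \ge\
  \tau\,\Leb_D(H_j),
\end{align*}
so the second bound falls out of exactly the same Ces\`aro average you computed for the first, with the same constant $\alpha=\tau\beta_0/4$. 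No full-measure claim and no re-running of the covering argument is needed.
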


\begin{proof}
  Just follow \cite[Proposition 3.5]{ABV00}, using
  Proposition~\ref{pr:maximal} together with the positive asymptotic
  frequency of hyperbolic times for each $x\in\Omega$, given by
  Proposition~\ref{pr:hyptimes}, and Remark~\ref{rmk:upad}.
\end{proof}

We consider weak$^*$ accumulation points $\mu$ and $\nu$ of
$(\mu_n), (\nu_n)$ respectively, along some subsequence
$(n_k)_k$. It is standard that $\mu$ is a $f$-invariant probability
measure and that $\wt{\mu}=\int_0^1(\phi_t)_*\mu\,dt$ is a flow invariant
probability measure; see e.g. \cite{ViOl16}. In
addition, $\nu(U) \ge \limsup_k\nu_k(U)\ge \alpha>0$.

We claim that $\nu$ has a property of absolute continuity along
certain disks contained in its support. We define the collection of
these disks in what follows.

Note that $\nu_n$ is supported on the union $\cup_{j=0}^{n-1}\Delta_j$
of disks with uniform size and $\delta_0$-away from
$\sing_\Lambda(G)$. Then $\supp \nu$ is contained in
$ \Delta_\infty=\cap_{n=1}^{\infty} \close\big(\cup_{j\ge n}
\Delta_j\big), $ the family of accumulation points of such disks. That
is, for $y\in\Delta_\infty$ there are $(j_i)_i\to\infty$, disks
$\wt{\Delta}_i=\Delta_{j_i}(x_i,\delta_1/4)\subset \Delta_{j_i}$, and
points $y_i\in \wt{\Delta}_i$ so that $y_i\to y$ when
$i\nearrow\infty$.

We may assume, without loss of generality, taking subsequences if
necessary, that $x_i$ converges to some point $x$. By uniform size and
bounded curvature, we can use the Ascoli-Arzela Theorem to conclude
that $\wt{\Delta}_i$ converge to a $cu$-disk $\wt{\Delta}(x)$ with
radius $\rho_1/4$ centered at $x$. Then
$y\in\close \wt{\Delta}(x)\subset \Delta_\infty$.

\begin{lemma}\label{le:limitdisk}
  Every $y\in\wt{\Delta}(x)$ is such that $N_y^u$ is uniformly
  expanding: $\|\big(P^k\mid_{N_y^u})^{-1}\|\le e^{- k c_0/8}$ for all
  $k\ge1$. The disk $\wt{\Delta}(x)$ is contained in $\Lambda$ and also
  in the unique\footnote{The center-unstable manifold might depend on
    the radius, but it is uniquely defined given the radius.}
  center-unstable manifold $W^{cu}_x(\rho_1)$ tangent to $E^{cu}_x$
  containing a $\rho_1$-ball around $x$.
\end{lemma}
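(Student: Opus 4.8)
The plan is to transfer the uniform backward-contraction estimates available along the disks $\wt\Delta_i=\Delta_{j_i}(x_i,\rho_1/4)\subset f^{j_i}(D)$ to the limiting disk $\wt\Delta(x)$ by a compactness/continuity argument. First I would fix $k\ge 1$ and a point $y\in\wt\Delta(x)$, together with the approximating sequences $y_i\in\wt\Delta_i$ with $y_i\to y$ and $x_i\to x$. Since $j_i$ is a hyperbolic time for the center of $\wt\Delta_i$ (more precisely, $\wt\Delta_i$ sits inside $f^{j_i}(D)$ with $f^{j_i}(H_{j_i})$ as its set of centers and $j_i\to\infty$, so for $i$ large we have $j_i\ge k$), Proposition~\ref{pr:preballs}(2b)(i) together with Lemma~\ref{le:contraction} gives that the inverse branch $g_k^{(i)}$ of the Poincar\'e composition defined along $\wt\Delta_i$ is an $e^{-kc_0/8}$-contraction on a $\rho_1$-neighborhood of the center; equivalently, reading \eqref{eq:hyptimex} through the identification $DR_j(f^jx)=P^1_{f^jx}$, we get
\begin{align*}
  \big\|\big(P^k\mid N^u_{z_i}\big)^{-1}\big\|
  \le
  \prod_{j=0}^{k-1}e^{c_0/8}\big\|\big(P^1\mid N^u_{f^{j}w_i}\big)^{-1}\big\|
  \le
  e^{-kc_0/8},
\end{align*}
for the appropriate base point $z_i$ on $\wt\Delta_i$ near $y_i$ (here $w_i$ is the preimage of $z_i$ under the relevant inverse branch). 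The point is that this bound is \emph{uniform in $i$}, with the constant $e^{-kc_0/8}$ independent of $i$.

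Next I would pass to the limit. The map $z\mapsto N^u_z=E^{cu}_z\cap G_z^\perp$ is continuous on $U^*$ (indeed H\"older, by domination of the splitting and smoothness of $G$, as recorded at the start of Section~\ref{sec:hyperb-times-center}), and the cocycle $z\mapsto (P^k\mid N^u_z)^{-1}$ is continuous in $z$ away from $\sing(G)$. Since all the disks $\wt\Delta_i$ are $\delta_0$-away from $\sing_\Lambda(G)$ by construction of the $\Delta_j$'s (the balls forming $\Delta_j$ are $\delta_0$-away from the equilibria, as noted right before \eqref{eq:nun}), the limit disk $\wt\Delta(x)$ is $\delta_0$-away from $\sing(G)$ as well, so $y$ is a regular point and the operator norm above is continuous at $y$. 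Because $z_i\to y$ (the $z_i$ converge to $y$ since $y_i\to y$ and the inverse-branch displacement is $O(e^{-j_ic_0/8})\to 0$, by Lemma~\ref{le:contraction}), taking $i\to\infty$ yields $\|(P^k\mid N^u_y)^{-1}\|\le e^{-kc_0/8}$, which is the first assertion.

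For the remaining assertions: since $\Lambda=\Lambda_G(U)=\cap_{t>0}\close{\phi_t(U)}$ is an attracting set and each $\wt\Delta_i\subset f^{j_i}(D)=\phi_{j_i}(D)$ with $D\subset U$ and $j_i\to\infty$, the disks $\wt\Delta_i$ eventually lie in any given neighborhood of $\Lambda$; as $\wt\Delta(x)$ is their Hausdorff limit (obtained via Ascoli--Arzel\`a from uniform size and the curvature bound $\kappa\le C_1$ of Proposition~\ref{pr:curvature}, preserved in the limit), we conclude $\wt\Delta(x)\subset\Lambda$. Finally, $\wt\Delta(x)$ is a $u$-disk tangent to the center-unstable cone field $C^{cu}_a$ whose tangent bundle is $(C_1,\zeta)$-H\"older, and it contains a $\rho_1$-ball around $x$; by the local uniqueness of center-unstable manifolds (the graph transform over $\Lambda$ has a unique fixed point in the space of such H\"older sections of prescribed size), $\wt\Delta(x)$ must coincide on that $\rho_1$-ball with $W^{cu}_x(\rho_1)$, the center-unstable manifold tangent to $E^{cu}_x$ at $x$.

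The main obstacle I expect is the passage to the limit of the contraction estimate with a constant \emph{uniform in $k$}: one must be careful that the inverse branches $g_k^{(i)}$ are all defined on a ball of a fixed radius $\rho_1$ independent of $i$ (so that $y$, which sits within $\rho_1/4$ of the center of $\wt\Delta(x)$, is in the domain for all large $i$), and that the near-singularity case in Lemma~\ref{le:unifDf} does not degrade the constant — but this is exactly why the $\delta_0$-separation from $\sing_\Lambda(G)$ of the balls forming $\Delta_j$ was built into the construction, and why the flight-time bound $|t_{n-k}-(n-k)|\le\tau_*$ of Lemma~\ref{le:contraction} is uniform. Once that uniformity is in hand, the limiting argument is routine continuity.
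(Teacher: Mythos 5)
Your proposal is correct and follows essentially the same route as the paper: approximate $y\in\wt\Delta(x)$ by points on the disks $\wt\Delta_i\subset f^{j_i}(D)$ at hyperbolic times $j_i\to\infty$, use Proposition~\ref{pr:preballs} (via Lemmas~\ref{le:unifDf} and~\ref{le:contraction}) to get the $e^{-kc_0/8}$ backward contraction uniformly in $i$, pass to the limit by continuity of the splitting and the cocycle away from equilibria (where the $\delta_0$-separation of the balls forming $\Delta_j$ matters exactly as you noted), and then invoke the Hadamard--Perron/graph-transform construction (what the paper cites as~\cite[Chapter~7, Section~3]{BarPes2007}) for uniqueness of $W^{cu}_x(\rho_1)$. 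Your extra care about uniformity of domains of the inverse branches and the near-singularity case makes explicit what the paper leaves implicit; the only cosmetic blemish is the intermediate display, where the indexing of base points ($z_i$ versus $w_i$ and $f^jw_i$) is not internally consistent with the left-hand side as written, though the intended meaning (the $k$-fold product of one-step inverse Poincar\'e-cocycle norms along the backward orbit, each inflated by $e^{c_0/8}$ from Lemma~\ref{le:unifDf}) is clear from context and matches~\eqref{eq:hyptimex}.
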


\begin{proof}
  Let $j_i\nearrow\infty$, $x_i\to x$, and
  $\wt{\Delta}_i\to \wt{\Delta}(x)$ be as in the construction
  described previously.  We have that $\wt{\Delta}_i$ is contained in
  the $j_i$th-iterate of $D$, which is a $cu$-disk.  The domination of
  the splitting on $U$ ensures that
  $\angle\big(\wt{\Delta}_i,E^{cu}\big)\to0$ as $i\nearrow\infty$,
  uniformly on $\Lambda$; this is a consequence of
  Proposition~\ref{prop:Ccu}.

  By Proposition~\ref{pr:preballs}, $f^{-k}$ is a
  $e^{-kc_0/8}$-contraction on
  $\wt{\Delta}_i\cap \exp_{\phi_s f^{j_i}x}\big(
  N_{\phi_sf^{j_i}x}^u\big)$ for every large $i$ and any given fixed
  $k\ge1$ and $|s|\le s_0$.  Passing to the limit in $i$, we get that
  $f^{-k}$ is a $e^{-kc_0/8}$-contraction on
  $\wt{\Delta}(x) \cap \exp_{\phi_sx}\big( N_{\phi_sx}^u\big) $, and
  $\wt{\Delta}(x)$ is a $cu$-disk in $\Lambda$ by continuity of the
  splitting on $U$.

  We have shown that the subspace $E^{cu}$ is uniformly sectionally
  expanding for $Df$ on $\wt{\Delta}(x)$.  Since $Df\mid_{E^s}$ is
  uniformly contracted, we are in the setting of \cite[Section 3 of
  Chapter 7]{BarPes2007} with respect to $f^{-1}$. Then there exists a
  unique \emph{center-unstable} manifold $W^{cu}_x(\rho_1)$ tangent to
  $E^{cu}_x$ containing a $\rho_1$-ball around $x$.
\end{proof}

\subsubsection{Absolute continuity}
\label{sec:absolute-continuity}

The same arguments in \cite[Section 4.1]{ABV00} imply the following
result, where we write \emph{cylinder} for any diffeomorphic image of
$\D^{d_{cu}} \times \D^{d_s}$ into $U$.

\begin{proposition}
\label{pr:cylinder}
There exist $C_3>1$ and a cylinder $\cC\subset M$, with a family
$\K_\infty$ of disjoint disks, contained in $\cC \cap \Delta_\infty$,
which are graphs over $\D^{d_{cu}}$, such that
\begin{enumerate}
\item the union $K_\infty$ of all disks in $\K_\infty$ has positive
  $\nu$-measure;
  % in fact, the intersection of $K$ with this union
% also has positive $\nu$-measure;
\item the restriction $\nu\mid_{K_\infty}$ has absolutely continuous
  conditional measures $\nu_\gamma$ with respect the induced volume
  $\m_\gamma$ along the disks $\gamma\in\K_\infty$, whose density is
  bounded: $C_3^{-1}\le d\nu_\gamma/d\m_\gamma \le C_3$.
\end{enumerate}
\end{proposition}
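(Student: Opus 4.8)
The plan is to adapt the absolute-continuity argument of Alves, Bonatti and Viana \cite[Section 4.1]{ABV00} to our setting, where the extra care is only needed because the disks in $\Delta_\infty$ are limits of sections $D_n^\perp$ transverse to the flow inside iterates of a $u$-disk. First I would select a point $x_0$ in the topological support of $\nu$ lying in $\Delta_\infty$, and a limit disk $\wt\Delta(x_0)$ as constructed above; by Lemma~\ref{le:limitdisk} this is a genuine $u$-disk of radius $\rho_1/4$ contained in $\Lambda$ and in the center-unstable manifold $W^{cu}_{x_0}(\rho_1)$. Around $x_0$ I would build a cylinder $\cC$ by choosing a smooth $d_{cu}$-disk $D^{cu}$ through $x_0$, transverse to $\cF^s$ and tangent to $C^{cu}_a$, and saturating a sub-disk of it by the local stable disks $W^s_y$ from Proposition~\ref{prop:Ws}: the resulting set is a diffeomorphic image of $\D^{d_{cu}}\times\D^{d_s}$, and any $u$-disk crossing $\cC$ close to $D^{cu}$ is a graph over $\D^{d_{cu}}$ by the cone invariance \eqref{eq:coneinv} and domination.

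Next I would collect, for every sufficiently large $j$ in the subsequence $(j_i)$, those balls $\wt\Delta_i=\Delta_{j_i}(x_i,\rho_1/4)$ that cross $\cC$ (a positive fraction of $\nu_{j_i}$-mass accumulates here because $x_0\in\supp\nu$ and the balls have uniform size $\rho_1/4$). Restricting each such disk to its graph part over $\D^{d_{cu}}$ gives a disjoint family of graphs inside $\cC\cap\Delta_{j_i}$; passing to a limit and using Ascoli--Arzel\`a as in the construction of $\wt\Delta(x)$, together with the fact that $\nu$ is a weak$^*$ limit of the $\nu_{j_i}$ supported on these disks, yields a measurable family $\K_\infty$ of disjoint graph-disks inside $\cC\cap\Delta_\infty$ whose union $K_\infty$ carries positive $\nu$-mass; this is item (1). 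For item (2), the density bound is inherited from Proposition~\ref{pr:preballs}(3): on each $\wt\Delta_i$ the measure $(f^{j_i}_*\Leb_D)\mid\Delta_{j_i}(x_i,\rho_1/4)$ is comparable, with the uniform constant $C_3$, to the induced Riemannian volume $\m_{\wt\Delta_i}$, by the bounded-distortion Proposition~\ref{pr:distortion}; one then disintegrates $\nu_{j_i}$ along the transverse stable foliation inside $\cC$, obtains conditional measures on the crossing disks with densities in $[C_3^{-1},C_3]$ relative to $\m_\gamma$, and passes to the limit. The absolute continuity of the holonomies of $\cF^s$ between nearby $d_{cu}$-disks (a standard consequence of Proposition~\ref{prop:Ws}, see \cite[Section 4.1]{ABV00}) guarantees that this disintegration behaves well under the weak$^*$ limit, so the limiting conditional measures $\nu_\gamma$ still satisfy $C_3^{-1}\le d\nu_\gamma/d\m_\gamma\le C_3$ on a positive-$\nu$-measure union of disks.

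The main obstacle I anticipate is controlling the passage to the limit of the conditional densities: the disks $\wt\Delta_i$ are sections of $f^{j_i}(D)$ \emph{orthogonal to the flow}, so they are only $C^1$-close (not $C^2$-close) to the limit $u$-disk $\wt\Delta(x_0)$, and their induced volumes $\m_{\wt\Delta_i}$ need not converge in a strong enough sense. The remedy, exactly as in \cite{ABV00}, is to note that bounded curvature ($\kappa(\wt\Delta_i)\le C_1$ from Remark~\ref{rmk:flowdisk} and Proposition~\ref{pr:curvature}) makes the family $\{\wt\Delta_i\}$ precompact in the $C^1$ topology with uniformly comparable induced volumes, and that the densities $d\nu_{j_i,\gamma}/d\m_\gamma$ are uniformly log-H\"older by the H\"older-distortion estimate in Proposition~\ref{pr:curvature}(b); hence one extracts a further subsequence along which these densities converge uniformly, and the limit density inherits the two-sided bound. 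A secondary technical point is to ensure that the cylinder $\cC$ can be chosen so that a definite $\nu$-fraction of the limiting disk family actually crosses it all the way through (so the graphs are defined over the full $\D^{d_{cu}}$); this follows by shrinking $\cC$ and using that, near $x_0\in\supp\nu$, the disks $\wt\Delta_i$ have diameter bounded below by $\rho_1/4$ and tangent directions converging to $E^{cu}$.
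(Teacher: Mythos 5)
Your proposal correctly reconstructs the argument of \cite[Proposition 4.1 \& Lemma 4.4]{ABV00}, which is precisely what the paper cites for this proposition, and the flow-specific adaptations (the $\wt\Delta_i$ being sections of $f^{j_i}(D)$ orthogonal to the flow, the $C^1$-only convergence of the disks, the use of the two-sided distortion bound from Propositions~\ref{pr:distortion} and~\ref{pr:preballs}) are handled appropriately. One small remark: the absolute continuity of the stable holonomy that you invoke to control the passage to the limit is not actually what is needed in this step of ABV00 — the disintegration and its density bounds survive the weak$^*$ limit simply because the crossing disks are uniform graphs over $\D^{d_{cu}}$ with uniformly log-H\"older densities, allowing a direct comparison of disintegrations through the common base; the absolute continuity of the $\cF^s$-holonomy (a consequence of domination/H\"older regularity of $E^s$, not merely of the continuity in Proposition~\ref{prop:Ws}) is needed later, in the Hopf-type argument showing that the ergodic component has a positive-volume basin.
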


\begin{proof}
  See \cite[Proposition 4.1 \& Lemma 4.4]{ABV00}, whose proof uses the
  properties of $\Delta_\infty$ already obtained.
\end{proof}

\begin{remark}[Uniformly sized cylinder]\label{rmk:unifcylinder}
  The cylinder $\cC$ is based on a center-unstable manifold
  $W^{cu}_x(\rho_1)$ with uniform inner radius obtained in
  Lemma~\ref{le:limitdisk} and contained in the ergodic basin of
  $\mu_*$.  Since the size of the stable leaves is also uniform by
  partial hyperbolicity, then the volume of the ergodic basin is
  uniformly bounded from below away from zero, with a bound depending
  only on $\rho_1$, which depends on the map $f$.
\end{remark}

\subsubsection{Ergodicity and ergodic basin}
\label{sec:ergodic-ergodic-basi}

Following \cite[Section 4.2]{ABV00} we obtain the next result.

\begin{lemma}
\label{le:ergodic}
The $f$-invariant probability measure $\mu=\nu+\eta$ has an ergodic
component $\mu_*$ whose Lyapunov exponents are all non-zero, except
along the direction of the vector field, and whose conditional
measures along local unstable manifolds are absolutely continuous with
respect to Lebesgue measure.  Moreover, $\supp \mu_*\subset\Lambda$
and $\Leb_D(B(\mu_*)\cap H)>0$.
\end{lemma}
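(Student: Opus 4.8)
The plan is to import the standard ergodic-component argument from \cite[Section 4.2]{ABV00}, built on the absolute-continuity structure obtained in Proposition~\ref{pr:cylinder}, and then transfer the conclusions from the time-one map $f=\phi_1$ to the flow. First I would invoke the ergodic decomposition of the $f$-invariant measure $\mu=\nu+\eta$; since $\nu(K_\infty)>0$ with $K_\infty$ carrying absolutely continuous conditional measures along the $u$-disks $\gamma\in\K_\infty$, a positive-measure set of ergodic components of $\nu$ (hence of $\mu$) must inherit, by the Lebesgue density/Fubini argument of \cite{ABV00}, conditional measures along local unstable manifolds that are absolutely continuous with respect to the induced volume. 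Pick one such ergodic component $\mu_*$. The key point is that the disks in $\K_\infty$ lie in $\Delta_\infty$, and by Lemma~\ref{le:limitdisk} every point of every limit disk $\wt\Delta(x)\subset\Delta_\infty$ has $N_y^u$ uniformly expanding under the Linear Poincar\'e flow, i.e. $\|(P^k\mid N_y^u)^{-1}\|\le e^{-kc_0/8}$; so on the support of $\mu_*$ the sectional center-unstable direction is genuinely (non-uniformly, but in fact here uniformly) expanding, while $E^s$ is uniformly contracted and the flow direction carries zero exponent. This yields that all Lyapunov exponents of $\mu_*$ are nonzero off the flow line, and the conditional measures along unstable manifolds are absolutely continuous (Pesin theory / the Leddrappier--Young picture, as used in \cite{ABV00,BarPes2007}).

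Next I would establish $\supp\mu_*\subset\Lambda$: the measures $f^j_*\Leb_D\mid\Delta_j$ are supported on iterates of the $u$-disk $D\subset U$ which converge into $\Delta_\infty$, and since $\Lambda=\Lambda_G(U)=\bigcap_{t>0}\close{\phi_t(U)}$ is an attracting set, any accumulation set of forward iterates of a subset of $U$ is contained in $\Lambda$; more directly, by Lemma~\ref{le:limitdisk} the limit disks $\wt\Delta(x)$ already lie in $\Lambda$, so $\Delta_\infty\subset\Lambda$ and hence $\supp\nu\subset\Lambda$, giving $\supp\mu_*\subset\Lambda$.

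For the basin statement $\Leb_D(B(\mu_*)\cap H)>0$ I would run the ``large basin from a single $u$-disk'' argument of \cite[Section 4.2]{ABV00}: using the absolute continuity of the holonomy along the stable foliation $\cW^s$ (Proposition~\ref{prop:Ws} and Remark~\ref{rmk:contholo}) together with the absolutely continuous conditional measures on the disks in $\K_\infty$, one shows that a positive-$\Leb_D$-measure subset of $D$ is mapped by forward iteration into the (Pesin) stable set of $\mu_*$-typical points, so time averages along these orbits converge to $\mu_*$; intersecting with $H=\bigcup_n H_n$ (the set with positive density of hyperbolic times, whose iterates feed $\Delta_\infty$) keeps the measure positive. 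One must be careful that $B(\mu_*)$ here means the basin for $f$, but since $\wt\mu_*=\int_0^1(\phi_t)_*\mu_*\,dt$ is flow-invariant and $\mu_*$ is $f$-ergodic, the flow basin differs from the $f$-basin only in how the time average is taken over the interval $[0,t]$ versus over integers, and a standard telescoping/continuity estimate (using $\|G\|$ bounded on $U$) shows the two coincide up to $\Leb$-null sets.

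The main obstacle I expect is the absolute-continuity transfer through the ergodic decomposition together with the $u$-disk-to-basin holonomy argument in the presence of the flow direction: one has to make sure that the ``unstable manifolds'' $W^{uu}_z$ used in Pesin theory for $f$ integrate correctly to the weak-unstable manifolds $W^{cu}_z=\phi_{(-1,1)}(W^{uu}_z)$, that the conditional measures survive the passage to the limit disks in $\Delta_\infty$ (this is exactly where bounded curvature and Ascoli--Arzel\`a from Lemma~\ref{le:limitdisk} and Proposition~\ref{pr:cylinder} are used), and that the $\delta_0$-distance from $\sing_\Lambda(G)$ maintained along $\Delta_j$ keeps all of this uniform near equilibria. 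Granting Proposition~\ref{pr:cylinder} and Lemma~\ref{le:limitdisk}, however, the remaining work is essentially the verbatim transcription of \cite[Section 4.2]{ABV00}, with the Linear Poincar\'e flow playing the role of the derivative cocycle along the center-unstable direction.
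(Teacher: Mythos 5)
Your proposal is correct and follows essentially the same route as the paper, which simply invokes \cite[Lemma 4.5]{ABV00} using the properties of $\Delta_\infty$ already established in Proposition~\ref{pr:cylinder} and Lemma~\ref{le:limitdisk}. Your expanded reconstruction (ergodic decomposition of $\mu$ restricted to $K_\infty$, uniform $N^u$-expansion on $\Delta_\infty$ giving positive transverse Lyapunov exponents, the attracting-set argument for $\supp\mu_*\subset\Lambda$, and the holonomy/Hopf-style argument for $\Leb_D(B(\mu_*)\cap H)>0$) matches the argument of \cite[Section 4.2]{ABV00} that the paper cites.
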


\begin{proof}
  This is \cite[Lemma 4.5]{ABV00}, whose proof uses the properties of
  $\Delta_\infty$ already obtained in the previous arguments.
\end{proof}

\subsection{Finitely many physical/SRB measures for the flow}
\label{sec:finitely-mamy-physic}

The following completes the proof of the existence statement of
Theorem~\ref{mthm:discretefabv}. The reciprocal is contained in the
proof of item (2) of Theorem~\ref{mthm:equivalence} in
Subsection~\ref{sec:discrete-nuse-versus}.

\begin{corollary} \label{cor:terminal} There exist finitely many
  ergodic hyperbolic physical/SRB invariant probability measures
  $\eta_1,\dots,\eta_k$ for $f$ and $\mu_1,\dots,\mu_k$ for the flow
  $\phi_t$ of $G$, supported on $\Lambda$, such that
  $\leb\big(\Omega\setminus\cup_{i=1}^k B(\mu_i)\big)=0$ and
  $\m\big(\Omega\cap ( B(\eta_i)\bigtriangleup B(\mu_i))\big)=0,
  \forall i$.
\end{corollary}

\begin{proof}
  The existence of finitely many ergodic hyperbolic physical/SRB
  measures $\eta_1,\dots,\eta_k$ with respect to $f$ supported in
  $\Lambda$ and satisfying
  $\leb\big(\Omega\setminus\cup_{i=1}^k B(\eta_i)\big)=0$ follows by
  \cite[Corollary 4.6]{ABV00} using the properties already obtained.

% $\wt{\mu}=\int_0^1(\phi_t)_*\mu\,dt$ is a flow invariant
% probability measure; see e.g. \cite{ViOl16}.
  
  We are left to obtain the $G$-invariant ergodic physical
  probability measures. The probability measures
  $\mu_i=\int_0^1(\phi_t)_*\eta_i\,dt$ are $\phi_t$-invariant for
  every $t>0$ and $\mu_i$ are ergodic for the flow, $i=1,\dots, k$.

  Moreover, if $\vfi:M\to\RR$ is continuous and $x\in B(\eta_i)$,
  then $\psi=\int_0^1\vfi\circ\phi_s\,ds$ is also continuous, and
  since $\phi_t$ and $f$ commute
  \begin{align*}
    \int\vfi\,d\mu_i
    &=
    \int\psi\,d\eta_i
    =
    \lim_{n\to+\infty}\frac1n\sum\nolimits_{j=0}^{n-1}\psi(f^jx)
    \\
    &=
    \lim_{n\to+\infty}\frac1n\int_0^n\vfi(\phi_sx)\,ds
    =
    \lim_{T\nearrow\infty, T\in\RR}\frac1T\int_0^T\vfi(\phi_sx)\,ds,
  \end{align*}
  where the last equality follows from boundedness of $\vfi$. This
  shows that $B(\eta_i)\subset B(\mu_i)$ and so $\mu_i$ becomes a
  physical measure and also
  $\m\big(\Omega\cap ( B(\eta_i)\bigtriangleup B(\mu_i))\big)=0$ for
  $i=1,\dots,k$.

  Hyperbolicity of $\mu_i$ follows from partial hyperbolicity together
  with the implication (NU2SE)$\implies$(MASE) from
  Theorem~\ref{thm:NUSEvar}, so that we have (MASE) for each
  $x\in B(\mu_i)$. From the results of~\cite{JiangQiPing03} the
  Lyapunov spectrum of $x\in B(\mu_i)$ and of $\mu_i$ coincide ---
  thus, $\mu_i$ has positive Lyapunov exponents along all the
  directions in $E^{cu}_z$, except $G(z)$, for
  $\mu_i$-a.e. $z\in\Lambda$.

  % the ergodicity of $\mu_i$: setting
  % $\Gamma_t(x):=\log\|\wedge^2(D\phi_t\mid_{E^{cu}_x})^{-1}\|$ we
  % obtain subadditivity
  % $\Gamma_{t+s}(x)\le \Gamma_s(\phi_t(x))+ \Gamma_t(x)$ for
  % $x\in U, t,s\ge0$, and Kingman's Subadditive Ergodic Theorem ensures
  % that
  % $\lim_{t\nearrow\infty}\frac{\Gamma_t(x)}t
  % = \inf_{t>0}\frac1t\int\Gamma_t\,d\mu_i
  % \le
  % \int \Gamma_1\,d\mu_i$ for $\mu_i$-a.e. $x$, but
  % \begin{align}\label{eq:NUSEmu}
  %   \int \Gamma_1\,d\mu_i
  %   =
  %   \lim_{n\to+\infty}\frac1n\sum\nolimits_{j=0}^{n-1}
  %   \int_0^1\Gamma_1(f^i\phi_s x)\,ds
  %   =
  %   \int_0^1 \lim_{n\to+\infty}\frac1n\sum\nolimits_{j=0}^{n-1}
  %   \Gamma_1(f^i\phi_s x)\,ds
  % \end{align}
  % for $x\in B(\eta_i)$, since $\Gamma$ is a continuous function on the
  % relatively compact set $U$.  From Theorem~\ref{thm:NUSEvar} we have
  % $(NU2SE)\iff(NUSE)$. Hence, $\int \Gamma_1\,d\mu_i < -c_0$ and we
  % also get $\lim_{t\nearrow\infty}\frac{\Gamma_t(x)}t<-c_0$ for
  % $\mu_i$-a.e. $x$, that is, (MASE). Again, from
  % Theorem~\ref{thm:NUSEvar}, we obtain $(MASE)\implies(PSL)$ and so
  % also positive Lyapunov exponents along all directions of
  % $E^{cu}_x\setminus\RR\cdot G(x)$ for $\m$-a.e. $x\in\Lambda$.

  By smoothness of the flow, the absolute continuity of conditional
  measures of $\eta_i$ along unstable manifolds implies absolute
  continuity of conditional measures of $\mu_i$ along weak-unstable
  (or center-unstable) manifolds, so that each $\mu_i$ is also a
  $cu$-Gibbs state. That is, each $\mu_i$ is an ergodic hyperbolic
  physical/SRB measure for the flow, completing the proof.
\end{proof}

%%%%%%%%%%%%%%%%%%%%%%%%%%%%%%%%%%%%%%%%%%%%%%%%%%%%%%%%%%%%%%%%%%%%%%

\section{Discrete versus continuous: the main theorems and
  corollaries}
\label{sec:recipr-condit}

To easily deduce Theorem~\ref{mthm:equivalence} from
Theorem~\ref{mthm:discretefabv}, we recall some general properties of
Gibbs $cu$-states first. Then we prove
Theorem~\ref{mthm:equivalence}. Afterwards, we obtain
Theorem~\ref{mthm:fABV} and
Corollaries~\ref{mcor:wABVndcu},~\ref{mcor:wABV}~\&~\ref{mcor:SRB-ASH}.

\subsection{Properties of Gibbs $cu$-states}
\label{sec:properties-gibbs-cu}

We collect some useful results here.

\begin{theorem}
  \label{thm:appv-attracting}
  Let $\Lambda=\Lambda_G(U)$ be a partially hyperbolic attracting set
  for a $C^2$ vector field $G$ which is NU2SE on $\Omega\subset U$ with
  $\m(\Omega)>0$. Then there exists $k\in\ZZ^+,\varrho>0$ and $k$
  distinct $f$-invariant ergodic probability measures
  $\mu_1,\dots,\mu_k$ satisfying
  \begin{enumerate}
  \item the family $\EE$ of all $G$-invariant physical probability
    measures $\mu$ such that $\m(\Omega\cap B(\mu))>0$ is the convex
    hull
    $ \EE=\{\sum_{i=1}^k t_i \mu_i : \sum_i t_i=1 , t_i\ge0,
    i=1,\dots,k\}.$ The same holds replacing $G$-invariance by
    $\phi_t$-invariance, for some fixed value of $t>0$.
  \item for a $G$-invariant (or $f$-invariant) \emph{hyperbolic}
    probability measure $\mu$ supported in $\Lambda$, with
    $\mu(\Omega)>0$, the following are equivalent
    \begin{enumerate}
    \item
      % $\chi(x)=\lim_{T\nearrow\infty}\log\|(\wedge^2D\phi_T\mid
      % E^{cu}_x)^{-1}\|^{1/T}<0$ for $\mu$-a.e. $x$ and
      the Entropy Formula:
      $h_\mu(f)=\int\log|\det Df\mid_{E^{cu}}|\,d\mu$;
    \item $\mu$ is a $cu$-Gibbs state, that is, admits an absolutely
      continuous disintegration along center-unstable manifolds;
    \item $\mu$ is a physical measure, i.e., its basin
      $B(\mu)$ has positive Lebesgue measure.
    \end{enumerate}
  \item the basin $B(\mu)$ of a physical measure $\mu$ supported in
    $\Lambda$, with $\m(B(\mu)\cap\Omega)>0$, admits an open subset
    $V$, containing a $\varrho$-ball, which intersects $\Lambda$ and is
    contained in the ergodic basin except a zero volume subset, that
    is, $\m(V\setminus B(\mu))=0$ and $V\cap\Lambda\neq\emptyset$.
  \item if $\Lambda$ is transitive then $k=1$, i.e., there exists only
    one physical probability measure $\mu=\mu_1$, which is also a
    Gibbs-$cu$-state, such that $m(B(\mu)\setminus\Omega)=0$.
  \end{enumerate}
\end{theorem}

\begin{proof}
  For item (1), Theorem~\ref{mthm:discretefabv}
  (cf. Corollary~\ref{cor:terminal}) ensures the existence of finitely
  many ergodic hyperbolic physical/SRB measures $\mu_1,\dots,\mu_k$
  such that the union of their ergodic basins covers $\Omega$ Lebesgue
  almost everywhere:
   % \begin{align}\label{eq:appv-attracting}
    $  \m\left(\Omega\setminus\big(\cup_{i=1}^k
      B(\mu_i)\big)\right)=0.$
       %      \end{align}
    We note that if there are no equilibria, then we can take
    $\Omega=U$. The measures considered can either be invariant for
    the flow, or $f$-invariant, or even $\phi_t$-invariant for any
    fixed $t>0$. 
    
    Since $\m(B(\mu)\cap\Omega)>0$, it follows that
    $B(\mu)\cap \Omega=\Omega\cap\big(\sum_{i=1}^k B(\mu)\cap B(\mu_i)\big)$
    Lebesgue modulo zero. By definition of ergodic basin, for each
    continuous observable $\vfi:U\to\RR$ we get
  \begin{align*}
    \int\vfi\,d\mu
    &=
      \frac1{\m(\Omega\cap B(\mu))}\int_{\Omega\cap B(\mu)} \int\vfi\,
      d\left(\lim_{n\to+\infty}
      \frac1n\sum\nolimits_{j=0}^{n-1}\delta_{f^jx}\right)
      \,d\leb(x)
    \\
    &=
      \sum\nolimits_{i=1}^k \frac{\m(B(\mu)\cap B(\mu_i)\cap
      \Omega)}{\m(B(\mu)\cap\Omega)}\int \vfi\,d\mu_i,
  \end{align*}
  where the limit is in the weak$^*$ topology of the probability
  measures of the ambient space $M$.  Hence, we deduce
  $\mu=\sum_{i=1}^k \frac{\leb(B(\mu)\cap B(\mu_i)\cap
    \Omega)}{\leb(B(\mu)\cap \Omega)}\mu_i$ and $\mu$ as a convex linear
  combination of the ergodic physical/SRB measures provided by
  Theorem~\ref{mthm:discretefabv}.
  
  For item (2), since $G$ is contained is $E^{cu}$ and has zero
  Lyapunov exponent, then domination of the splitting
  $E^s\oplus E^{cu}$ ensures that all Lyapunov exponents along $E^s$
  are strictly negative and so
  $\int\log|\det Df\mid_{E^{cu}}|\,d\mu=\int \Sigma^+\,d\mu$ by Oseledets'
  Multiplicative Ergodic Theorem. This holds either for $G$-invariant
  of $f$-invariant probability measures, or even $\phi_t$-invariant
  for a fixed value of $t>0$.

  Then, assumption (2a) means $h_\mu(f)=\int\Sigma^+\,d\mu>0$. In
  particular, $\mu$ is non-atomic\footnote{For otherwise by Ergodic
    Decomposition, Jacob's Theorem \cite[Chpt. 9, Sec. 6]{ViOl16} and
    Margulis-Ruelle Inequality \cite[Theorem 9.4.4]{ViOl16} we would
    obtain the Entropy Formula for each ergodic component $\nu$ of
    $\mu$. In particular, if $\nu$ is supported on a critical element
    of $\Lambda$, either an equilibrium or a periodic orbit, then
    $0=h_\nu(f)=\int\log|\det f\mid_{E^{cu}}|\,d\nu=\int\Sigma^+\,d\nu$,
    contradicting the hyperbolicity assumption on $\mu$.}  and this
  becomes the necessary and sufficient condition for absolutely
  continuous disintegration along unstable manifolds $W^{uu}_x$ for
  $\mu$-a.e. $x$, by the characterization of measures satisfying the
  Entropy Formula \cite{LY85} for $C^2$ smooth dynamics. This means,
  more precisely, that for $\mu$-a.e. $x\in\Lambda$ there exists
  $\rho=\rho(x)>0$ so that
  \begin{align*}
    \Pi_x=\{W^{uu}_y: y\in B(x,\rho)\, \& \,  W^{uu}_y \text{ crosses
    } B(x,\rho) \}
  \end{align*}
  and\footnote{We say that $W^{uu}_y$ \emph{crosses} $B(x,\rho)$ if
    the connected component $\gamma_y$ of $W^{uu}_y\cap B(x,\rho)$
    containing $y$ projects into the corresponding connected component
    $\gamma_x$ of $W^{uu}_y\cap B(x,\rho)$ containing $x$, thought
    the stable holonomy map $\pi^s$ in a one-to-one way, i.e.,
    $\pi^s\mid_{\gamma_y}: \gamma_y\to\gamma_x$ is injective.} the
  normalized restriction $\tilde\mu=\mu\mid_{\cup\Pi_x}$ disintegrates
  along the leaves of $\Pi_x$ as $\tilde\mu=\int
  \mu_y\,d\hat\mu(y)$. Here $\mu_y$ is a probability measure supported
  on $\gamma_y$ equivalent to the restriction $\m_y$ of Lebesgue
  measure on this submanifold and $\hat\mu=\pi_*\tilde\mu$, where
  $\pi:\cup\Pi_x\to\Pi_x$ is the quotient map associating a point of
  $\cup\Pi_x$ to the corresponding leaf of $\Pi_x$.
 
  Each manifold $W^{uu}_x$ is contained in $\Lambda$ with dimension
  $\dim E^{cu}-1$ and tangent bundle in $E^{cu}$. The center-unstable
  (or weak-unstable) manifolds $W^{cu}_x=\phi_{(-1,1)}(W^u_x)$ are
  tangent to the center-unstable bundle $E^{cu}$ at each point and
  also the disjoint union of strong-unstable leaves transported by the
  flow. By smoothness of the flow, the disintegration of $\mu$ along
  the center-unstable leaves is also absolutely continuous. 

  Indeed, for small enough $\rho$, $W^{uu}_y\in\Pi_x$ if, and only if,
  $W^{cu}_y$ crosses $B(x,\rho)$. Considering
  $\Pi_x^c=\{ W^{cu}_y : y\in B(x,\rho) \,\&\, W^{cu}_y$ crosses
  $B(x,\rho)\}$, then $\cup\Pi_x=\cup\Pi_x^c$ and
  $\tilde\mu=\int \nu_z\,d\hat\nu(z)$, with
  $\hat\nu=\hat\pi_*\tilde\mu$ where $\hat\pi:\cup\Pi_x^c\to\Pi_x^c$
  is the corresponding quotient map, and
  $\nu_z=\int \mu_y \, d(\pi_*\nu_z)$ is equivalent to $\m$ induced on
  the connected component $\gamma_z^{cu}$ of $W^{cu}_z\cap B(x,\rho)$
  containing $z$. This is the property stated in item (2b).
  
  Assuming condition (2b), the Ergodic Theorem provides a full
  $\mu$-measure subset $B$ of Birkhoff generic points for $\mu$ which
  is also a full $\tilde\mu$-measure subset. Hence, $B$ has full
  $\nu_z$-measure for $\hat\nu$-a.e. $z$.  If we fix a center unstable
  disk $\gamma_z^{cu}$ for a $\hat\nu$-generic $z$, then $\nu_z(B)=1$
  and $B$ is also a full $\m_z$-measure subset of
  $\gamma_z^{cu}$. Since the stable foliation is defined at all points
  of $\Lambda$, tangent to the stable bundle $E^s$ which makes an
  angle with the center-unstable bundle uniformly bounded away from
  zero, then the subset
  %W^s_\epsilon(\gamma_z^{cu})
  $V=\{ W^s_w(\epsilon): w\in\gamma_z^{cu}\}$ is an open neighborhood
  of $z$ for small enough $\epsilon>0$, where $W^s_w(\epsilon)$ is the
  connected component of $W^s_w\cap B(w,\epsilon)$ containing
  $w$. Moreover, the stable foliation is absolutely
  continuous~\cite[Section 6]{ArMel18}, and so the subset
  $W=\{ W^s_w(\epsilon): w\in B\cap\gamma_z^{cu}\}$ has full
  $\m$-measure in $V$: %W^s_\epsilon(\gamma_z^{cu})$
  $\m(W\setminus V)=0$. In addition, each
  $y\in W$ is such that $d(\phi_ty,\phi_tw)\to0$ when
  $t\nearrow\infty$ for some $w\in B\cap\gamma_z^{cu}$. Hence, for any
  given continuous observable $\vfi:U\to\RR$ we obtain
  \begin{align}\label{eq:contcontraction}
    \lim_{T\nearrow\infty}\frac1T\int_0^T\vfi(\phi_ty)\,dt
    =
    \lim_{T\nearrow\infty}\frac1T\int_0^T\vfi(\phi_tw)\,dt
    =
    \int\vfi\,d\mu
  \end{align}
  and thus $W\subset B(\mu)$ with $\m(W)>0$ and $\mu$ becomes a
  physical measure, as stated in item (2c). For an $f$-invariant
  measure, we replace~\eqref{eq:contcontraction} by
  $\lim n^{-1}\sum_{i=0}^{n-1}\vfi(f^ix)$ an argue in the same way.

  We note that we immediately obtain item (3) from the previous
  construction, since $\m(V\setminus
  W)=0$, once we show that (2c) implies (2a). Indeed, the uniformed
  sized $\varrho$-ball inside $V$ is given by the cylinder
  $\cC$ obtained in the proof of Theorem~\ref{mthm:discretefabv}; see
  Remark~\ref{rmk:unifcylinder}.

  Moreover, from item (3) we easily obtain item (4). Indeed, if there
  are two physical measures $\mu_1,\mu_2$, then from item (3) there
  exist open subsets $V_i$ such that $\m(V_i\setminus B(\mu_i))=0$ and
  $V_i\cap\Lambda\neq\emptyset, i=1,2$. Transitivity ensures that there
  exist $x_1\in V_1$ and $t>0$ so that $\phi_tx_1\in
  V_2$. Smoothness of the flow ensures that $\phi_{-t}B(\mu_2)$ has
  positive volume in $V_1$, thus by flow invariance of the ergodic
  basin we find $y\in B(\mu_1)\cap B(\mu_2)$, which implies that
  $\mu_1=\mu_2$.

  We are left with showing that condition (2c) implies (2a). But this
  is an easy consequence of item (1), since a physical probability
  measure $\mu$ is a linear convex combination of the finitely many
  ergodic physical/SRB measures provided by
  Theorem~\ref{mthm:discretefabv} which are $cu$-Gibbs states, that
  is, satisfy (2a). The proof is complete.
\end{proof}

\subsection{Proof of the main theorems}
\label{sec:proof-corollaries}

We are now ready to deduce Theorem~\ref{mthm:equivalence} first,
assuming the statement of Theorem~\ref{mthm:discretefabv}. Afterwards,
we prove Theorem~\ref{mthm:fABV} and then
Corollaries~\ref{mcor:wABVndcu},~\ref{mcor:wABV}~and~\ref{mcor:SRB-ASH}.

%%%%%%%%%%%%%%%%%%%%%%%%%%%%%%%%%%%%%%%%%%%%%%%%%%%%%%%%

\subsubsection{Discrete (NU2SE) versus continuous (MASE)}
\label{sec:discrete-nuse-versus}

\begin{proof}[Proof of Theorem~\ref{mthm:equivalence}]
  For item (1), to obtain slow recurrence (SR) from continuous slow
  recurrence (CSR), we note that, since $G$ is $L$-Lipschitz, where
  $L=\sup_{x\in U}\|DG(x)\|$, we have for $\vfi(x)=d(x,\sing(G))$
\begin{align}\label{eq:ODE}
  \left|\frac{d}{dt}\vfi(t)\right|
  \le
  \|G(\phi_tx)\|
  =
  \|G(\phi_tx)-G(\sigma)\|
  \le
  L\cdot d(x,\sigma)
  =
  L\cdot \vfi(t),
\end{align}
whenever $x$ is near $\sigma\in\sing(G)$. Hence,
$e^{-Ls}\le \vfi(\phi_sx)/\vfi(x) \le e^{Ls}$ for $|s|$ small enough
so that $\vfi(\phi_sx)=d(\phi_sx,\sigma)$.
Therefore, setting
$\vfi_\delta(x)=d_\delta(x,\sing_\Lambda(G))$, given $\delta>0$
we can take $s>0$ so that $Ls<-\log\delta^{1/2}$ and if
$d(x,\sigma)<\delta$, then for $0\le t\le s$
\begin{align}\label{eq:perturb}
  -\log \vfi_\delta(\phi_tx)
  =
  -\log \vfi(\phi_tx)
  \ge
  -\log \vfi(x)-Lt
  \ge
  -(1/2)\log\vfi_\delta(x).
\end{align}
Thus, from~\eqref{eq:SSR}: for any $\epsilon>0$ we can find $\delta$
and $k\ge2, k\in\ZZ^+$ so that $L/k<-\log\delta$ and for all
$x\in\Omega$ there exists $N=N(x)>1$ so that for each $n\in\ZZ^+$
satisfying $n\ge N$ we have
\begin{align*}
  \epsilon n
  \ge
  \int_0^{n} \vfi_\delta(\phi_sx)\,ds
  &=
    \sum\nolimits_{i=0}^{nk-1}
    \int_0^{1/k} \vfi_\delta(\phi_{s+i/k}(x))\,ds
  \ge
    \frac12\sum\nolimits_{i=0}^{nk-1}
    \vfi_\delta(\phi^i_{1/k}(x)).
\end{align*}
Setting $g:=\phi_{1/k}$, this ensures that for $m\in\ZZ^+$, if
$n=[m/k]+1$, then
  \begin{align*}
    \sum\nolimits_{i=0}^{m-1}-\log\vfi_\delta(g^ix)
    \le
    \sum\nolimits_{i=0}^{nk-1}-\log\vfi_\delta(g^ix)
    \le
    2
    \int_0^{n}-\log\vfi_\delta(\phi_sx)\,ds
    <2n\epsilon
  \end{align*}
  if $x\in\Omega$ and $m>k\cdot N(x)$. Thus we obtain the next time
  reparametrization of~\eqref{eq:SR}:
\begin{align}\label{eq:slowapproxdiscrete}
  (1/m)\sum\nolimits_{i=0}^{m-1}-\log\vfi_\delta(g^ix)
  <
  2\big(1/k+1/m\big)\epsilon.
  \end{align}

  Noting that from~\eqref{eq:ODE} we may likewise deduce the reverse
  inequality to~\eqref{eq:perturb}, then a similar argument to the
  previous one enables us to reciprocally obtain continuous slow
  recurrence~\eqref{eq:sNUE} from slow recurrence~\eqref{eq:SR}. This
  completes the proof of the  statement of item (1).

  For item (2): Theorem~\ref{mthm:discretefabv} shows that $(B)$
  implies $(A)$. For the reciprocal, let $\mu$ be a hyperbolic
  physical/SRB measure for the flow, which is also a $cu$-Gibbs state
  supported on the partial hyperbolic attracting set
  $\Lambda=\Lambda_G(U)$.

  To obtain the (NU2SE) condition~\eqref{eq:NU2SE}, we note that by
  hyperbolicity of $\mu$, all central-unstable directions transversal
  to the vector field have positive Lyapunov exponents, thus
  $\chi(x):=\lim_{t\to+\infty}
  \log\big\|\wedge^2(D\phi_t\mid_{E^{cu}_x})^{-1}\big\|^{1/t} \le
  -c_0<0$ for $\mu$-a.e. $x$ and some constant $c_0>0$, by Oseledets'
  Theorem. By Fatou's Lemma for bounded sequences of functions
  % Kingman's Subadditive Ergodic Theorem and the
  % Ergodic Decomposition Theorem \cite[Chapters 3 \& 5]{ViOl16},
  we get
  $ -c_0 > \int\chi\,d\mu \ge
  \limsup_{T\nearrow\infty}
  \int\log\|\wedge^2(D\phi_T\mid_{E^{cu}_x})^{-1}\|^{1/T}\,d\mu(x).$ 
  Thus, we find $T>0$ so that
  $\int \log\|\wedge^2(D\phi_t\mid_{E^{cu}})^{-1}\|^{1/t} \,d\mu\le
  -c_0/2$ for all $t\ge T$. Hence, there exists an ergodic component
  $\nu$ of $\mu$ so that
  $\int \log\|\wedge^2(D\phi_t\mid_{E^{cu}})^{-1}\|^{1/t} \,d\nu\le
  -c_0/2$.

  Moreover, $\nu$ is also a physical/SRB measure for the flow; see
  Theorem~\ref{thm:appv-attracting}. Since $\nu$ is flow invariant, we
  can assume without loss of generality that $\nu$ is ergodic with
  respect to $g=\phi_T$, using the flow invariance and ergodicity
  of $\nu$.

\begin{remark}[Ergodicity for continuous time ensures dicrete time
  ergodicity]
  \label{rmk:contergodic}
  The ergodicity of a flow invariant probability measure $\mu$ implies
  that, for a co-countable set of times $t_*\in\RR$, we have that
  $\mu$ is $\phi_{t_*}$-ergodic; see e.g.~\cite{PuSh71}. That is, if a
  measurable set $A$ is $\phi_{t_*}$-invariant $\phi_{-t_*}(A)=A$ for
  this fixed value of $t_*$, then $\mu(A)\cdot\mu(M\setminus A)=0$.
  
  This property is not true for transformations, i.e., if $\mu$ is
  $g$-ergodic, then not necessarily $\mu$ is $g^k$-ergodic for some
  $k>1$. %Hence the analogous to Corollary~\ref{mcor:fABV} is absent
  %from~\cite{ABV00}.
\end{remark}

Hence, because
$U\ni x\mapsto \Gamma(x):=\log\|(Dg\mid_{E^{cu}_x})^{-1}\big\|$
is continuous and $\nu$ is also physical/SRB with respect to $g$, we
get 
$
    \lim_{n\to\infty}\frac1n\sum\nolimits_{i=0}^{n-1}
    \Gamma(g^ix)
%    \log\big\|\wedge^2(D\phi_T\mid_{E^{cu}_{g^i(x)}})^{-1}\big\|^{1/T}
    =
    \int \Gamma\,d\nu
%    \log\big\|\wedge^2(D\phi_T\mid_{E^{cu}_x})^{-1}\big\|^{1/T} 
    \le
    -c_0/2,
$
  for all $x\in B_g(\mu)$,
  where $B_g(\mu)$ is the ergodic basin of $\mu$ as a $g$-invariant
  probability measure.

  From Theorem~\ref{thm:appv-attracting} this shows
  that~\eqref{eq:NU2SE} holds on the positive Lebesgue measure subset
  $B_g(\nu)$. We also have $B_g(\nu)\subset B(\nu)$ and so
  $\m(B_g(\nu)\cap\Omega)>0$, completing the proof that $(A)$ implies
  $(B)$ with $E=B_g(\nu)$. This finishes the proof of item (2).

  Item (3b) is a straightforward consequence of item (4) of
  Theorem~\ref{thm:appv-attracting}.

  For item (3a), we assume from now on that we are in the setting of
  the statement of Theorem~\ref{mthm:discretefabv} and its conclusion,
  i.e., we have both properties $(A)$ and $(B)$ and take $\mu$ an
  ergodic hyperbolic physical/SRB measure for the flow which is also
  $f$-ergodic (reparametrizing the flow if needed) and
  satisfying $\m(E\cap B(\mu))>0$; recall
  Remark~\ref{rmk:contergodic}.
  Since, from Theorem~\ref{thm:NUSEvar}, all trajectories in $U$ 
  % not converging to any equilibrium
  satisfying (NU2SE) automatically satisfy (MASE), then we obtain a
  full volume measure subset of $E$ satisfying (MASE).

Since $E=\cup_{i=1}^k(E\cap B(\mu_i))$ except perhaps a
subset of zero Lebesgue measure, we can repeat this argument for each
$\mu_i$ to cover $\m$-a.e. point of $E$. This completes the proof
of item (3a) and of Theorem~\ref{mthm:equivalence}.
\end{proof}

%%%%%%%%%%%%%%%%%%%%%%%%%%%%%%%%%%%%%%%%%%%%%%%%%%%%%%%

\subsubsection{Weak asymptotic sectional expansion: proof of the main
  corollaries}
\label{sec:weak-asympt-sectio}
  
\begin{proof}[Proof of Theorem~\ref{mthm:fABV}]
We recall that $(wASE)\implies(wNU2SE)$ from
Theorem~\ref{thm:NUSEvar} if $\dim E^{cu}=d_{cu}=2$. Hence we have
(wNU2SE) on a positive volume subset $\Omega$ of $U$.

To obtain the physical/SRB measure, we set
$\Gamma(x):=\log\|\wedge^2(Df\mid_{E^{cu}_x})^{-1}\|$ 
% \big\|(P^1\mid_{N^{cu}_x})^{-1}\big\|$
and $J^{cu}(x):=\log|\det
(Df\mid_{E^{cu}_x})|$ in what follows. We note that for
$x\in\Omega$ we have a subsequence of the integers
$n_k$ so that
$\lim\frac1{n_k}\sum_{i=0}^{n_k-1}\Gamma(f^i(x))<0$ and the sequence
$\big(\frac1{n_k}\sum_{i=0}^{n_k-1}\delta_{f^ix}\big)_{k\ge1}$
weak$^*$ accumulates on a $f$-invariant probability measure
$\mu$ which satisfies~\eqref{eq:gpesin} (after
Theorem~\ref{thm:GenPesin}) and, for some subsequence $m_i$ of $n_k$
\begin{align}
  \label{eq:weakstar}
  \mu(\Gamma)
  =
  \lim_{i\to+\infty}\frac1{m_i}\sum\nolimits_{j=0}^{m_i-1}\Gamma(f^j(x))
  =
  \lim_{k\to+\infty}\frac1{n_k}\sum\nolimits_{i=0}^{n_k-1}\Gamma(f^i(x))<0.
\end{align}
Since $\mu$ is not necessarily ergodic, we consider its ergodic
decomposition to write
\begin{align}
  0
  &\le
  \int \big(h_{\mu_y}(f)-\mu_y(J^{cu})\big)\,d\mu(y)\nonumber
  \\
  &=\label{eq:disint}
  \int_{y\in\Upsilon}\big(h_{\mu_y}(f)-\mu_y(J^{cu})\big)\,d\mu(y)
  +
  \int_{y\in\supp\mu\setminus\Upsilon}\big(h_{\mu_y}(f)-\mu_y(J^{cu})\big)\,d\mu(y)
\end{align}
where $\Upsilon:=\{ y : \mu_y(\Gamma)<0\}$;
$\mu(\Gamma)<0$ by~\eqref{eq:weakstar}; and
$\mu_y:=\lim_{n\to\infty}\frac1n\sum_{i=0}^{n-1}\delta_{f^i(y)}$ (in
the weak$^*$ topology) is a
$f$-invariant ergodic probability measure so that
$y\in\supp\mu_y$ for $\mu$-a.e
$y$; see e.g.~\cite[Chapter II.6]{Man87}.

%We also have $\mu(\Gamma)>0$ and $y\in\supp\mu$ for $\mu$-a.e. $y$.

\begin{claim}\label{cl:SRB}
  There exists $y\in\Upsilon$ so that $h_{\mu_y}(f)=\mu_y(J^{cu})$.
\end{claim}

Assuming the claim, since for $y\in\Upsilon$ all the Lyapunov exponents
along $E^{cu}$ are either positive or zero along the flow direction,
and negative along $E^s$ by partial hyperbolicity, then we have that
$\nu:=\mu_y$ is a hyperbolic ergodic measure; and $\nu(J^{cu})$
coincides with the sum $\Sigma^+$ of all the positive Lyapunov
exponents of $\nu$. Hence we arrive at Pesin's Formula
$h_{\nu}(f)=\Sigma^+$. From the characterization of measures
satisfying this formula~\cite{LY85} we obtain that $\nu$ is $SRB$; and
ergodic hyperbolic SRB measures for $C^2$ diffeomorphisms are
physical; see e.g.~\cite{BDV2004}. We have obtained an ergodic
physical/SRB measure for the time-$1$ map $f$ of the flow of $G$.  The
same argument of the proof of Corollary~\ref{cor:terminal} ensures
that $\eta:=\int_0^1(\phi_t)_*\nu\,dt$ is flow invariant, ergodic and
also a hyperbolic $cu$-Gibbs state, and a physical/SRB measure for the
flow of $G$.

\begin{remark}\label{rmk:nhbhdbasin}
  There exists a neighborhood $W$ of $x$ where $\leb$-a.e. points are
  in $B(\eta)$. Indeed, by construction, we have
  $B(\eta)\subset\Omega$. Then, on $\Omega_0:=B(\eta)$ we have (NU2SE)
  and from the proof of item (3) of Theorem~\ref{thm:appv-attracting},
  we obtain the existence of an open subset $V$ such that
  $V\cap\Lambda\neq\emptyset$ and $\leb(V\setminus B(\eta))=0$. Hence
  $\nu(V)>0$ and also $\mu(V)>0$. We can take a non-negative
  continuous bump function $\vfi:M\to[0,1]$ supported on $V$ so that
  $ 0<\mu(\vfi)= \lim\frac1{n_k}\sum_{i=0}^{n_k-1}\vfi(f^i(x))$,
  ensuring that $f^k(x)\in V$ for some $k>1$. Hence, we may take
  $W=f^{-k}(V)$.

  If $\Lambda$ is an attractor, we conclude that $\eta$ is the unique
  physical/SRB measure in the same way as in the proof of item (4) of
  Theorem~\ref{thm:appv-attracting}. This proves the existence and
  uniqueness statements of Theorem~\ref{mthm:fABV}.
\end{remark}

% and $\eta_1$ is another physical/SRB
% measure supported on $\Lambda$, then we can associate to $\eta_1$ an
% open cylinder $C_1$ so that $\leb(C_1\setminus B(\eta_1))=0$ and
% $C_1\cap\Lambda\neq\emptyset$. We can find a point $w\in\Lambda$ with
% dense positive trajectory in $\Lambda$ and times $t_0>t_1$ so that
% $w_0=\phi_{t_0}(w)\in C$ and $w_1=\phi_{t_1}(w)\in C_1$. Hence, we can find
% neighborhoods $V_i$ of $w_i$, $i=0,1$ so that
% $\phi_{t_1-t_0}(V_0)\subset V_1$. Thus, since the flow is at least
% $C^1$ smooth, we have that
% $\leb\big(\phi_{t_1-t_0}^{-1}(B(\eta_1))\cap B(\eta)\big)>0$ and
% conclude that $\eta_1=\eta$.
% This shows that the physical/SRB measure
% is unique in the transitive setting.
It remains to prove the claim.

\begin{proof}[Proof of Claim~\ref{cl:SRB}]
  Arguing by contradiction, since $\mu(\Upsilon)>0$ and for $x\in\Upsilon$
  we have
  $h_{\mu_x}(f)-\mu_x(J^{cu})=h_{\mu_x}(f)-\mu_x(\Sigma^+)\le0$ by
  Ruelle's Inequality, we assume that $h_{\mu_x}(f)-\mu_x(J^{cu})<0$
  for all $x\in\Upsilon$. Then the left hand side of~\eqref{eq:disint}
  is negative, thus $\mu(\supp(\mu)\setminus\Upsilon)>0$ and there must
  be $z\in\supp(\mu)\setminus\Upsilon$ so that
  $h_{\mu_z}(f)-\mu_z(J^{cu})>0$.

  Since
  $\mu_z(\Sigma^+) \ge h_{\mu_z}(f) \ge \mu_z(J^{cu})=
  \mu_z\big(\sum_{i\le d_{cu}}\chi^+_i\big)$, then such $z$ admits a
  negative sectional Lyapunov exponent on $E^{cu}$.  \emph{Because we
    are assuming that $d_{cu}=\dim(E^{cu})=2$}, we thus obtain an
  ergodic measure $\nu=\int_0^1 (\phi_t)_* \mu_z \, dt$ for the flow
  of $G$, whose Lyapunov exponents are all negative with the exception
  of the flow direction. Hence, this measure is supported on a
  hyperbolic periodic attracting orbit $\cO$ --- a sink; see
  e.g.~\cite{Ara2020}. In particular, $\mu_z$ is an atom of $\mu$, so
  $\mu(B_\epsilon(\cO))>0$ for all $\epsilon>0$. Since $\mu$ is a
  finite measure, we may find $\epsilon_0>0$ arbitrarily small such
  that the $\epsilon_0$-neighborhood of the periodic orbit has null
  boundary: $\mu(\partial B_{\epsilon_0}(\cO))=0$. By Portmanteau
  Theorem on weak$^*$ convergence, this ensures that
  $ 0<\mu(B_{\epsilon_0}(\cO)) =
  \lim\frac1{n_k}\sum_{i=0}^{n_k-1}1_{B_{\epsilon_0}(\cO)}(f^i(x)).$
  and so $f^k(x)\in B_{\epsilon_0}(\cO)$ for some $k>1$. But this
  implies that $x$ belongs to the basin of a sink, contradicting
  $\mu(\Gamma)<0$. This contradiction proves the claim.
\end{proof}

\begin{remark}[Other setting where the claim holds]\label{rmk:claim+}
  The same proof of the claim follows, with arbitrary $d_{cu}\ge2$, if
  we know that no negative Lyapunov exponents exist along $E^{cu}$ on
  the attracting set. For instance, under condition~\eqref{eq:NNE} of
  Corollary~\ref{mcor:wABVndcu}, or for sectional hyperbolic
  attractors as in Example~\ref{ex:multidimLorenz}; see also
  Example~\ref{ex:modifLorenzMultidim}.
\end{remark}

For the reciprocal statement in Theorem~\ref{mthm:fABV}, let $\mu$ be
an hyperbolic ergodic physical/SRB measure for the partial hyperbolic
attracting set $\Lambda=\Lambda_G(U)$ with $\m(B(\mu)\cap\Omega)>0$,
and let us deduce mostly asymptotic sectional expansion (MASE), without any
restriction on $\dim E^{cu}$.

We start by noting that $G$-invariance, ergodicity and hyperbolicity
of $\mu$, together with Kingman's Subadditive Ergodic Theorem, ensure
that there exists $c_0>0$ so that
\begin{align*}
  \inf_{t>0}\int\log\|\wedge^2(D\phi_t\mid_{E^{cu}})^{-1}\|^{1/t} \,d\mu
  =
  \lim_{t\nearrow\infty}\log\|\wedge^2(D\phi_t\mid_{E^{cu}_x})^{-1}\|^{1/t}<-c_0,
  \quad \mu-\text{a.e. $x$}.
\end{align*}

For a given small $\epsilon>0$ let us fix $g=\phi_T$ with
$\int\log\|\wedge^2(Dg\mid_{E^{cu}})^{-1}\|^{1/T}
\,d\mu\le-c_0+\epsilon$ for some $T>0$ so that $\mu$ is $g$-ergodic,
from Remark~\ref{rmk:contergodic}. Note that $g$ is a partially
hyperbolic diffeomorphism with respect to the same splitting
$E^s\oplus E^{cu}$ over $\Lambda$.  Since $\mu$ is a hyperbolic
$cu$-Gibbs state even for the dynamics of $g$, then $\mu$ is a
physical measure for $g$ also; see the proof that (2b) implies (2c) in
Theorem~\ref{thm:appv-attracting}.
 
Using that $\mu$ is $g$-ergodic and physical, together with the
subadditive property of the continuous function
$(x,t)\mapsto\log\|\wedge^2(D\phi_t\mid_{E^{cu}_x})^{-1}\|$, we obtain
for $\m$-a.e. $x\in B_g(\mu)$, since $g^nx=\phi_{nT}(x)$ for $n\ge0$
\begin{align*}
  -c_0+\epsilon
  &\ge
  \int \frac{\log\|\wedge^2 (Dg\mid_{E^{cu}})^{-1}\|}{T} \,d\mu
    =
      \lim_{n\to\infty}\frac1n\sum\nolimits_{i=0}^{n-1}
      \log\|\wedge^2(D\phi_T\mid_{E^{cu}_{g^ix}})^{-1}\|^{1/T}
  \\
  &\ge
    \limsup\nolimits_{n\to\infty}
    \log\|\wedge^2(D\phi_{nT}\mid_{E^{cu}_x})^{-1}\|^{1/nT}
  \ge
    \liminf\nolimits_{t\to\infty}
    \log\|\wedge^2 (D\phi_t\mid_{E^{cu}_x})^{-1}\|^{1/t}.
\end{align*}
Since $\epsilon>0$ was arbitrary, we conclude that we have (wMASE) as
in~\eqref{eq:wMASE1} on the positive volume subset
$B_g(\mu)\subset B(\mu)$, which satisfies $\m(B_g(\mu)\cap\Omega)>0$,
completing the proof of Theorem~\ref{mthm:fABV}, since
(wMASE)$\implies$(wASE) on a full Lebesgue measure subset, from
Theorem~\ref{thm:NUSEvar}.
\end{proof}

\begin{proof}[Proof of Corollary~\ref{mcor:wABVndcu}]
  We start by assuming (wNU2SE) which, in the proof of
  Theorem~\ref{mthm:fABV} was deduced as the first step. Thus, from
  the proof of Theorem~\ref{mthm:fABV}, we obtain a probability measure
  $\mu$ supported in $\Lambda$ so that $h_{\mu}(f)=\mu(J^{cu})$. Since
  we additionally assume~\eqref{eq:NNE}, which ensures the
  non-existence of negative Lyapunov exponents along the
  central-unstable direction on $\Lambda$, we can use
  Claim~\ref{cl:SRB} by Remark~\ref{rmk:claim+}. This enables us to
  obtain a physical/SRB measure $\mu$ supported in $\Lambda$. This
  proves the first statement of Corollary~\ref{mcor:wABVndcu}.

  Moreover, such measure $\mu$ is such that its basin $B(\mu)$ is
  contained in $\Omega$, from Remark~\ref{rmk:nhbhdbasin}.

  If we additionally assume that points in $\Omega$ satisfy the slow
  recurrence condition (SR), then all points of $B(\mu)$ satisfy both
  (SR) and (NU2SE) conditions, the latter one with a rate $c_0$.
  Reapplying the construction of $cu$-disks using hyperbolic times
  from the proof of Theorem~\ref{mthm:discretefabv}, we get that
  ergodic basin $B(\mu)$ of $\mu$ has volume bounded from below away
  from zero by a uniform constant, depending only on the map $f$; see
  Remark~\ref{rmk:unifcylinder}.

  If $\m(\Omega\setminus B(\mu))=0$, then the proof is
  complete. Otherwise, we set $\mu_1=\mu$ and reapply the proof of
  Theorem~\ref{mthm:fABV} starting with $x\in\Omega\setminus B(\mu)$,
  to obtain a new ergodic physical/SRB measure $\mu_2$ with
  $B(\mu_2)\subset\Omega$. Proceeding by induction, we obtain a family
  of distinct ergodic physical/SRB probability measures
  $\mu_1,\mu_2,\ldots$ whose basins are contained in $\Omega$ with
  positive volume bounded away from zero. Since $\Omega\subset U$ with
  finite volume ($U$ is a relatively compact neighborhood of $\Lambda$
  in a finite dimensional manifold), after finitely many steps we get
  finitely distinct ergodic physical/SRB measures whose basis cover
  $\Omega$ except perhaps a zero volume subset. This completes the
  proof of the existence part of Corollary~\ref{mcor:wABVndcu}. The
  reciprocal follows just as in the proof of Theorem~\ref{mthm:fABV}.
\end{proof}

\begin{proof}[Proof of Corollary~\ref{mcor:wABV}]
  As in the proof of Theorem~\ref{mthm:fABV}, we use
  (wASE)$\implies$(wNU2SE) with rate $c_0$ from
  Theorem~\ref{thm:NUSEvar}; and the ergodic physical/SRB measure
  $\mu$ obtained is such that its basin $B(\mu)$ is contained in
  $\Omega$, from Remark~\ref{rmk:nhbhdbasin}.

  The rest of the proof of finiteness follows just like in the proof
  of Corollary~\ref{mcor:wABVndcu}, since we are assuming also that
  (SR) condition holds. 
\end{proof}

\begin{proof}[Proof of Corollary~\ref{mcor:SRB-ASH}]
  We show that a partially hyperbolic attracting set satisfying
  (wASE), when $\sing_\Lambda(G)$ contains only saddle-type hyperbolic
  equilibria, is in the setting of Theorem~\ref{mthm:fABV}.

  We start by observing that if $\Lambda$ is transitive, then
  $\sing_\Lambda(G)$ cannot contain sinks or sources.  We are left to
  find a positive Lebesgue measure subset $\Omega\subset U$ whose
  trajectories are weak asymptotically sectional expanding (wASE).

  In order to find $\Omega$, we note that, from
  Theorem~\ref{thm:NUSEvar} (see e.g. the proof of
  Theorem~\ref{mthm:fABV}), any point of
  $\Lambda':=\Lambda\setminus \cup\{W^s_\sigma:x\in\sing_\Lambda(G)\}$
  satisfies (NU2SE).

  Therefore, just like in the proof of Theorem~\ref{mthm:fABV}, for
  $x\in\Lambda'$ any $f$-invariant probability measure $\mu$ obtained
  as a weak$^*$ accumulation point of
  $\big(\frac1{n_k}\sum_{i=0}^{n_k-1}\delta_{f^ix}\big)_{k\ge1}$
  satisfies $\mu(\Gamma)\le-c_u<0$, where
  $\Gamma(x)=\log\big\|\wedge^2(Df\mid_{E^{cu}_x})^{-1}\big\|$.

  Thus, there exists an ergodic component $\nu$ of $\mu$ such that
  $\nu(\Gamma)\le -c_u$. This ensures that
  $\eta=\int_0^1(\phi_t)_*\nu\,dt$ is a hyperbolic measure for the
  flow. Hence, there exists a Pesin unstable-manifold
  $W^u_z(\epsilon)$ contained in $\Lambda$ (since $\Lambda$ is an
  attracting subset) and a corresponding center-unstable manifold
  $W^{cu}_z(\epsilon)=\cup_{|t|<\epsilon}\phi_t\big(W^u_z(\epsilon)\big)$,
  which is an embedded $cu$-disk inside $\Lambda$.
  
  Moreover, any compact connected part of the stable manifold
  $W^s_\sigma$ of each saddle-type hyperbolic equilibrium
  $\sigma\in\sing_\Lambda(G)$ is a compact submanifold transversal to
  $\Delta:=W^{cu}_z(\epsilon)$.  Therefore, since $\sing_\Lambda(G)$
  is a finite set, the intersection
  $\Delta\cap\big( \cup\{W^s_\sigma:\sigma\in\sing_\Lambda(G)\} \big)$
  has zero measure with respect to the induced volume measure
  $\leb_\Delta$ on $\Delta$.

  Thus, $\Delta':=\Delta\cap\Lambda'$ has $\leb_\Delta$-full
  measure. Since each $w\in\Delta'$ has a local stable manifold, we
  have that $y\in W^s_w$ also satisfies (wNU2SE)
  % weak non-uniform sectional expansion~\eqref{eq:wMASE},
  since $d(\phi_t y,\phi_tx)\to0$ as $t\nearrow\infty$.

  Therefore, we are left to check that the subset
  $\Omega:=\cup\{W^s_w: w\in\Delta'\}$ has positive volume to complete
  the conditions of Theorem~\ref{mthm:fABV} and conclude the existence
  of an ergodic physical/SRB measure supported on $\Lambda$.

  The flow is H\"older-$C^1$ (it is in fact of class $C^2$ as the
  vector field $G$) and so, from standard results, the stable leaves
  form an absolutely continuous lamination; see e.g.~\cite[Section
  6]{ArMel18} \& \cite[Appendix B.7]{fisherHasselblatt12} and
  references therein. This ensures that $\leb(\Omega)$ is positive as
  a consequence of the positivity of $\leb_\Delta(\Delta')$. This
  completes the proof of Corollary~\ref{mcor:SRB-ASH}.
\end{proof}

%%%%%%%%%%%%%%%%%%%%%%%%%%%%%%%%%%%%%%%%%%%%%%%%%%%%%

\section{New examples of application}
\label{sec:new-examples}

Here, we present mostly asymptotically sectional expanding examples
which are either non-sectional hyperbolic or non-singular hyperbolic,
with or without hyperbolic equilibria; as well as counter-examples
failing some of our assumptions and having no physical measure.

\subsection{Mostly asymptotically sectional expanding and
  non-sectional hyperbolic}
\label{sec:mostly-asympt-sectio}

\begin{example}[Mostly asymptotically sectional expanding,
  singular-hyperbolic and not sectional-hyperbolic, with no
  equilibria]
  \label{ex:singhypnosecnoeq} We consider the hyperbolic (Anosov)
  automorphism $f_0$ of the $3$-torus $\TT=(\sS^1)^3$ induced by the
  linear map defined by
\begin{align*}
  A=
  \begin{pmatrix}
    2 & 1 & -1 \\
    1 & 1 & 0 \\
    -1 & 0 & 2
  \end{pmatrix}
  \; \text{with} \;
            \spec(A)=\{\lambda_3\approx 0.198062<1<\lambda_2\approx 1.55496<
            \lambda_1\approx3.24698\}.
\end{align*}
Let $p$ be the fixed point at the class of the origin
$(0,0,0)\in\RR^3$ and a small neighborhood $V$ of $p$ with a choice of
basis $\{v_1,v_2,v_3\}$ where $v_i$ is a unit eigenvector
corresponding to the eigenvalue $\lambda_i, i=1,2,3$. In $V$ the map
$f_0$ has the expression
$(x,y,z)\mapsto (\lambda_1x,\lambda_2y,\lambda_3z)$.  We consider the
one-parameter family of maps of the real line
\begin{align*}
  f_\mu(x)=\psi(x)\lambda_2x+(1-\psi(x))((1-\mu)\lambda_2x+\mu\cdot
  h(x)),
  \quad 0<\mu<1;
\end{align*}
where $h(x)=(1-b) x (1+x^2(x^2-a^2))$ for $0<b<a<\xi<1$ (with small
$b$) has $3$ fixed points at $0,\pm\xi$ satisfying $h'(0)=1-b<1$ and
$h'(\pm\xi)=(1-b)(1\pm\xi(4\xi^2-2a^2))>1$ ; and $\psi:\RR\to[0,1]$ is
a $C^\infty$ bump function so that
$\supp\psi\subset\RR\setminus[-1+b,1-b]$ and $\psi(x)=1$ whenever
$|x|\ge1+b$; see the left hand side of Figure~\ref{fig:bump}.
Moreover, we can also assume that
\begin{align}\label{eq:domf1}
  \lambda_3<f'_\mu(x)<\lambda_1, \quad x\in\RR
  \qand 0<\mu<1.
\end{align}
In addition, since $f'_\mu(0)=\mu h'(0)=\mu(1-b)$ is
the minimum of $f'_\mu(x)$, then
\begin{align}\label{eq:volexp}
  \lambda_1+f'_\mu(x)>1, \quad
  x\in\RR.
\end{align}
We replace the second coordinate map $y\mapsto \lambda_2y$ by the
one-parameter family $y\mapsto \epsilon_1\cdot f_\mu(y/\epsilon_1)$
for $\epsilon_1>0$ small enough so that the ball of radius
$\epsilon_1(1+b)$ around $p$ is contained in $V$, and the properties
stated above are preserved at corresponding points after scaling.
For $\mu=0$ we have the original map $f_0$. 
\begin{figure}[htpb]
  \centering
  \includegraphics[width=3cm]{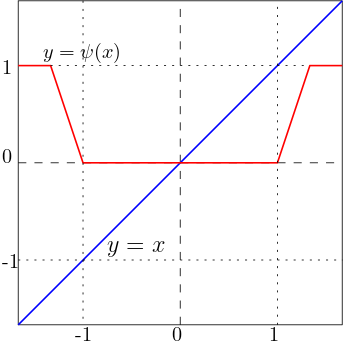}
  \;
  \includegraphics[width=3.5cm]{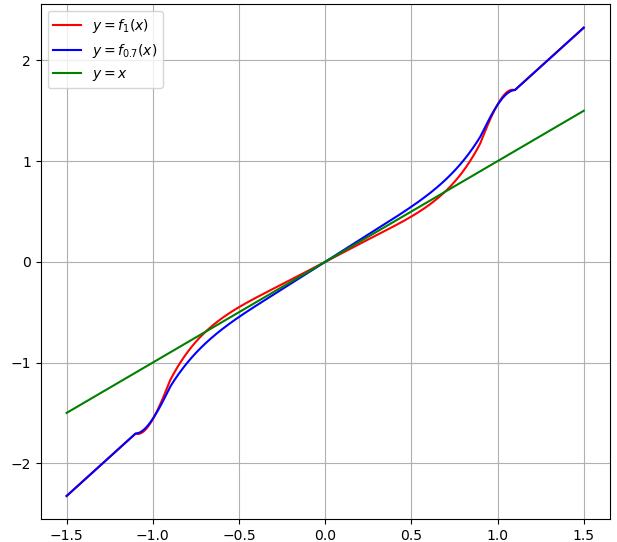}
  \;
  \includegraphics[height=2.5cm]{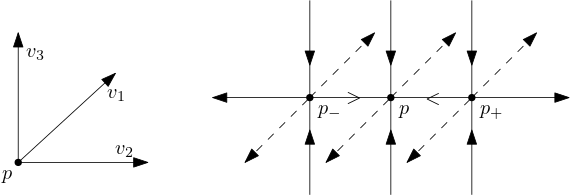}
  \caption{\label{fig:bump} From left to right: the graph of a
    continuous bump function together with $y=x$; graphs of the
    identity, $y=f_1(x)$ and $y=f_{0.7}(x)$; depiction of the eigenvalue
    directions at the origin of $T_pM$; and
    the dynamical behavior of the (un)stable directions in a
    neighborhood of $p$ for $f_1$.}
  \end{figure}

For $\mu=1$ we have a map
$f_1$ coinciding with $f_0$ outside of $V$ and having inside $V$ three
fixed hyperbolic saddle points: $p$ with index $2$; and $p_\pm$ with
index $1$, symmetrically placed with respect to $p$ along the line
segment $[-\epsilon_1(1+b),\epsilon_1(1+b)]v_2$ inside $V$; see the
right hand side of
Figure~\ref{fig:bump}.

% \begin{figure}[htpb]
%   \centering
%   \includegraphics[width=7cm]{bifurca.png}
%   \caption{Depiction of the eigenvalue directions at the origin, on
%     the left hand side; and, on the right hand side, the dynamical
%     behavior of the (un)stable directions in a neighborhood of the
%     origin for $f_1$.}
%   \label{fig:bifurca}
% \end{figure}

We note that $f_1$ has a partially hyperbolic splitting
$E^s\oplus E^{cu}$ defined on all of $\TT$ which is volume hyperbolic:
\begin{itemize}
\item $E^s$ coincides with the stable bundle of $f_0$ spanned
  everywhere by $v_3$ and is uniformly contracted
  $\|Df_1\mid_{E^s}\|=\lambda_3$;
\item $E^{cu}$ coincides with the unstable bundle of $f_0$ spanned
  everywhere by $\{v_1,v_2\}$, domination of the splitting is a
  consequence of~\eqref{eq:domf1}; and
\item $\det (Df_1\mid_{E^{cu}})>1$ as a consequence of~\eqref{eq:volexp}.
\end{itemize}
The invariant bundle $E^{cu}$ further decomposes into the continuous
splitting $E^c\oplus E^u$, where $E^u$ is spanned everywhere by $v_1$
and uniformly expanded: $\|Df\mid_{E^u}\|=\lambda_1$; and $E^c$ is
spanned everywhere by $v_2$ and dominated by $E^u$. We claim that
$f_1$ is non-uniformly expanding, that is
\begin{align}\label{eq:NUEdiff}
  \limsup\nolimits_{n\nearrow\infty}\sum\nolimits_{i=0}^{n-1}\log\|(Df_1\mid_{E^{cu}_{f_1^ix}})^{-1}\|^{1/n} < 0, \quad \m-\text{a.e.} x\in\TT.
\end{align}
But $f=f_1$ is a perturbation of the Anosov automorphism $f_0$ on the
$3$-torus around the fixed point satisfying all the conditions stated
in~\cite[Appendix]{ABV00}, namely:
\begin{enumerate}
\item $f$ admits invariant cone fields $C^{cu}$ and $C^{cs}$, with
  small width containing, respectively, the unstable and stable bundle
  of the Anosov diffeomorphism $f_0$;
\item there are $0<\sigma_2<1<\sigma_1$ and $\delta_0>0$ so that for
  disks $D^{cu}$ and $D^{cs}$ through $x$ tangent, respectively, to
  the centre-unstable cone field $C^{cu}$ and to centre-stable cone
  field $C^{cs}$, we have
  \begin{enumerate}
  \item
    $\min\{|\det(Df\mid_{T_xD^{cu}})|,|\det(Df\mid_{T_xD^{cs}})^{-1}|\}
    >\sigma_1$, for $x\in M$;
  \item
    $\max\{\|(Df \mid_{T_x D^{cu}})^{-1}\|, \|(Df \mid_{T_x D^{cs}})\|\} <
    \sigma_2 $, for $x\in M\setminus V$;
  \item
    $\max\{ \|(Df \mid_{T_xD^{cu}})^{-1}\|, \|(Df \mid_{T_xD^{cs}})\|\} <
    (1+\delta_0)$, for $x\in V$.
\end{enumerate}
\end{enumerate}
Then it follows that $f_1$ satisfies~\eqref{eq:NUEdiff}; see
e.g.~\cite[Appendix]{ABV00} or~\cite[Section 7.6]{Alves2020b}.

We now consider the suspension flow $G$ on a $4$-dimensional manifold
$\wt{\TT}=\TT\times[0,1]/\sim$ of the diffeomorphism $f_1$, where the
equivalence relation is given by $(x,1)\sim(f_1(x),0)$ for $x\in\TT$;
see e.g. \cite[Proposition 3.7]{PM82}. Since $f_1$ can be taken of
class $C^r$ for every $r\ge1$, the same holds for $G$. Moreover,
$\wt{\TT}$ is also a parallelizable manifold as $\TT$ is; thus we can
consider $v_1,v_2,v_3$ as globally defined vector fields transverse to
$G$.

We observe that the flow becomes singular-hyperbolic but not
sectional-hyperbolic: the splitting
$T\wt{\TT}=F^s\oplus (F^c\oplus \RR\cdot G \oplus F^{u})$ where
$F^s, F^c, F^u$ are respectively spanned by $v_1,v_2,v_3$ everywhere
on $\wt{\TT}$ is such that $F^{cu}=F^c\oplus \RR\cdot G \oplus F^{u}$
is volume expanding, since the action of the flow $\phi_t$ of $G$
along $G$ is a translation. However, at the point $p_0=p\times\{0\}$
we have $\det (D\phi_1\mid_{F^c\oplus\RR\cdot G})=1-b<1$, contradicting
sectional-expansion, since $p_0$ belongs to a periodic orbit of $G$
with period $1$.
We claim that this flow satisfies the (MASE) condition.

Indeed, we note that since each submanifold
$\Sigma_s=\TT\times\{s\}, 0\le s<1$ is a global cross-section for the
flow $\phi_t$ with constant return time equal to $1$, then
$\phi_1\mid_{\Sigma_s}:\Sigma_s\to \Sigma_s$ is the Poincar\'e First
Return Map to $\Sigma_s$, and such return maps all coincide with $f_1$
by construction of the suspension flow as a translation on the last
coordinate. In addition, we get $P^1=D(\phi_1\mid_{\Sigma_s})$. Hence,
since $f_1$ is a partially hyperbolic non-uniformly expanding
diffeomorphism, we obtain~\eqref{eq:NUE} for $\m$-a.e. $x\in\Sigma_s$
for each $0\le s<1$. Thus by Fubini's Theorem, we get~\eqref{eq:NUE}
for $\m$-a.e. point of $\wt{\TT}$, because $\{\Sigma_s: 0\le s<1\}$ is
a smooth foliation of $\wt{\TT}$.

We have (NUSE) without singularities, so that (SR) holds automatically
by vacuity. Moreover, since the flow direction makes an angle bounded
away from zero from the global cross-sections $\Sigma_s$, we also
obtain (NU2SE).  We can then ensure the existence of finitely many
ergodic physical/SRB measures for this class of systems from the proof
of Theorem~\ref{mthm:discretefabv} (which just assumes (NUSE)). From
this we obtain (MASE) from Theorem~\ref{mthm:equivalence}.
\end{example}

\begin{example}[Mostly asymptotically sectional expanding, with
  equilibria and not sectionally
  hyperbolic]\label{ex:modifLorenzMultidim}
  We adapt the construction of the multidimensional Lorenz
  attractor, first presented by Bonatti, Pumari\~no and Viana
  in~\cite{BPV97}, to obtain an example of a mostly asymptotically
  sectional expanding attracting set with an equilibrium.
  
  We consider a ``solenoid'' constructed over a uniformly expanding
  map $g:\TT\to\TT$ of the $k$-dimensional torus $\TT$, for some
  $k\ge2$. That is, let $\DD$ be the unit disk on $\RR^2$ and consider
  a smooth embedding $F_0:N\circlearrowleft$ of $N=\TT\times\DD$ into
  itself, which preserves and contracts the foliation
  $\F^s=\big\{\{z\}\times\DD: z\in\TT\big\}$. The natural projection
  $\pi:N\to\TT$ on the first factor conjugates $F_0$ to $g$:
  $\pi\circ F_0=g\circ\pi$. We assume that the initial expanding map $g$
  has simple spectrum $\{\lambda_1>\lambda_2>\dots>\lambda_k\}$ and
  that $F_0$ admits two distinct fixed points $p$ and $q$.

  We have that $DF_0(q):T_q N\to T_q N$ is hyperbolic with a
  $2$-dimensional contracting invariant subspace, and a complementary
  $k$ dimensional expanding invariant subspace. Let
  $\{v_1,\dots,v_k,u_1,u_2\}$ be a basis of $TN=\RR^k\times\RR^2$
  formed by unit vectors so that $v_i$ is an eigenvector corresponding
  to $\lambda_i, i=1,\dots,k$. We choose coordinates on a neighborhood
  $V$ of $q$ in $N$ so that $F_0\mid_V$ has the expression
  $(x,y_1,\dots,y_k)\mapsto (Ax, \lambda_1y_1,\dots,\lambda_ky_k)$
  with $x=x_1u_1+x_2u_2$ and $A$ a linear contraction on $\RR^2$.

  We perform the same perturbation as in
  Example~\ref{ex:singhypnosecnoeq} replacing the weakest expanding
  coordinate map $y_k\mapsto\lambda_k y_k$ by
  $y_k\mapsto \epsilon_1 f_\mu(y_k/\epsilon_1)$ around the fixed point
  $q$, obtaining a new base map $F:N\to N$. We choose $\mu\in(0,1)$ so
  that the $DF(q)$ has eigenvalue $1$ along the $y_k$ direction and
  keeps the expanding/contracting eigenvalues along the other
  directions.
  
  We note that $F$ is a partially hyperbolic map with an invariant
  splitting $E^s\oplus E^c\oplus E^u$, where $E^s=\{0\}\times\RR^2$,
  $E^c=\RR\times v_k$ and $E^u$ is everywhere spanned by
  $v_1,\dots,v_{k-1}$, with
  $\|\wedge^2\big( DF\mid_{E^{cu}_z}\big)^{-1}\|<1$ for
  $z\in N\setminus\{q\}$ with respect to the standard product metric
  in $N$ where $E^{cu}=E^c\oplus E^u$ is central-unstable subbundle;
  and $\|\wedge^2\big( DF\mid_{E^{cu}_q}\big)^{-1}\|=1$. That is, we
  have area expansion along any two-dimensional subspace of the
  central-unstable subbundle away from $q$.  We also have an
  attracting subset $\Lambda_0=\cap_{n\ge0}F^n(N)$ with $N$ as
  topological basin of attraction.

  We further consider the constant vector field $X=(0,1)$ on
  $M=N\times[0,1]$ and modify this field on the cylinder
  $C=U\times\DD\times[0,1]$ around the periodic orbit of the point
  $p=(z,0)\in N\times\{0\}$, where $U$ is a neighborhood of $z$ in
  $\TT$ such that $V\cap (U\times\DD)=\emptyset$, in such a way as to
  create a hyperbolic (generalized Lorenz-like) equilibrium $\sigma$
  of saddle-type with $k$ expanding and $3$ contracting eigenvalues,
  as depicted in Figure~\ref{fig:sing-attractor-4d}. The eigenspace of
  one of the contracting eigenvalues lies along the direction of $X$,
  the other two-dimensional contracting directions still lie on the
  direction of $\DD$, and the remaining expanding eigenspaces are
  transversal to the $X$ direction.
  
\begin{figure}[htbp]
\includegraphics[width=5cm]{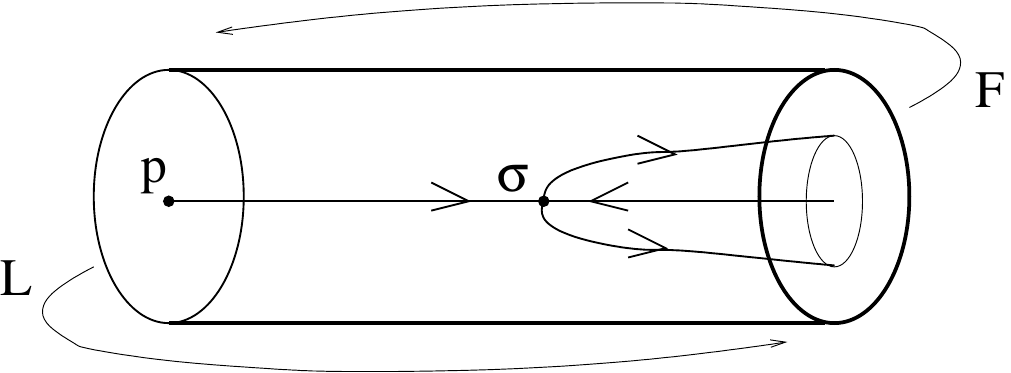}
\caption{A sketch of the modification of the vector field leading to
  the multidimensional Lorenz attractor.}
\label{fig:sing-attractor-4d}
\end{figure}

This vector field $Y$ defines a transition map from
$\Sigma_\epsilon=N\times\{\epsilon\}$ to
$\Sigma_{1-\epsilon}=N\times\{1-\epsilon\}$ for some fixed small
$\epsilon>0$, which is the identity in the first coordinate when
restricted to $\Sigma_\epsilon\setminus (U\times\DD\times\{\epsilon\})$.

We assume that the standard inner product satisfies
$\langle Y, X\rangle>0$ on $\Sigma_\epsilon\cup\Sigma_{1-\epsilon}$
and take a $C^\infty$ bump function $\psi:[0,1]\circlearrowleft$ so
that $\psi\mid_{[\epsilon/2,1-\epsilon/2]}\equiv0$ and
$\psi\mid_{[0,\epsilon/3]\cup[1-\epsilon/3,1]}\equiv1$.  We define the
vector field
$G(x,t)=\psi(t)\cdot X+(1-\psi(t))\cdot Y(x,t), (x,t)\in M$ which
generates a smooth transition map $L$ from
$\Sigma_0^*=(N\setminus\{p\})\times\{0\}$ to $\Sigma_1=N\times\{1\}$.
Together with the identification $(x,0)\sim(F_0(x),1), x\in N$ we
obtain a smooth parallelizable manifold $\wt{M}=M/\sim$ where $G$
induces a $C^\infty$ vector field which we denote by the same letter.

We may assume that the splitting $E^s\oplus E^{cu}$ is still preserved
by $F\circ L$: this is clear outside of the cylinder $C$,
inside $C$ this is obtained by the choice of $Y$ and, moreover, in $C$
the $E^{cu}$ bundle in uniformly expanded.

We may now induce invariant bundles for the flow $\phi_t$ of $G$ on
$\wt{M}$ by parallel transport: $F^{cu}=\RR\cdot G\oplus E^{cu}$ and
$F^s=E^s$ and consider the maximal invariant subset
$\Lambda=\cap_{t>0}\phi_t(\wt{M})$ which is a attracting set with
basin $\wt{M}$. Since $q\in N$ becomes a periodic point with period
$1$ for $G$ and $p\in W^s_\sigma$, we still have area expansion along
$F^{cu}_z$ for $z\neq q$ and non-sectional-expansion along $F^{cu}_q$
for $\phi_t$. But, considering the cone fields $C^{cs}$ and $C^{cu}$
of small width around $F^s, F^{cu}$ respectively, we obtain the
sufficient conditions (1-2) presented in the previous
Example~\ref{ex:singhypnosecnoeq} for non-uniform sectional expansion
of $f=\phi_1$. Condition (NUSE) is obtained again as in
Example~\ref{ex:singhypnosecnoeq}, as well as condition (NU2SE).

Moreover, by construction, there are no asymptotic contracting
directions along the $F^{cu}$ subbundle: we have
condition~\eqref{eq:NNE} from Corollary~\ref{mcor:wABVndcu}. Hence
there exists an ergodic physical/SRB probability measure.  We can also
obtain transitivity, ensuring the uniqueness of this measure and that
$\wt{M}=B(\mu), \leb\bmod0$.
\end{example}

\subsection{Mostly asymptotically sectional expanding and not
  singular-hyperbolic}
\label{sec:mostly-asymtot-secti}

\begin{example}[Geometric Lorenz-like attractor with non-hyperbolic
  periodic orbit]\label{ex:notsinghyp}
  We start with a one-dimensional Lorenz-like transformation with two
  expanding fixed repellers at the boundary of the interval, which is
  an adaptation of the ``intermittent'' Manneville map~\cite{ManPom80}
  into a local homeomorphism of the circle; . We consider $I=[-1,1]$
  and the map $f:I\to I$, $ x\mapsto%\left\{
    \begin{cases}
      2\sqrt{x}-1 & \text{if $x\ge0$}, \\
      1- 2\sqrt{|x|} & \text{otherwise}.
    \end{cases}
    $; see the left hand side of Figure~\ref{fig:L1drep}.

  Then we perform the geometric Lorenz construction in such a way to
  obtain this map as the quotient over the stable leaves of the
  Poincar\'e first return map to the global cross-section of a vector
  field $G_0$; see the right hand side of Figure~\ref{fig:L1drep}.
  \begin{figure}[htpb]
    \centering \includegraphics[width=3.5cm]{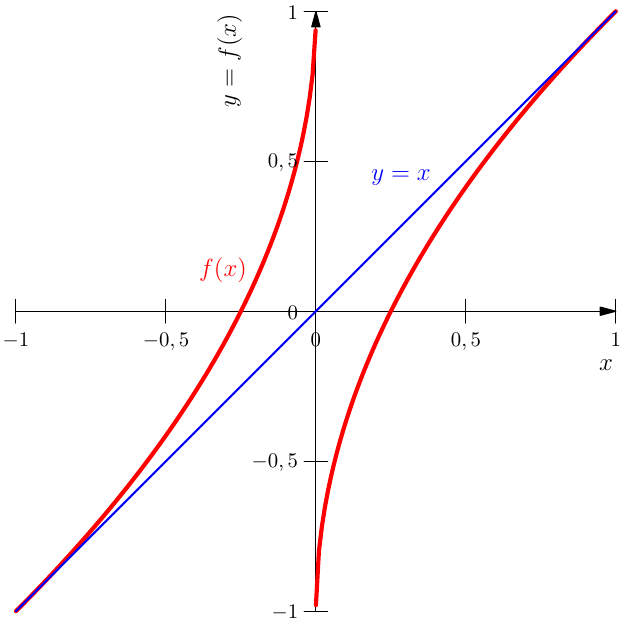}
    \qquad
    \includegraphics[width=4cm]{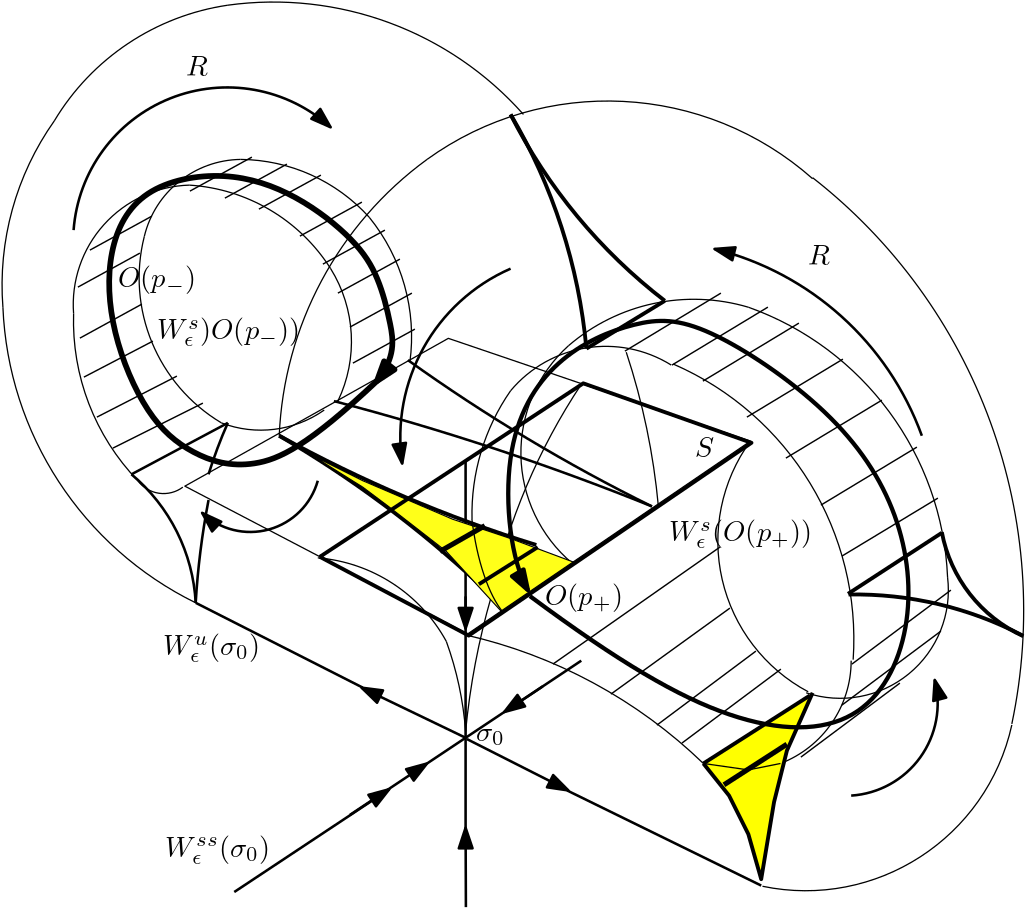}
    \caption{\label{fig:L1drep}Lorenz one-dimensional transformation
      with repelling fixed points at the extremes of the interval on
      the left; and the geometric Lorenz construction with this map as
      the quotient over the contracting invariant foliation on the
      cross-section $S$, with two corresponding periodic saddle-type
      periodic orbits $\cO(p_{\pm})$.}
  \end{figure}

  As usual in the geometric Lorenz construction, we assume that in the
  cube $I^3$ the flow is linear $\dot G_0=A\cdot G_0$ with
  $A=\diag\{\lambda_1,\lambda_2,\lambda_3\}$ and a Lorenz-like
  equilibrium at the origin $\sigma_0$ satisfying
  $\lambda_1<\lambda_3<0<-\lambda_3<\lambda_1$; see e.g. the detailed
  description in~\cite[Chap. 3, Sec. 3]{AraPac2010s}.

  The map $f$ preserves Lebesgue measure $\lambda$ on $I$ which is
  $f$-ergodic; see \cite[Sec. 5]{alves-araujo2004}. In particular, $f$
  is transitive (in fact, it is locally eventually onto, and so
  topologically mixing) and since $|log|x||$ is $\lambda$-integrable,
  then we have slow recurrence to the unique singularity at $0$.

  We thus obtain an attractor $\Lambda$ for the flow of the vector
  field $G$ depicted in the right hand side of Figure~\ref{fig:L1drep}
  which is partially hyperbolic (with $d_s=1$ and $d_{cu}=2$) and
  admits two periodic orbits $\cO(p_{\pm})$ corresponding to the
  indifferent fixed points of $f$ which are not hyperbolic. Indeed,
  the Poincar\'e first return map $R:S^*\to S$ to the cross-section
  $S=I^2\times\{1\}$, with domain
  $S^*=S\setminus\big(\{0\}\times I\times\{1\}\big)$ given by all the
  points of $S$ away from the singular line, is a skew-product map
  $R(x,y)=(f(x), g(x,y))$, where $g$ is a contraction on the second
  coordinate. The non-hyperbolicity of $\cO(p_{\pm})$ ensures that
  \emph{the attractor $\Lambda$ of the $3$-vector field $G$ is not
    singular-hyperbolic.}

  Following the standard construction described in~\cite[Chap. 7,
  Sec. 3.4]{AraPac2010s}, there exists an ergodic physical
  $R$-invariant probability measure $\nu$ on $S$ whose marginal
  $\pi_*\nu$ is $\lambda$, where $\pi:S\simeq I^2\to I$ is the natural
  projection on the first coordinate. Finally, we obtain a physical
  ergodic invariant probability measure $\mu$ for the flow of $G$ by
  considering the suspension flow with base map $R$ and roof function
  provided by the Poincar\'e first return time $\tau:S^*\to\RR^+$ to
  $S$.

  Moreover, $|f'(x)|>1$ for all $x\in I\setminus\{0,\pm1\}$ and so, if
  $(\phi_t)_{t\in\RR}$ is the flow of $G$, then since $\tau$ is
  constant on the fibers of the skew-product and $\lambda$-integrable
  \begin{align*}
    \int\log|\det D\phi_1\mid_{E^c}|\,d\mu\ge
    h_{\mu}(\phi_1)=\frac{h_\nu(R)}{\mu(\tau)}\ge
    \frac{h_\lambda(f)}{\mu(\tau)}
    =\frac1{\mu(\tau)}\int \log|f'|\,d\lambda>0,
  \end{align*}
  we conclude that $\Lambda$ is mostly asymptotically sectional
  expanding (MASE) while not being singular-hyperbolic. From
  Theorem~\ref{mthm:fABV}, these attractors admit a unique
  physical/SRB measure due to transitivity and $\dim E^{cu}=2$.
\end{example}

\begin{example}[Geometric Lorenz-like attractor with non-hyperbolic
  equilibrium]
  \label{ex:nonhypeq}
  In the recent work~\cite{bruin2023mixing} Bruin-Farias construct
  (similarly to the previous example) and study a geometric
  Lorenz-like attractor with a neutral equilibrium replacing the
  hyperbolic Lorenz-like equilibrium from the classical (geometrical)
  Lorenz attractor. This neutral equilibrium is neither Lorenz-like
  nor Rovella-like.

  The authors show that there exists a unique physical/SRB measure and
  proceed to study its mixing rate (obtaining polynomial upper
  bounds). This implies mostly asymptotic sectional expansion without
  singular-hyperbolicity.
\end{example}

\begin{remark}\label{rmk:singularidades}
  Example~\ref{ex:nonhypeq} shows, in particular, that the
  \emph{assumption of hyperbolic equilibria is not necessary for the
    existence of a physical/SRB measure} and so also not necessary to
  obtain asymptotical sectional expansion. Hence, this assumption
  should be regarded as a simplifying general assumption which is used
  in our line of proof.
\end{remark}

\subsection{Non-uniform weak expansion without slow recurrence
  nor physical measure}
\label{sec:non-uniform-section-1}

\begin{example}[Non-uniform expanding and no physical
  measure]\label{ex:nuenophysical}
  We consider the well-known vector field $X$ generating the flow
  $(\phi_t)_{t\ge0}$ of the cylinder $N:=\sS^1\times\RR$ with a
  double heteroclinic connection (the ``Bowen's eye'' flow), e.g.,
  from Takens work~\cite{Ta95} showing that \emph{Birkhoff averages
    may not exist almost everywhere}; see the left hand side of
  Figure~\ref{fig:Bowen}. % and also~\cite{gaunersdorfer1992}
  In this system time averages exist only for the sources $C, D$ and
  for the set of separatrixes and saddle equilibria
  $W=W_1\cup W_2 \cup W_3 \cup W_4 \cup \{A,B\}.$ Moreover the orbit
  $(\phi_t(x))_{t\ge0}$ of each $x$ not in $W$ and different from
  $C, D$ tends to $W$ as $t\nearrow\infty$.

  \begin{figure}[htpb]
    \centering
    \includegraphics[width=7cm]{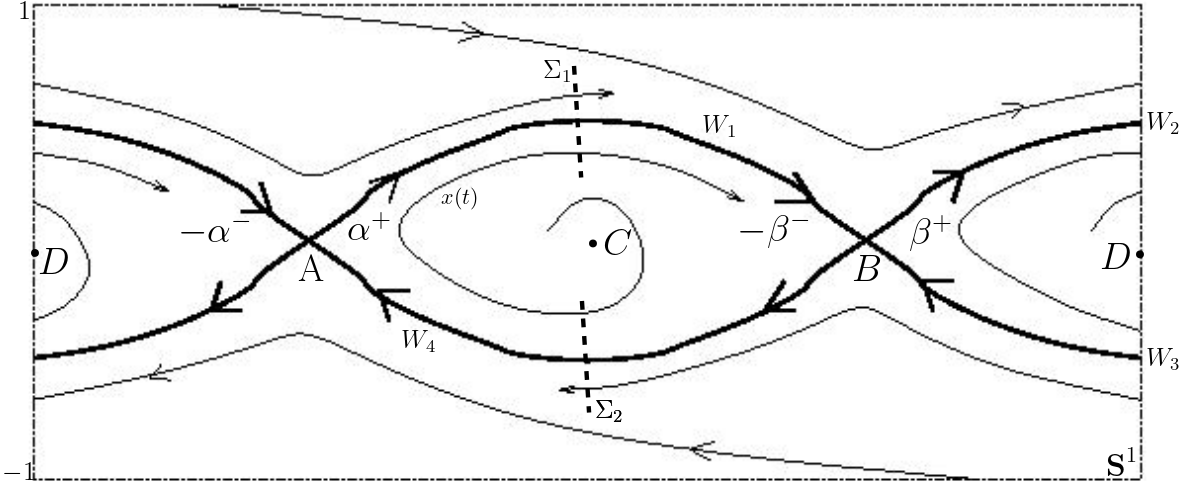}
    \quad
    \includegraphics[width=2.5cm]{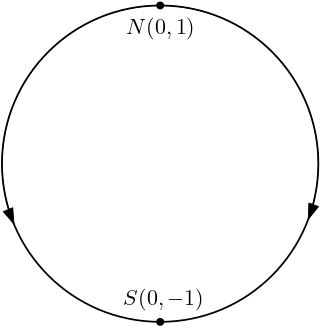}
    \caption{The double heteroclinic connection with non-existing
      time averages for a full Lebesgue measure subset; on the right
      hand side. On the left hand side, the North-South flow on the
      circle.}
    \label{fig:Bowen}
  \end{figure}
  Letting $f:=\phi_1$ denote the time $1$ map of the flow,
 %for a  conveniently chosen $\tau>0$ close to $0$
  we see that $W$ is a
  compact $f$-invariant attracting set, since $W=\cap_{n\ge0} f^n(U)$
  for all sufficiently small neighborhoods $U$ of $W$.
  Moreover, we may choose the saddles eigenvalues and adapted
  coordinates near $A$ and $B$ to obtain the following for every
  $x\in N\setminus\{C,D\}$
\begin{align}
  \label{eq:secontr}
  \limsup\nolimits_{T\nearrow\infty}\log|\det D\phi_T(x)|^{1/T}<0<
  \limsup\nolimits_{n\nearrow\infty}\sum\nolimits_{i=0}^{n-1}\log\|Df(f^ix)\|^{1/n}.
\end{align}
This shows that this system, although with some average asymptotic
expansion, is asymptotically sectional contracting on an open and full
Lebesgue measure subset -- which shows that these trajectories are not
Oseledets regular; see e.g.~\cite{BarPes2007} and~\cite{CCSV21}.
Since physical measures cannot exist in this system due to the
non-existence of Birkhoff time averages, then we obtain a weak
counterexample to the following conjecture.

\begin{conjecture}{\cite[Conjecture 12.37]{BDV2004}}
  \label{conj:viana}
  If a smooth map $f$ has only non-zero Lyapunov exponents at Lebesgue
  almost every point, then it admits some SRB measure. 
  % \begin{align*}
  %   \liminf\nolimits_{n\nearrow\infty}\frac1n\sum\nolimits_{i=0}^{n-1}\log\|Df\mid E^c_{f^ix}\|>0
  % \end{align*}
  % then there exists a physical probability measure $\mu$ supported in
  % $K$.
\end{conjecture}
The proof of~\eqref{eq:secontr} is a consequence of the
following (see also Kiriki et al.~\cite{Kiriki2022} and
  Ott-Yorke~\cite{Ott2008}).
\begin{theorem}{\cite[Theorem 1]{Ta95}}
  \label{thm:takens}
  If $g$ is a continuous function on $N$ with $g(A)>g(B)$ and the
  positive trajectory of $x$ accumulates $W$, then
  \begin{align*}
    \limsup_{T\nearrow\infty}\frac1T\int_0^Tg(\phi_tx)\,dt
    &=
      \frac{\sigma g(A) + g(B)}{1+\sigma} \qand
    \liminf_{T\nearrow\infty}\frac1T\int_0^Tg(\phi_tx)\,dt
    =
      \frac{\lambda g(B)+g(A)}{1+\lambda},
  \end{align*}
  where $ \lambda:=\alpha^-/\beta^+$ and $\sigma:=\beta^-/\alpha^+$
  from the spectra $\spec(DX(A))=\{\alpha^+, -\alpha^-\}$ and
  $\spec(DX(B))=\{\beta^+, -\beta^-\}$ with $\alpha^\pm,\beta^\pm>0$.
\end{theorem}
Indeed, to ensure that $W$ is attracting it is enough to have
$\lambda\sigma>1$ and we can set this together with
$\delta_A:=\alpha^+-\alpha^-<0$ and
$\delta_B:=\beta^+-\beta^-<0$. Since
$|\det D\phi_t(x)|=\exp\int_0^t\tr(DX(\phi_sx))\,ds$ we
set %\footnote{Here $\tr(L)$ is the trace of the linear operator $L$.}
$g(x)=\tr(DX(x))$ to get
$\log|\det D\phi_T(x)|=\int_0^Tg(\phi_sx)\,ds$ and both
$g(A)=\delta_A$ and $g(B)=\delta_B$ strictly negative. Thus, the left
hand side inequality from~\eqref{eq:secontr} follows from
Theorem~\ref{thm:takens}.

For the right hand side inequality, we set $g(x)=\log\|Df(x)\|$ and
note that $t\mapsto g(\phi_tx)$ is $C^1$. Hence, we can write
$ \int_0^ng(\phi_tx)\,dt
=
\sum_{i=0}^{n-1}\int_0^1g(\phi_t f^ix)\,dt $ and 
$g(\phi_tf^ix)=g(f^ix)+t\cdot \partial_s(g\circ
\phi_sf^ix)\mid_{s=s(t)}$ by the Mean Value Theorem for some
$s(t)\in(0,t)$. Moreover,
\begin{align*}
  \partial_s(g\circ \phi_sf^ix)\mid_{s=s(t)}
  &=
    \nabla g(\phi_sf^ix) \cdot DX(\phi_sf^ix)X(\phi_sf^ix)
\end{align*}
is uniformly bounded from above and below, so we can find $\bar
L$ so that
\begin{align*}
  \int_0^1 g(\phi_tf^ix)\,dt
  \le
  \int_0^1 \big(g(f^ix)+t\bar L\big)\,dt
  \le
  g(f^ix) + \frac12 \bar L.
\end{align*}
This ensures that
$(1/n)\sum\nolimits_{i=0}^{n-1}g(f^ix) \ge
(1/n)\int_0^ng(\phi_tx)\,dt-\bar L/2n$ and so the right hand
inequality of~\eqref{eq:secontr} follows again from
Theorem~\ref{thm:takens}, since for our choice of $g$ we have both
$g(A)=\log \|e^{DX(A)}\|=\alpha^+$ and
$g(B)=\log \|e^{DX(B)}\|=\beta^+$ strictly positive.
\end{example}

\begin{example}[Partially hyperbolic nonuniform sectional expanding
  with no physical measure]
  \label{ex:parthypnuse}
  Continuing from the previous example, we consider the compactification
  $\sS^2$ of $N$ with a source at infinity and the direct product
  $M=\sS^2\times\sS^1$ with the ``North-South flow'' on the circle;
  see right hand side of Figure~\ref{fig:Bowen}.

  We get a flow $(\psi_t:M\to M)_{t\in\RR}$ with an
  attracting set $\cA:=\sS^2\times\{S\}$ so that
  $d(\psi_t(z),\cA)\to0$ when $t\nearrow\infty$ for all
  $z\in M\setminus \cA$, where $d$ is any Riemannian distance on
  $M$. If we let the contraction rate at the sink $S$ of the
  North-South flow to be stronger than the contracting rates of the
  saddles $A,B$ from $(\phi_t)_{t>0}$, then $\cA$ becomes a partially
  hyperbolic attracting set with splitting $T_\cA M= E^s\oplus E^c$
  given by $E^s=\{0\}\times T_S\sS^1$ and $E^c=T\sS^2\times\{0\}$.

  We note that the region between the saddle connections $W_1$ and
  $W_4$ containing $C$ has a closure $F$ which is invariant and
  $K:=F\times V_S$ becomes also a partially hyperbolic forward
  invariant set, where $V_S$ is any compact positively invariant
  neighborhood of the sink $S$ in $\sS^1$ with respect to the
  North-South flow, with the same splitting as above since we have a
  direct product.

  Moreover, because all future trajectories starting in $K$ accumulate
  $W_1\cup W_4\cup\{A,B\}$, from~\eqref{eq:secontr} we obtain
\begin{align}\label{eq:weakSNUE}
  \limsup\nolimits_{T\nearrow\infty}\log|\det D\psi_T\mid_{E^c_x}|^{1/T}
  &<
  0
  <
  \limsup\nolimits_{n\nearrow\infty}\sum\nolimits_{i=0}^{n-1}\log\|D\bar
    f\mid_{E^c_{\bar f^ix}}\|^{1/n}
\end{align}
for all $x\in K\setminus\{C\}\times V_S$ where $\bar f:=\psi_1$. Thus,
for an open and full $\leb$-measure subset of the partially hyperbolic
forward invariant set $K$ we have average asymptotic expansion along
the central bundle together with asymptotic sectional contraction, and
no physical/SRB measure.

Moreover, \emph{we do not have slow recurrence}. Indeed, for any given
$\delta,L>0$ the continuous function
$g(x):=\min\{L, -\log d_\delta(x,\{A,B,C\})\}$ is such that
$\limsup_{T\nearrow\infty}\frac1T\int_0^Tg(\phi_tx)\,dt=L$ since
$g(A)=g(B)=L$ for all $x$ whose future trajectory accumulates $W$, as
a direct consequence of Theorem~\ref{thm:takens}. Hence, for these
trajectories we arrive at
\begin{align*}
  \limsup_{T\nearrow\infty}\frac1T
  \int_0^T -\log d_\delta(\phi_tx,\{A,B,C\})\,dt = +\infty
\end{align*}
for each small $\delta>0$. Analogously, since $\|G(x)\|$ is comparable
to $d_\delta(\phi_tx,\sing(X))$ (see Lemma~\ref{le:boundsigma}) we
obtain the same results replacing the distance to the equilibria with
the norm of the vector field.
\end{example}

\begin{remark}\label{rmk:Vivas}
  The proof of the existence of a physical measure for asymptotic
  sectional hyperbolic attractors presented in \cite{smviv23} --- in
  the case when the attractor contains non-Lorenz-like equilibria
  --- is based on the assumption that \emph{the right hand side
    inequality of~\eqref{eq:weakSNUE} on a positive Lebesgue measure
    subset of points $x\in U$ implies the existence of some physical
    measure}. From Examples~\ref{ex:nuenophysical}
  and~\ref{ex:parthypnuse} we see that the proof in \cite{smviv23} is
  incomplete.
\end{remark}

%%%%%%%%%%%%%%%%%%%%%%%%%%%%%%%%%%%%%%%%%%%%%%%%%%%%%%%%%%%%%%%%%%%

\bibliographystyle{abbrv}
\bibliography{../bibliobase/bibliography}

%%%%%%%%%%%%%%%%%%%%%%%%%%%%%%%%%%%%%%%%%%%%%%%%%%%%%%%%%%%%%%%%%% 

\end{document}